\documentclass[11pt]{article}

\usepackage[margin=1in]{geometry}

\usepackage{amsmath, amsthm, graphicx, amsfonts, mathtools}
\usepackage{amssymb}
\usepackage[colorlinks=true, allcolors=blue]{hyperref}
\usepackage{tikz-cd}
\usepackage{tikz}
\usepackage{xcolor}
\usetikzlibrary{arrows.meta, calc}
\colorlet{boxgray}{black!70!white}
\usepackage{mathdots}
\usepackage{dsfont}

\usepackage{placeins} 
\usepackage{subcaption}

\DeclareFontFamily{U}{mathb}{\hyphenchar\font45}
\DeclareFontShape{U}{mathb}{m}{n}{
  <5><6><7><8><9><10> gen * mathb
  <10.95> mathb10 <12><14.4><17.28><20.74><24.88> mathb12
}{}

\DeclareSymbolFont{mathb}{U}{mathb}{m}{n}

\DeclareMathSymbol{\boxslash}{\mathord}{mathb}{"6D}
\DeclareMathSymbol{\boxminus}{\mathord}{mathb}{"61}

\newcommand{\zbox}{\Z_\uparrow^4}
\newcommand{\zcross}{\Z^4_\boxminus}
\newcommand{\zdiag}{\Z^4_\boxslash}

\usepackage[
  backend=biber,
  style=alphabetic,
  giveninits=false,
  maxbibnames=99,
  date=year,
  doi=false,
  eprint=true,
  url=false,
  isbn=false
]{biblatex}

\theoremstyle{plain}
\newtheorem{theorem}{Theorem}[section]
\newtheorem{lemma}[theorem]{Lemma}
\newtheorem{proposition}[theorem]{Proposition}
\newtheorem{corollary}[theorem]{Corollary}

\theoremstyle{definition}

\newtheorem{remark}[theorem]{Remark}

\numberwithin{equation}{section}

\newcommand{\R}{\mathbb{R}}
\newcommand{\N}{\mathbb{N}}
\newcommand{\Z}{\mathbb{Z}}
\newcommand{\GT}{\mathrm{GT}}

\DeclareMathOperator{\sh}{sh}
\newcommand{\defeq}{\mathrel{\mathop:}=}
\newcommand{\disteq}{\ {\buildrel d \over =}\ }

\title{Invariance in half-space last passage percolation and directed polymers}
\author{Yunhao Lou}
\date{}

\begin{document}
\maketitle

\begin{abstract}
    We prove various invariance properties
    for half-space geometric and exponential last passage percolation and the half-space log-gamma polymer, analogous to the full-space invariances proved in \cite{dauvergne}. We use the RSK and geometric RSK correspondences to establish a conditional distributional identity between the half-space models and the corresponding full-space models, which allows us to directly transfer the invariance results in the full space to the half space. 
\end{abstract}

\tableofcontents

\section{Introduction}
\label{sec: intro}

\subsection{Background}

Last passage percolation (LPP) has been studied for decades. A landmark result of Baik, Deift, and Johansson \cite{baikDistributionLengthLongest1999} shows that, after suitable centering and scaling, the length of the longest increasing subsequence of a uniformly random permutation converges in distribution to the GUE Tracy--Widom law \cite{tracyLevelspacingDistributionsAiry1994}.
The corresponding fluctuation results for geometric and exponential LPP
were established by Johansson
\cite{johanssonShapeFluctuationsRandom2000}.

While numerous asymptotic results for LPP and other integrable probabilistic models have been obtained since then, 
it is only recently that some surprising underlying invariances within these models have been identified. 
Borodin, Gorin, and Wheeler
\cite{borodin} started from the stochastic colored six-vertex model and showed that in a variety of integrable stochastic systems, the joint distribution of certain observables is invariant under shifts of selected observation points. 
These results were later generalized by Galashin in \cite{galashin}, who discovered the flip-invariance in the stochastic colored six-vertex model. 
Dauvergne \cite{dauvergne} established similar shift invariances and other symmetries in geometric or exponential LPP, as well as in the log-gamma polymer. 
Other works related to invariances in integrable probabilistic models include \cite{kuan, Korotkikh, zhang, batesPermutationInvarianceLastpassage2026}.

Many integrable probabilistic models admit a half-space version. 
For example, we can define a half-space LPP by an environment that is symmetric across the diagonal. 
Some early studies from the perspective of the longest increasing subsequence of random involutions include \cite{baikSymmetrizedRandomPermutations2001, baikAsymptoticsMonotoneSubsequences2001, baikAlgebraicAspectsIncreasing2001}, and some later work includes
\cite{pfaffian, beteaStationaryHalfSpaceLast2020, zengStationaryHalfspaceGeometric2025, beteaHalfspaceAiryStat2022, ferrariTimetimeCovarianceLast2024}.
For some studies on half-space directed polymer models, see
\cite{barraquandIdentityDistributionFullSpace2023,dasConvergenceStationaryMeasures2025, wu, ginsburgPinningDiffusiveFluctuations2024, barraquandStationaryMeasuresLoggamma2023, dasHalfspaceLoggammaPolymer2024}. 
Additional studies on half-space models include 
\cite{heBoundaryCurrentFluctuations2024, sasamotoFluctuationsOneDimensionalPolynuclear2004, barraquand, barraquandHalfSpaceStationaryKardar2020, barraquandStochasticSixvertexModel2018, zhangTASEPHalfspace2025}.

He \cite{he} studied symmetry properties in several half-space integrable models, including the half-space colored stochastic six-vertex model, the half-space beta polymer, and the half-space colored asymmetric simple exclusion process (ASEP). 
However, as discussed in \cite[§1.5]{he}, apart from a shift invariance for single point-to-point passage values in half-space exponential LPP, those results do not cover the integrable half-space LPP or polymer models.

In this paper, we establish some shift invariances in the half-space geometric or exponential LPP and the half-space log-gamma polymer. 
To do this, we will use the \emph{Robinson--Schensted--Knuth} (RSK) correspondence, which is a bijection between matrices with nonnegative integer entries and pairs of semistandard Young tableaux of the same shape. 
For the original work of Knuth, see
\cite{knuthPermutationsMatricesGeneralized1970};
for alternative constructions of the correspondence, see
\cite{fominGeneralizedRobinsonSchenstedKnuthCorrespondence1988, KRATTENTHALER2006404};
for its connection to LPP, see \cite{GREENE1974254}.
See also \cite{dauvergneRSKLastPassage2022} for a recent exposition on how the various forms of the RSK correspondence are applied in both discrete and continuous models of LPP. 
The RSK correspondence can also be formulated in terms of the Schur process, and its half-space version in terms of the Pfaffian Schur process, 
see \cite{pfaffian}. 

For directed polymer models, the geometric RSK (gRSK) correspondence is used instead of the classical RSK;
see \cite{oconnell, corwinTropicalCombinatoricsWhittaker2014a, noumiTropicalRobinsonSchenstedKnuthCorrespondence2004}.
In particular, Section 5 of \cite{oconnell} studies the restriction of the gRSK correspondence to symmetric matrices, which corresponds to the half-space environments considered in this paper. 

\subsection{Setup and notation}

For simplicity, we will only formulate our main results in this introduction in the setting of half-space LPP, since the corresponding results for the half-space log-gamma polymer are almost identical.  

We first introduce the full-space model of LPP, following the notation of \cite{dauvergne}.

Let $M=\{M_x:x\in \Z^2\}$ be an environment of random variables. 

A sequence of points $(\pi(1),\pi(2),\dots, \pi(k))$ in $\Z^2$ is called a \emph{directed path} (or simply a path) if $\pi(i)-\pi(i-1)\in \{(0,1),(1,0)\}$ for each $1 < i\leq k$. 

For a directed path $\pi$ in $\Z^2$, define the weight of $\pi$ in the environment $M$ as 
\[ M(\pi) = \sum_{x\in \pi} M_x.\]

For two points $x=(x_1,x_2),y = (y_1,y_2) \in \Z^2$, write $x \nearrow y$ if $x_1 \leq y_1$ and $x_2 \leq y_2$. Let
\[
\zbox \defeq \{ u = (u^-;u^+) \in \Z^2 \times \Z^2 : u^- \nearrow u^+\}.
\]
Each element $u\in \zbox$ can be seen as a pair of endpoints or a box in $\Z^2$ whose lower-left corner is $u^-$ and upper-right corner is $u^+$. 

For any $u = (u^-;u^+) \in \zbox$, say that a directed path $\pi$ is a $u$-path if $\pi$ goes from $u^-$ to $u^+$. 
Define the last passage value of $u$ by
\[ 
Z_M(u) = \max_\pi M(\pi),
\]
where the max is over all $u$-paths $\pi$. 
For a vector $\mathbf{u} = (u_1,\dots,u_k) \in (\zbox)^k$, we also define its last passage value by
\[ 
Z_M(\mathbf{u}) = \max_\pi \sum_{i=1}^k M(\pi_i)
\]
where the max is taken over all $k$-tuples of disjoint paths $\pi = (\pi_1,\dots, \pi_k)$, where each $\pi_i$ is a $u_i$-path. 
If no such tuple of disjoint paths exists, then the value is undefined and we say that $\mathbf u$ is not \emph{valid}. 

We will identify each $u \in \zbox$ with the box $[u^-_1,u^+_1] \times [u^-_2,u^+_2]$. 
For $u,v\in \zbox$, we say that $u$ and $v$ are disjoint if they are disjoint as boxes, i.e. 
\[
([u^-_1,u^+_1] \times [u^-_2,u^+_2]) \cap ([v^-_1,v^+_1] \times [v^-_2, v^+_2]) = \emptyset.
\]
We say that \emph{$u$ crosses $v$ horizontally} and \emph{$v$ crosses $u$ vertically} if 
\[
[v^-_1,v^+_1] \subseteq [u^-_1,u^+_1] \quad \text{and} \quad 
[u^-_2,u^+_2] \subseteq [v^-_2,v^+_2].
\]

We write 
\begin{align*}
    \mathcal{N} &\defeq \{(u,v)\in \zbox \times \zbox: u \text{ and } v \text{ are disjoint}\} \\
    \mathcal{H} &\defeq \{(u,v) \in \zbox \times \zbox: u \text{ crosses } v \text{ horizontally} \} \\
    \mathcal{V} &\defeq \{(u,v) \in \zbox \times \zbox: u \text{ crosses } v \text{ vertically} \}.
\end{align*} 
For $c \in \Z^2$, let 
\[ T_c u = (u^-+c;u^++c) \in \zbox\]
be the shift of $u$ by $c$. 
We say that $v$ is a shift of $u$ if $v = T_c u$ for some $c\in \Z^2$. 
We also say that the vector $\mathbf{v} = (v_1,\dots, v_k) \in (\zbox)^k$ is a shift of $\mathbf{u} = (u_1,\dots,u_k) \in (\zbox)^k$ if there exists a $c \in \Z^2$ such that $v_i = T_c u_i$ for each $i$.

The LPP model is integrable when the random variables in $M$ are i.i.d. geometric or exponential.
It has been shown by \cite{dauvergne} that in this case there exist some remarkable invariances  
of the joint distribution of last passage values
under certain shifts of the endpoints.

Recently, the half-space settings of many integrable models have attracted much attention, and it is natural to ask if these symmetry results continue to hold in the half space. 
For LPP, a half-space environment is an array $\tilde{M}= \{\tilde{M}_x:x\in \Z^2\}$ satisfying $\tilde{M}_{(i,j)} = \tilde{M}_{(j,i)}$ for all $i,j$. 
Its last passage values $Z_{\tilde{M}}$ are defined as above.
For the half-space model to be integrable, the environment must again be geometric or exponential. 
We take the entries above the diagonal to be i.i.d. geometric (respectively, exponential) random variables and the diagonal entries to be i.i.d. random variables of the same type, possibly with a different parameter.  
From now on in this section, we will always assume that $\tilde{M}$ is one of these integrable half-space environments unless specified otherwise.

We remark that our definition of the half-space environment is slightly different from the usual setup found in the literature. 
Typically, only the environment restricted to the half-plane below the diagonal is considered, i.e. 
\[
\{\tilde{M}_x:x=(x_1,x_2) \in \Z^2, x_1 \geq x_2\},
\]
whereas we extend it to the full plane in a symmetric way. 
In this sense, the model may alternatively be called a symmetric-space model. 
However, since the point-to-point last passage value is the same in the lower half-plane for the two settings, we will continue to use the term \emph{half-space} to refer to this symmetric setup. 
We will discuss this symmetric formulation of the half-space directed polymer model further in Remark~\ref{remark: half-space directed polymer}.

We will mostly deal with passage values for boxes that fully cross the diagonal in the half-space setting. 
Accordingly, we define
\begin{align*}
    \Z^4_\boxminus &= \{u\in \zbox: u^-_1 \geq u^-_2,u^+_1 \leq u^+_2\} , \text{ and}\\
 \Z^4_\boxslash &= \{u\in \zbox: u^-_1=u^-_2,u^+_1=u^+_2\},
\end{align*} 
corresponding to boxes that cross the diagonal vertically and boxes that lie on the diagonal.
We do not need to consider the horizontally crossing boxes since for all $(x_1,y_1;x_2,y_2) \in \zbox$ we have
\[
Z_{\tilde{M}}(x_1,y_1;x_2,y_2) = Z_{\tilde{M}}(y_1,x_1;y_2,x_2).
\]

\subsection{Main results}
\label{subsec: main results}

A variety of half-space integrable models were studied in \cite{he},
where several symmetry results were proved,
and one of their results for the half-space ASEP leads to the following shift invariance of the half-space LPP:

\begin{proposition}
[{\cite[Corollary~1.1]{he}}]
\label{point-to-point shift Jimmy He}
    Let $\tilde M$ be the homogeneous half-space exponential LPP
    environment.
    Suppose $u,u' \in \zcross$ are such that $u'$ is a shift of $u$. Then $ Z_{\tilde{M}}(u) \disteq Z_{\tilde{M}}(u')$. 
\end{proposition}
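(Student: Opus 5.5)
The approach is to express $Z_{\tilde M}(u)$ as the first row of an RSK shape, and to show that the law of that shape depends only on the side lengths of the box. Write $u=(a,b;c,d)\in\zcross$, so that $b\le a\le c\le d$, and set $n=c-a+1$, $p=a-b$ and $q=d-c$. Any shift $u'=T_{(s,t)}u$ that stays in $\zcross$ has the same $n$ and the same $p+q$. The shift only redistributes the $p+q$ rows lying off the diagonal square between the part below the square and the part above it. Diagonal shifts, with $s=t$, are trivially harmless because the half-space environment is stationary along the diagonal. So it suffices to show that the law of $Z_{\tilde M}(u)$ depends only on $n$ and $p+q$, not on the split $(p,q)$.

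The restriction of $\tilde M$ to the box is an $(p+n+q)\times n$ array with three blocks:
\begin{itemize}
\item a block $B$ of $p$ rows below the diagonal square;
\item the symmetric square $S=\tilde M|_{[a,c]^2}$;
\item a block $A$ of $q$ rows above the square.
\end{itemize}
By symmetry of $\tilde M$, the block $B$ consists of entries lying strictly off the diagonal, and only its reflection appears elsewhere in the box; the same holds for $A$. Hence $B$, $S$ and $A$ are independent, and $A,B$ have i.i.d.\ exponential (or geometric) entries. By Greene's theorem, $Z_{\tilde M}(u)$ is the first part of the RSK shape of this array.

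First, run RSK on $S$. Since $S$ is symmetric, $P=Q$, and the shape $\lambda$ has the half-space (Pfaffian Schur) law, as in \cite{pfaffian}. Second, append the $q$ rows of $A$ one at a time. Each row is an independent geometric word, so row insertion drives the shape $P$-marginally as a Markov chain with the standard Schur (Pieri) transition kernel, whose law depends only on the current shape. Third, handle the $p$ rows of $B$. Rotating the whole array by $180^\circ$ leaves the RSK shape unchanged, and it turns prepended rows into appended ones. It also maps $S$ to its own rotation, which is again symmetric with the same law.

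\textbf{Main obstacle.} The rows of $A$ are added at one end and the rows of $B$ at the other, so the shape evolution is not a single Markov chain in an obvious way. My plan is to use the symmetry $P=Q$ of the square: inserting rows on top acts on $P$, while, via the rotation/evacuation duality, prepending rows at the bottom acts on the other tableau $Q$. Given $\lambda$, these two actions should be conditionally independent Schur transitions. Since the Pieri kernels with a common parameter commute, the final shape would then have law
\[
\lambda \cdot K^{p}K^{q}=\lambda\cdot K^{p+q}.
\]
This depends only on $p+q$, which gives the claim.

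Making precise that prepending rows acts on $Q$ in a way conditionally independent of appending on $P$ is exactly the conditional identity between half-space and full-space RSK. If this route stalls, I would fall back on the ASEP/colored six-vertex symmetry of \cite{he}. The exponential case then follows from the geometric case by the usual limit.
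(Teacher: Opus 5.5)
Your reduction is fine: it suffices to show that the law of $Z_{\tilde M}(u)$ depends only on $n$ and $p+q$. The blocks $B,S,A$ are independent, and the one-sided cases $p=0$ or $q=0$ are correct. Even the one-sided step, though, uses more than you state. Appending independent geometric rows makes the shape alone a Markov chain with the Schur kernel only because the tableau of $S$ has, given $\lambda$, the Gibbs law $\gamma^r/s_\lambda(\gamma)$ of Proposition~\ref{prop: RSK R given lambda}. For a general initial tableau law, the transition out of a shape depends on the tableau, not just on the shape. This is the crucial input and should be invoked explicitly.

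The genuine gap is the two-sided step. You leave it at ``should be'', the fallback to \cite{he} is circular, and the proposed mechanism does not work. Since $S$ is symmetric, $P=Q$, so there can be no conditional independence given $\lambda$ between an action on $P$ and an action on $Q$. Moreover, the $180^\circ$ rotation does not swap the two tableaux; it replaces both by their evacuations. Rows added at either end interact with the current array only through its column-index tableau: directly when added on top, and via its evacuation when added at the bottom. The row-index tableau is never used. That is exactly the missing idea: $Z(u)$ depends on $S$ only through $Z|_{\bar V(v)}$, i.e.\ through a single tableau (Lemma~\ref{decoupling duncan 3.1} together with Proposition~\ref{isometry RSK}). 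If both tableaux mattered, nothing would force the half-space law (where $P=Q$) to agree with the full-space one (where $P\perp\!\!\!\perp Q$ given $\lambda$).

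With this fact, Theorem~\ref{bridge LPP} shows that $\mathcal L(Z_{\tilde M}(u)\mid\lambda)$ coincides with the full-space conditional law obtained by replacing $S$ with an i.i.d.\ square. The paper then concludes with the full-space shift invariance of Proposition~\ref{duncan tower}. Alternatively, you can stay inside your Markov framework by working sequentially rather than ``simultaneously''. By the intertwining, given $\lambda$ and the shape $\mu$ of $[S;A]$, the column-index tableau of $[S;A]$ is again Gibbs (uniform, in the homogeneous case) and independent of $B$. The shape of $[B;S;A]$ is a function of that tableau and $B$, so its conditional law may be computed in an i.i.d.\ model, where rotation gives $\mu K^{p}$. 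This yields $\lambda K^{q}K^{p}=\lambda K^{p+q}$.
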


\begin{figure}
  \centering
  \begin{subfigure}{0.4\linewidth}
    \centering
    \resizebox{\linewidth}{!}{
        \begin{tikzpicture}[x=0.5cm,y=0.5cm,
          >={Latex[length=2.5mm]},
          dot/.style={circle,fill,inner sep=1.2pt}]
        \draw[step=1, thin, gray!50] (0.5, 0.5) grid (14.5, 14.5);
        \draw[boxgray, thick] (3,2) rectangle (8,13);
        \draw[gray,dashed,thick] (0.5,0.5) -- (14.5,14.5);
        \draw[thick, blue]
          (3.5,2.5) node[dot,blue]{} -- (3.5,3.5) -- (4.5,3.5) -- (4.5, 6.5) -- (5.5, 6.5) -- (5.5,10.5) -- (6.5,10.5) -- (6.5,12.5) -- (7.5, 12.5) node[dot,blue]{};
        \node[blue,anchor=center,font = \Large] at (5.5, 2.5){$u$};
    \end{tikzpicture}
    }
    \caption{} 
    \label{fig: prop1.1a}
  \end{subfigure}
  \quad
  \begin{subfigure}{0.4\linewidth}
    \centering
    \resizebox{\linewidth}{!}{
        \begin{tikzpicture}[x=0.5cm,y=0.5cm,
         >={Latex[length=2.5mm]},
        dot/.style={circle,fill,inner sep=1.2pt}]
        \draw[step=1, thin, gray!50] (0.5, 0.5) grid (14.5, 14.5);
        \draw[boxgray, thick] (5,2) rectangle  (10,13);
        \draw[gray, dashed, thick] (0.5,0.5) -- (14.5,14.5);
        \draw[thick, blue]
          (5.5,2.5) node[dot,blue]{} -- (6.5,2.5) -- (6.5,4.5) -- (7.5,4.5) -- (7.5, 8.5) -- (8.5, 8.5) -- (8.5,10.5) -- (9.5,10.5) -- (9.5,12.5) node[dot,blue]{};
        \node[blue,anchor=center,font = \Large] at (7.5, 2.5){$u'$};
    \end{tikzpicture}
    }
    \caption{}
    \label{fig: prop1.1b}
  \end{subfigure}
  \caption{Example of Proposition~\ref{point-to-point shift Jimmy He}, where $u'$ is a shift of $u$ and the last passage values are equal in distribution. The dashed line indicates the diagonal. }
  \label{fig: prop1.1}
\end{figure}

We remark that, although this result appears simple, it is highly nontrivial when the shift is not merely along the diagonal, unlike the corresponding shift of a single box in the full-space environment. 
Indeed, this invariance does not hold in general if the random entries in the half-space environment have arbitrary distributions. 

However, the argument used in \cite{he} only deals with the point-to-point passage values and does not provide the corresponding statement for multi-point passage values. 

Using a different strategy, we obtain a significantly broader class of shifts under which the joint distribution of multi-point passage values is preserved. 
These invariances are similar to those discovered for the full-space model in \cite{dauvergne}, although the symmetric structure of the half-space environment imposes additional constraints on the admissible configurations of boxes, since, for example, we must require that all boxes fully cross the diagonal, in addition to crossing each other. 

In order to avoid more complicated notation, we will not state our main theorems in their strongest forms in this section. 
Instead, we present several representative special cases that capture the essential types of shifts that are allowed.  
First, we extend Proposition~\ref{point-to-point shift Jimmy He} to multi-point passages. 

\begin{theorem}
    \label{multi-point shift intro}
    Suppose $\mathbf{u}=(u_1,\dots,u_k)\in(\zbox)^k$ is valid,
    $\mathbf{u}'=(u'_1,\dots,u'_k)\in(\zbox)^k$ is a shift of
    $\mathbf{u}$, and there exists $v\in\zdiag$ such that
    $(v,u_i),(v,u'_i)\in\mathcal H$ for all $i$.
    Then for the multi-point passage values, we have
    \[ Z_{\tilde{M}}(\mathbf{u}) \disteq Z_{\tilde{M}}(\mathbf{u'}),\]
 and, moreover, the joint distribution of individual point-to-point passage values satisfies
    \[ (Z_{\tilde{M}}(u_1),Z_{\tilde{M}}(u_2),\dots,Z_{\tilde{M}}(u_k)) 
    \disteq
    (Z_{\tilde{M}}(u_1'),Z_{\tilde{M}}(u_2'),\dots,Z_{\tilde{M}}(u_k')).\]
\end{theorem}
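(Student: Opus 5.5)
Before any probability enters, I would relate the half-space passage values to a full-space model through RSK, as the abstract indicates. The key point is that all boxes $u_i,u_i'$ are crossed horizontally by a single diagonal box $v=(a,a;b,b)\in\zdiag$. So every $u_i$ (and $u_i'$) lies in the vertical strip $[a,b]\times\Z$ horizontally and contains the full column range $[a,b]$ vertically. Consequently $Z_{\tilde M}(\mathbf u)$ and all $Z_{\tilde M}(u_i)$ depend only on the environment in a region consisting of three pieces: the symmetric square $[a,b]^2$, plus strips above and below it. These strips are independent of the square, up to the identification $\tilde M_{(i,j)}=\tilde M_{(j,i)}$, which here only links the square to itself.

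The plan has four steps.

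\emph{Step 1.} Decompose each $u_i$-path into three segments: a part below row $a$, a part inside the square $[a,b]^2$ entering at the bottom row, and a part above row $b$. By the standard RSK/Greene-type gluing (as in \cite{dauvergne}, via the Pitman/RSK shape), the passage values are deterministic functions of three objects:
\begin{itemize}
\item the environment in the strips;
\item the ``line-to-line'' data of the square, namely the collection of multi-path values from the bottom row to the top row of $[a,b]^2$.
\end{itemize}
More precisely, by RSK the square's contribution is encoded by the Gelfand--Tsetlin pattern (the shapes $\sh$) obtained by applying RSK to the matrix $\tilde M|_{[a,b]^2}$ row by row.

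\emph{Step 2.} Prove the conditional distributional identity. Let $M$ be a full-space i.i.d.\ environment on $[a,b]^2$. Conditionally on the final shape $\lambda=\sh$ (the RSK shape of the whole square), the symmetric-matrix RSK (\cite{pfaffian}, \cite[§5]{oconnell}) and the full-space RSK both give a uniform, or Schur-weighted, distribution on the GT pattern. The weight is a product $\prod$ over the pattern differences of the parameter, and this is the same in both cases; only the law of $\lambda$ differs, with diagonal parameters entering only through $\lambda$. Hence the line-to-line data of the half-space square, conditioned on $\lambda$, equals in law the full-space data conditioned on $\lambda$.

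\emph{Step 3.} Combining Steps 1 and 2, conditionally on $\lambda$ the vector $(Z_{\tilde M}(\mathbf u),Z_{\tilde M}(u_1),\dots)$ has the same law as the analogous vector for a full-space environment conditioned on the shape of its $[a,b]^2$ block. The same holds for $\mathbf u'$ with the same $v$, so the same conditioning variable is used for both.

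\emph{Step 4.} Invoke the full-space shift invariance of \cite{dauvergne}. I would need it in a conditional form: invariance jointly with the RSK shape of the block $v$. This follows from Dauvergne's theorem applied to the tuple $(\mathbf u, v^{(1)},\dots,v^{(k)})$ with multi-path values of $v$, since the shape of $v$ is determined by $Z_M(v,\dots,v)$ multi-path values. Here $v$ stays fixed while the $u_i$ shift and $v$ crosses each $u_i,u_i'$. Integrating over $\lambda$ then gives both the multi-point and the joint identity.

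The main obstacle is Step 4. Dauvergne's shift theorems require specific crossing configurations, and I must check that appending the fixed box $v$ (as multi-path values, i.e.\ the shape) still gives an admissible configuration for shifting $\mathbf u$ alone. I would try to realize the shape of $v$ via disjoint-path values $Z_M(v,\dots,v)$, treated as the distinguished ``crossing'' box in his shift invariance for boxes all crossed by a common box. A secondary subtlety is making the gluing of Step 1 precise for multi-path values, where disjointness across the segments must be encoded by the full GT pattern rather than just $\lambda$.
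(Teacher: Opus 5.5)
Your plan is essentially the paper's argument. Step 1 is the decoupling of $Z(\mathbf u)$ through the strip weights and $Z|_{\bar V(v)}$ (a function of the $Q$-pattern by the RSK isometry). Step 2 is the identity of the conditional laws of $Q$ and $R$ given the shape, with Sch\"utzenberger's lemma making the diagonal parameter enter only through the shape. The obstacle you flag in Step 4 is resolved exactly as in the paper, by Dauvergne's tower theorem with two layers, $\{v\}$ unshifted and $\{u_1,\dots,u_k\}$ shifted by the common vector. This configuration is admissible because $(v,u_i),(v,u_i')\in\mathcal H$, and it gives invariance jointly with $Z_M|_{D(v)}$; note that the shape is encoded by the values $Z_M(v^k)$, not by the invalid $Z_M(v,\dots,v)$.

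The one ingredient you omit comes at the end. You have a kernel identity that holds for almost every shape under the full-space law, and to use it for the half-space law you need the half-space shape law to be absolutely continuous with respect to the full-space one. This is automatic in the geometric case but needs a short argument in the exponential case.
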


\begin{figure}[h]
  \centering
  \begin{subfigure}{0.4\linewidth}
    \centering
    \resizebox{\linewidth}{!}{
    \begin{tikzpicture}[x=0.5cm,y=0.5cm,
      >={Latex[length=2.5mm]},
      dot/.style={circle,fill,inner sep=1.2pt}]
        \draw[step=1, thin, gray!50] (0.5, 0.5) grid (15.5, 15.5);
        \draw[boxgray, thick] (4,4) rectangle (7,14);
        \draw[boxgray, thick] (5,4) rectangle (9,15);
        \draw[boxgray, thick] (6,2) rectangle (10,14);
        \draw[gray, dashed, thick] (0.5,0.5) -- (15.5,15.5);
        \draw[thick, blue]
          (4.5, 4.5) node[dot,blue]{} -- (4.5, 10.5) -- (5.5, 10.5) -- (5.5,12.5) -- (6.5,12.5) -- (6.5, 13.5) node[dot,blue]{};
        \node[blue,anchor=center,font = \Large] at (3.5, 8.5){$u_1$};
        \draw[thick, red]
          (5.5, 4.5) node[dot,red]{} -- (5.5, 6.5) -- (6.5, 6.5) -- (6.5,8.5) -- (7.5,8.5) -- (7.5, 10.5) -- (8.5,10.5) -- (8.5,14.5) node[dot,red]{};
        \node[red,anchor=center,font = \Large] at (5.5, 3.5){$u_2$};
        \draw[thick, violet]
          (6.5, 2.5) node[dot,violet]{} -- (7.5, 2.5) -- (7.5, 3.5) -- (8.5,3.5) -- (8.5,7.5) -- (9.5, 7.5) -- (9.5,13.5) node[dot,violet]{};
        \node[violet,anchor=center,font = \Large] at (8.5, 1.5){$u_3$};
    \end{tikzpicture}
    }
    \caption{} 
    \label{fig: prop1.2a}
  \end{subfigure}
  \quad
  \begin{subfigure}{0.4\linewidth}
    \centering
    \resizebox{\linewidth}{!}{
    \begin{tikzpicture}[x=0.5cm,y=0.5cm,
      >={Latex[length=2.5mm]},
      dot/.style={circle,fill,inner sep=1.2pt}]
        \draw[step=1, thin, gray!50] (-0.5, 1.5) grid (14.5, 16.5);
        \draw[boxgray, thick] (4,4) rectangle (7,14);
        \draw[boxgray, thick] (5,4) rectangle (9,15);
        \draw[boxgray, thick] (6,2) rectangle (10,14);
        \draw[gray,dashed, thick] (-0.5,1.5) -- (14.5,16.5);
        \draw[thick, blue]
          (4.5, 4.5) node[dot,blue]{} -- (4.5, 9.5) -- (5.5, 9.5) -- (5.5,13.5) -- (6.5, 13.5) node[dot,blue]{};
        \node[blue,anchor=center,font = \Large] at (3.5, 9.5){$u_1'$};
        \draw[thick, red]
          (5.5, 4.5) node[dot,red]{} -- (5.5, 6.5) -- (6.5, 6.5) -- (6.5,10.5) -- (7.5,10.5) -- (7.5, 14.5) -- (8.5,14.5) node[dot,red]{};
        \node[red,anchor=center,font = \Large] at (6.5, 15.5){$u_2'$};
        \draw[thick, violet]
          (6.5, 2.5) node[dot,violet]{} -- (7.5, 2.5) -- (8.5, 2.5) -- (8.5,3.5) -- (8.5,6.5) -- (9.5, 6.5) -- (9.5,13.5) node[dot,violet]{};
        \node[violet,anchor=center,font = \Large] at (10.5, 7.5){$u_3'$};
    \end{tikzpicture}
    }
    \caption{}
    \label{fig: prop1.2b}
  \end{subfigure}
  \caption{Example of Theorem~\ref{multi-point shift intro}. The box $v$ on the diagonal is not shown. }
  \label{fig: prop1.2}
\end{figure}

Here, the auxiliary $v \in \zdiag$ is a common diagonal anchor introduced to control the diagonal-crossing behavior of all the marginal endpoints. 
The existence of a common $v$ cannot in general be replaced by the
condition that the boxes cross the diagonal individually. 
For example, take
$u_1=(0,0;0,2)$, $u_2=(1,1;1,2)$,
and $u_i'=T_{(1,0)}u_i$.
All four boxes cross the diagonal, but
$Z_{\tilde M}(u_1)$ and $Z_{\tilde M}(u_2)$ are independent, whereas
$Z_{\tilde M}(u_1')$ and $Z_{\tilde M}(u_2')$ share the weight
$\tilde M_{(1,2)}=\tilde M_{(2,1)}$.

We can obtain more complicated shift invariances. For simplicity, we demonstrate these only for point-to-point passage values here. 

\begin{theorem}
    \label{intro tower shift}
    Suppose $u_1,\dots, u_k,u_1',\dots, u'_k \in \zcross$ are such that
    $(u_i, u_{i+1}), (u'_i,u'_{i+1}) \in \mathcal{H}$ for $1\leq i < k$, and for every $1\leq i \leq k$ there exists $c_i\in \Z^2$ such that $u'_i = T_{c_i}u_i$ (i.e. $u'_i$ is a shift of $u_i$). 
    Then
    \[ (Z_{\tilde{M}}(u_1),Z_{\tilde{M}}(u_2),\dots,Z_{\tilde{M}}(u_k)) 
    \disteq
    (Z_{\tilde{M}}(u_1'),Z_{\tilde{M}}(u_2'),\dots,Z_{\tilde{M}}(u_k')).\]
\end{theorem}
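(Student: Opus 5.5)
The plan is to follow the strategy announced in the abstract. We first prove a conditional distributional identity between the half-space environment and a full-space environment, and then invoke the corresponding full-space ``tower'' shift invariance of \cite{dauvergne}. That result says the joint law of $(Z_M(u_1),\dots,Z_M(u_k))$ for an i.i.d. environment $M$ is unchanged under independent shifts $u_i\mapsto T_{c_i}u_i$, as long as consecutive boxes keep crossing horizontally.

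\textbf{Step 1 (reduction to a symmetric full-space computation).} Since every $u_i\in\zcross$, each box contains a diagonal segment $\{(t,t): u_{i,1}^-\le t\le u_{i,1}^+\}$. Because $(u_i,u_{i+1})\in\mathcal H$, these segments are nested: $[u_{i+1,1}^-,u_{i+1,1}^+]\subseteq[u_{i,1}^-,u_{i,1}^+]$. Thus there is a common diagonal box $v\in\zdiag$, namely the diagonal square over the innermost horizontal range $[u_{k,1}^-,u_{k,1}^+]$, which is crossed horizontally by all $u_i$, and likewise a $v'$ for the $u_i'$. We restrict the symmetric environment to a large square $[a,b]^2$ containing all boxes and apply RSK to the symmetric matrix $\tilde M|_{[a,b]^2}$. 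The symmetric RSK image is a single tableau whose shape is a Pfaffian Schur process, and its law depends on the diagonal parameter only through the number of odd columns, i.e.\ through the diagonal sum.

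\textbf{Step 2 (the key conditional identity).} We aim to show the following. Let $M$ be a full-space i.i.d. environment of the same off-diagonal parameter on $[a,b]^2$, and let $\tilde M$ be built from it by symmetrizing. Then, conditionally on the RSK shape data, the collection of passage values $Z_{\tilde M}(w)$ for all boxes $w$ crossing a fixed diagonal box $v$ has the same law as $Z_M(w)$. Concretely, passage values over boxes anchored at corners are given by Greene's theorem as functions of RSK shapes of sub-rectangles. Symmetric RSK and full RSK produce Gelfand--Tsetlin patterns whose conditional laws given the diagonal shapes coincide, since both are uniform with Schur-type weights. Boxes that cross the diagonal box $v$ are exactly those whose values are determined by patterns on the two sides of $v$ glued along it. This is the step I expect to be the main obstacle. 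One has to check that the Markov structure of the Schur process versus the Pfaffian Schur process agrees away from the diagonal, and that general (non-corner-anchored) boxes can be expressed through these patterns. For the latter I would try an induction on deleting boundary rows and columns, using the local RSK (toggle) moves of \cite{dauvergne} rather than Greene's theorem directly.

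\textbf{Step 3 (transfer).} Given Step 2, we condition on the diagonal data $D$, i.e.\ the shapes along $v$, which should be shift-invariant in law in the relevant sense. Conditionally, the vector $(Z_{\tilde M}(u_i))_i$ has the law of $(Z_M(u_i))_i$ under a full-space model conditioned on $D$. We apply the full-space tower shift invariance of \cite{dauvergne} in its conditional form. Dauvergne's proof proceeds through RSK moves that preserve the relevant shape data, so the invariance holds conditionally on $D$ provided the shifts keep every box crossing $v$. This is guaranteed because $u_i'\in\zcross$ and $(u_i',u_{i+1}')\in\mathcal H$, so we may choose a common $v$, possibly after enlarging to a diagonal box contained in both innermost ranges, or by passing through intermediate configurations that shift one box at a time. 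Integrating over $D$ then gives the claimed equality in distribution.
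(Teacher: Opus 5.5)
Your overall plan is the paper's: prove a conditional identity between $\tilde M$ and a full-space $M$ given diagonal data on a square $v$, then apply Dauvergne's full-space tower invariance. However, the anchor you choose is wrong, and the transfer hinges exactly on this point. The mechanism needs every box to cross $v$ \emph{vertically}, i.e.\ $(v,u_i)\in\mathcal H$. Then \cite[Proposition~3.1]{dauvergne} expresses $Z(u_i)$ through $Z|_{\bar V(v)}$ and the weights on $u_i\setminus v\subseteq I\times(\Z\setminus I)$, where $I=[v^-_1,v^+_1]$. This region is disjoint from its reflection, so those half-space weights are independent of each other and of $v$, and have the full-space law. Your square over the innermost range $[u^-_{k,1},u^+_{k,1}]$ fails this for every $u_i$ strictly wider than $u_k$. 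For instance, $u_1\setminus v$ contains the symmetric set $u_1^\boxslash\setminus v$, including diagonal sites $(t,t)$ outside $v$. Hence the half-space conditional law of $Z_{\tilde M}(u_1)$ given data in $v$ depends on $c$, while the full-space one does not. The correct anchor is the outermost square $v=u_1^\boxslash$: since $u_1\in\zcross$ we have $(v,u_1)\in\mathcal H$, and by transitivity $(v,u_i)\in\mathcal H$ for all $i$. To use one $v$ for both towers you also need a reduction you do not make. The law of $\tilde M$ is invariant under diagonal translations $T_{(r,r)}$, which preserve $\zcross$ and $\mathcal H$. So you may assume $c_1=(0,h)$, whence $(u_1')^\boxslash=u_1^\boxslash$; this is exactly the paper's reduction before invoking Theorem~\ref{tower shift}. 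Your alternatives cannot replace it. For $u_1=(0,0;1,1)$ and $u_1'=T_{(1,1)}u_1$, the only shifts of $u_1$ lying in $\zcross$ are diagonal translates, and no two distinct ones admit a common anchor.

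Steps 2 and 3 also leave the essential arguments open. RSK on a large square containing all boxes is the wrong object. Its pattern $Q$ encodes only passage values of boxes anchored at its lower-left corner and spanning its full height, which the $u_i$ are not; hence your unresolved toggle induction. What works is RSK on $v$ alone. The full-space law of $Q$ given $\sh(Q)=\lambda$ and the half-space law of $R$ given $\sh(R)=\lambda$ are both $\alpha^q/s_\lambda(\alpha)$ on $\GT_n(\lambda)$, because Sch\"utzenberger's lemma turns the diagonal factor $c^{\sum_i\tilde m_{ii}}$ into $c^{O(\lambda)}$. Via Greene's theorem, this says $\mathcal L(Z|_{V(v)}\mid Z|_{D(v)})$ agrees in the two models. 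The isometry property \cite[Proposition~3.2]{dauvergne} then upgrades $V(v)$ to $\bar V(v)$, and decoupling handles all boxes crossing $v$ vertically. In Step 3, do not appeal to the internals of Dauvergne's proof; instead put $v$ as the bottom layer of both towers with zero shift. The full-space tower theorem covers all of $\mathcal E_D(\{v,u_1,\dots,u_k\})\supseteq D(v)$, so it yields $(Z_M|_{D(v)},(Z_M(u_i))_i)\disteq(Z_M|_{D(v)},(Z_M(u_i'))_i)$, which is the conditional identity you need. No shift invariance of the diagonal data is involved, since $v$ is fixed. Finally, the full-space conditional identity holds only $\mathcal L(Z_M|_{D(v)})$-a.e., so transferring it requires $\mathcal L(Z_{\tilde M}|_{D(v)})\ll\mathcal L(Z_M|_{D(v)})$. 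The exponential case also needs a separate limiting or continuous-RSK argument.
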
 

\FloatBarrier
\begin{figure}
  \centering
  \begin{subfigure}{0.4\linewidth}
    \centering
    \resizebox{\linewidth}{!}{
    \begin{tikzpicture}[x=0.5cm,y=0.5cm,
      >={Latex[length=2.5mm]},
      dot/.style={circle,fill,inner sep=1.2pt}]
        \draw[step=1, thin, gray!50] (0.5, -0.5) grid (14.5, 16.5);
        \draw[boxgray, thick] (4,4) rectangle (11,12);
        \draw[boxgray, thick] (5,3) rectangle (10,13);
        \draw[boxgray, thick] (6,2) rectangle (8,15);
        \draw[gray,dashed,thick] (0.5,1.5) -- (14.5,15.5);
        \draw[thick, blue]
          (4.5, 4.5) node[dot,blue]{} -- (7.5, 4.5) -- (7.5, 6.5) -- (9.5,6.5) -- (9.5,9.5) -- (10.5, 9.5) -- (10.5,11.5) node[dot,blue]{};
        \node[blue,anchor=center,font = \Large] at (3.5, 8.5){$u_1$};
        \draw[thick, red]
          (5.5, 3.5) node[dot,red]{} -- (5.5, 8.5) -- (6.5, 8.5) -- (6.5,10.5) -- (8.5,10.5) -- (8.5, 12.5) -- (9.5,12.5) node[dot,red]{};
        \node[red,anchor=center,font = \Large] at (4.5, 2.5){$u_2$};
        \draw[thick, violet]
          (6.5, 2.5) node[dot,violet]{} -- (6.5, 10.5) -- (7.5, 10.5) -- (7.5,14.5) node[dot,violet]{};
        \node[violet,anchor=center,font = \Large] at (7.5, 1.5){$u_3$};
    \end{tikzpicture}
    }
    \caption{}
    \label{fig: prop1.3a}
  \end{subfigure}
  \quad
  \begin{subfigure}{0.4\linewidth}
    \centering
    \resizebox{\linewidth}{!}{
    \begin{tikzpicture}[x=0.5cm,y=0.5cm,
      >={Latex[length=2.5mm]},
      dot/.style={circle,fill,inner sep=1.2pt}]
        \draw[step=1, thin, gray!50] (0.5, 1.5) grid (14.5, 18.5);
        \draw[boxgray, thick] (4,4) rectangle (11,12);
        \draw[boxgray, thick] (4,4) rectangle (9,14);
        \draw[boxgray, thick] (4,4) rectangle (6,17);
        \draw[gray,dashed, thick] (1.5,1.5) -- (14.5,14.5);
        \draw[thick, blue]
          (4.5, 4.5) node[dot,blue]{} -- (4.5, 6.5) -- (6.5, 6.5) -- (6.5,8.5) -- (8.5,8.5) -- (8.5, 9.5) -- (9.5,9.5) -- (9.5,11.5) -- (10.5,11.5) node[dot,blue]{};
        \node[blue,anchor=center,font = \Large] at (11.5, 12.5){$u_1'$};
        \draw[thick, red]
          (4.5, 4.5) node[dot,red]{} -- (4.5, 6.5) -- (5.5, 6.5) -- (5.5,9.5) -- (6.5,9.5) -- (6.5, 12.5) -- (7.5,12.5) -- (7.5,13.5) -- (8.5,13.5) node[dot,red]{};
        \node[red,anchor=center,font = \Large] at (9.5, 14.5){$u_2'$};
        \draw[thick, violet]
          (4.5, 4.5) node[dot,violet]{} -- (4.5, 11.5) -- (5.5, 11.5) -- (5.5,16.5) node[dot,violet]{};
        \node[violet,anchor=center,font = \Large] at (6.5, 17.5){$u_3'$};
    \end{tikzpicture}
    }
    \caption{} 
    \label{fig: prop1.3b}
  \end{subfigure}
  \caption{Example of Theorem~\ref{intro tower shift}. }
  \label{fig: prop1.3}
\end{figure}

\begin{theorem}
    \label{intro disjoint shift}
    
    Suppose $v,u_1,\dots, u_k,u_1',\dots, u'_k \in \zcross$ are such that
    $(v, u_i), (v,u'_i) \in \mathcal{H}$ for all $i$, 
    $(u_i,u_j),(u'_i,u'_j) \in \mathcal{N}$ for all $i\neq j$, 
    and there exists $c_i\in \Z^2$ such that $u'_i = T_{c_i}u_i$ (i.e. $u'_i$ is a shift of $u_i$). 
    Then
    \[ (Z_{\tilde{M}}(v),Z_{\tilde{M}}(u_1),\dots,Z_{\tilde{M}}(u_k)) 
    \disteq
    (Z_{\tilde{M}}(v),Z_{\tilde{M}}(u_1'),\dots,Z_{\tilde{M}}(u_k')).\]
\end{theorem}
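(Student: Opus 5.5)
We deduce Theorem~\ref{intro disjoint shift} from the corresponding full-space disjoint-shift invariance of \cite{dauvergne}, using the conditional distributional identity between half-space and full-space models described in the abstract. The intended mechanism is as follows. Since every box in question crosses the diagonal vertically, $\tilde M$ restricted to such a box has a diagonal-symmetric structure. Symmetric RSK applied to $\tilde M$ on the relevant region then produces a single tableau, and its shape, together with the diagonal data, determines the passage values. We would establish the following: conditionally on the environment on a suitable diagonal block, the passage values of the relevant boxes in $\tilde M$ have the same law as the corresponding passage values in a full-space i.i.d. environment $M$, given the same data on a box $v_0\in\zdiag$. Concretely, the half-space Pfaffian Schur process and the full-space Schur process differ only by a factor depending on the diagonal, which gets absorbed into the conditioning.

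\textbf{Step 1 (reduction to a diagonal anchor).} Choose $w\in\zdiag$ such that $(w,v)\in\mathcal H$. Such a $w$ exists because $v\in\zcross$: take the diagonal square spanned by $[v^-_1,v^+_1]$. Then $(w,u_i)\in\mathcal H$ and $(w,u'_i)\in\mathcal H$ for all $i$, since horizontal crossing is transitive. The configuration $(v,u_1,\dots,u_k)$ therefore lies in the class anchored by $w$, and likewise for the primed configuration.

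\textbf{Step 2 (transfer identity).} Invoke the conditional identity in the following form. For every tuple of boxes all crossed horizontally by a common $w\in\zdiag$, the joint law of their half-space passage values is a fixed mixture of the joint laws of the same boxes' passage values in the full-space environment. Here the mixing is over the diagonal data of $w$, which enters only through quantities such as the RSK shape on $w$. The mixing kernel should depend only on $w$ and not on the other boxes.

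\textbf{Step 3 (full-space invariance).} In the full space, \cite{dauvergne} gives the disjoint shift invariance: if $v$ crosses every $u_i$ horizontally and the $u_i$ are pairwise disjoint, then shifting the $u_i$ individually preserves the joint law of $(Z(v),Z(u_i))$. We need this jointly with the anchor data on $w$. Because $w$ crosses $v$ horizontally, $w$ can be included as an extra box in the full-space statement, so we apply it to the tuple $(w,v,u_1,\dots,u_k)$. The required joint law with the conditioning data then follows, provided the conditioning data is a function of passage values for $w$ (multi-point values on $w$, i.e.\ the RSK shape). For this we use the multi-path version of the full-space invariance. Combining the full-space identity with Step 2 gives the half-space identity.

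The main obstacle is Step 2 together with its compatibility with Step 3. We must set up the conditioning so that it involves only quantities which are themselves invariant jointly under the full-space shifts. The natural choice is the RSK shape, i.e.\ the multi-path passage values on the diagonal box $w$. We must then verify that \cite{dauvergne}'s disjoint shift theorem permits adding these multi-path values of a box crossing everything horizontally. I would first try to prove the transfer via symmetric RSK on the square $w$ extended by the rectangles above it, checking that the half-space weight equals the full-space weight divided by a function of the shape.
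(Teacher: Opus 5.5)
Your Steps~1 and~3 match the paper's argument. It takes $w=v^\boxslash$, applies the full-space box-permutation invariance (Proposition~\ref{duncan tower}(2)) with a single crossing set $V_1=\{w,v\}$, and transfers through a conditional identity given $Z|_{D(w)}$. Since $w$ and $v$ overlap, they must share one crossing set, which is allowed.

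The gap is Step~2. It carries the whole argument, you only invoke it, and the route you sketch for it does not work as stated. The region ``$w$ extended by the rectangles above it'' is not symmetric, and your boxes also extend below $w$. So there is no symmetric RSK on that region. It is also not true that the shape, together with diagonal data, determines the passage values.

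The missing idea is to run RSK on the square $w$ alone and use the isometry and decoupling properties (Proposition~\ref{isometry RSK} and Lemma~\ref{decoupling duncan 3.1}). Take any valid vector whose boxes cross $w$ vertically; this covers $v$, all $u_i$, $u_i'$, and $D(w)$. Its passage value is one and the same deterministic function, in both models, of two inputs:
\begin{itemize}
\item $Z|_{\bar V(w)}$, which is a function of the second RSK tableau $Q$ of $w$ (resp.\ of $R$);
\item the weights on $(v\cup\bigcup_i u_i\cup\bigcup_i u_i')\setminus w$.
\end{itemize}
All these boxes lie in the strip $[w^-_1,w^+_1]\times\Z$, which meets its reflection only in $w$. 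Hence those outside weights contain no symmetric pair and no diagonal site. They are independent of the weights in $w$ and have the same joint law in both models.

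Only after this reduction to the square does your check become the right one. The half-space law of $R$ is proportional to $c^{O(\sh r)}\alpha^r$. Sch\"utzenberger's lemma (Proposition~\ref{schutzenberger}) is exactly what turns the diagonal factor $c^{\sum_i\tilde m_{ii}}$ into a function of the shape. The full-space marginal of $Q$ is proportional to $s_{\sh q}(\beta)\,\alpha^q$. So $R$ and $Q$ have the same conditional law given the shape. Your proposal contains neither the decoupling step nor the role of Sch\"utzenberger's lemma. Without the former, conditioning on data of the square says nothing about boxes that stick out of it.

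Second, combining Steps~2 and~3 needs more than a ``mixture'' statement. Step~3 gives equality of the conditional kernels given $Z|_{D(w)}$ only for almost every conditioning value under the full-space law $\mu$ of $Z|_{D(w)}$. The half-space law, however, is obtained by integrating these kernels against the half-space law $\tilde\mu$. You therefore need $\tilde\mu\ll\mu$ (Lemma~\ref{lemma: conditioning vector absolute continuity}). In the geometric case this is immediate, since every partition with at most $n$ parts has positive full-space mass. In the exponential case it requires an argument. The same holds for the conditional identity itself, which the paper derives from the geometric case by a limiting argument.
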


\begin{figure}
  \centering
  \begin{subfigure}{0.4\linewidth}
    \centering
    \resizebox{\linewidth}{!}{
    \begin{tikzpicture}[x=0.5cm,y=0.5cm,
      >={Latex[length=2.5mm]},
      dot/.style={circle,fill,inner sep=1.2pt}]
        \draw[step=1, thin, gray!50] (1.5, 0.5) grid (18.5, 20.5);
        \draw[boxgray, thick] (5,5) rectangle (15,16);
        \draw[boxgray, thick] (7,4) rectangle (10,19);
        \draw[boxgray, thick] (11,2) rectangle (13,17);
        \draw[gray,dashed,very thick] (1.5,1.5) -- (18.5,18.5);
        \draw[thick, blue]
          (5.5, 5.5) node[dot,blue]{} -- (8.5, 5.5) -- (8.5, 6.5) -- (9.5, 6.5) -- (9.5, 9.5) -- (11.5, 9.5) -- (11.5, 12.5) -- (13.5, 12.5) -- (14.5, 12.5) -- (14.5, 14.5) -- (14.5, 15.5) node[dot,blue]{};
        \node[blue,anchor=center,font = \Large] at (4.5, 10.5){$v$};
        \draw[thick, red]
          (7.5, 4.5) node[dot,red]{} --(7.5, 10.5) --(8.5, 10.5) --(8.5, 13.5) --(9.5, 13.5) --(9.5, 16.5) --(9.5, 18.5) node[dot,red]{};
        \node[red,anchor=center,font = \Large] at (8.5, 3.5){$u_1$};
        \draw[thick, violet]
          (11.5, 2.5) node[dot,violet]{} --(11.5, 6.5) --(12.5, 6.5) --(12.5, 8.5) --(12.5, 11.5) --(12.5, 14.5) --(12.5, 16.5) node[dot,violet]{};
        \node[violet,anchor=center,font = \Large] at (13.5, 3.5){$u_2$};
    \end{tikzpicture}
    }
    \caption{}
    \label{fig: prop1.4a}
  \end{subfigure}
  \quad
  \begin{subfigure}{0.4\linewidth}
    \centering
    \resizebox{\linewidth}{!}{
    \begin{tikzpicture}[x=0.5cm,y=0.5cm,
      >={Latex[length=2.5mm]},
      dot/.style={circle,fill,inner sep=1.2pt}]
        \draw[step=1, thin, gray!50] (1.5, 0.5) grid (18.5, 20.5);
        \draw[boxgray, thick] (5,5) rectangle (15,16);
        \draw[boxgray, thick] (6,3) rectangle (8,18);
        \draw[boxgray, thick] (11,3) rectangle (14,18);
        \draw[gray,dashed,very thick] (1.5,1.5) -- (18.5,18.5);
        \draw[thick, blue]
          (5.5, 5.5) node[dot,blue]{} --(5.5, 7.5) --(7.5, 7.5) --(7.5, 10.5) --(9.5, 10.5) --(9.5, 12.5) --(11.5, 12.5) --(11.5, 14.5) --(13.5, 14.5) --(13.5, 15.5) --(14.5, 15.5) node[dot,blue]{};
        \node[blue,anchor=center,font = \Large] at (4.5, 10.5){$v$};
        \draw[thick, violet]
          (6.5, 3.5) node[dot,violet]{} --(6.5, 12.5) --(7.5, 12.5) --(7.5, 15.5) --(7.5, 17.5) node[dot,violet]{};
        \node[violet,anchor=center,font = \Large] at (6.5, 2.5){$u_2'$};
        \draw[thick, red]
          (11.5, 3.5) node[dot,red]{} --(11.5, 6.5) --(12.5, 6.5) --(12.5, 9.5) --(12.5, 11.5) --(13.5, 11.5) --(13.5, 15.5) --(13.5, 17.5) node[dot,red]{};
        \node[red,anchor=center,font = \Large] at (12.5, 2.5){$u_1'$};
    \end{tikzpicture}
    }
    \caption{}
    \label{fig: prop1.4b}
  \end{subfigure}
  \caption{Example of Theorem~\ref{intro disjoint shift}. }
  \label{fig: prop1.4}
\end{figure}

These are all simplified cases of our main shift-invariance theorems for homogeneous half-space environments, Theorems~\ref{tower shift} and \ref{disjoint shift}.
In the full statements, each individual box appearing above may be replaced by a subset of $\zcross$ and certain multi-point passage values are also preserved jointly with the point-to-point passage values. 

Our method also yields shift- and parameter-permuting invariances in inhomogeneous environments. 
For $\alpha$ a bi-infinite sequence of real numbers and $c \in \R$, define the inhomogeneous half-space exponential environment by assigning an independent $\mathrm{Exp}(c+\alpha_i)$ random variable to each diagonal position $(i,i)$,
an independent $\mathrm{Exp}(\alpha_i+\alpha_j)$ random variable to each off-diagonal position $(i,j)$ with $i<j$, 
and defining the remaining off-diagonal entries $(i,j)$ with $i>j$ by symmetry.
The inhomogeneous half-space geometric environment is defined analogously, with diagonal entries having geometric distributions with probability mass $(1-c\alpha_i)(c\alpha_i)^k$ and off-diagonal entries having mass $(1-\alpha_i \alpha_j)(\alpha_i\alpha_j)^k$, where each $\alpha_i$ and $c$ are positive. 
For the half-space definitions, we require $\alpha_i+\alpha_j >0$ for all $i\neq j$ and $c+\alpha_i > 0$ for all $i \in \Z$ in the exponential case, and $\alpha_i\alpha_j \in (0,1)$ for all $i\neq j$ and $c\alpha_i \in (0,1)$ for all $i\in \Z$ in the geometric case.

From now on in this section, we will assume that $\tilde{M}_{\alpha,c}$ is an integrable inhomogeneous half-space environment of one of the types described above (either exponential or geometric) with parameters $\alpha$ and $c$, and that $\tilde{M}_{\alpha',c}$ is another environment of the same type defined using parameters $\alpha'$ and $c$.

Our next result tells us that if a tower of boxes undergoes a shift that preserves the tower structure and if the parameters are permuted so that the parameters associated with each box are preserved under the shift, then the joint distribution of passage values across these boxes is also preserved.

For $u \in \zbox$, we define, for convenience in the statement of the next result, 
\[
I(u) = [u^-_1,u^+_1] \cap \Z,
\qquad 
J(u) = [u^-_2, u^+_2] \cap \Z. 
\]

\begin{theorem}
\label{intro inhomogeneous shift}
    Let $u_1,\dots, u_k, u_1',\dots, u'_k \in \zcross$ be as in Theorem~\ref{intro tower shift}.
    Assume $\alpha_i, \alpha_i' \in (0,1)$ for every $i\in \Z$ in the geometric case, and $\alpha_i, \alpha_i' >0 $ for every $i\in \Z$ in the exponential case. 
    Suppose that the sequences $\alpha, \alpha'$ satisfy the property that
    $(\alpha_i: i\in I(u_j))$ is a permutation of $(\alpha'_i:i\in I(u_j'))$ and that 
    $(\alpha_i: i\in J(u_j))$ is a permutation of $(\alpha'_i:i\in J(u_j'))$
    for all $1\leq j\leq k$. 
    Then, for every $c$ for which both
    $\tilde M_{\alpha,c}$ and
    $\tilde M_{\alpha',c}$ are well-defined,
    \[ (Z_{\tilde{M}_{\alpha,c}}(u_1),
    Z_{\tilde{M}_{\alpha,c}}(u_2),\dots,
    Z_{\tilde{M}_{\alpha,c}}(u_k)) 
    \disteq
    (Z_{\tilde{M}_{\alpha',c}}(u_1'),
    Z_{\tilde{M}_{\alpha',c}}(u_2'),
    \dots,Z_{\tilde{M}_{\alpha',c}}(u_k')).\]
\end{theorem}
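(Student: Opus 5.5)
The plan is to reduce the half-space statement to the corresponding inhomogeneous full-space tower shift invariance of \cite{dauvergne}, via the conditional distributional identity between half-space and full-space environments announced in the abstract.

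\textbf{Step 1: a full-space coupling.} Let $u_1,\dots,u_k$ form a tower in $\zcross$, so each box crosses the diagonal vertically and $(u_i,u_{i+1})\in\mathcal H$. Let $R=[a,b]^2$ be the smallest square containing all the boxes; since the boxes cross the diagonal, $R$ is determined by the extreme coordinates of the $u_i$. Using RSK (or gRSK in the polymer case), I would express the passage values of every box anchored at corners on the boundary of $R$ through the shape and entries of the tableaux. For a symmetric matrix, RSK yields $P=Q$, and the diagonal weights control the number of odd columns (the Pfaffian Schur process of \cite{pfaffian}, or Section~5 of \cite{oconnell} for gRSK). Comparing with a full-space inhomogeneous environment $M_{\alpha}$ with $\mathrm{Exp}(\alpha_i+\alpha_j)$ or geometric $\alpha_i\alpha_j$ entries, the Schur process has the same conditional law given the shape sequence along the boundary, with only a weight factor depending on the shapes. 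The aim is the identity: conditional on the diagonal-square passage data (shapes along the diagonal), the vector of half-space values $(Z_{\tilde M_{\alpha,c}}(u_j))_j$ has the same law as the corresponding full-space vector. This identity is the earlier established tool I would invoke.

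\textbf{Step 2: reduction to shape functionals.} Each $Z(u_j)$ for a vertically crossing box should be a function of the RSK data of the rows $I(u_j)$ and columns $J(u_j)$. Via Greene's theorem, a tower of boxes corresponds to a nested family of "interlacing" shapes. The full-space inhomogeneous tower invariance of \cite{dauvergne} says that the joint law is unchanged when boxes are shifted preserving the tower structure and parameters are permuted within each $I(u_j)$ and $J(u_j)$. I would check that this full-space invariance holds jointly with the conditioning variable, namely the Radon--Nikodym weight. That weight depends only on the shape along the diagonal block and on $c$ and the multiset of parameters, which is preserved by the hypothesis. Integrating the conditional identity against this weight then transfers the invariance to the half space. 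Because the weight factor involves $c$ only through the diagonal shape, the same $c$ on both sides is exactly what is needed.

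\textbf{Main obstacle.} The hard step is making the conditioning compatible with different boxes. The shifted towers $u'_j$ sit differently relative to the diagonal, so the diagonal data seen by each box changes. One needs a single conditioning statistic, for example the RSK shape of the full symmetric square $I(u_1)\times I(u_1)$ or the union of all $I(u_j)$, whose law and whose Radon--Nikodym factor depend only on permutation-invariant quantities. I would try to choose this statistic as the shape associated with the widest horizontal extent, using the tower nesting to ensure that every $I(u_j)$ is contained in it. Its law is then symmetric in the parameters by the symmetry of Schur/Whittaker functions, so the permuted parameters $\alpha'$ give the same weight.
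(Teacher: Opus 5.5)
Your overall strategy is the paper's: condition on data of a diagonal square, transfer the full-space inhomogeneous tower invariance of \cite{dauvergne}, and use the permutation symmetry of the half-space shape law. However, the two steps that make it work are wrong or missing.

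First, the conditional identity you invoke in Step~1 is false for a square $R$ containing all the boxes. On a symmetric block RSK gives $P=Q$, whereas in the full space $P$ and $Q$ are conditionally independent given the shape. Only the conditional law of a \emph{single} pattern agrees, which is why Theorem~\ref{bridge} only concerns $Z|_{\bar V(v)}$ (boxes spanning the full height of $v$) given $Z|_{D(v)}$. Observables of boxes not spanning $R$ vertically do not match. For example, take the tower $u_1=(1,1;1,1)$, $u_2=(1,0;1,1)$ in the homogeneous geometric model, so $R=\{0,1\}^2$. Given a shape with $\lambda_1-\lambda_2=1$, $Z(u_1)=M_{(1,1)}$ equals $1$ with conditional probability $1/2$ in the half space but $1/4$ in the full space.

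What replaces your Step~2 is the decoupling Lemma~\ref{decoupling duncan 3.1}. If a diagonal square $v$ crosses every $u_j$ horizontally, then $Z(u_j)$ is a function of $Z|_{\bar V(v)}$ and of the weights on $u_j\setminus v$. These weights avoid the diagonal, contain no symmetric pairs, are independent of the weights on $v$, and have the same joint law as in $M_{\alpha,\alpha}$. So $v$ must sit below the tower, not contain it. The natural choice $v=I(u_1)\times I(u_1)$ appears only in your last paragraph.

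Second, the step you leave as ``I would check that this full-space invariance holds jointly with the conditioning variable'' is the crux, and your single statistic does not serve the primed tower. If $c_1$ has a nonzero horizontal component, $I(u_1)\times I(u_1)$ does not cross $u'_1$ horizontally, so the $Z(u'_j)$ cannot be decoupled through it. The paper handles this in three moves:
\begin{enumerate}
\item It translates one environment along the diagonal, using $Z_{\tilde M_{\alpha,c}}\disteq Z_{\tilde M_{\beta,c}}\circ T_{(r,r)}$ for $\beta_i=\alpha_{i-r}$, to reduce to $c_1=(0,y)$. Then $I(v)=I(u_1)=I(u_1')$.
\item It appends $v$ as an extra bottom layer of both towers. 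It then applies Proposition~\ref{duncan parameter shift} to $\{v\},u_1,\dots,u_k$ and $\{v\},u'_1,\dots,u'_k$, with $f(v)=v$ and parameters $(\alpha,\alpha)$, $(\alpha',\alpha')$.
\item The permutation condition needed for the layer $v$ (for $\alpha_v$ and for $\hat\alpha_v=\alpha_v$) is exactly your hypothesis on $I(u_1)$.
\end{enumerate}
This gives the full-space invariance jointly with $Z|_{D(v)}$. One could instead condition on $I(u'_1)\times I(u'_1)$ on the primed side and map $v$ to it, but some such explicit choice is needed.

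After that, your reweighting step finishes the proof. It is equivalent to the paper's combination of the common kernel from Theorem~\ref{general bridge}, Lemma~\ref{lemma: conditioning vector absolute continuity}, and Proposition~\ref{diagonal change of parameter}.
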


This result is a simplified version of Theorem~\ref{inhomogeneous shift and permute}, which establishes invariances in half-space inhomogeneous models. 

\begin{remark}
Some related parameter-permuting invariances for inhomogeneous half-space environments have been established in \cite[Lemmas~2.3 and~2.6]{dauvergne-zhang}.
Their results keep the boxes fixed and also apply to boxes that do not fully cross the diagonal.
Our inhomogeneous shift result requires the boxes to fully cross the diagonal but allows the boxes' positions to be shifted together with appropriate permutations of the parameters.
\end{remark}

In addition to the LPP invariances stated above, we will show that similar invariance results also hold for the half-space log-gamma polymer, where the last passage value is replaced by the polymer partition function.

\subsection{Idea of proof}

Rather than proving the half-space invariances directly, our approach is to establish an identity in conditional distributions between the full-space and half-space models. 
This identity effectively allows us to replace any symmetric part of a region in the half-space environment with a corresponding region in the full-space environment. 
As a consequence, we can then directly transfer invariance results from the full-space to the half-space setting.

A homogeneous version of the identity in conditional distributions can be formulated as follows.
Recall that we say a vector $\mathbf u \in (\zbox)^k$ is valid if there exist disjoint paths $\pi_1,\dots, \pi_k$ such that each $\pi_i$ is a $u_i$-path. 
For $v \in \zbox$, let $\bar{V}(v)$ be the set of all valid vectors $\mathbf u$ in some $(\zbox)^k$ where each $u_i$ starts on $\{v^-_1, v^-_1+1,\dots, v^+_1\}\times \{v^-_2\}$ and ends on
    $\{v^-_1, v^-_1+1,\dots, v^+_1\}\times \{v^+_2\}$,
    i.e. $u_i$ is contained in $v$ as a box and has the same height as $v$. 
Moreover, let $D(v)$ be the set of vectors of the form
\[
v^k \defeq \bigl((v^-;v^+ - (k-1,0)),(v^-+(1,0);v^+-(k-2,0)),\dots, (v^- + (k-1,0);v^+)\bigr)
\in (\zbox)^k,
\]
where
$1\leq k\leq\min\{v^+_1-v^-_1+1,\;v^+_2-v^-_2+1\}$.
Note that $D(v)\subseteq\bar V(v)$.

\begin{proposition}
\label{intro bridge}
    Suppose $M$ is an integrable full-space LPP environment and $\tilde M$ is a half-space LPP environment of the same type (either geometric or exponential), and that the off-diagonal entries of $\tilde M$ have the same distribution as the entries of $M$. 
    Then, for each $v \in \zdiag$, we have the following identity in conditional laws:
    \[
    \mathcal L\!\left(
        Z_M|_{\bar V(v)}
        \,\middle|\,
        Z_M|_{D(v)}
    \right)
    =
    \mathcal L\!\left(
        Z_{\tilde M}|_{\bar V(v)}
        \,\middle|\,
        Z_{\tilde M}|_{D(v)}
    \right).
    \]
\end{proposition}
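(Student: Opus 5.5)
The plan is to reduce everything to the RSK correspondence applied to the restriction of the environment to the square box $v$. Write $v=(a,a;b,b)$ and $n=b-a+1$. Every $\mathbf u\in\bar V(v)$ consists of boxes contained in $v$, so every admissible path stays in $v$. Hence both $Z_M|_{\bar V(v)}$ and $Z_{\tilde M}|_{\bar V(v)}$ are deterministic functions of the $n\times n$ matrix $W=(M_{(i,j)})_{i,j\in[a,b]}$, respectively of the symmetric matrix $\tilde W=(\tilde M_{(i,j)})_{i,j\in[a,b]}$. It therefore suffices to prove that there is a single deterministic function $F$ with $Z|_{\bar V(v)}=F(W)$ in both cases, that $F(W)$ depends on $W$ only through one RSK tableau, and that the conditional law of that tableau given $Z|_{D(v)}$ is the same for $W$ and for $\tilde W$.

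I would treat the geometric case first and carry out the following steps.
\begin{enumerate}
\item Apply RSK to $W$ to get a pair $(P,Q)$ of semistandard tableaux of common shape $\lambda$ with entries in $[a,b]$. Let $P$ record the column indices, i.e.\ the first coordinate, which is the coordinate varying along the bottom and top rows of $v$.
\item By Greene's theorem \cite{GREENE1974254}, $Z(v^k)=\lambda_1+\dots+\lambda_k$. So $Z|_{D(v)}$ and $\lambda$ determine each other, and conditioning on $Z|_{D(v)}$ is the same as conditioning on $\lambda$.
\item Show that for each $\mathbf u\in\bar V(v)$, the value $Z(\mathbf u)$ is a function of $P$ alone; this is the key combinatorial input. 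Every $u_i$ has full height, so $Z(\mathbf u)$ only sees columns of $W$, read bottom to top, which is the order in which the row-insertion word is read. For a single column interval $[i,j]$ the argument goes as follows. The $P$-tableau of the submatrix on columns $[i,j]$ is the rectification of the restriction of $P$ to letters in $[i,j]$, because Knuth equivalence is compatible with restricting a word to an interval of letters. Greene's theorem then expresses the multi-path values for disjoint paths in that interval through the shape of this rectification. For general valid $\mathbf u$ with different column ranges, I would invoke the corresponding statement of \cite{dauvergne}: multi-path values of full-height configurations are functions of $P$, via Greene--Kleitman-type formulas for the generalized RSK. Either way we get $Z|_{\bar V(v)}=F(P)$ with $F$ the same deterministic map for $W$ and $\tilde W$.
\item Compare the conditional laws of $P$ given $\lambda$.
\begin{itemize}
\item Full space: $\mathbb P(W)\propto q^{|W|}=q^{|\lambda|}$, and RSK is a bijection. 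So given $\lambda$, $(P,Q)$ is uniform on $\mathrm{SSYT}(\lambda)^2$, and $P$ is uniform on $\mathrm{SSYT}(\lambda,[a,b])$.
\item Half space: symmetric matrices correspond exactly to $P=Q$. The trace of $\tilde W$ equals the number of odd columns of $\lambda$, call it $\mathrm{odd}(\lambda)$, and $\tilde W$ is determined by its entries on and above the diagonal. Hence
\[
\mathbb P(\tilde W)\propto q^{(|\lambda|-\mathrm{odd}(\lambda))/2}\,c^{\mathrm{odd}(\lambda)},
\]
which again depends only on $\lambda$. Given $\lambda$, $P$ is once more uniform on $\mathrm{SSYT}(\lambda,[a,b])$.
\end{itemize}
Combined with step 3, this gives the identity of conditional laws.
\end{enumerate}

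For the exponential case I would pass to the limit from geometric environments: take $\mathrm{Geom}$ parameters $1-\varepsilon/N$ and rescale by $1/N$. Then $Z(\cdot)/N$ converges jointly in law on the finite set $\bar V(v)$, and the conditional identity survives the limit. The conditional identity can be phrased as equality of joint laws of $(Z|_{D(v)},Z|_{\bar V(v)})$ after reweighting by a function of $Z|_{D(v)}$. Indeed, the full- and half-space densities of $\lambda$ differ by the factor $q^{-(|\lambda|+\mathrm{odd}(\lambda))/2}c^{\mathrm{odd}(\lambda)}$, which depends only on $\lambda$. Alternatively one can rerun the argument with the continuous RSK, where the density of $W$ is $e^{-|W|}$ and the same symmetry and trace identities hold.

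The main obstacle is step 3, that $Z(\mathbf u)$ for every valid $\mathbf u\in\bar V(v)$, including vectors mixing different column ranges, is a function of $P$ only. I would first try to quote this directly from \cite{dauvergne}. Failing that, I would prove it through the Knuth-equivalence and rectification argument for intervals of letters, combined with a Greene-type formula for disjoint full-height paths with prescribed start and end columns.
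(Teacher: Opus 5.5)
Your proposal is correct and follows the paper's proof. You reduce $Z|_{\bar V(v)}$ to a function of the single RSK tableau encoding $Z|_{V(v)}$ via \cite[Proposition~3.2]{dauvergne} (the paper's Proposition~\ref{isometry RSK} with Lemma~\ref{translation}), identify $Z|_{D(v)}$ with the common shape, and compare the conditional laws of that tableau given the shape using the push-forward masses and Sch\"utzenberger's lemma; your uniform law on $\mathrm{SSYT}(\lambda)$ is the homogeneous case of the paper's kernel $\alpha^q/s_\lambda(\alpha)$.

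The only thin spot is the exponential case, because conditional laws do not pass to weak limits automatically. The paper justifies this step by showing that the rescaled geometric kernels converge to the normalized Lebesgue measure on the real Gelfand--Tsetlin polytope of the limiting shape, and that this limit kernel is a version of both conditional laws. If you use your reweighting route instead, note that the ratio of the two shape laws also carries a factor $1/\#\mathrm{SSYT}(\lambda)$, which is harmless since it depends only on $\lambda$.
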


Here and throughout the paper, $\mathcal L(A\mid B)$ denotes a version
of the regular conditional distribution of $A$ given $B$, viewed as a
probability kernel. Whenever we assert
$\mathcal L(A\mid B)=\mathcal L(A'\mid B')$,
we mean that $B$ and $B'$ take values in the same measurable space
and that there exists a probability kernel on this space that is a
version of both conditional distributions. In the applications below,
such a common kernel is supplied by the explicit RSK or gRSK
disintegrations.

The proof of this proposition proceeds by comparing explicit expressions for the relevant push-forward probability measures of the two environments under the RSK correspondence, and by using the isometry property of the RSK.

To illustrate how this identity is applied, we sketch how Proposition~\ref{point-to-point shift Jimmy He} can be proved from it together with known invariances in the full-space environment,
where Theorems~\ref{multi-point shift intro}--\ref{intro inhomogeneous shift} are proved following the same idea. 

\begin{proof}
    [Proof sketch of Proposition~\ref{point-to-point shift Jimmy He}] 
    It suffices to consider the case in which $u = (x_1,y_1;x_2,y_2) \in \zcross$ and $u' = (x_1,y_1+c;x_2,y_2+c)\in \zcross$.
    We then define $v = (x_1,x_1;x_2,x_2) \in \zdiag.$
    
    Observe that $Z_M(u)$ and $Z_M(u')$ can each be expressed as functions of two independent components, the collection of passage values $Z_M|_{\bar{V}(v)}$ and the set of environment variables $\{M_x:x\in u \cup u'\setminus v\}$.
    That is, we may write
    \[
    Z_{{M}}(u) = \varphi(Z_M|_{\bar{V}(v)}, \{M_x:x\in u \cup u'\setminus v\})
    \]
    and  
    \[
    Z_{{M}}(u') = \varphi'(Z_M|_{\bar{V}(v)}, \{M_x:x\in u \cup u'\setminus v\})
    \]
    for some deterministic functions $\varphi$ and $\varphi'$. 
    It is not hard to write them out explicitly (see the proof of \cite[Proposition~3.1]{dauvergne}), but this is not needed for us. 
    Replacing the full-space environment $M$ with the half-space environment $\tilde M$, we can similarly write
    \[
    Z_{\tilde{M}}(u) = \varphi(Z_{\tilde{M}}|_{\bar{V}(v)}, \{\tilde{M}_x:x\in u \cup u'\setminus v\})
    \]
    and 
    \[
    Z_{\tilde{M}}(u') = \varphi'(Z_{\tilde{M}}|_{\bar{V}(v)}, \{\tilde{M}_x:x\in u \cup u'\setminus v\})
    \]
    using the same functions. 

    Now, using the full-space shift invariance as proved in \cite{dauvergne}, we have equality in joint distributions
    \[
    (Z_M|_{D(v)}, Z_M(u)) \disteq (Z_M|_{D(v)}, Z_M(u'))
    \]
    and thus in conditional laws
    \[
    \mathcal L \! \left(Z_M(u) \,\middle|\, Z_M|_{D(v)} \right) 
    =
    \mathcal L \! \left( Z_M(u') \,\middle|\, Z_M|_{D(v)} \right).
    \]
    
    Since $Z_M|_{D(v)}$ and $Z_M|_{\bar{V}(v)}$ are determined by a set of environmental variables completely independent from $\{M_x:x \in u \cup u' \setminus v\}$, the decoupling above gives
    \begin{equation}
    \label{eq: decoupled conditional distributions}
    \mathcal L \! \left(
    \varphi(Z_M|_{\bar{V}(v)}, \{M_x:x\in u \cup u'\setminus v\}) 
    \,\middle|\,
    Z_M|_{D(v)} \right)
    =
    \mathcal L \! \left(
    \varphi'(Z_M|_{\bar{V}(v)}, \{M_x:x\in u \cup u'\setminus v\})
    \,\middle|\,
    Z_M|_{D(v)} \right).
    \end{equation}
    
    As a consequence of Proposition~\ref{intro bridge} and the fact that 
    \[
    \{ M_x: x\in u\cup u' \setminus v\} \disteq 
    \{\tilde M_x: x\in u\cup u' \setminus v\},
    \]
    we have
    \[
    \mathcal L \! \left(
    (Z_M|_{\bar V(v)}, \{M_x: x\in u\cup u' \setminus v\})
    \,\middle|\,
    Z_M |_{D(v)}\right)
    =
    \mathcal L \! \left(
    (Z_{\tilde{M}}|_{\bar V(v)}, \{\tilde M_x: x\in u\cup u' \setminus v\})
    \,\middle|\,
    Z_{\tilde{M}} |_{D(v)}\right).
    \]
    
    By Lemma~\ref{lemma: conditioning vector absolute continuity},
    the law of $Z_{\tilde M}|_{D(v)}$ is absolutely continuous
    with respect to the law of $Z_M|_{D(v)}$.
    Combining this with Proposition~\ref{intro bridge} and
    \eqref{eq: decoupled conditional distributions}, we obtain
    \[
    \mathcal L\!\left(
    \varphi\!\left(Z_{\tilde M}|_{\bar V(v)},\{\tilde M_x:x\in u\cup u'\setminus v\}\right)
    \,\middle|\,
    Z_{\tilde M}|_{D(v)}
    \right)
    =
    \mathcal L\!\left(
    \varphi' \!\left(Z_{\tilde M}|_{\bar V(v)},\{\tilde M_x:x\in u\cup u'\setminus v\}\right)
    \,\middle|\,
    Z_{\tilde M}|_{D(v)}
    \right),
    \]
    which means
    \[
    \mathcal L\!\left(Z_{\tilde M}(u)\,\middle|\,Z_{\tilde M}|_{D(v)}\right)
    =
    \mathcal L\!\left(Z_{\tilde M}(u')\,\middle|\,Z_{\tilde M}|_{D(v)}\right).
    \]
    Therefore,
    \[
    Z_{\tilde{M}}(u) \disteq Z_{\tilde{M}}(u').
    \]
\end{proof}

The proofs in the half-space log-gamma polymer setting are almost identical, with the only difference being that the RSK correspondence is replaced by its geometric version in order to obtain the identity in conditional laws of partition functions. 

\subsection{Organization of the text}

In Section~\ref{sec: preliminary}, we provide some preliminary results on the RSK and gRSK correspondences, and demonstrate how they are applied in the geometric LPP and log-gamma polymer settings to give the conditional distributional identities that we need. 
In Section~\ref{main section}, we use the identities from Section~\ref{sec: preliminary} to prove our main half-space invariance results and establish a related conditional independence property.
In Appendix~\ref{appendix}, we prove a technical lemma of the RSK correspondence on symmetric matrices that is used in Section~\ref{sec: preliminary}. 

\subsection*{Acknowledgments}

We are deeply grateful to Lingfu Zhang for his mentorship, invaluable
insights, and continued support throughout this project. 
This project was primarily conducted during the summer of 2025 as part
of the Caltech Summer Undergraduate Research Fellowships (SURF) program
and was partially supported by NSF grant DMS-2505625.
We thank Sayan Das, Jimmy He, and Ran Tao for helpful discussions. 
We are especially grateful to
Jimmy He for pointing out the conditional independence property presented in
Subsection~\ref{subsec: half-space conditional independence} and encouraging its
inclusion here,
to Ran Tao for helpful comments on its proof,
and to Sayan Das for drawing our attention to the parameter-permuting
invariances in \cite{dauvergne-zhang} and suggesting a comparison with
our results.

\subsection*{Statement on the use of AI}

Generative AI tools were used to assist with drafting portions of the
limiting argument in the proof of Theorem~\ref{bridge},
the statement
and proof of Lemma~\ref{lemma: conditioning vector absolute continuity},
the proof of
Lemma~\ref{lemma: conditional independence under marginal change},
and the auxiliary full-space construction in
Remark~\ref{remark: extension of parameter range}.
All other mathematical results and proofs were developed and written
by the author.

\section{Preliminary results} \label{sec: preliminary}

In this section, we present some known combinatorial results on the RSK and geometric RSK correspondences, and apply them to the geometric LPP and log-gamma polymer environments, in both the full-space and half-space settings.

\subsection{The RSK correspondence} \label{subsec: RSK}

First, we recall some combinatorial objects. 
For a nonnegative integer $m$, we say that a weakly decreasing sequence of positive integers $\lambda = (\lambda_1,\lambda_2,\dots, \lambda_k)$ is a \emph{partition} of $m$ if $\sum_{i=1}^k \lambda_i = m$. 
Equivalently, we may allow $\lambda$ to have zeros at the end so it becomes an infinite sequence with finitely many nonzero terms at the beginning.  
The number $m$ is called the \emph{size} of $\lambda$, denoted $|\lambda|$.
The \emph{Young diagram} of $\lambda$ is the set $\{ (i,j)\in \Z^2: 1\leq j\leq k, \ 1\leq i\leq \lambda_j\}$, which we usually think of as a configuration of $|\lambda|=m$ boxes with $\lambda_j$ boxes on the $j$-th row, aligned on the left, and with rows numbered from top to bottom. 

For $\lambda$ a partition of $m$,
a \emph{semistandard Young tableau} of \emph{shape} $\lambda$ is a filling of the Young diagram of $\lambda$ with positive integers such that the entries are strictly increasing along each column and weakly increasing along each row. 

\begin{figure}[h]
    \centering
    \begin{subfigure}{0.4\linewidth}
    \centering
    \begin{tikzpicture}[x=0.7cm,y=-0.7cm]
    \foreach \r/\len in {1/4,2/3,3/1}{
        \foreach \c in {1,...,\len}{
            \draw (\c-1,\r-1) rectangle ++(1,1);
        }
    }
    \node at (0.5,0.5) {$1$};
    \node at (1.5,0.5) {$1$};
    \node at (2.5,0.5) {$2$};
    \node at (3.5,0.5) {$3$};
    \node at (0.5,1.5) {$2$};
    \node at (1.5,1.5) {$3$};
    \node at (2.5,1.5) {$3$};
    \node at (0.5,2.5) {$3$};
    \end{tikzpicture}
    \end{subfigure}
    \begin{subfigure}{0.4\linewidth}
        \centering
        \begin{tikzpicture}
          \def\dx{1.6}
          \def\dy{0.8}
          \node at (0, 2*\dy) {$2$};
          \node at (-0.5*\dx, 1*\dy) {$1$};
          \node at ( 0.5*\dx, 1*\dy) {$3$};
          \node at (-1.0*\dx, 0*\dy) {$1$};
          \node at ( 0.0*\dx, 0*\dy) {$3$};
          \node at ( 1.0*\dx, 0*\dy) {$4$};
        \end{tikzpicture}
    \end{subfigure}
    
    \caption{A semistandard Young tableau of shape $(4,3,1)$ and the corresponding Gelfand--Tsetlin pattern.}
    \label{fig:ssyt}
\end{figure}

There is a natural correspondence between semistandard Young tableaux with filling integers at most $n$ and Gelfand--Tsetlin patterns of $n$ rows. 
A \emph{Gelfand--Tsetlin pattern} is a triangular array of nonnegative integers of the following form
\[
G \;=\;
\begin{array}{ccccccc}
  & &   & g_{11} &   & &   \\[6pt]
  & & g_{22} &   & g_{21} & &   \\[6pt]
& \iddots &   & \vdots &   & \ddots &  \\[6pt]
g_{nn} & & &  \cdots & & &  g_{n1} 
\end{array}
\]
such that $g_{i+1,j+1}\leq g_{ij} \leq g_{i+1,j}$ for all $1\leq j\leq i < n$.
The correspondence goes as follows: $g_{ij}$ is the number of entries less than or equal to $i$ on the $j$-th row in the semistandard Young tableau. 
The last row of the Gelfand--Tsetlin pattern (in right-to-left order) then corresponds to the shape of the semistandard Young tableau, which is a partition. 
Thus, we define the last row of a Gelfand--Tsetlin pattern $G$, read right-to-left as $(g_{n1},\dots, g_{nn})$, to be its \emph{shape}, and denote it as $\sh(G)$. 
We write $\GT_n$ for the set of Gelfand--Tsetlin patterns of $n$ rows, and $\GT_n(\lambda)$ for those with the shape $\lambda$. The subscripts may be omitted when there is no ambiguity. 

The classical RSK correspondence provides a bijection between $n \times n$ matrices with nonnegative integer entries and pairs of semistandard Young tableaux of the same shape with filling integers not exceeding $n$, or equivalently pairs of Gelfand--Tsetlin patterns of the same shape with $n$ rows. 
To be consistent with the discussion of geometric RSK later which involves more general patterns, we proceed using Gelfand--Tsetlin patterns here. 

To avoid repetition, we assume all matrices mentioned to be $n\times n$ nonnegative-integer-valued and all Gelfand--Tsetlin patterns to be of $n$ rows in Subsections~\ref{subsec: RSK} to \ref{subsec: half space geom lpp}.
We denote the input matrix for the RSK as $M = (m_{ij}) \in (\Z_{\geq 0})^{n \times n}$, and its corresponding pair of Gelfand--Tsetlin patterns as $(P,Q)$, with entries indexed in the same manner as $G$ above. 

There are multiple ways of defining the RSK map, including using insertion moves (as in \cite{stanley}, also given in Appendix~\ref{appendix}) or diagrammatic approaches (as in \cite{KRATTENTHALER2006404}).
Here we adopt a definition using the multi-point last passage values, whose equivalence with the classical definitions comes from the well-known Greene's theorem \cite{GREENE1974254}. 
Under this definition, it is convenient to interpret the matrix $M$ as placing a weight $m_{ij}$ at each lattice point $(i,j)\in [n]^2$. 

For $1\leq k \leq a\wedge b$, let $\Pi_{a,b}^k$ be the set of $k$-tuples of non-intersecting directed paths starting at the points $(1,1),(1,2), \dots, (1,k)$ and ending at the points $(a,b-k+1),(a,b-k+2), \dots, (a,b)$, respectively. 
Now, we define the entries of the Gelfand--Tsetlin pattern $P = (p_{ij})_{1\leq j\leq i \leq n}$ by requiring that,
for $1\leq k \leq r \leq n$, 
\begin{equation}
\label{eq: RSK def P}
    \sum_{i=1}^k p_{ri} = \max_{(\pi_1,\pi_2,\dots,\pi_k)\in \Pi_{n,r}^k} \, 
    \sum_{(i,j) \in \pi_1 \cup \pi_2 \cup \dots \cup \pi_k}
    m_{ij},
\end{equation}
where the right hand side is called the $k$-point last passage value from $(1,1)$ to $(n,r)$. 
Similarly, we define the Gelfand--Tsetlin pattern $Q = (q_{ij})_{1\leq j \leq i \leq n}$ by requiring that, for $1\leq k \leq r \leq n$,
\begin{equation}
\label{eq: RSK def Q}
    \sum_{i=1}^k q_{ri}= \max_{(\pi_1,\pi_2,\dots,\pi_k)\in \Pi_{r,n}^k} \, 
    \sum_{(i,j) \in \pi_1 \cup \pi_2 \cup \dots \cup \pi_k}
    m_{ij}.
\end{equation}

Taking $r=n$, it is immediate that the last rows of $P$ and $Q$ are identical. 
Also, since $\Pi_{a,b}^{a\wedge b}$ contains only one set of non-intersecting directed paths using all of $\{(i,j):1\leq i \leq a, \ 1\leq j \leq b\}$, 
we have that for each $1\leq r \leq n$,
\[ \sum_{i=1}^r p_{ri} = \sum_{j=1}^r \sum_{i=1}^n m_{ij},\qquad  \sum_{i=1}^r q_{ri} = \sum_{i=1}^r \sum_{j=1}^n m_{ij}. \]
Hence, for each $1\leq r\leq n$, we have
\begin{equation} 
\label{eq: row sum}
\sum_{i=1}^n m_{ir} = \sum_{i=1}^r p_{ri} - \sum_{i=1}^{r-1} p_{r-1,i},
\end{equation}
and
\begin{equation} 
\label{eq: col sum}
\sum_{i=1}^n m_{ri} = \sum_{i=1}^r q_{ri} - \sum_{i=1}^{r-1} q_{r-1,i},
\end{equation}
where the last terms are regarded as 0 for $r=1$. 

Although bijectivity is not apparent from this definition of the RSK map, the symmetry property follows more directly:

\begin{theorem} \label{RSK symmetry}
    If the matrix $M = (m_{ij}) \in (\Z_{\geq 0})^{n \times n}$ maps to the pair $(P,Q)$ under the RSK correspondence, then its transpose $M^\top$ maps to the pair $(Q,P)$.
    Consequently, a symmetric matrix corresponds to a pair of identical Gelfand--Tsetlin patterns.
\end{theorem}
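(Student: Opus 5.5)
The plan is to read off the symmetry directly from the last-passage definitions \eqref{eq: RSK def P} and \eqref{eq: RSK def Q}, using the fact that transposition of the environment corresponds to reflection of paths across the diagonal. Write $M^\top = (m'_{ij})$ with $m'_{ij} = m_{ji}$, and let $(P',Q')$ be the pair of patterns assigned to $M^\top$ by these formulas. The goal is to show that $P' = Q$ and $Q' = P$. Because each pattern is determined by its partial row sums $\sum_{i=1}^k p_{ri}$ for $1\le k\le r\le n$ (via successive differences), it suffices to match these partial sums.

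The key step is a bijection of path families. Let $\sigma(i,j) = (j,i)$ be reflection across the diagonal. A directed path under $\sigma$ is again a directed path, since the steps $(1,0)$ and $(0,1)$ are exchanged, and $\sigma$ preserves disjointness. I would check that $\sigma$ maps $\Pi^k_{n,r}$ bijectively onto $\Pi^k_{r,n}$. Concretely, $\Pi^k_{n,r}$ consists of $k$ disjoint paths from the points $(1,1),\dots,(1,k)$ to the points $(n,r-k+1),\dots,(n,r)$. After reflection, these become disjoint paths from $(1,1),(2,1),\dots,(k,1)$ to $(r-k+1,n),\dots,(r,n)$.

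This is the one point needing care. The definition of $\Pi^k_{a,b}$ fixes the starting points as $(1,1),\dots,(1,k)$ and the endpoints on the line $x=a$, so the reflected family does not literally have the form required by the definition. I expect to resolve this with the standard observation that, for non-intersecting families in the rectangle $[1,a]\times[1,b]$, the maximal total weight is unchanged when the $k$ start and end points are placed along either boundary edge adjacent to the corner. The reason is that any optimal family can be rerouted along the boundary. Equivalently, this is the Greene-type characterization of the $k$-point passage value as the maximum weight of a union of $k$ disjoint paths from $(1,1)$ to $(a,b)$, in the multiset sense.

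Alternatively, if the paper's convention is read symmetrically, so that $\Pi^k_{a,b}$ denotes families in $[1,a]\times[1,b]$ started and ended at the corners in the natural way, the reflection is literally a bijection and no rerouting is needed. I consider this normalization of endpoints the only genuine obstacle. Once it is settled, the rest of the argument is formal.

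Granting the bijection, the weights satisfy
\[
\sum_{(i,j)\in\pi_1\cup\dots\cup\pi_k} m'_{ij} = \sum_{(i,j)\in\sigma(\pi_1)\cup\dots\cup\sigma(\pi_k)} m_{ij}.
\]
Maximizing over $\Pi^k_{n,r}$ for $M^\top$ therefore gives the same value as maximizing over $\Pi^k_{r,n}$ for $M$. That is, $\sum_{i\le k} p'_{ri} = \sum_{i\le k} q_{ri}$ for all $1\le k\le r\le n$, and hence $P'=Q$. The same argument with the roles of $P$ and $Q$ exchanged gives $Q'=P$.

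For the consequence, a symmetric matrix satisfies $M=M^\top$, so $(P,Q) = (Q,P)$ and therefore $P=Q$.
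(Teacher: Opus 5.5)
Your argument is essentially the paper's: reflection carries $\Pi^k_{n,r}$ onto $\hat\Pi^k_{r,n}$, and your boundary-rerouting step is exactly Lemma~\ref{lemma: k-point paths equivalence}. The paper proves that lemma by noting that non-intersection forces $\pi_i$ to begin with $(1,i),\dots,(k-i+1,i)$, and symmetrically near the endpoints, so every family (not only an optimal one) fills the two corner triangles and can be re-partitioned there without changing its set of occupied lattice points. The Greene-type aside is not needed, and as phrased it is wrong: in that formulation the $k$ paths from $(1,1)$ to $(a,b)$ may overlap, and the weight is taken over their set union, not a multiset.
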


For $1\leq k\leq a\wedge b$, let $\hat\Pi_{a,b}^k$ be the set of $k$-tuples of non-intersecting directed paths starting at the points $(1,1),(2,1), \dots, (k,1)$ and ending at the points $(a-k+1,b),(a-k+2,b),\dots,(a,b)$, respectively. 
Then, the theorem follows directly from the next lemma, Lemma~\ref{lemma: k-point paths equivalence}. 
More concretely, if $(P',Q')$ is the image of $M^\top$ under the RSK, then using the lemma we get
\begin{gather*}
    q'_{r1} + q'_{r2} + \dots + q'_{rk} = 
    \max_{(\pi_1,\pi_2,\dots,\pi_k)\in \Pi_{r,n}^k} \, 
    \sum_{(i,j) \in \pi_1 \cup \pi_2 \cup \dots \cup \pi_k}
    m_{ji} 
    = 
    \max_{(\pi_1,\pi_2,\dots,\pi_k)\in \hat\Pi_{r,n}^k} \, 
    \sum_{(i,j) \in \pi_1 \cup \pi_2 \cup \dots \cup \pi_k}
    m_{ji} \\
    = \max_{(\pi_1,\pi_2,\dots,\pi_k)\in \Pi_{n,r}^k} \, 
    \sum_{(i,j) \in \pi_1 \cup \pi_2 \cup \dots \cup \pi_k}
    m_{ij} 
    = 
    p_{r1} + p_{r2} + \dots + p_{rk}
\end{gather*}
and similarly
\[ 
p'_{r1}+p'_{r2}+\dots + p'_{rk} = q_{r1}+q_{r2}+\dots + q_{rk},
\]
so that $(P',Q') = (Q,P).$
    
\begin{lemma} \label{lemma: k-point paths equivalence}
There is a bijection
\[
\Phi:\Pi_{a,b}^k\longrightarrow\hat\Pi_{a,b}^k
\]
such that if
$\Phi(\pi_1,\dots,\pi_k)=(\pi'_1,\dots,\pi'_k)$, then
\[
\pi_1\cup\cdots\cup\pi_k
=
\pi'_1\cup\cdots\cup\pi'_k,
\]
i.e. the two paths use the same set of lattice points. 
\end{lemma}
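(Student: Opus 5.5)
We construct $\Phi$ explicitly and make it its own inverse up to swapping the roles of the two families. Given $(\pi_1,\dots,\pi_k)\in\Pi_{a,b}^k$, let $S=\pi_1\cup\cdots\cup\pi_k\subseteq[a]\times[b]$. The goal is to show that $S$ can also be covered by $k$ non-intersecting paths from $(1,1),\dots,(k,1)$ to $(a-k+1,b),\dots,(a,b)$, and that this covering is unique. Then $\Phi(\pi)$ is that covering, and by the same argument with coordinates swapped the inverse map exists. It therefore suffices to characterize, intrinsically, which sets $S$ arise as the vertex set of some element of $\Pi^k_{a,b}$, and to show that the characterization is symmetric under the transpose-type exchange of start/end configurations.

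\textbf{Step 1: decomposition of a non-intersecting $k$-tuple into chains.} The points $(1,1),\dots,(1,k)$ lie on the first column and $(a,b-k+1),\dots,(a,b)$ on the last column. For non-intersecting paths $\pi_1,\dots,\pi_k$ from these points, $\pi_i$ lies weakly below-right of $\pi_{i+1}$ in the obvious sense. Consider the set $S$ and decompose it greedily into anti-chain layers. Equivalently, peel $S$ into $k$ paths of the other family: let $\pi_1'$ be the ``lowest-leftmost'' path in $S$ starting at $(1,1)$. At each step it goes right if the point to the right is in $S$ and still allows completion, otherwise up. Then recurse on $S\setminus\pi'_1$.

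A cleaner route, which I would actually pursue, is a local-move argument. Define $\Phi$ by induction on $k$, or on the number of points of $S$ near the corner. For $k=1$ the two families coincide, since $(1,1)$ and $(a,b)$ are the same endpoints. For general $k$, note that $S$ contains the $k\times1$ segment $\{(1,1),\dots,(1,k)\}$, which must be traversed vertically by the starting points, and must also contain $(1,1),\dots,(k,1)$ after the transformation.

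Since $\pi_1$ starts at $(1,1)$ and $\pi_2$ at $(1,2)$ without intersecting, $\pi_1$ must immediately go right, and so on. The lower-left region is thus a staircase of points forced into $S$.

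\textbf{Step 2: the lattice-point exchange at each vertex.} The key local fact is the standard one from the Lindström--Gessel--Viennot / Fomin viewpoint. For any finite set $S$ that is a union of $k$ disjoint up-right paths, the in-degree/out-degree structure at each point (predecessor in $S$ among left/below neighbours, successor among right/above neighbours) can be re-paired. At a point $x\in S$ with both its left neighbour and its lower neighbour in $S$, the path through $x$ arriving from one side could equally arrive from the other. Rewiring these choices converts a path family with sources on the left column into one with sources on the bottom row, and sinks on the right column into sinks on the top row.

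Concretely, I would describe $S$ by the digraph on $S$ with edges $x\to x+e_1$, $x\to x+e_2$ whenever both endpoints lie in $S$. A $k$-tuple of vertex-disjoint paths covering $S$ with given sources and sinks is then a perfect ``path cover''. Using the non-intersecting structure, I would show directly that $S$ has the shape of a skew region between two lattice paths, thickened so that every row-section and column-section of $S$ is controlled. From that description I would write the $\hat\Pi$ cover explicitly: take $\pi'_j$ as the $j$-th ``hook layer'' from the bottom-right boundary.

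\textbf{Main obstacle.} The hard step is proving that the explicitly defined $\pi'_j$ are genuinely directed paths with the prescribed endpoints, are pairwise disjoint, and cover exactly $S$. Non-intersecting paths may run adjacent, producing thick blobs where naive peeling fails. My first attempt would be induction on $a+b$: remove the last column $\{a\}\times[b]$. The paths of $\pi$ restricted to columns $<a$ give an element of $\Pi^{k'}_{a-1,b'}$-type configuration with sinks along column $a-1$, though these sinks are not necessarily consecutive. So I would first generalize the lemma to arbitrary ordered source/sink sets on the boundary of the rectangle, where sources lie on the left column or bottom row and sinks on the right column or top row. That generalization makes the induction close: adding back column $a$ only requires the one-column exchange, checked by hand.
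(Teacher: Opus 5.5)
\textbf{There is a genuine gap.} Your proposal never constructs $\Phi$. It also never proves that $S=\pi_1\cup\cdots\cup\pi_k$ admits a cover by an element of $\hat\Pi^k_{a,b}$, which is the entire content of the lemma. Each of the routes you sketch is either undefined or breaks down:
\begin{enumerate}
\item The greedy peeling in Step~1 relies on an undefined rule (``still allows completion'').
\item The re-pairing principle in Step~2 is not valid at a general vertex of $S$. Changing the predecessor of $x$ from $x-(1,0)$ to $x-(0,1)$ leaves $x-(1,0)$ without a successor, and the former successor of $x-(0,1)$ without a predecessor. In general this cascade does not close up.
\item The explicit ``hook layer'' cover rests on $S$ being a skew region between two lattice paths, which is false. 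Take $a=b=5$, $k=2$; let $\pi_1$ run along the bottom row and up the right column to $(5,4)$, and let $\pi_2$ run up the left column from $(1,2)$ and along the top row. Then $S$ is the boundary of the square, and its row at height $3$ is $\{(1,3),(5,3)\}$.
\item The generalized statement for the induction on $a+b$ is never formulated: you do not say which families are to be exchanged. For arbitrary source/sink sets on the boundary there is no such exchange, since the lemma depends on the sources and sinks being consecutive at the corners.
\end{enumerate}
The uniqueness of covers you assert is true, but it is unproved and not needed.

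\textbf{The missing idea is the observation at the end of your Step~1, pushed one step further:} the forced staircases already give the bijection. Inducting downward from $i=k$, non-intersection forces $\pi_i$ to begin with the horizontal run $(1,i),\dots,(k-i+1,i)$. Symmetrically, it forces $\pi_i$ to end with the horizontal run $(a-i+1,b-k+i),\dots,(a,b-k+i)$. These runs exactly tile the corner triangles
$T_1=\{(x,y):x,y\geq 1,\ x+y\leq k+1\}$ and $T_2=\{(x,y):x\leq a,\ y\leq b,\ x+y\geq a+b-k+1\}$,
and no other points of any path lie in $T_1\cup T_2$.

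Now replace the initial run of $\pi_i$ by the vertical run $(k-i+1,1),\dots,(k-i+1,i)$, and its final run by $(a-i+1,b-k+i),\dots,(a-i+1,b)$. Call the result $\pi'_{k-i+1}$. The new runs tile the same triangles column by column. Hence the $\pi'_j$ are disjoint directed paths, with the endpoints required by $\hat\Pi^k_{a,b}$ and the same union of lattice points. Elements of $\hat\Pi^k_{a,b}$ are likewise forced to begin and end with exactly these vertical runs, so reversing the replacement gives the inverse map. No global decomposition of $S$, uniqueness statement, or induction on $a+b$ is needed.
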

\begin{proof}
    Let $(\pi_1,\pi_2,\dots, \pi_k)\in \Pi_{a,b}^k$. 
    Note that for each $1\leq i \leq k$,  $\pi_i$ must start with the points
    $(1,i), (2,i),\dots, (k-i+1,i)$
    and end with the points
    $(a-i+1,b-k+i), (a-i+2,b-k+i), \dots, (a,b-k+i)$
    in order for the paths to be non-intersecting. 
    Now for each $1\leq i \leq k$, set $\pi'_{k-i+1}$ to be $\pi_i$ with its starting points
    $(1,i), (2,i),\dots, (k-i+1,i)$ changed to 
    $(k-i+1,1),(k-i+1,2),\dots, (k-i+1,i)$
    and its ending points
    $(a-i+1,b-k+i), (a-i+2,b-k+i), \dots, (a,b-k+i)$
    changed to
    $(a-i+1,b-k+i), (a-i+1,b-k+i+1), \dots, (a-i+1,b)$. 
    Note that after these modifications, the paths $\pi'_1,\dots \pi'_k$ are still non-intersecting and $(\pi'_1,\dots, \pi'_k)\in \hat\Pi_{a,b}^k. $
    Moreover, if we let 
    $T_1 = \{(i,j):i,j\geq 1 \text{ and }i+j \leq k+1\}$
    and 
    $T_2 = \{(i,j): i\leq a, \ j \leq b, \text{ and }i+j \geq a+b-k+1\}$,
    then
    $(\cup_i \pi_i) \cap T_1 = T_1 =  (\cup_i \pi'_i)\cap T_1,$
    $(\cup_i \pi_i) \cap T_2 = T_2 =  (\cup_i \pi'_i)\cap T_2,$
    and 
    $(\cup_i \pi_i) \setminus (T_1 \cup T_2) =  (\cup_i \pi'_i)\setminus (T_1 \cup T_2)$
    since the modifications only occur inside the regions $T_1$ and $T_2$. 
    Hence $\cup_i \pi_i = \cup_i \pi'_i$. 
    The reverse modifications recover the original ensemble uniquely.
    Thus the construction is a bijection, and it preserves the union of
the occupied lattice points.
\end{proof}

For $\lambda = (\lambda_1,\lambda_2,\dots)$ a partition, let
\[
O(\lambda) \defeq \sum_i (-1)^{i-1}\lambda_i
= \lambda_1-\lambda_2+\lambda_3-\dots
\]
be its alternating sum,
which is also the number of odd columns in the Young diagram of $\lambda$. 
The next result tells us that the diagonal sum of a symmetric matrix can be determined from its RSK image using only its shape. 

\begin{proposition}
[Sch\"utzenberger's lemma]
\label{schutzenberger}
    If $M = (m_{ij}) \in (\Z_{\geq 0})^{n \times n}$ is symmetric and $(R,R)$ is its image under the RSK, then
    \[ \sum_{i=1}^n m_{ii} = O(\sh(R)). \]
\end{proposition}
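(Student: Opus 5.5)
We prove Schützenberger's lemma by induction on $n$, using the insertion description of RSK; this is the classical route. An alternative is a purely path-based argument from \eqref{eq: RSK def P}, but alternating sums are awkward to control through maxima of path weights, so we use insertion.

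\textbf{Step 1 (reduction to an involution).} Expand the symmetric matrix $M$ into a two-line array, i.e.\ a multiset of biletters $(i,j)$ with multiplicity $m_{ij}$. Symmetry of $M$ means this array is invariant under swapping the top and bottom rows. Standardize by replacing repeated letters with distinct ones, ordered compatibly with RSK so that insertion commutes with standardization. The result is a permutation $\sigma$ with $\sigma=\sigma^{-1}$. By Theorem~\ref{RSK symmetry}, $\sigma$ corresponds to a pair $(T,T)$ whose shape equals $\sh(R)$. The standardization must be chosen symmetrically: off-diagonal pairs $(i,j),(j,i)$ become $2$-cycles, and the $m_{ii}$ copies of $(i,i)$ become fixed points. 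To achieve this, order the diagonal copies so they stay non-crossing; a block of $m$ copies of $(i,i)$ standardizes to the identity on $m$ letters. Then $\sum_i m_{ii}$ is the number of fixed points of $\sigma$. This reduces the claim to the statement that the number of fixed points of an involution equals the number of odd-length columns of its RSK shape.

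\textbf{Step 2 (involutions).} Induct on the size of $\sigma$ by removing the largest letter $N$.
\begin{itemize}
\item If $\sigma(N)=N$, then $N$ was inserted last and sits at the end of the first row of $T$. Deleting it shortens row 1 by one, which changes the number of odd columns by exactly one. This matches the loss of one fixed point.
\item If $\sigma(N)=a\neq N$, use the known fact that for an involution the recording and insertion tableaux coincide. Then removing the $2$-cycle $(a\,N)$ corresponds to deleting $N$ from $T$ and performing reverse bumping from the corner that contained $N$. This removes two boxes of total shape change, ending in a domino-like configuration, and one must show the count of odd columns is unchanged.
\end{itemize}
Handling the second case cleanly is the main obstacle. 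My first attempt is to use Schützenberger's evacuation/jeu de taquin characterization: the tableau of $\sigma$ with the $2$-cycle removed is obtained by deleting $N$ and $a$ and rectifying. One then checks that the two removed cells lie in the same column, or in rows of equal parity, so that $O(\lambda)$ is preserved.

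\textbf{Step 3 (fallback).} If that bookkeeping proves messy, I will fall back on the enumerative argument. By Knuth's identity $\sum_\lambda s_\lambda(x)=\prod_i(1-x_i)^{-1}\prod_{i<j}(1-x_ix_j)^{-1}$, symmetric matrices correspond bijectively to tableaux $R$. Refining with a marker $t$ gives
\[
\sum_\lambda t^{O(\lambda)}s_\lambda(x)=\prod_i(1-tx_i)^{-1}\prod_{i<j}(1-x_ix_j)^{-1},
\]
which is the Littlewood identity. That weight-generating-function identity only yields equality of distributions, however, not the pointwise statement. So I prefer the bijective induction and would consult the insertion details in Appendix~\ref{appendix} for the domino step.
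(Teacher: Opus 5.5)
Your Step 1 (symmetric standardization yields an involution with $\sum_i m_{ii}$ fixed points and the same RSK shape) and the fixed-point case of Step 2 are fine. The gap is the $2$-cycle case. It carries all the content of the lemma, you leave it open, and the target you set for it is wrong.

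\begin{itemize}
\item Removing corner cells in rows $i$ and $i'$ changes $O$ by $-(-1)^{i-1}-(-1)^{i'-1}$. So $O$ is preserved exactly when the rows have \emph{opposite} parity. Two cells in rows of equal parity change the number of odd columns by $\pm 2$.
\item The two cells need not form a domino: for $\sigma=132$ they are $(1,2)$ and $(2,1)$.
\item The evacuation/jeu de taquin route is only named, not carried out.
\item The fallback does not rescue the argument. Littlewood's identity only shows that $\sum_i m_{ii}$ and $O(\sh(R))$ are equidistributed within each content class. The paper needs the pointwise identity for \eqref{pmf half space lpp}: each $r$ has exactly one symmetric preimage, whose weight $c^{\sum_i m_{ii}}$ must equal $c^{O(\sh(r))}$.
\end{itemize}

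Within your own framework the missing step is short.
\begin{itemize}
\item Since $P=Q=T$ and the last biletter is $(N,a)$, the cell $(r,\lambda_r)$ created by the last insertion contains $N$ in both tableaux.
\item Moreover $r\geq 2$: if $r=1$, the inserted letter $a$ itself would sit in that cell of $P$, forcing $a=N$.
\item Undoing the last insertion reverse-bumps $N$ itself out of $(r,\lambda_r)$. As $N$ is the maximum, it displaces the last entry of row $r-1$, and all later bumps occur in higher rows. So the insertion tableau of $\sigma(1)\cdots\sigma(N-1)$ has $N$ at $(r-1,\lambda_{r-1})$.
\item Removing the remaining biletter $(a,N)$ deletes the largest letter from the word, which deletes exactly that cell.
\item Hence the smaller involution has shape $\lambda$ minus the last cells of two consecutive rows, and $O$ is unchanged.
\end{itemize}

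The paper avoids standardization altogether. It derives from row insertion the growth-diagram recurrence of Theorem~\ref{appendix-equivalence theorem}. For symmetric $A$ one has $\lambda^{(n,n-1)}=\lambda^{(n-1,n)}$, so on the diagonal the recurrence reduces to $\lambda^{(n,n)}_i=\lambda^{(n,n-1)}_i+\lambda^{(n,n-1)}_{i-1}-\lambda^{(n-1,n-1)}_{i-1}$, plus $a_{nn}$ in the first part. The alternating sum then telescopes to $O(\lambda^{(n,n)})=a_{nn}+O(\lambda^{(n-1,n-1)})$.
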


The result was first proved in \cite{schutzenberger} for permutation matrices. We will prove this more general case directly from the classical row insertion definition of the RSK correspondence in Appendix~\ref{appendix}. 

\subsection{RSK push-forward for full-space geometric LPP}\label{subsec: geom lpp}

In this and the next subsection, we take the input to the RSK to be a random matrix 
whose entries are the weights of a geometric LPP environment in $\Z^2$. 
Our goal is to study the corresponding push-forward law of the RSK output, 
since it encodes the joint law of the associated multi-point last passage values and provides a convenient way to compare the full-space and half-space models. 
In this subsection we work in the full-space setting, where the matrix entries are independent; in the next subsection we pass to the half-space setting by imposing reflection symmetry across the diagonal on the matrix.

Let $\alpha,\beta\in (\R_{>0})^n $ be such that $\alpha_i\beta_j \in (0,1)$ for all $i,j$.
Let $M = (m_{ij})$ be an $n\times n$ random matrix with independent entries $m_{ij}$ such that, for each $1\leq i,j\leq n$,
\[
\mathbb{P}(m_{ij} = k)=(1-\alpha_i\beta_j) (\alpha_i \beta_j)^k, \qquad k\geq 0.
\]

Equivalently, $M$ has probability mass function
\[ \mathbb P_{\alpha,\beta}(M=(m_{ij})) 
= \Bigl(\prod_{i,j}(1-\alpha_i\beta_j)\Bigr)\prod_{i,j}(\alpha_i\beta_j)^{m_{ij}}
= Z_{\alpha,\beta} \prod_{r=1}^n \alpha_r^{\sum_i m_{ri}} \beta_r^{\sum_i m_{ir}},
\]
where $Z_{\alpha,\beta} = \prod_{i,j} (1-\alpha_i \beta_j)$. 

Since the RSK map is a bijection, we may use \eqref{eq: row sum} and \eqref{eq: col sum}
to express the row and column sums of $M$ in terms of the RSK output $(P,Q)$, thereby obtaining the push-forward mass function on pairs of Gelfand--Tsetlin patterns:
\[ \mathbb P^{\mathrm{RSK}}_{\alpha,\beta}(P=p,Q=q) 
= Z_{\alpha,\beta} 
\prod_{r=1}^n 
\alpha_r^{\sum_{i=1}^r q_{ri} - \sum_{i=1}^{r-1} q_{r-1,i}}
\beta_r^{\sum_{i=1}^r p_{ri} - \sum_{i=1}^{r-1} p_{r-1,i}}
\mathds{1}_{\{\sh(p) = \sh(q)\}}.
\]
We introduce the following shorthand notation that for $G$ a Gelfand--Tsetlin pattern and $\theta \in \R^n$,
\[ 
\theta^G \defeq  \prod_{r=1}^n  \theta_r^{ \sum_{i=1}^r g_{ri} - \sum_{i=1}^{r-1} g_{r-1,i}}.
\]
(Equivalently, if $G$ is viewed as a semistandard Young tableau, then the exponent of $\theta_r$ is the number of entries equal to $r$.)
Then, the push-forward mass can be expressed as
\begin{equation}
\label{eq: rsk pushforward mass lpp}
\mathbb P^{\mathrm{RSK}}_{\alpha,\beta}(P=p,Q=q)= Z_{\alpha,\beta}\, \alpha^q \beta^p \mathds{1}_{\{\sh(p) = \sh(q)\}}.
\end{equation}

\begin{proposition}
    The marginal distribution of the common shape of $P$ and $Q$ is given by 
    \[
    \mathbb P^{\mathrm{RSK}}_{\alpha,\beta}(\sh(P)=\sh(Q)=\lambda)=Z_{\alpha,\beta} \, s_\lambda(\alpha)s_\lambda(\beta)
    \]
    where $s_\lambda$ is the Schur polynomial. In particular, it is nonzero iff $\lambda$ has at most $n$ parts, and it is invariant under permutations of $\alpha_1,\dots, \alpha_n$ or permutations of $\beta_1,\dots, \beta_n$.
\end{proposition}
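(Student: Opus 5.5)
We sum the push-forward mass \eqref{eq: rsk pushforward mass lpp} over all pairs $(p,q)$ with $\sh(p)=\sh(q)=\lambda$. Since the indicator only couples $p$ and $q$ through their common shape, the sum factors:
\[
\mathbb P^{\mathrm{RSK}}_{\alpha,\beta}(\sh(P)=\sh(Q)=\lambda)
= Z_{\alpha,\beta}\Bigl(\sum_{q\in \GT_n(\lambda)}\alpha^q\Bigr)\Bigl(\sum_{p\in \GT_n(\lambda)}\beta^p\Bigr).
\]
The proof then reduces to identifying each factor as a Schur polynomial. Here bijectivity of RSK is essential: it guarantees that the push-forward mass is exactly \eqref{eq: rsk pushforward mass lpp} on every pair of patterns of equal shape, with no pair omitted and none counted twice.

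To identify the factors, we use the correspondence between $\GT_n(\lambda)$ and semistandard Young tableaux of shape $\lambda$ with entries in $[n]$. As noted after the definition of $\theta^G$, the exponent of $\theta_r$ in $\theta^G$ is the number of entries equal to $r$ in the tableau. Indeed, $\sum_{i\le r} g_{ri}$ counts the entries $\le r$, so consecutive differences count the entries equal to $r$. Hence $\sum_{G\in\GT_n(\lambda)}\theta^G=\sum_T \theta^T=s_\lambda(\theta)$, which is the combinatorial definition of the Schur polynomial. This gives the formula.

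For the remaining claims:
\begin{itemize}
\item \emph{Support.} A semistandard tableau with entries in $[n]$ has strictly increasing columns, so it has at most $n$ rows. Hence $\GT_n(\lambda)$ is empty if $\lambda$ has more than $n$ parts. Conversely, if $\lambda$ has at most $n$ parts, the tableau whose $j$-th row is filled with $j$ exists, so $s_\lambda(\theta)>0$ for positive $\theta$. Since $Z_{\alpha,\beta}>0$ by the assumption $\alpha_i\beta_j<1$, the probability is nonzero exactly in this case.
\item \emph{Symmetry.} Permutation invariance follows from the classical fact that $s_\lambda$ is a symmetric polynomial; for instance, it equals a ratio of alternants, or one can invoke the Bender--Knuth involutions. $Z_{\alpha,\beta}=\prod_{i,j}(1-\alpha_i\beta_j)$ is visibly invariant under permuting the $\alpha_i$ or the $\beta_j$ separately.
\end{itemize}

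The only point needing care is the symmetry of Schur polynomials, which we cite as standard rather than reprove. Everything else is direct bookkeeping from \eqref{eq: rsk pushforward mass lpp}.
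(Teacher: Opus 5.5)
Your proof is correct and follows the paper's argument: sum the push-forward mass \eqref{eq: rsk pushforward mass lpp} over pairs of patterns of shape $\lambda$, factor it as $Z_{\alpha,\beta}\,s_\lambda(\alpha)s_\lambda(\beta)$, and read off the support claim from whether $\GT_n(\lambda)$ is empty and the permutation invariance from the symmetry of $s_\lambda$ and of $Z_{\alpha,\beta}$. Your extra details, namely the count of entries equal to $r$ and the explicit tableau showing $s_\lambda(\theta)>0$, fill in steps the paper states only briefly.
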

\begin{proof}
    We can compute from \eqref{eq: rsk pushforward mass lpp} that
    \begin{align*}
    \mathbb P^{\mathrm{RSK}}_{\alpha,\beta}(\sh(P)=\sh(Q)=\lambda)
    &=
    \sum_{p\in \GT_n(\lambda)} \sum_{q\in \GT_n(\lambda)} Z_{\alpha,\beta} \,\alpha^q \beta^p\\  
    &= Z_{\alpha,\beta}\Bigl( \sum_{q\in \GT_n(\lambda)}\alpha^q\Bigr) \Bigl( \sum_{p\in \GT_n(\lambda)}\beta^p \Bigr) \\
    &= Z_{\alpha,\beta} \, s_\lambda(\alpha)s_\lambda(\beta).
    \end{align*}
    In particular, $Z_{\alpha,\beta}$ is always positive and does not depend on the ordering of the entries of $\alpha$ or $\beta$. The Schur polynomial $s_\lambda$ is symmetric, hence also invariant under permutations of entries of $\alpha$ or $\beta$. Finally, $s_\lambda(\alpha)$ or $s_\lambda(\beta)$ is zero iff $\GT_n(\lambda)=\emptyset$ iff $\lambda$ has more than $n$ parts. 
\end{proof}

\begin{proposition}
 \label{prop: rsk Q given lambda}
    Under this push-forward measure, for every partition $\lambda$ of at most $n$ parts, the conditional distribution of $Q$ given $\sh(Q)=\lambda$
    is
    \[
    \mathbb P^{\mathrm{RSK}}_{\alpha,\beta} (Q=q \,|\, \sh(Q)=\lambda) 
    = \frac{1}{s_\lambda(\alpha)}\, \alpha^q, \qquad q\in \GT_n(\lambda).
    \]
\end{proposition}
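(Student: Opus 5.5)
The plan is to compute the conditional probability directly from the push-forward mass \eqref{eq: rsk pushforward mass lpp}: first find the joint mass of $Q$ and the event $\sh(Q)=\lambda$, then divide by the marginal mass of the shape from the preceding proposition. Everything is a finite or absolutely convergent sum of nonnegative terms, so no measure-theoretic subtleties arise.

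Fix a partition $\lambda$ with at most $n$ parts and $q\in\GT_n(\lambda)$. Since $\sh(Q)=\sh(P)$ almost surely, the event $\{Q=q\}$ is contained in $\{\sh(Q)=\lambda\}$. Using bijectivity of RSK, I would write
\[
\mathbb P^{\mathrm{RSK}}_{\alpha,\beta}(Q=q)=\sum_{p\in\GT_n(\lambda)} Z_{\alpha,\beta}\,\alpha^q\beta^p = Z_{\alpha,\beta}\,\alpha^q\, s_\lambda(\beta).
\]
Here I use the combinatorial definition of the Schur polynomial, $s_\lambda(\beta)=\sum_{p\in\GT_n(\lambda)}\beta^p$; this is the identification already used in the proof of the preceding proposition, since $\beta^p$ is the monomial weight of the corresponding semistandard tableau.

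By the preceding proposition, $\mathbb P^{\mathrm{RSK}}_{\alpha,\beta}(\sh(Q)=\lambda)=Z_{\alpha,\beta}\,s_\lambda(\alpha)s_\lambda(\beta)$. This is strictly positive, because $\lambda$ has at most $n$ parts, so $\GT_n(\lambda)\neq\emptyset$, and all of $\alpha_i,\beta_j$ and $Z_{\alpha,\beta}$ are positive. Dividing the two displays, the factors $Z_{\alpha,\beta}$ and $s_\lambda(\beta)$ cancel, which leaves $\alpha^q/s_\lambda(\alpha)$.

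There is no real obstacle. The only points to state explicitly are the positivity of the conditioning event, which justifies elementary conditioning, and the fact that the sum over $p$ factors off from $q$ because the indicator only constrains $p$ to have shape $\lambda$.
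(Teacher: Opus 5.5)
Your proposal is correct and follows essentially the same route as the paper. The paper divides the joint mass \eqref{eq: rsk pushforward mass lpp} by the shape marginal $Z_{\alpha,\beta}\,s_\lambda(\alpha)s_\lambda(\beta)$ and then sums over $p\in\GT_n(\lambda)$, whereas you sum over $p$ first and then divide; these amount to the same computation.
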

\begin{proof}
    The conditional joint distribution of $P$ and $Q$ given their common shape being $\lambda$ is
    \[
    \mathbb P^{\mathrm{RSK}}_{\alpha,\beta}(P=p,Q=q \, | \, \sh(P)=\sh(Q)=\lambda) = \frac{Z_{\alpha,\beta}\,\alpha^q \beta^p}{Z_{\alpha,\beta}\, s_\lambda(\alpha)s_\lambda(\beta)} 
    =\frac{1}{s_\lambda(\alpha)s_\lambda(\beta)} \alpha^q \beta^p
    \]
    for 
    $p,q \in \GT_n(\lambda)$.
    The result follows by summing over $p\in \GT_n(\lambda)$.
\end{proof}

\subsection{RSK push-forward for half-space geometric LPP}\label{subsec: half space geom lpp}

Let $\gamma = (\gamma_1,\dots, \gamma_n) \in (\R_{>0})^n$ be a sequence of parameters such that $\gamma_i\gamma_j \in (0,1)$ for all $1\leq i<j \leq n$. 
Let $c \in \R_{>0}$ be an additional parameter such that $c \gamma_i \in (0,1)$ for all $i$. 
Let $\tilde{M}=(\tilde{m}_{ij})$ be a symmetric $n\times n$ random matrix, such that the upper-triangular entries $\{\tilde m_{ij}:1\leq i\leq j \leq n\}$ are independently distributed with 
\[
\mathbb{P}(\tilde{m}_{ij} =k) = (1-\gamma_i\gamma_j)(\gamma_i\gamma_j)^k, \qquad k\geq0,
\]
for $i<j$
and
\[
\mathbb{P}(\tilde{m}_{ii}= k)=(1-c\gamma_i)(c\gamma_i)^k, \qquad k\geq 0.
\]

Equivalently, the matrix $\tilde{M} \in (\Z_{\geq 0})^{n\times n}_{\mathrm{sym}}$ has probability mass function
\[ \tilde{\mathbb P}_{c,\gamma}(\tilde{M}) 
= 
\prod_{i<j} (1-\gamma_i\gamma_j)(\gamma_i\gamma_j)^{\tilde{m}_{ij}} 
\prod_{i=1}^n (1-c\gamma_i)(c\gamma_i)^{\tilde{m}_{ii}}
= \tilde{Z}_{c,\gamma} \,
c^{\sum_{i=1}^n \tilde{m}_{ii}}
\prod_{r=1}^n \gamma_r^{\sum_{i=1}^n \tilde{m}_{ir}},
\]
where 
\[
\tilde{Z}_{c,\gamma} = \prod_{i<j} (1-\gamma_i\gamma_j)\prod_{i=1}^n (1-c\gamma_i).
\]

By Theorem~\ref{RSK symmetry}, the RSK restricts to a bijection between symmetric matrices and pairs of identical Gelfand--Tsetlin patterns. 
Let $(R,R)$ be the image of $\tilde{M}$ under the RSK.
Then, using \eqref{eq: row sum} and Proposition~\ref{schutzenberger} we can again write the push-forward law on $R$ as
\begin{equation} \label{pmf half space lpp}
    \tilde{\mathbb P}^{\mathrm{RSK}}_{c,\gamma}(R=r) = \tilde{Z}_{c,\gamma}\, c^{O(\sh(r))} \gamma^r
\end{equation} 
for $r \in \GT_n$. 

\begin{proposition}
    \label{lpp matrix change of parameter}
    The marginal distribution of $\sh(R)$ is given by 
    \[
    \tilde{\mathbb{P}}^{\mathrm{RSK}}_{c,\gamma}(\sh(R) = \lambda) = \tilde{Z}_{c,\gamma}  s_\lambda(\gamma) c^{O(\lambda)}.
    \]
    In particular, it is nonzero iff $\lambda$ has at most $n$ parts, and it does not depend on the order of the parameters $\gamma_1,\dots,\gamma_n$. 
\end{proposition}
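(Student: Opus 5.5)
The plan is to mirror the proof of the full-space shape marginal, starting from the push-forward mass function \eqref{pmf half space lpp}. By Theorem~\ref{RSK symmetry}, the RSK restricts to a bijection between symmetric matrices in $(\Z_{\geq 0})^{n\times n}_{\mathrm{sym}}$ and $\GT_n$, via $\tilde M\mapsto R$. Hence the event $\{\sh(R)=\lambda\}$ corresponds exactly to the set of $r\in\GT_n(\lambda)$, and we may sum \eqref{pmf half space lpp} over this set:
\[
\tilde{\mathbb P}^{\mathrm{RSK}}_{c,\gamma}(\sh(R)=\lambda)
=\sum_{r\in\GT_n(\lambda)}\tilde Z_{c,\gamma}\,c^{O(\sh(r))}\gamma^r
=\tilde Z_{c,\gamma}\,c^{O(\lambda)}\sum_{r\in\GT_n(\lambda)}\gamma^r .
\]
The factor $c^{O(\lambda)}$ comes out of the sum because $\sh(r)=\lambda$ is constant on $\GT_n(\lambda)$; this is where Proposition~\ref{schutzenberger} enters, through \eqref{pmf half space lpp}.

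Next, identify the remaining sum with $s_\lambda(\gamma)$. By the description of $\theta^G$ as the monomial recording the content of the corresponding semistandard Young tableau, and the bijection between $\GT_n(\lambda)$ and semistandard tableaux of shape $\lambda$ with entries at most $n$, the sum is the combinatorial definition of the Schur polynomial. This gives the claimed formula.

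For the remaining claims, argue as in the full-space case.
\begin{itemize}
\item The constant $\tilde Z_{c,\gamma}=\prod_{i<j}(1-\gamma_i\gamma_j)\prod_i(1-c\gamma_i)$ is positive by the parameter assumptions.
\item It is invariant under permutations of $\gamma$: a permutation $\sigma$ permutes the unordered pairs $\{i,j\}$, so $\prod_{i<j}(1-\gamma_{\sigma(i)}\gamma_{\sigma(j)})$ is unchanged, and the product over single indices is clearly symmetric.
\item The polynomial $s_\lambda$ is symmetric, and $c^{O(\lambda)}>0$ does not involve $\gamma$.
\item The probability is nonzero iff $s_\lambda(\gamma)\neq 0$. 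Since all $\gamma_i>0$ and every monomial has coefficient one, this holds iff $\GT_n(\lambda)\neq\emptyset$, which is equivalent to $\lambda$ having at most $n$ parts.
\end{itemize}

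There is no real obstacle here. The only point needing care is that \eqref{pmf half space lpp} is a genuine mass function on all of $\GT_n$, so that no $r$ is missed or double-counted. This rests on the bijectivity of the restricted RSK from Theorem~\ref{RSK symmetry} together with the standard bijectivity of RSK, which we take as given.
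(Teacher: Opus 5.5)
Your proposal is correct and follows the same route as the paper's proof. Both sum the push-forward mass \eqref{pmf half space lpp} over $\GT_n(\lambda)$, pull out the constant factor $c^{O(\lambda)}$, identify $\sum_{r\in\GT_n(\lambda)}\gamma^r=s_\lambda(\gamma)$, and read off the remaining claims from the positivity and permutation invariance of $\tilde Z_{c,\gamma}$ together with the symmetry and nonvanishing properties of $s_\lambda$. Your extra details (positivity of each monomial $\gamma^r$, and bijectivity of $\tilde M\mapsto R$ onto all of $\GT_n$) simply make explicit what the paper leaves implicit.
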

\begin{proof}
    Sum \eqref{pmf half space lpp} over all $r\in \GT_n(\lambda)$ and use the identity $s_\lambda(\gamma)=\sum_{G\in \GT_n(\lambda)}\gamma^G$ to obtain the stated expression. The final claim follows because $c^{O(\lambda)}$ and  $\tilde{Z}_{c,\gamma}$ are both nonzero and invariant under permutations of the entries of $\gamma$, together with standard symmetry and non-vanishing properties of $s_\lambda$.
\end{proof}

\begin{proposition}
\label{prop: RSK R given lambda}
    For $\lambda$ a partition of at most $n$ parts, the conditional distribution of $R$ given $\sh(R)=\lambda$ is
    \[ \tilde{\mathbb{P}}^{\mathrm{RSK}}_{c,\gamma}(R=r\, | \, \sh(R)=\lambda) 
    = \frac{1}{s_\lambda(\gamma)} \, 
      \gamma^r
    \]
    for $r\in \GT_n(\lambda)$.
\end{proposition}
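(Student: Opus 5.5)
The argument is the direct analogue of Proposition~\ref{prop: rsk Q given lambda}: divide the push-forward mass \eqref{pmf half space lpp} by the marginal law of the shape from Proposition~\ref{lpp matrix change of parameter}.

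Fix a partition $\lambda$ with at most $n$ parts and some $r\in \GT_n(\lambda)$. The event $\{R=r\}$ is contained in $\{\sh(R)=\lambda\}$. Hence
\[
\tilde{\mathbb{P}}^{\mathrm{RSK}}_{c,\gamma}(R=r\mid \sh(R)=\lambda)
=\frac{\tilde{\mathbb{P}}^{\mathrm{RSK}}_{c,\gamma}(R=r)}{\tilde{\mathbb{P}}^{\mathrm{RSK}}_{c,\gamma}(\sh(R)=\lambda)}.
\]
The denominator equals $\tilde Z_{c,\gamma}\, s_\lambda(\gamma)\, c^{O(\lambda)}$ by Proposition~\ref{lpp matrix change of parameter}, and it is strictly positive. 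So the conditioning is legitimate: $\tilde Z_{c,\gamma}>0$ because each factor $1-\gamma_i\gamma_j$ and $1-c\gamma_i$ lies in $(0,1)$, $c^{O(\lambda)}>0$, and $s_\lambda(\gamma)>0$ since $\GT_n(\lambda)\neq\emptyset$ and all $\gamma_i>0$.

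For the numerator, \eqref{pmf half space lpp} gives $\tilde Z_{c,\gamma}\, c^{O(\sh(r))}\gamma^r$. Since $\sh(r)=\lambda$, the factor $c^{O(\sh(r))}$ is exactly $c^{O(\lambda)}$. The factors $\tilde Z_{c,\gamma}$ and $c^{O(\lambda)}$ therefore cancel between numerator and denominator, leaving $\gamma^r/s_\lambda(\gamma)$.

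The only point needing care is this cancellation of the diagonal weight $c^{O(\cdot)}$. It works because Sch\"utzenberger's lemma (Proposition~\ref{schutzenberger}) makes the diagonal sum a function of the shape alone, which is precisely why conditioning on the shape removes the dependence on $c$. Everything else is routine.
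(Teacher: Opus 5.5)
Your proof is correct and follows essentially the paper's argument: divide the push-forward mass \eqref{pmf half space lpp} by the shape marginal from Proposition~\ref{lpp matrix change of parameter}, and cancel $\tilde Z_{c,\gamma}\,c^{O(\lambda)}$ using $\sh(r)=\lambda$. Your explicit check that the denominator is strictly positive is a harmless bit of extra care that the paper leaves implicit.
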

\begin{proof}
    From \eqref{pmf half space lpp} and Proposition~\ref{lpp matrix change of parameter}, we immediately have that for $r \in \GT_n(\lambda)$,
    \[
    \tilde{\mathbb{P}}^{\mathrm{RSK}}_{c,\gamma}(R =r\, | \, \sh(R)=\lambda) 
    = 
    \frac{\tilde{Z}_{c,\gamma}\, c^{O(\lambda)} \gamma^r}
    {\tilde{Z}_{c,\gamma}  s_\lambda(\gamma) c^{O(\lambda)}}
    = \frac{1}{s_\lambda(\gamma)}\, 
      \gamma^r.
    \]
\end{proof}

By comparing the conditional laws in Proposition~\ref{prop: rsk Q given lambda} and Proposition~\ref{prop: RSK R given lambda}, we immediately get the following result. 

\begin{theorem}
    \label{bridge LPP}
    Let $M$ be a random matrix corresponding to the full-space geometric LPP environment with parameters $(\alpha,\beta)$, with RSK image being $(P,Q)$. 
    Let $\tilde M$ be a random symmetric matrix corresponding to the half-space geometric LPP environment with parameters $(c,\gamma)$, with RSK image $(R,R)$.
    Assume in addition that $\gamma=\alpha$. Then, 
    \[
    \mathcal L(R\mid \sh(R))
    =
    \mathcal L(Q\mid \sh(Q)).
    \]
\end{theorem}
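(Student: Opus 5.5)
The plan is to compare the two explicit conditional mass functions already computed, checking along the way that the conditioning variables range over the same space. Both $Q$ and $R$ take values in $\GT_n$, and both $\sh(Q)$ and $\sh(R)$ take values in the countable set of partitions with at most $n$ parts. Since all spaces involved are countable and discrete, a regular conditional distribution is determined by elementary conditioning on atoms of positive probability. The only freedom in the choice of version lies on null atoms, and there we are free to pick a common kernel.

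First, I will record that every partition $\lambda$ with at most $n$ parts is an atom of positive mass for both shapes. For $\sh(Q)$ the mass is $Z_{\alpha,\beta}\,s_\lambda(\alpha)s_\lambda(\beta)>0$ by the proposition on the marginal of the common shape. For $\sh(R)$ the mass is $\tilde Z_{c,\gamma}\,s_\lambda(\gamma)c^{O(\lambda)}>0$ by Proposition~\ref{lpp matrix change of parameter}. Partitions with more than $n$ parts carry zero mass under both laws, so the two shape laws are mutually absolutely continuous. Consequently, the conditional laws agree a.s. with respect to either one as soon as they agree on each such $\lambda$.

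Next, I will define the kernel
\[
K(\lambda,\{q\}) \defeq \frac{\alpha^q}{s_\lambda(\alpha)}\mathds 1_{\{q\in\GT_n(\lambda)\}}
\]
for $\lambda$ with at most $n$ parts, and take $K(\lambda,\cdot)$ to be an arbitrary fixed probability measure otherwise. This is a probability measure because $s_\lambda(\alpha)=\sum_{G\in\GT_n(\lambda)}\alpha^G$. By Proposition~\ref{prop: rsk Q given lambda}, $K$ is a version of $\mathcal L(Q\mid\sh(Q))$. By Proposition~\ref{prop: RSK R given lambda} with $\gamma=\alpha$, the same $K$ is a version of $\mathcal L(R\mid\sh(R))$, because the parameter $c$ cancels out of the conditional law.

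One compatibility point remains: the hypotheses $\alpha_i\beta_j\in(0,1)$ and the requirements on $\gamma$ and $c$ must be simultaneously satisfiable with $\gamma=\alpha$. This is an assumption of the theorem and needs nothing from the proof. I expect no real obstacle here. The only subtle step is the measure-theoretic interpretation of equality of conditional laws as the existence of a common kernel, and discreteness together with the positivity of the shape masses settles it.
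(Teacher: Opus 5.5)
Your proposal is correct and is essentially the paper's argument. The paper gets the theorem immediately by comparing the explicit conditional mass functions in Propositions~\ref{prop: rsk Q given lambda} and~\ref{prop: RSK R given lambda}, which coincide when $\gamma=\alpha$ because $c$ cancels. Your extra check, that every partition with at most $n$ parts has positive mass under both shape laws so that one kernel is a version of both conditional laws, simply makes explicit the convention for $\mathcal L(A\mid B)=\mathcal L(A'\mid B')$ that the paper fixes in the introduction.
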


\subsection{Geometric RSK correspondence}
\label{subsec: gRSK}

The geometric RSK (gRSK) provides a bijective correspondence from the set of positive matrices $(\R_{>0})^{n\times n}$ to
itself. 
It can be defined via row insertion as in \cite[Section 2.1]{corwinTropicalCombinatoricsWhittaker2014a}, via local moves as in \cite{oconnell}, or via discrete Pitman transforms as in \cite{corwin}.
We will adopt the description using non-intersecting paths (see \cite[Section 2.2]{corwinTropicalCombinatoricsWhittaker2014a} and \cite{noumiTropicalRobinsonSchenstedKnuthCorrespondence2004}),
because it makes the most direct connection to the setting of directed polymers. 

Recall the notion of directed paths and the notation $\Pi_{a,b}^k$ that we introduced in Subsection~\ref{subsec: RSK}.

Let $W = (w_{ij}) \in (\R_{>0})^{n\times n}$. 
The gRSK associates to $W$ the output matrix $T = (t_{ij}) \in (\R_{>0})^{n\times n}$, defined implicitly by the following identities.
The lower-triangular entries of $T$ are characterized by that, for every $1\leq k \leq r \leq n$,
\begin{equation}
\label{eq: gRSK def P}
    \prod_{i=1}^k t_{n+1-i,r+1-i}
    = \sum_{(\pi_1,\pi_2,\dots,\pi_k) \in \Pi_{n,r}^k} \prod_{(i,j)\in \pi_1\cup \dots\cup \pi_k} w_{ij}.
\end{equation}
The upper-triangular entries are characterized by that,
for every $1 \leq k \leq r < n$,
\begin{equation}
\label{eq: gRSK def Q}
    \prod_{i=1}^k t_{r+1-i,n+1-i}
    =
    \sum_{(\pi_1,\pi_2,\dots,\pi_k) \in \Pi_{r,n}^k} \prod_{(i,j)\in \pi_1\cup \dots\cup \pi_k} w_{ij}. 
\end{equation}

The symmetry property of the gRSK follows from the definition and Lemma~\ref{lemma: k-point paths equivalence}, as in the case of the classical RSK:

\begin{theorem}
    \label{thm: gRSK symmetry}
    If $\mathrm{gRSK}(W)=T$, then $\mathrm{gRSK}(W^\top)=T^\top$. In particular, if $W$ is symmetric, then $T$ is symmetric.
\end{theorem}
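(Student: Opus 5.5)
We will deduce the statement directly from the defining identities \eqref{eq: gRSK def P} and \eqref{eq: gRSK def Q}, in the same way Theorem~\ref{RSK symmetry} was deduced. Let $T=\mathrm{gRSK}(W)$ and $T'=\mathrm{gRSK}(W^\top)$. Since the identities determine $T$ uniquely (the products over $k=1,2,\dots$ can be divided successively, all quantities being positive), it suffices to show that the entries of $T^\top$ satisfy the defining identities for input $W^\top$.

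The key tool is the geometric analogue of Lemma~\ref{lemma: k-point paths equivalence}. Transposition $(i,j)\mapsto(j,i)$ maps $\Pi_{a,b}^k$ bijectively onto $\hat\Pi_{b,a}^k$ and preserves the multiset of weights, with $w$ replaced by $w^\top$. The bijection $\Phi$ then identifies $\hat\Pi_{b,a}^k$ with $\Pi_{b,a}^k$ while preserving the union of occupied lattice points. Because the summand $\prod_{(i,j)\in\pi_1\cup\dots\cup\pi_k} w_{ij}$ depends only on that union, we obtain, for all $a,b,k$,
\[
\sum_{\pi\in\Pi_{a,b}^k}\prod_{(i,j)\in\cup\pi_l} w_{ji}
=\sum_{\pi\in\Pi_{b,a}^k}\prod_{(i,j)\in\cup\pi_l} w_{ij}.
\]

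Applying this identity with $(a,b)=(r,n)$ turns the upper-triangular identity \eqref{eq: gRSK def Q} for $W^\top$ into the lower-triangular identity \eqref{eq: gRSK def P} for $W$:
\[
\prod_{i=1}^k t'_{r+1-i,n+1-i}=\prod_{i=1}^k t_{n+1-i,r+1-i}, \qquad 1\le k\le r<n.
\]
Dividing consecutive values of $k$ gives $t'_{r+1-i,n+1-i}=t_{n+1-i,r+1-i}$, which says exactly that the strictly upper entries of $T'$ equal the corresponding lower entries of $T$, i.e.\ $t'_{xy}=t_{yx}$ for $x<y$.

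Applying the identity with $(a,b)=(n,r)$ gives the reverse matching. For $r<n$ this handles the strictly lower entries of $T'$. For $r=n$ it handles the diagonal: there $\Pi_{n,n}^k$ is sent to itself under transposition followed by $\Phi$, so the diagonal entries $t'_{n+1-i,n+1-i}=t_{n+1-i,n+1-i}$ agree. Together these show $T'=T^\top$.

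If $W$ is symmetric, then $W=W^\top$, so $T=T'=T^\top$ and $T$ is symmetric.

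The only point needing care is the index bookkeeping: checking that the entries $t_{r+1-i,n+1-i}$ and $t_{n+1-i,r+1-i}$ are genuinely mirror images of each other, and that the case $r=n$ consistently covers the diagonal. The rest is routine.
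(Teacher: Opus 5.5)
Your proposal is correct and follows essentially the same route as the paper, which deduces the symmetry directly from the defining identities \eqref{eq: gRSK def P}--\eqref{eq: gRSK def Q} together with transposition and the union-preserving bijection of Lemma~\ref{lemma: k-point paths equivalence}, exactly as in the classical RSK case. Your index bookkeeping is sound: all strictly upper entries are covered for $r<n$, the case $r=n$ handles the diagonal, and dividing consecutive $k$-products recovers the individual entries.
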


Also as in the classical RSK, the gRSK output encodes products of input entries along rows and columns. 
For each $1\leq r\leq n$, the defining relations imply
\begin{equation}
\label{eq: row product}
    \prod_{i=1}^n w_{ri} = \frac{ t_{r,n}\, t_{r-1,n-1} \cdots t_{1,n-r+1}}{t_{r-1,n}\, t_{r-2,n-1} \cdots t_{1,n-r+2}}
\end{equation}
and
\begin{equation}
\label{eq: col product}
    \prod_{i=1}^n w_{ir} = \frac{t_{n,r}\, t_{n-1,r-1} \cdots t_{n-r+1,1}}{t_{n,r-1}\, t_{n-1,r-2} \cdots t_{n-r+2,1}},
\end{equation}
with the convention that when $r=1$ the denominators are equal to 1. 
These identities will later be used to identify the push-forward laws of polymer environments under gRSK.

A crucial analytic property of gRSK is that it preserves Lebesgue measure in logarithmic coordinates.
\begin{lemma}
[{\cite[Theorem~3.1]{oconnell}}]
\label{volume-preserving}
    The gRSK mapping in logarithmic variables
    \[ (\log w_{ij})_{1\leq i,j\leq n} \ \longmapsto \  (\log t_{ij})_{1\leq i,j \leq n} \]
    has Jacobian determinant $\pm 1$. 
\end{lemma}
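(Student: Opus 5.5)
The plan is to avoid working with the implicit path-sum definitions \eqref{eq: gRSK def P}--\eqref{eq: gRSK def Q} directly. Instead, I would factor the gRSK map as a composition of elementary ``local moves'' in the sense of \cite{oconnell}, following Noumi--Yamada \cite{noumiTropicalRobinsonSchenstedKnuthCorrespondence2004}. Each factor should be a map whose Jacobian in logarithmic coordinates is visibly $\pm1$, and multiplicativity of Jacobians then gives the claim.

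\emph{Step 1 (local moves).} For positions $(i,j)$, define a map $\ell_{ij}$ on $(\R_{>0})^{n\times n}$ that changes only the entry $x_{ij}$, replacing it by
\[
x_{ij}' = \frac{1}{x_{ij}}\,\bigl(x_{i-1,j}+x_{i,j-1}\bigr)\Bigl(\frac{1}{x_{i+1,j}}+\frac{1}{x_{i,j+1}}\Bigr)^{-1},
\]
with the obvious degenerate conventions on the boundary: missing neighbours are dropped, and at the corner $(1,1)$ the factor is simply $w_{11}$ times the appropriate term. Since only one coordinate changes, in log variables $y=\log x$ the move reads $y_{ij}\mapsto -y_{ij}+F(\text{other }y\text{'s})$, with all other coordinates fixed. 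Its Jacobian matrix is the identity except for one row equal to $(\ast,\dots,-1,\dots,\ast)$, so its determinant is $-1$. Following \cite{oconnell}, the gRSK is obtained by sweeping in the entries of $W$ one at a time, in a fixed order of positions (row by row, say), and applying a prescribed word $\rho_{ij}$ in the $\ell$'s after each new entry is inserted. The insertion step multiplies the new entry by a function of previously placed ones, which is again a unipotent shear in log coordinates. Hence the full composition has Jacobian $\pm1$.

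\emph{Step 2 (identification).} The substantive point is that this composition of local moves actually produces the matrix $T$ characterized by \eqref{eq: gRSK def P}--\eqref{eq: gRSK def Q}. I would show by induction on the sweep that after processing the $a\times b$ upper-left block, the entries on the current boundary diagonals equal ratios of the partition functions $\sum_{\Pi_{a,b}^k}\prod w$. The inductive step should reduce to a single local identity: the transformation $\ell_{ij}$ implements the recursion
\[
Z^{(k)}_{a,b}\,Z^{(k-1)}_{a-1,b-1} = w_{ab}\bigl(\cdots\bigr)
\]
for non-intersecting path ensembles. This recursion is the geometric (Lindstr\"om--Gessel--Viennot/Desnanot--Jacobi type) lifting of the max-plus recursion for multi-path LPP.

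\emph{Main obstacle.} I expect Step 2 to be the hard part, namely verifying the multi-path recursion and matching the boundary conventions at every $k$. Step 1 is a routine triangular-Jacobian computation. If a direct proof of Step 2 becomes unwieldy, I would instead take the row-insertion definition of \cite[Section~2.1]{corwinTropicalCombinatoricsWhittaker2014a}, for which the equivalence with the path description is \cite[Section~2.2]{corwinTropicalCombinatoricsWhittaker2014a}. In that approach, each geometric row insertion is shown to be, in log coordinates, a composition of maps each of the form ``one coordinate $\mapsto \pm$ itself $+$ function of the others''. The same determinant argument then applies.
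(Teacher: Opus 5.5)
Your proposal is essentially the argument behind \cite[Theorem~3.1]{oconnell}, which the paper quotes without proof. There gRSK is factored into local moves, each a shear or a sign-reversing shear of a single coordinate in logarithmic variables, so the Jacobian is $\pm1$. Your single-entry toggle at $(i-1,j-1)$ is exactly the $a\mapsto bc/(a(b+c))$ half of O'Connell--Sepp\"al\"ainen--Zygouras's two-entry move $l_{ij}$ followed by the $d\mapsto d(b+c)$ half of $l_{i-1,j-1}$, so the bookkeeping is the same. Your Step~2, the identification with the path formulas \eqref{eq: gRSK def P}--\eqref{eq: gRSK def Q}, is likewise handled in the literature (the local-move description together with Noumi--Yamada's path formulas) rather than reproved.

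One small correction: at the corner $(1,1)$ the upper-left factor in your toggle must be $1$, not something obtained by dropping both neighbours (which would give $0$) or involving $w_{11}$. For $n=2$ this factor gives the correct value $t_{11}=w_{12}w_{21}/(w_{12}+w_{21})$.
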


To mirror the $(P,Q)$ viewpoint of the classical RSK, it is convenient to write $T$ as a pair of triangular patterns, which are also known as geometric Young tableaux.

A \emph{triangular pattern} with $n$ rows is an array $Y=(y_{ij})_{1\leq j\leq i\leq n}$ of positive reals, usually written in the form
\[
Y \;=\;
\begin{array}{ccccccc}
  & &   & y_{11} &   & &   \\[6pt]
  & & y_{22} &   & y_{21} & &   \\[6pt]
& \iddots &   & \vdots &   & \ddots &  \\[6pt]
y_{nn} & & &  \cdots & & &  y_{n1}.
\end{array}
\]
We define its \emph{shape} by
\[
\sh(Y) \defeq (y_{n1},\dots,y_{nn}) \in (\R_{>0})^n.
\]
Note that this is its last row read from right to left. We will use $\mathrm{Tri}_n$ to denote the set of triangular patterns with $n$ rows and $\mathrm{Tri}_n(\lambda)$ for those having the shape $\lambda \in (\R_{>0})^n.$

Given the square matrix $T=(t_{ij})$, define two triangular patterns $P=(p_{ij})$ and $Q=(q_{ij})$ by splitting $T$ along its diagonal. 
Specifically, $P$ records the lower-triangular part of $T$ and $Q$ records the upper-triangular part,  with the indexing convention
\[
p_{ij}\defeq t_{n+1-j,\,i-j+1}, \qquad q_{ij}\defeq t_{i-j+1,\, n+1-j}.
\]
We will conveniently identify $T$ with this pair $(P,Q)=(P(T),Q(T))$.
Written in triangular form, we have
\[
P \;=\;
\begin{array}{ccccccc}
  & &   & p_{11} &   & &   \\[6pt]
  & & p_{22} &   & p_{21} & &   \\[6pt]
& \iddots &   & \vdots &   & \ddots &  \\[6pt]
p_{nn} & & &  \cdots & & &  p_{n1}
\end{array}
\ = \ 
\begin{array}{ccccccc}
  & &   & t_{n1} &   & &   \\[6pt]
  & & t_{n-1,1} &   & \ \ t_{n2} & &   \\[6pt]
& \iddots &   & \vdots &   & \ddots &  \\[6pt]
t_{11}\  & & &  \cdots & & & \ t_{nn} 
\end{array}
\]
and
\[
Q \;=\;
\begin{array}{ccccccc}
  & &   & q_{11} &   & &   \\[6pt]
  & & q_{22} &   & q_{21} & &   \\[6pt]
& \iddots &   & \vdots &   & \ddots &  \\[6pt]
q_{nn} & & &  \cdots & & &  q_{n1}
\end{array}
\ = \ 
\begin{array}{ccccccc}
  & &   & t_{1n} &   & &   \\[6pt]
  & & t_{1,n-1} &   & t_{2n} & &   \\[6pt]
& \iddots &   & \vdots &   & \ddots &  \\[6pt]
t_{11} & & &  \cdots & & &  t_{nn}. 
\end{array}
\]

With this convention, the reversed diagonal
\[
\mathrm{revdiag}(T) \defeq (t_{nn},t_{n-1,n-1},\dots, t_{11})
\]
is exactly the common shape of $P$ and $Q$.
Thus we may equivalently view gRSK as a bijection
\[
(\R_{>0})^{n\times n} \longrightarrow \{(P,Q):\sh(P)=\sh(Q)\}.
\]

Also, transposing $T$ results in interchanging $P$ and $Q$, so gRSK restricts to a bijection between symmetric matrices and identical pairs of triangular patterns. 

Rewriting \eqref{eq: row product}--\eqref{eq: col product} in terms of $P,Q$ gives, for each $1\leq r\leq n$,
\begin{equation}
    \label{eq: row product in Q}
    \prod_{i=1}^n w_{ri} = \frac{q_{r1} q_{r2} \dots q_{rr}}{q_{r-1,1} q_{r-1,2} \dots q_{r-1,r-1}}
\end{equation}
and 
\begin{equation}
    \label{eq: col product in P}
    \prod_{i=1}^n w_{ir} = \frac{p_{r1} p_{r2} \dots p_{rr}}{p_{r-1,1} p_{r-1,2} \dots p_{r-1,r-1}},
\end{equation}
again with the convention that the denominators equal 1 when $r=1.$

We record two combinatorial identities of the gRSK that will be useful later for computing push-forward laws.

Following \cite[(2.10)]{oconnell}, with $s=1/2$, 
define for a triangular pattern $Y=(y_{ij})$ with $n$ rows that 
\[ \mathcal{F}(Y) 
\defeq 
\frac{1}{2y_{nn}} + \sum_{1\leq j\leq i< n} \frac{y_{i+1,j+1}}{y_{ij}} 
+ \sum_{1\leq j < i \leq n} \frac{y_{i-1,j}}{y_{ij}}.\]
Also, for $T$ identified with $(P,Q)$, define 
\[ \mathcal{E}(T) \defeq \frac{1}{t_{11}} + \sum_{i,j} \frac{t_{i-1,j} + t_{i,j-1}}{t_{ij}}= \mathcal{F}(P)+\mathcal{F}(Q). \] 
In this sum, terms with an index outside $\{1,\dots, n\}^2$ are interpreted as zero. 
This definition matches with $\mathcal E_1(T)$ in \cite{oconnell}.

\begin{lemma}
[{\cite[Theorem~3.2]{oconnell}}]
\label{inverse-sum}
    Under gRSK,
    \[ 
    \sum_{i,j} \frac{1}{w_{ij}} = \mathcal{E}(T) = \mathcal{F}(P)+\mathcal{F}(Q).
    \]
\end{lemma}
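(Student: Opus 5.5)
This lemma is quoted from \cite[Theorem~3.2]{oconnell}, and the natural proof follows O'Connell's strategy: represent gRSK as a composition of elementary \emph{local moves}, then track a conserved quantity through each move. I will first recall the local-move construction of gRSK (Noumi--Yamada, as used in \cite{oconnell}). For each position $(i,j)$ there is a birational map $l_{ij}$ which alters only the entries in the $2\times 2$ block at $(i-1,j-1),(i-1,j),(i,j-1),(i,j)$, plus a boundary variant when $i=1$ or $j=1$. Composing these maps in a fixed order, for instance adding rows of $W$ one at a time (row insertion), carries $W$ to $T$ through intermediate matrices $W=T^{(0)},T^{(1)},\dots,T^{(N)}=T$. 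At each stage $T^{(m)}$ splits into an \emph{unprocessed} set $U_m$ of positions, whose entries are still the original $w_{ij}$, and a \emph{processed} staircase-shaped region $S_m$, which already carries the partial gRSK output of the part of $W$ read so far.

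The key step is to define an intermediate functional
\[
\Phi(T^{(m)}) \defeq \sum_{(i,j)\in U_m} \frac{1}{t^{(m)}_{ij}} + \mathcal{E}_{S_m}(T^{(m)}),
\]
where $\mathcal{E}_{S_m}$ is the energy $\frac{1}{t_{11}}+\sum \frac{t_{i-1,j}+t_{i,j-1}}{t_{ij}}$ restricted to the processed region, with indices outside $S_m$ treated as zero. I will then show that $\Phi(T^{(m)})=\Phi(T^{(m+1)})$. At $m=0$ the value is $\sum_{i,j} 1/w_{ij}$, and at $m=N$ it is $\mathcal{E}(T)$. The identity $\mathcal{E}(T)=\mathcal{F}(P)+\mathcal{F}(Q)$ is just bookkeeping. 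Under the indexing $p_{ij}=t_{n+1-j,i-j+1}$ and $q_{ij}=t_{i-j+1,n+1-j}$, each ratio $t_{i-1,j}/t_{ij}$ or $t_{i,j-1}/t_{ij}$ lies entirely below or entirely above the diagonal, and so appears exactly once in one of the two sums defining $\mathcal{F}$. The term $1/t_{11}$ is split as $\frac{1}{2t_{11}}+\frac{1}{2t_{11}}$, using $p_{nn}=q_{nn}=t_{11}$.

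The invariance under one local move is a finite algebraic identity in the four block variables together with the incoming weight $w$. A local move replaces the corner $d$ by an expression of the form $d' = w\,(a+b)$ and the opposite corner by a harmonic-type combination $c' = \frac{ab}{d'}\cdot\frac{1}{c}\cdot(\ldots)$, where $a,b$ are the side entries. I will expand the terms of $\Phi$ that involve the block, namely $1/w$ and the ratios touching $c,d$, and check that they match before and after. The cross terms telescope because $\frac{a+b}{d'}=\frac{1}{w}$, and the remaining terms cancel by the harmonic relation for $c'$. The boundary moves ($i=1$ or $j=1$) need a separate but simpler check, and the term $1/t_{11}$ is produced by the move at the corner.

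I expect the main obstacle to be getting the exact form of the local moves and the boundary conventions right, so that the intermediate functional telescopes correctly across the whole sequence. In particular, ratio terms that straddle the boundary of $S_m$ must be shown to vanish or to be accounted for at the right moment. I will first verify the claim by hand for $n=2$, where $T$ is computed directly from \eqref{eq: gRSK def P}--\eqref{eq: gRSK def Q}, to pin down these conventions before running the induction.
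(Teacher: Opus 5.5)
Your strategy is the right one, and it is the local-move argument behind \cite[Theorem~3.2]{oconnell}, which the paper quotes without proof. Your choice of $\Phi$, its values at the two ends of the process, and the bookkeeping $\mathcal{E}(T)=\mathcal{F}(P)+\mathcal{F}(Q)$ (including the split of $1/t_{11}$ using $p_{nn}=q_{nn}=t_{11}$) are all correct.

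The one step that does not go through as you describe it is the conservation step. Reading a single weight does \emph{not} alter only the $2\times 2$ block at $(i,j)$, so conservation is not an identity in the four block variables and $w$ alone. In the local-move construction, inserting $w_{ij}$ is the composite $l_{i-k+1,j-k+1}\circ\cdots\circ l_{i-1,j-1}\circ l_{ij}$ with $k=i\wedge j$. It changes every entry $x_s=x_{i-s,j-s}$, $0\le s<k$, on the diagonal through $(i,j)$. The unspecified factor $(\ldots)$ in your formula for $c'$ is supplied by $l_{i-1,j-1}$, which in turn modifies $x_{i-2,j-2}$, and so on down the diagonal. For example, with $n=3$, reading $w_{33}$ already changes $t_{11}$; your $n=2$ test cannot detect this, because there the cascade stays inside the block. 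Moreover, $\Phi$ is not conserved by a single $l_{ij}$: between $l_{ij}$ and $l_{i-1,j-1}$ the matrix is not of processed/unprocessed form. So your stages $T^{(m)}$ must be taken after complete diagonal cascades, and the terms touching every diagonal cell must be checked, not just those touching $c$ and $d$.

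The repair is short.
\begin{itemize}
\item For $0\le s<k$, let $A_s$ be the sum of the upper and left neighbours of $(i-s,j-s)$. Neighbours outside the matrix count as $0$, except that $A_s=1$ when the cell is $(1,1)$; this is where the term $1/t_{11}$ comes from.
\item For $1\le s<k$, let $B_s$ be the sum of the reciprocals of the lower and right neighbours of $(i-s,j-s)$.
\item All these neighbours lie off the diagonal, are already processed, and are unchanged by the cascade.
\end{itemize}
The cascade then acts by $x_0\mapsto w\,A_0$ and $x_s\mapsto A_s/(x_sB_s)$ for $s\ge 1$. This is the geometric lift of the growth-diagram recursion in the appendix.

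No ratio $t_{p-1,q}/t_{pq}$ or $t_{p,q-1}/t_{pq}$ has both entries on one diagonal, so the terms of $\Phi$ that change split by diagonal cell:
\begin{itemize}
\item Cell $0$ contributes $1/w$ before the cascade and $A_0/x_0'=1/w$ after it.
\item Cell $s\ge 1$ contributes $A_s/x_s+B_sx_s$, and the map $x_s\mapsto A_s/(x_sB_s)$ simply interchanges these two summands.
\end{itemize}
This gives $\Phi(T^{(m)})=\Phi(T^{(m+1)})$ for every insertion, with boundary cells handled by the same conventions.
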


Next, for $x = (x_1,x_2,\dots, x_n) \in (\R_{>0})^n$, define
\[ 
\mathcal{D}(x) 
 = \prod_{i=1}^n x_i^{(-1)^{i+1}}
= \frac{x_1x_3x_5\dots}{x_2x_4x_6\dots}.
\] 

\begin{lemma}
[{\cite[Lemma~5.1(b)]{oconnell}}]
\label{diagonal product}
    
    If $W$ is symmetric, 
    then
    \[ 
    4^{\lfloor n/2 \rfloor}
    \prod_{i=1}^n 
    w_{ii} 
    = \prod_{i=1}^n t_{i,i}^{(-1)^{n-i}}
    =
    \mathcal{D}(\mathrm{revdiag}(T))
    = \mathcal{D}(\sh(P))
    = \mathcal{D}(\sh(Q))
    .
    \]
\end{lemma}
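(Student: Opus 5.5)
The equalities after the first one are bookkeeping. By definition $\mathrm{revdiag}(T)=(t_{nn},t_{n-1,n-1},\dots,t_{11})$, so $\mathcal D(\mathrm{revdiag}(T))=t_{nn}\,t_{n-1,n-1}^{-1}\,t_{n-2,n-2}\cdots=\prod_i t_{ii}^{(-1)^{n-i}}$. With the indexing $p_{ij}=t_{n+1-j,i-j+1}$ and $q_{ij}=t_{i-j+1,n+1-j}$, both $\sh(P)$ and $\sh(Q)$ equal $\mathrm{revdiag}(T)$. So the content of the statement is the identity
\[
4^{\lfloor n/2\rfloor}\prod_{i=1}^n w_{ii}=\prod_{i=1}^n t_{ii}^{(-1)^{n-i}}
\]
for symmetric $W$. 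This is a geometric analogue of Proposition~\ref{schutzenberger}. I do not expect to derive it from the tropical statement, since the factor $4^{\lfloor n/2\rfloor}$ has no tropical counterpart. Instead I would argue directly with the symmetric structure.

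As a sanity check, and to see where the $4$ comes from, take $n=2$. By \eqref{eq: gRSK def P} with $r=n=2$, we have $t_{22}=\sum_\pi\prod_{\pi}w$, summed over the two paths from $(1,1)$ to $(2,2)$, and $t_{22}t_{11}=w_{11}w_{12}w_{21}w_{22}$. When $w_{12}=w_{21}$ the partition function is $2w_{11}w_{12}w_{22}$. Hence
\[
\frac{t_{22}}{t_{11}}=\frac{(2w_{11}w_{12}w_{22})^2}{w_{11}w_{12}^2w_{22}}=4w_{11}w_{22}.
\]
The factor $2$ arises because the two paths around a symmetric $2\times2$ block have equal weight, and it enters squared.

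For general $n$ I would use O'Connell's local-move description of gRSK. There $T$ is obtained from $W$ by a sequence of elementary maps $\rho_{ij}$ acting on the entries near $(i,j)$, and each $\rho_{ij}$ changes only the entries of one diagonal $\{(a,b):a-b=\text{const}\}$. I would order the moves symmetrically: moves at $(i,j)$ and $(j,i)$ are paired, and moves at diagonal positions $(i,i)$ are done on their own. Symmetry is preserved at every stage, consistently with Theorem~\ref{thm: gRSK symmetry}. The plan is then to track the alternating product of the main-diagonal entries through the process.
\begin{itemize}
\item Off-diagonal moves do not touch the main diagonal, so they leave this product unchanged.
\item A diagonal local move acts on $t_{ii}$ by a rule of the form $t_{ii}\mapsto t_{i-1,i}t_{i,i-1}/t_{ii}\cdot(\cdots)$, where the bracket contains sums like $t_{i-1,i}+t_{i,i-1}$. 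In the symmetric case this bracket collapses to $2t_{i-1,i}$, giving an explicit factor $2$.
\end{itemize}
Composing, the alternating product picks up exactly one $w_{ii}$ for each $i$, together with powers of $2$. The powers of $2$ should then telescope to $4^{\lfloor n/2\rfloor}$, with the parity of $n$ entering through the sign pattern $(-1)^{n-i}$.

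An equivalent route is induction on $n$. One passes from the top-left $(n-1)\times(n-1)$ block to the full matrix by geometric row insertion of row $n$ followed by column insertion of column $n$; by symmetry these are the same operation transposed. One then checks that the new diagonal entry $t_{nn}$ and the shifted sign pattern multiply the invariant by $w_{nn}$, and by an extra $4$ exactly when $n$ is even.

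The main obstacle is the precise bookkeeping of the diagonal local move. I would need to verify, from O'Connell's explicit formula for $\rho_{ii}$ restricted to symmetric arrays, that it contributes exactly the factor $w_{ii}$ times the power of $2$ needed to produce $4^{\lfloor n/2\rfloor}$ overall, and that the alternating signs align. I would first check this by hand for $n=3$, using the path formulas \eqref{eq: gRSK def P}, before writing the general induction.
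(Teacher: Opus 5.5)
Your reduction of the statement to the single identity $4^{\lfloor n/2\rfloor}\prod_i w_{ii}=\prod_i t_{ii}^{(-1)^{n-i}}$ is correct, and so is your $n=2$ check. The paper does not prove this identity itself; it imports it from \cite{oconnell}. So a local-move argument is a reasonable way to supply it.

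However, your proposal stops exactly where the content lies. The step ``composing, the alternating product picks up exactly one $w_{ii}$ for each $i$'' is asserted rather than computed, and the mechanism you sketch for it is not quite right. Write $a=t_{k-1,k-1}$, $b=t_{k-1,k}$, $c=t_{k,k-1}$, $d=t_{kk}$. The elementary move at $(k,k)$ is $a\mapsto bc/(a(b+c))$, $d\mapsto d(b+c)$. It changes two diagonal entries, and the one that gets inverted is $t_{k-1,k-1}$, not $t_{kk}$. Because of this inversion, no fixed alternating product such as $\prod_k t_{kk}^{(-1)^{n-k}}$ is preserved by single moves, nor is it simply multiplied by $w_{ii}$ times a power of $2$ at stage $i$. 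The sign pattern must shift with the stage; you allude to this in your second route but do not carry it out. Also, the powers of $2$ do not telescope; they accumulate.

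Here is the computation that closes the gap. Run the moves stage by stage: at stage $i$, first process the positions $(i,j)$ and $(j,i)$ for $j<i$, then $(i,i)$. This reordering is legitimate. Sweeps at incomparable positions lie on diagonals at distance at least $2$, so neither writes what the other reads, and they commute. The paired sweeps at $(i,j)$ and $(j,i)$ keep the array symmetric.

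Off-diagonal sweeps never touch the main diagonal. So just before the diagonal sweep of stage $i$, the diagonal is $(y_1,\dots,y_{i-1},w_{ii})$, where $y_1,\dots,y_{i-1}$ is the diagonal of the gRSK image of the top-left $(i-1)\times(i-1)$ block. Let $b_k$ be the current $(k-1,k)=(k,k-1)$ entry; the diagonal sweep does not change it. With $b=c$, the move at $(k,k)$ becomes $(t_{k-1,k-1},t_{kk})\mapsto(b_k/(2t_{k-1,k-1}),\,2b_kt_{kk})$. Applying it at $(i,i),(i-1,i-1),\dots,(2,2)$ in this order yields
\[
y_1'=\frac{b_2}{2y_1},\qquad y_k'=\frac{b_kb_{k+1}}{y_k}\quad(2\le k\le i-1),\qquad y_i'=2b_iw_{ii}.
\]
Each $b_k$ enters $y'_{k-1}$ and $y'_k$ with opposite signs, so the $b$'s cancel. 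Every old $y_k$ is inverted, which is exactly what turns the stage-$(i-1)$ sign pattern into the stage-$i$ one:
\[
\prod_{k=1}^{i}(y_k')^{(-1)^{i-k}}=2^{1+(-1)^i}\,w_{ii}\prod_{k=1}^{i-1}y_k^{(-1)^{i-1-k}}.
\]
Start at stage $1$, where the sweep is trivial, and induct. Each even stage contributes a factor $4$, giving the total $\prod_{i=2}^n2^{1+(-1)^i}=4^{\lfloor n/2\rfloor}$. With this computation inserted, your argument is complete. Your row/column-insertion variant reduces to the same calculation.
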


\subsection{gRSK push-forward for full-space log-gamma polymer}\label{subsec: log-gamma polymer}

In this and the next subsection, we apply the gRSK to matrices coming from log-gamma polymer environments and study the push-forward laws on the output triangular patterns. 

Fix parameters $\alpha,\beta \in \R^n$ such that $\alpha_i+\beta_j > 0 $ for all $1\leq i,j \leq n$. 
Under the log-gamma polymer model, we take the input matrix $W = (w_{ij})_{1\leq i,j\leq n}$ to have independent entries with 
\[
w_{ij} \sim \mathrm{Gamma}^{-1}(\alpha_i+\beta_j),
\]
i.e. with density $\Gamma(\alpha_i+\beta_j)^{-1}x^{-(\alpha_i+\beta_j)-1}e^{-1/x}$ on $\R_{> 0}.$

Equivalently, the joint law of $W$ as a probability measure on $(\R_{> 0})^{n\times n}$ is given by

\[
    \nu_{\alpha,\beta} (dw) = \frac{1}{\mathcal Z_{\alpha,\beta}} \prod_{i,j} w_{ij}^{-\alpha_i-\beta_j} e^{-\sum_{i,j} 1 / w_{ij}} \prod_{i,j} \frac{dw_{ij}}{w_{ij}}
    ,
\]
where 
$ \mathcal Z_{\alpha,\beta} = \prod_{i,j} \Gamma(\alpha_i+\beta_j)$. 

We introduce a shorthand notation that for $Y \in \mathrm{Tri}_n$ and $\theta \in \R^n$,
\[
Y^\theta \defeq \prod_{k=1}^n \left(
\frac{y_{k,k} \, y_{k,k-1} \dots y_{k,1}}{y_{k-1,k-1}\, y_{k-1,k-2} \dots y_{k-1,1}} \right)
^{\theta_k}
\]
with the convention that when $k=1$ the denominator equals 1. 

Then, using the identities \eqref{eq: row product in Q} and \eqref{eq: col product in P}, we can write 
\begin{equation} 
\label{eq: W in terms of P,Q}
    \prod_{i,j} w_{ij}^{-\alpha_i-\beta_j} = \prod_{i=1}^n \left( \prod_{j=1}^n w_{ij} \right)^{-\alpha_i} \prod_{j=1}^n \left( \prod_{i=1}^n w_{ij}\right)^{-\beta_j} = P^{-\beta} Q^{-\alpha}.
\end{equation}

Together with Lemmas~\ref{volume-preserving} and \ref{inverse-sum}, it follows that the 
push-forward of $\nu_{\alpha,\beta}$ under gRSK is 
\[
    (\nu_{\alpha,\beta} \circ \mathrm{gRSK}^{-1})(dt) = \frac{1}{\mathcal Z_{\alpha,\beta}} P(t)^{-\beta} Q(t)^{-\alpha} e^{-\mathcal{E}(t)} \prod_{i,j} \frac{dt_{ij}}{t_{ij}}
\]
(This is also \cite[Corollary~3.3]{oconnell}).

As $\mathcal{E}(T) = \mathcal{F}(P)+\mathcal{F}(Q)$, 
the preceding density cleanly factorizes if we fix the diagonal of $T$, and then integrating out the $P$-coordinates gives us the conditional law of $Q$ given its shape.
\begin{proposition}
\label{prop: gRSK Q given shape}
    The regular conditional distribution kernel of $Q$ given $\sh(Q)$ is
    \[
    K_{Q \mid \sh(Q)}(\mu,dq) = 
    \frac{1}{\mathcal Z_{\alpha}(\mu)} q^{-\alpha} 
    e^{-\mathcal{F}(q)}
    \prod_{1\leq j\leq i < n} \frac{dq_{ij}}{q_{ij}}, \qquad q\in \mathrm{Tri}_n(\mu),
    \]
    where
    $\mu \in (\R_{>0})^n$
    and
    \[
    \mathcal Z_\alpha(\mu) = \int_{\mathrm{Tri}_n(\mu)} q^{-\alpha} 
    e^{-\mathcal{F}(q)}
    \prod_{1\leq j\leq i < n} dq_{ij}/q_{ij}.
    \]
\end{proposition}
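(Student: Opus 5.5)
The plan is to disintegrate the push-forward density of $T=\mathrm{gRSK}(W)$ along the shape coordinates $\mu=\mathrm{revdiag}(T)$, and then read off the $Q$-marginal of the resulting conditional law. I will work in logarithmic coordinates throughout, so that $\prod_{i,j} dt_{ij}/t_{ij}$ is Lebesgue measure in $(\log t_{ij})$.

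In these coordinates $T$ splits as $(\mu, P^{\circ}, Q^{\circ})$. Here $P^{\circ}=(p_{ij})_{1\le j\le i<n}$ collects the strictly lower-triangular entries of $T$, and $Q^{\circ}$ collects the strictly upper-triangular ones. Since this is a product decomposition of $(\R_{>0})^{n\times n}$, the reference measure factorizes as
\[
\prod_{i,j}\frac{dt_{ij}}{t_{ij}}=\prod_{k}\frac{d\mu_k}{\mu_k}\,\prod_{1\le j\le i<n}\frac{dp_{ij}}{p_{ij}}\,\prod_{1\le j\le i<n}\frac{dq_{ij}}{q_{ij}}.
\]
Combining this with $\mathcal{E}(t)=\mathcal{F}(P)+\mathcal{F}(Q)$, the push-forward density becomes
\[
\frac{1}{\mathcal Z_{\alpha,\beta}}\Bigl(P^{-\beta}e^{-\mathcal F(P)}\Bigr)\Bigl(Q^{-\alpha}e^{-\mathcal F(Q)}\Bigr)
\]
against this product measure. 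For fixed $\mu$, the first factor depends only on $(\mu,P^{\circ})$ and the second only on $(\mu,Q^{\circ})$. One should check that $P^{-\beta}$ and $\mathcal F(P)$ involve only entries of $P$, which include the shape. The definitions of $Y^\theta$ and $\mathcal F(Y)$ confirm this, since both are expressed purely through the pattern $Y$.

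By Tonelli, the marginal density of $\mu$ is $\mathcal Z_{\alpha,\beta}^{-1}\mathcal Z_\beta(\mu)\mathcal Z_\alpha(\mu)$, where $\mathcal Z_\beta(\mu)$ is defined analogously to $\mathcal Z_\alpha(\mu)$. The joint density of $(\mu,Q^{\circ})$ is $\mathcal Z_{\alpha,\beta}^{-1}\mathcal Z_\beta(\mu)\,q^{-\alpha}e^{-\mathcal F(q)}$. Dividing the second by the first gives a kernel of exactly the stated form. It is a genuine version of the regular conditional distribution because it is measurable in $\mu$, again by Tonelli. It also integrates against the $\mu$-marginal to reproduce the joint law, as follows from Fubini on the product space.

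The main obstacle is verifying that $0<\mathcal Z_\alpha(\mu)<\infty$, at least for almost every $\mu$ under the law of $\sh(Q)$, so that the kernel is well defined. Finiteness almost everywhere follows from the computation above: the total mass is $1$, so $\mathcal Z_\beta(\mu)\mathcal Z_\alpha(\mu)<\infty$ for a.e. $\mu$. Positivity of the integrand then forces each factor to be finite and positive. On the null set of $\mu$ where this fails, the kernel can be defined arbitrarily, for instance as a fixed probability measure, and it remains a version. If one wants finiteness for every $\mu$, the route is to note that $e^{-\mathcal F(q)}$ decays superexponentially in each interlacing direction, as in the Whittaker-function integral representations. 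This refinement is not needed for the proposition as stated.
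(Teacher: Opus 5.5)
Your proposal is correct and follows the same route as the paper. The paper likewise uses $\mathcal E(T)=\mathcal F(P)+\mathcal F(Q)$ to factorize the gRSK push-forward density once the diagonal $\mathrm{revdiag}(T)=\sh(Q)$ is fixed, and then integrates out the $P$-coordinates; your extra bookkeeping (the full product structure of the gRSK image, Tonelli, and almost-everywhere finiteness of $\mathcal Z_\alpha(\mu)$) supplies details the paper leaves implicit.
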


\subsection{gRSK push-forward for half-space log-gamma polymer}\label{subsec: half space log gamma}

Now we turn to the half-space or symmetric environment.

Let $\gamma = (\gamma_1,\dots, \gamma_n) \in \R^n$ be a sequence of parameters such that $\gamma_i+\gamma_j>0$ for all $1 \leq i<j \leq n$.  
Let $c \in \R$ be an additional parameter such that $c + \gamma_i > 0$ for all $i$. 
Let $\tilde{W}=(\tilde{w}_{ij})$ be a symmetric $n\times n$ random matrix, such that the upper-triangular entries $\{\tilde w_{ij}:1\leq i\leq j \leq n\}$ are independently distributed with 
\[
\tilde w_{ij} \sim \mathrm{Gamma}^{-1}(\gamma_i+\gamma_j)
\]
for $i<j$
and
\[
2 \tilde w_{ii} \sim \mathrm{Gamma}^{-1}(c+\gamma_i).
\]

Equivalently, this is defined by the probability measure on the space $(\R_{>0})^{n\times n}_{\mathrm{sym}}$, identified with that of the upper-triangular entries, by
\[ 
\tilde{\nu}_{c,\gamma} (d\tilde w) = 
\frac{1}{\tilde{\mathcal Z}_{c,\gamma}}
\prod_{i < j} \tilde{w}_{ij}^{-\gamma_i-\gamma_j} 
\prod_{i} \tilde{w}_{ii}^{-c-\gamma_i} 
\exp\left(-\sum_{i<j}\frac{1}{\tilde{w}_{ij}}-\sum_{i}\frac{1}{2\tilde{w}_{ii}} \right) 
\prod_{i\leq j} 
\frac{d\tilde{w}_{ij}}{\tilde{w}_{ij}}
\]
where 
\[
\tilde{\mathcal Z}_{c,\gamma} = 2^{\sum_i (c+ \gamma_i)}\prod_i \Gamma(c+\gamma_i)\prod_{i<j} \Gamma(\gamma_i+\gamma_j).
\] 

The gRSK restricts to a bijection $\tilde W \mapsto (R,R)$ between symmetric matrices with positive entries and pairs of identical triangular patterns. 
The volume-preserving property still holds in this case:
\begin{lemma}
[{\cite[Theorem~5.2]{oconnell}}]
\label{volume-preserving symmetric}
    The gRSK on symmetric matrices in logarithmic variables
    \[
    (\log \tilde{w}_{ij})_{1\leq i \leq j \leq n} \longmapsto (\log r_{ij})_{1 \leq j\leq i \leq n}
    \]
    has Jacobian determinant $\pm 1$. 
\end{lemma}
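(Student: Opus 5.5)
The plan is to realize gRSK as a composition of elementary birational ``local moves'', in the spirit of \cite{oconnell}. I would check that each move preserves the subspace of symmetric matrices, and that its restriction to that subspace has Jacobian $\pm1$ in logarithmic coordinates. Multiplying the Jacobians then gives the statement.

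\textbf{Step 1 (factorization).} I would write $\mathrm{gRSK}=\pi_n\circ\cdots\circ\pi_1$, where each $\pi_k$ incorporates one new row of input entries $w_{k1},\dots,w_{kn}$ into the array built so far. Each $\pi_k$ is a composition of local moves $l_{ij}$. A local move changes the single entry $t_{ij}$ into
\[
\frac{1}{t_{ij}}\,(t_{i-1,j}+t_{i,j-1})\Bigl(\frac{1}{t_{i+1,j}}+\frac{1}{t_{i,j+1}}\Bigr)^{-1},
\]
with the usual boundary conventions, possibly multiplied by a fresh input entry. I would take this factorization from the local-move description of \cite{oconnell}, which agrees with the path definition \eqref{eq: gRSK def P}--\eqref{eq: gRSK def Q} by the Noumi--Yamada theorem.

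To adapt it to symmetric inputs, I would reorder the construction so that it processes the principal $k\times k$ corners in turn. At stage $k$ one adjoins row $k$ and column $k$ simultaneously. The moves then come in pairs $l_{ij}, l_{ji}$ with $i\neq j$, together with unpaired moves on diagonal entries $t_{ii}$. By Theorem~\ref{thm: gRSK symmetry} applied to each corner, every intermediate array is symmetric whenever the input is symmetric. I expect this reorganization to be the main obstacle: one must check that the reordered composition still equals gRSK, and that no intermediate step breaks symmetry. I would first try to deduce it from commutation of local moves at non-adjacent positions and from the fact that, by \eqref{eq: gRSK def P}--\eqref{eq: gRSK def Q}, the output depends only on the partition functions of corners.

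\textbf{Step 2 (Jacobians on the symmetric slice).} I would use the upper-triangular entries $x_{ij}=\log t_{ij}$, $i\le j$, as coordinates on the symmetric slice. The move $l_{ij}$ depends only on the neighbours $(i\pm1,j)$ and $(i,j\pm1)$. The mirror entry $(j,i)$ is never among these when $i\neq j$; for instance, the neighbours of $(i,i+1)$ include $(i,i)$ and $(i+1,i+1)$ but not $(i+1,i)$.

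Hence the paired move $l_{ij}\circ l_{ji}$, restricted to symmetric arrays, changes only the coordinate $x_{ij}$. It acts by
\[
x_{ij}\mapsto -x_{ij}+g(\text{other coordinates}),
\]
and its Jacobian matrix is triangular with determinant $-1$. The same form holds for a diagonal move, where possible symmetric reflection terms enter $g$ but not the coefficient of $x_{ii}$. Adjoining a new input entry, including the factor $2$ on the diagonal, only adds $\log w$ or a constant to one coordinate. That is a shear, with determinant $1$ in the joint variables.

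\textbf{Step 3 (conclusion).} Composing all these maps, the total change of variables from $(\log\tilde w_{ij})_{i\le j}$ to $(\log r_{ij})_{j\le i}$ is a product of triangular maps with determinants $\pm1$. The final relabelling $t\mapsto R$ by $p_{ij}=t_{n+1-j,i-j+1}$ is a permutation of coordinates. Therefore the determinant is $\pm1$.
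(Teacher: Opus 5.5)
Your proposal is correct. The paper does not prove this lemma itself but cites \cite[Theorem~5.2]{oconnell}, and your argument is essentially the local-move proof behind that citation: factor gRSK into local moves, order them corner by corner so that mirror-image moves occur in commuting pairs, and note that each pair (or diagonal move) changes a single upper-triangular log coordinate by $x\mapsto \pm x+g(\text{others})$.

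Two points need tidying.
\begin{itemize}
\item Your single-entry ``toggle'' is not literally the local move of \cite{oconnell}, which changes two entries. It is what you get by grouping those two-entry moves along a diagonal.
\item The corner-by-corner reordering is justified because diagonal updates at incomparable positions $(r,s),(r',s')$ act on diagonals at distance at least $2$. Each update reads only its own and adjacent diagonals, so such updates commute.
\end{itemize}
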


\begin{proposition}
\label{prop: gRSK pushforward law of R}
    The push-forward law of $R\in \mathrm{Tri}_n$ is given by
    \[
    (\tilde\nu_{c,\gamma} \circ \mathrm{gRSK}^{-1}) (dr) =
    \frac{4^{\lfloor n/2 \rfloor c}}{\tilde{\mathcal Z}_{c,\gamma}}
    r^{-\gamma} e^{-\mathcal{F}(r)} 
    \mathcal{D}(\sh(r))^{-c}
    \prod_{j \leq i} \frac{dr_{ij}}{r_{ij}}.
    \]
\end{proposition}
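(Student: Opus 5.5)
The plan is a direct change of variables, in the same way the full-space push-forward was obtained, using Lemma~\ref{volume-preserving symmetric} in place of Lemma~\ref{volume-preserving}. We take the density of $\tilde\nu_{c,\gamma}$ with respect to $\prod_{i\le j} d\tilde w_{ij}/\tilde w_{ij}$, which is Lebesgue measure in the logarithmic coordinates $(\log\tilde w_{ij})_{i\le j}$. We then rewrite each factor of this density as a function of $R$. Since gRSK restricted to symmetric matrices is a bijection onto $\mathrm{Tri}_n$ with Jacobian $\pm1$ in log coordinates, the push-forward density with respect to $\prod_{j\le i} dr_{ij}/r_{ij}$ is obtained by substitution.

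The first step handles the power factors. Using symmetry, write
\[
\prod_{i<j}\tilde w_{ij}^{-\gamma_i-\gamma_j}\prod_i \tilde w_{ii}^{-c-\gamma_i}
=\Bigl(\prod_{i,j}\tilde w_{ij}^{-\gamma_i-\gamma_j}\Bigr)^{1/2}\prod_i \tilde w_{ii}^{-c}
=\prod_{r=1}^n\Bigl(\prod_{j}\tilde w_{rj}\Bigr)^{-\gamma_r}\prod_i\tilde w_{ii}^{-c}.
\]
This holds because $\prod_{i,j}\tilde w_{ij}^{-\gamma_i-\gamma_j}$ counts each off-diagonal pair twice and each diagonal entry with exponent $-2\gamma_i$.

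By \eqref{eq: row product in Q} applied with $Q=R$, the first product equals $r^{-\gamma}$; this is the same computation as \eqref{eq: W in terms of P,Q}, with $P=Q=R$ and the square root taken. By Lemma~\ref{diagonal product},
\[
\prod_i\tilde w_{ii}=4^{-\lfloor n/2\rfloor}\mathcal D(\sh(r)),
\]
so that
\[
\prod_i\tilde w_{ii}^{-c}=4^{\lfloor n/2\rfloor c}\,\mathcal D(\sh(r))^{-c}.
\]
This produces the constant $4^{\lfloor n/2\rfloor c}$ in the statement.

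The second step handles the exponential factor. Applying Lemma~\ref{inverse-sum} to the symmetric matrix $\tilde W$ gives
\[
\sum_{i,j}1/\tilde w_{ij}=2\sum_{i<j}1/\tilde w_{ij}+\sum_i 1/\tilde w_{ii}=\mathcal F(R)+\mathcal F(R)=2\mathcal F(R).
\]
Dividing by two gives $\sum_{i<j}1/\tilde w_{ij}+\sum_i 1/(2\tilde w_{ii})=\mathcal F(r)$, which is exactly the exponent in $\tilde\nu_{c,\gamma}$. Collecting the three factors and the normalization $\tilde{\mathcal Z}_{c,\gamma}^{-1}$ yields the claimed density.

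The main point needing care is not the algebra but the fact that the earlier identities must be applied on the symmetric locus. Lemma~\ref{inverse-sum} and \eqref{eq: row product in Q} are stated for general $W$, so they specialize without issue. The Jacobian, however, must be the symmetric one from Lemma~\ref{volume-preserving symmetric}, with $n(n+1)/2$ coordinates on each side, rather than the full-space Lemma~\ref{volume-preserving}. One should also check that the measure $\prod_{i\le j}d\tilde w_{ij}/\tilde w_{ij}$ used for $\tilde\nu_{c,\gamma}$ is precisely the one whose image under the symmetric gRSK is $\prod_{j\le i}dr_{ij}/r_{ij}$. Both hold by construction, so the proposition follows.
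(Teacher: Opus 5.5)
Your proposal is correct and follows the paper's proof essentially step for step. You rewrite the power factor via \eqref{eq: row product in Q} as $r^{-\gamma}$, the exponential factor via Lemma~\ref{inverse-sum} as $e^{-\mathcal F(r)}$, and $\prod_i \tilde w_{ii}^{-c}$ via Lemma~\ref{diagonal product} as $4^{\lfloor n/2\rfloor c}\mathcal D(\sh(r))^{-c}$, then conclude with the symmetric Jacobian of Lemma~\ref{volume-preserving symmetric}.
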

\begin{proof}
    From \eqref{eq: row product in Q} we have
    \[ \prod_{i<j} \tilde{w}_{ij}^{-\gamma_i-\gamma_j}
    \prod_{i} \tilde{w}_{ii}^{-\gamma_i} = \prod_{i} \left( \prod_{j} \tilde{w}_{ij} \right)^{-\gamma_i} = R^{-\gamma}.\]
    From Lemma~\ref{inverse-sum} we have that
    \[ \exp\left(-\sum_{i<j}\frac{1}{\tilde{w}_{ij}}-\sum_{i}\frac{1}{2\tilde{w}_{ii}} \right) 
     = \exp \left(-\frac12 \sum_{i,j} \frac{1}{\tilde{w}_{ij}}\right) = e^{-\mathcal{F}(R)}.\]
    From Lemma~\ref{diagonal product} we have
    \[ \prod_i \tilde{w}_{ii}^{-c} = (4^{-\lfloor n/2 \rfloor} \mathcal{D}(\sh(R)))^{-c}.\]
    The result follows from these together with Lemma~\ref{volume-preserving symmetric}.
\end{proof}

\begin{proposition}
\label{polymer matrix change of parameter}
    The marginal distribution of $\sh(R)$ is invariant under the permutation of the parameters $\gamma_1,\dots,\gamma_n$.
\end{proposition}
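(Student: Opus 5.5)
The plan is to compute the marginal density of $\sh(R)$ by integrating the push-forward law of Proposition~\ref{prop: gRSK pushforward law of R} over the fibre $\mathrm{Tri}_n(\mu)$. This should give
\[
\frac{4^{\lfloor n/2\rfloor c}}{\tilde{\mathcal Z}_{c,\gamma}}\,\mathcal D(\mu)^{-c}\,\mathcal Z^{\mathrm{sym}}_\gamma(\mu)\prod_{i}\frac{d\mu_i}{\mu_i},
\qquad
\mathcal Z^{\mathrm{sym}}_\gamma(\mu)=\int_{\mathrm{Tri}_n(\mu)} r^{-\gamma}e^{-\mathcal F(r)}\prod_{1\le j\le i<n}\frac{dr_{ij}}{r_{ij}}.
\]
The prefactor $4^{\lfloor n/2\rfloor c}\mathcal D(\mu)^{-c}$ does not depend on $\gamma$. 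The normalizer $\tilde{\mathcal Z}_{c,\gamma}$ is visibly symmetric in $\gamma$: both $\sum_i(c+\gamma_i)$ and the products of Gamma functions are. So everything reduces to showing that the integral $\mathcal Z^{\mathrm{sym}}_\gamma(\mu)$, a $\mathrm{GL}_n$-Whittaker-type function with the extra boundary term $1/(2r_{nn})$ in $\mathcal F$, is symmetric in $\gamma$ for each fixed shape $\mu$. This is the geometric analogue of the symmetry of $s_\lambda$ used in Proposition~\ref{lpp matrix change of parameter}.

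To avoid proving the symmetry of this integral by hand (Bump–Stade / Givental-type arguments), I would get it from a full-space comparison. Take the full-space log-gamma matrix $W$ with parameters $(\alpha,\beta)=(\gamma,\beta)$, where $\beta$ is chosen with $\gamma_i+\beta_j>0$ for all $i,j$. Integrating out $P$ in the push-forward density factorizes the marginal of the common shape as
\[
\mathcal Z_{\alpha,\beta}^{-1}\,\mathcal Z_\gamma(\mu)\,\mathcal Z_\beta(\mu)\prod_i\frac{d\mu_i}{\mu_i},
\]
with $\mathcal Z_\gamma$ as in Proposition~\ref{prop: gRSK Q given shape}. Permuting the rows of $W$, i.e.\ permuting $\alpha$, does not change the law of the shape. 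This is because the entries $t_{ii}$ are $k$-point partition functions over the whole square, and these are invariant under row permutations of $W$? That claim is false in general, since paths are directed. So instead I would use the standard fact (O'Connell) that the shape law is symmetric in $\alpha$, equivalently that $\mathcal Z_\gamma(\mu)$ is symmetric in $\gamma$. One route is to reduce to adjacent transpositions $\gamma_k\leftrightarrow\gamma_{k+1}$. In the integral, only row $k$ of the pattern has exponent depending on $(\gamma_k,\gamma_{k+1})$ asymmetrically. The integral over row $k$, with rows $k\pm1$ fixed, is then a one-level Bessel-type integral, and the change of variables $r_{kj}\mapsto r_{k-1,j-1}r_{k+1,j}\cdot(\text{dual})/r_{kj}$ (the geometric Bender–Knuth involution) maps $\mathcal F$ to itself and swaps the exponents $\gamma_k,\gamma_{k+1}$.

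So the key steps, in order, are as follows. First, derive the marginal formula above. Second, check that the geometric Bender–Knuth involution on row $k$ with $1\le k<n$ preserves both the measure $\prod dr_{ij}/r_{ij}$ (it is an inversion in log coordinates, so its Jacobian is $\pm1$) and $\mathcal F$. Third, verify that it exchanges the weights of $\gamma_k$ and $\gamma_{k+1}$ in $r^{-\gamma}$. The boundary term $1/(2r_{nn})$ involves only the last row, which is the fixed shape, so it is untouched. This is what makes the half-space case work identically.

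The main obstacle is the second step: checking that $\mathcal F$ is preserved. The terms of $\mathcal F$ that involve row $k$ come in the pairs $r_{k+1,j+1}/r_{kj}$, $r_{kj}/r_{k-1,j-1}$, $r_{k-1,j}/r_{kj}$ and $r_{kj}/r_{k+1,j}$. The involution should be chosen as
\[
r_{kj}\mapsto \frac{(r_{k-1,j-1}^{-1}+r_{k+1,j}^{-1})^{-1}\,(r_{k+1,j+1}+r_{k-1,j})}{r_{kj}},
\]
with the usual boundary conventions, which interchanges these pairs appropriately. I would first verify this on $n=2$, where it reduces to the symmetry of a Bessel-$K$ integral $K_{\nu}=K_{-\nu}$, and then do the general row computation.
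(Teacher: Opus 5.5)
Your argument is correct. It shares the paper's reduction, but you prove the key symmetry yourself where the paper cites it.

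\emph{Comparison with the paper.} The paper identifies the marginal of $\sh(R)$ with $\mathcal D(\mu)^{-c}e^{-1/(2\mu_n)}\Psi^n_\gamma(\mu)$ and quotes the symmetry of the $\mathrm{GL}(n,\R)$-Whittaker function from the literature, or a Macdonald-process argument. You make the same reduction to the fibre integral $\mathcal Z_\gamma(\mu)$, but prove its symmetry directly with geometric Bender--Knuth involutions. Your route is elementary and self-contained, and it shows plainly why the boundary term $1/(2r_{nn})$ and the factor $\mathcal D(\mu)^{-c}$ do no harm. The citation route is shorter but rests on deeper results and on matching normalizations.

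\emph{Drop the full-space detour.} The full-space comparison, including the row-permutation claim you retracted, is not needed. The sentence where you fall back on ``the standard fact'' is exactly the paper's proof.

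\emph{You have placed the difficulty in the wrong step.} Set $A_j=r_{k+1,j+1}+r_{k-1,j}$ and $B_j=r_{k-1,j-1}^{-1}+r_{k+1,j}^{-1}$, with $r_{k-1,k}=0$ and $r_{k-1,0}^{-1}=0$. Since $\mathcal F$ has no terms within a single row, the part of $\mathcal F$ involving row $k$ is exactly $\sum_j\bigl(A_j/r_{kj}+B_jr_{kj}\bigr)$. Your involution is $r_{kj}\mapsto A_j/(B_jr_{kj})$, which swaps the two summands, so invariance of $\mathcal F$ is immediate.

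\emph{The real check is the weight swap.} It is not true that only row $k$ carries $\gamma_k,\gamma_{k+1}$ asymmetrically. Writing $\Pi_m=\prod_j r_{mj}$ and $\Pi_0=1$, you have
\[
r^{-\gamma}=\Pi_n^{-\gamma_n}\prod_{m=1}^{n-1}\Pi_m^{\gamma_{m+1}-\gamma_m},
\]
so $\Pi_{k-1}$ and $\Pi_{k+1}$ also depend on $\gamma_k$ and $\gamma_{k+1}$. What makes the swap work is the telescoping identity
\[
\prod_{j=1}^k \frac{A_j}{B_j}=\Pi_{k-1}\Pi_{k+1}.
\]
To prove it, use $B_1^{-1}=r_{k+1,1}$, $A_k=r_{k+1,k+1}$, and $A_jB_{j+1}^{-1}=r_{k-1,j}\,r_{k+1,j+1}$ for $1\le j\le k-1$. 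It gives $\Pi_k\mapsto \Pi_{k-1}\Pi_{k+1}/\Pi_k$, which exchanges $\Pi_k/\Pi_{k-1}$ and $\Pi_{k+1}/\Pi_k$ and fixes all other weights. Hence $r^{-\gamma}$ is carried to $r^{-\gamma'}$, where $\gamma'$ is $\gamma$ with $\gamma_k$ and $\gamma_{k+1}$ interchanged.

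With this identity written out, your steps two and three are complete. The map has Jacobian $\pm1$ in log coordinates, since it is $x_{kj}\mapsto \log A_j-\log B_j-x_{kj}$ with $A_j,B_j$ depending only on rows $k\pm1$. It preserves the fibre $\mathrm{Tri}_n(\mu)$. Applying it for $k=1,\dots,n-1$ gives the full symmetry.
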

\begin{proof}
    This follows because the marginal law of $\sh(R)$ can be written explicitly in terms of the $\mathrm{GL}(n,\R)$-Whittaker function $\Psi_\gamma^n$, which is a well-known symmetric function, as
    \[
    \frac{1}{\tilde{\mathcal Z}^{\mathrm{sh}}_{c,\gamma}} 
    \mathcal{D}((r_{n1},\dots, r_{nn}))^{-c}
    e^{-1/2r_{nn}} \Psi^n_\gamma(r_{n1},\dots, r_{nn}) \prod_{i=1}^n \frac{dr_{ni}}{r_{ni}},   
    \]
    where 
    \[
    \tilde{\mathcal Z}^{\mathrm{sh}}_{c,\gamma} = \int_{(\R_{>0})^n} \mathcal{D}(x)^{-c}
    e^{-1 / 2x_{n}} \Psi^n_\gamma(x) \prod_i dx_{i}/x_i. 
    \]
    See \cite[Corollary~5.3]{oconnell}, the remark following (2.11) therein, and \cite{gerasimovBaxterOperatorArchimedean2008, kharchevIntegralRepresentationsEigenfunctions2001}.
    Alternatively, it can be shown using the more general framework of Macdonald processes, as has been essentially proved in the proof of \cite[Lemma~2.7]{barraquandStationaryMeasuresLoggamma2023}. 
\end{proof}

By fixing $\sh(R)$ in the density given in Proposition~\ref{prop: gRSK pushforward law of R}, we get
the following.
\begin{proposition}
\label{prop: gRSK R given shape}
    The regular conditional distribution kernel of $R$ given $\sh(R)$ is
    \[
    K_{R \mid \sh(R)}(\mu,dr)= 
    \frac{1}{{\mathcal Z}_{\gamma}(\mu)} 
    r^{-\gamma} 
    e^{-\mathcal{F}(r)}
    \prod_{1\leq j \leq i<n} \frac{dr_{ij}}{r_{ij}}, 
    \qquad r\in \mathrm{Tri}_n(\mu)
    \]
    where $\mu \in (\R_{>0})^n$ and
    \[
    {\mathcal Z}_{\gamma}(\mu)
    = \int_{\mathrm{Tri}_n(\mu)} r^{-\gamma} 
    e^{-\mathcal{F}(r)}
    \prod_{1\leq j \leq i<n} \frac{dr_{ij}}{r_{ij}}.
    \]
\end{proposition}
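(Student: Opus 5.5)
The plan is to disintegrate the push-forward density of Proposition~\ref{prop: gRSK pushforward law of R} along the shape map $r\mapsto \sh(r)$, in the same way Proposition~\ref{prop: gRSK Q given shape} is obtained in the full-space case. First I split the coordinates of $\mathrm{Tri}_n$ into the shape coordinates $\mu=(r_{n1},\dots,r_{nn})$ and the interior coordinates $(r_{ij})_{1\le j\le i<n}$. Then $\prod_{j\le i} dr_{ij}/r_{ij}$ is a product of $\prod_i d\mu_i/\mu_i$ and the interior measure, and $\mathrm{Tri}_n$ is exactly the product $(\R_{>0})^n\times(\R_{>0})^{n(n-1)/2}$. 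The joint density therefore has the form
\[
\frac{4^{\lfloor n/2\rfloor c}}{\tilde{\mathcal Z}_{c,\gamma}}\,\mathcal D(\mu)^{-c}\; r^{-\gamma}e^{-\mathcal F(r)}
\]
with respect to this product measure. The factor $\mathcal D(\sh(r))^{-c}$ depends only on $\mu$.

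Next I integrate out the interior coordinates to get the marginal density of $\sh(R)$:
\[
\frac{4^{\lfloor n/2\rfloor c}}{\tilde{\mathcal Z}_{c,\gamma}}\,\mathcal D(\mu)^{-c}\,\mathcal Z_\gamma(\mu).
\]
The ratio of the joint density to this marginal is $\mathcal Z_\gamma(\mu)^{-1} r^{-\gamma}e^{-\mathcal F(r)}$, because the $\mu$-only factors cancel. By the standard construction of regular conditional densities from a joint density on a product space (Fubini), this ratio defines a version of the kernel $K_{R\mid\sh(R)}$ on the set where $0<\mathcal Z_\gamma(\mu)<\infty$. On the exceptional null set one may take the kernel to be any fixed probability measure.

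The main obstacle is checking that $0<\mathcal Z_\gamma(\mu)<\infty$ for every $\mu$, or at least for Lebesgue-a.e.\ $\mu$, so that the displayed formula is a genuine probability kernel.
\begin{itemize}
\item Positivity is immediate, since the integrand is strictly positive.
\item For a.e.\ finiteness, Tonelli shows that $\int \mathcal D(\mu)^{-c}\mathcal Z_\gamma(\mu)\prod_i d\mu_i/\mu_i$ equals $\tilde{\mathcal Z}_{c,\gamma}4^{-\lfloor n/2\rfloor c}<\infty$. Hence $\mathcal Z_\gamma(\mu)<\infty$ for a.e.\ $\mu$, which suffices for a version of the regular conditional distribution.
\item For finiteness at every $\mu$, I would argue that $e^{-\mathcal F}$ forces decay in every interior coordinate. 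The terms $y_{i+1,j+1}/y_{ij}$ and $y_{i-1,j}/y_{ij}$ penalize $y_{ij}\to0$, and the terms with $y_{ij}$ in the numerator penalize $y_{ij}\to\infty$; this controls the factor $r^{-\gamma}$, which is a monomial.
\end{itemize}
If the everywhere statement is too tedious, I would instead identify $\mathcal Z_\gamma(\mu)$ with $e^{-1/2\mu_n}\Psi^n_\gamma(\mu)$, the Givental integral representation of the Whittaker function, as in the proof of Proposition~\ref{polymer matrix change of parameter}; its finiteness for real $\gamma$ is known. Two details still need checking: that this identification absorbs the $\frac1{2y_{nn}}$ term, which depends only on $\mu$ and could equally be moved into the marginal, and that $\mathcal F$ has no other pure-shape terms, which would also cancel in the ratio.
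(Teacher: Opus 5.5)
Your proposal is correct and matches the paper's argument, which obtains the kernel by fixing $\sh(R)$ in the density of Proposition~\ref{prop: gRSK pushforward law of R}, so that the shape-only factor $\mathcal{D}(\sh(r))^{-c}$ and the constant $4^{\lfloor n/2\rfloor c}/\tilde{\mathcal Z}_{c,\gamma}$ cancel on normalization. Your Tonelli argument that $\mathcal{Z}_\gamma(\mu)<\infty$ for Lebesgue-a.e.\ $\mu$ fills in a detail the paper leaves implicit, and it is all that is needed for a version of the regular conditional distribution.
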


By comparing the conditional laws in Propositions~\ref{prop: gRSK Q given shape} and \ref{prop: gRSK R given shape}, we get the following result, analogous to what we have in the LPP settings. 

\begin{theorem}
\label{bridge polymer}
    Let $W$ be a random matrix corresponding to the full-space log-gamma polymer environment with parameters $(\alpha,\beta)$, with gRSK image $(P,Q)$. 
    Let $\tilde W$ be a random symmetric matrix corresponding to the half-space log-gamma polymer environment with parameters $(c,\gamma)$, 
    with gRSK image $(R,R)$.
    Assume in addition that $\gamma=\alpha$. 
    Then, 
    \[
    \mathcal L(R\mid \sh(R))
    =
    \mathcal L(Q\mid \sh(Q)).
    \]
\end{theorem}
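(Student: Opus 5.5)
The plan is to compare the two explicit conditional kernels already computed, Proposition~\ref{prop: gRSK Q given shape} for the full space and Proposition~\ref{prop: gRSK R given shape} for the half space. Both live on the same fibres $\mathrm{Tri}_n(\mu)$, $\mu\in(\R_{>0})^n$, with the same reference measure $\prod_{1\le j\le i<n} dq_{ij}/q_{ij}$. Once we set $\gamma=\alpha$, their unnormalized densities coincide as functions on the fibre:
\[
q^{-\alpha}e^{-\mathcal F(q)}=r^{-\gamma}e^{-\mathcal F(r)}\Big|_{r=q}.
\]
The normalizing constants $\mathcal Z_\alpha(\mu)$ and $\mathcal Z_\gamma(\mu)$ are therefore the same integral. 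So a single kernel $K(\mu,\cdot)$ serves for both, and it remains to check that it is a version of each regular conditional distribution.

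For each side, I would verify the disintegration directly. The joint density from Corollary~3.3 of \cite{oconnell} (for $(P,Q)$), respectively Proposition~\ref{prop: gRSK pushforward law of R} (for $R$), factorizes as follows:
\begin{itemize}
\item a function of the shape $\mu$ alone: the factor $\mathcal D(\mu)^{-c}$, the term $1/(2\mu_n)$ inside $\mathcal F$, and the last-row part of $r^{-\gamma}$, which depends only on $\prod_i \mu_i$;
\item times the fibre density of $K(\mu,\cdot)$.
\end{itemize}
On the full-space side one first integrates out $P$, which contributes $\mathcal Z_\beta(\mu)$ times shape factors. By Fubini, the marginal of $\sh$ then has density equal to the shape factor times $\mathcal Z(\mu)$. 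Hence for every measurable set $A$ of shapes and $B$ of patterns,
\[
\mathbb P(\sh\in A,\ \cdot\in B)=\int_A K(\mu,B)\,\mathbb P(\sh\in d\mu).
\]
This is exactly the statement that $K$ is a version of the conditional law in each case.

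The only real obstacle is a technical one: $\mathcal Z(\mu)$ must be finite and positive for a.e.\ $\mu$, so that the kernel is well defined. Positivity is clear because the integrand is positive. Finiteness for a.e.\ $\mu$ follows from Fubini, since the total joint mass is $1$: the fibre integral times a strictly positive shape factor is integrable in $\mu$, hence finite almost everywhere. On the null set where it fails, define $K(\mu,\cdot)$ arbitrarily. The exceptional sets may differ between the two models, but they are null for both shape laws.

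The shape laws are mutually absolutely continuous, since both have strictly positive densities against $\prod d\mu_i/\mu_i$ wherever $\mathcal Z(\mu)<\infty$. So one common choice of $K$, modified on a set null for both laws, works simultaneously. This gives $\mathcal L(R\mid\sh(R))=\mathcal L(Q\mid\sh(Q))$ in the sense fixed in the introduction.
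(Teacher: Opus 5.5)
Your proposal is correct and follows the paper's route. The paper deduces the theorem simply by comparing the explicit kernels of Propositions~\ref{prop: gRSK Q given shape} and \ref{prop: gRSK R given shape}, which coincide when $\gamma=\alpha$; your Fubini argument that $\mathcal Z_\alpha(\mu)$ is finite for almost every $\mu$, and that this single kernel disintegrates both push-forward laws, makes explicit the measure-theoretic details the paper leaves implicit.
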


\section{Invariance results for half-space models} \label{main section}

In this section, we prove our main invariance theorems using their full-space counterparts and distributional identities derived in the previous section. 

\subsection{Settings and notation}

We retain the notation from Sections~\ref{sec: intro} and \ref{sec: preliminary}, and adopt some further notation from \cite{dauvergne} for this section. 

For the general setting, let $M = \{M_x : x\in \Z^2\}$ be a two-dimensional array of real-valued random variables.

We say that $Z_M$ is defined as the last passage value on $M$ if we let
\[ 
Z_M(u) = \max_\pi \sum_{x\in \pi} M_x
\]
for any $u \in \zbox$, where the max is over all $u$-paths $\pi$,
and
\[
Z_M(\mathbf{u}) = \max_\pi \sum_{i=1}^k \sum_{x\in \pi_i} M_x,
\]
for any $\mathbf{u} = (u_1,\dots,u_k) \in (\zbox)^k$,
where the max is taken over all $k$-tuples of disjoint paths $\pi = (\pi_1,\dots, \pi_k)$, where each $\pi_i$ is a $u_i$-path. 

Since we will be dealing with LPP and directed polymers at the same time in this section, we will use the same notation for the directed polymer model. 
We say that $Z_M$ is defined as the partition function on $M$ if we let
\[
Z_M(u) = \sum_\pi \prod_{x\in \pi} M_x
\]
for any $u \in \zbox$,
where the summation is taken over all $u$-paths $\pi$,
and 
\[ Z_M(\mathbf{u}) = \sum_\pi \prod_{i=1}^k \prod_{x\in \pi_i} M_x\]
for $\mathbf{u} = (u_1,\dots,u_k) \in (\zbox)^k$, 
where the summation is taken over all $k$-tuples of disjoint paths $\pi = (\pi_1,\dots, \pi_k)$ with each $\pi_i$ being a $u_i$-path.

By an \emph{environment} or a \emph{model}, we refer to such an array $M$ together with a choice of $Z_M$, being either the last passage value or the partition function. 

In both cases, $Z_M$ is only defined for vectors $\mathbf{u}$ for which a set of disjoint paths exists. We call these vectors \emph{valid}, and use $\mathcal{E}$ to denote the set of valid vectors. 
More generally, for $U \subseteq \zbox$,
we define
\[
\mathcal E(U) \defeq
\{ \mathbf{u}\in \mathcal E:u_i\in U \ \forall i\}
\]
 which is the set of valid vectors whose coordinates all lie in $U$.
We identify $u\in \zbox$ with the one-component valid vector $(u) \in \mathcal E$. 

For $u = (u^-;u^+)\in \zbox$ and $1 \leq k \leq \min\{u^+_1-u^-_1+1,\;u^+_2-u^-_2+1\}$, define
\[ 
u^k \defeq \bigl((u^-;u^+ - (k-1,0)),(u^-+(1,0);u^+-(k-2,0)),\dots, (u^- + (k-1,0);u^+)\bigr)
\in (\zbox)^k,
\]
and
\[
D(u) \defeq \{u^k : 1\leq k \leq \min(u^+_1-u^-_1+1,u^+_2-u^-_2+1)\}.
\]

Using these, we define the extended endpoint set for each $U \subseteq \zbox$,
\[
\mathcal{E}_D(U) \defeq \{ \mathbf{u}= (u_1^{k_1}, \dots, u_m^{k_m}) \in \mathcal{E}:\, u_i \in U, k_i,m\in \Z_{>0}\}. 
\]

For $u = (u^-;u^+)\in \zbox$, define the sets of endpoints
\begin{align*}
    H(u) &\defeq  \cup \{D((u^-,u^+-(0,i))):i\in \{0,1,\dots, u^+_2-u^-_2\}\},  \\
    V(u) &\defeq \cup \{D((u^-,u^+-(i,0))):i\in \{0,1,\dots, u^+_1-u^-_1\}\},   \\
    \bar{H}(u) &\defeq \mathcal{E} ( \{v\in \zbox: v \subseteq u, v^-_1=u^-_1,v^+_1=u^+_1\} ) , \\
    \bar{V}(u) &\defeq \mathcal{E} ( \{v\in \zbox: v \subseteq u, v^-_2=u^-_2,v^+_2=u^+_2\} ) . 
\end{align*}

Also, we will identify $u$ with the box $[u^-_1,u^+_1]\times [u^-_2,u^+_2]$, or the set of lattice points contained in this box, depending on the context. 

To avoid confusion, we will use the symbol $\sqcup$ to denote the unions in the sense of lattice points.
For example, for $U \subseteq \zbox$, 
\[
\sqcup U = \{(x,y)\in \Z^2: \exists u\in U, (x,y) \in u =[u^-_1,u^+_1]\times [u^-_2,u^+_2]\}.
\]

When discussing half-space environments, we write
\[
u^\top \defeq (u^-_2,u^-_1;u^+_2,u^+_1)
\]
for the reflection of  $u = (u^-_1,u^-_2;u^+_1,u^+_2)\in \zbox$ across the diagonal $\{(i,i):i\in \Z\}$.
More generally, for a region $R\subseteq \Z^2$, we write 
\[
R^\top \defeq \{(i,j)\in \Z^2:(j,i)\in R\}.
\]
The regions we will consider will usually be unions of boxes in $\zbox$.

\subsection{Correspondence between half-space and full-space}

In this subsection we convert the RSK and gRSK push-forward identities from Section \ref{sec: preliminary} into a form that can be applied directly to passage values and polymer partition functions. The main point is that, after conditioning on the diagonal data $Z|_{D(u)}$, the collection of observables $Z|_{\bar V(u)}$ has the same conditional law in the half-space environment as in the corresponding full-space environment. This will be the basic mechanism by which the full-space invariance results are transferred to the half-space setting.

We first state this correspondence for the three integrable models considered in this paper.

\begin{theorem}\label{bridge}
    Let $(M,Z_M)$ be a full-space environment and $(\tilde M,Z_{\tilde M})$ the corresponding half-space environment, chosen from one of the following three cases.
    Note that in each case the off-diagonal half-space parameters are chosen to match the horizontal parameter sequence $\alpha$ for the full-space environment. 
    \begin{enumerate}
        \item 
        Fix $\alpha,\beta \in (\R_{>0})^\Z$ and $c\in \R_{>0}$ with $\alpha_i\beta_j \in (0,1)$ for all $i,j \in \Z$, $\alpha_i\alpha_j \in (0,1)$ for all $i \neq j$, and $c\alpha_i \in (0,1)$ for all $i\in \Z$. 
        Let $M$ be an array of independent geometric random variables with $\mathbb{P}(M_{(i,j)} = k) = (1-\alpha_i\beta_j)(\alpha_i\beta_j)^k,$ for $k = 0,1,\dots$.
        Let $\tilde{M}$ be a symmetric array of geometric random variables, so that $\{\tilde{M}_{(i,j)}:i\leq j\}$ are independent with $\mathbb{P}(\tilde{M}_{(i,j)} = k) = (1-\alpha_i\alpha_j)(\alpha_i\alpha_j)^k$ for all $i<j$ and $\mathbb{P}(\tilde{M}_{(i,i)} = k) = (1-c\alpha_i)(c\alpha_i)^k$ for all $i$, and $\tilde{M}_{(i,j)} = \tilde{M}_{(j,i)}$ for all $i,j$. 
        Define $Z_M$ and $Z_{\tilde{M}}$ to be the last passage values on $M$ and $\tilde{M}$. 
        
        \item Fix $\alpha,\beta \in \R^\Z$ and $c \in \R$ with $\alpha_i + \beta_j > 0$ for all $i,j$, $\alpha_i+\alpha_j > 0$ for all $i\neq j$, and $c+\alpha_i >0$ for all $i$.  
        Let $M$ be an array of independent exponential random variables with $\mathbb{E}M_{(i,j)} = (\alpha_i+\beta_j)^{-1}$ for each $i,j$.
        Let $\tilde{M}$ be a symmetric array of exponential random variables, so that $\{\tilde{M}_{(i,j)}:i\leq j\}$ are independent with $\mathbb{E}\tilde{M}_{(i,j)} = (\alpha_i+\alpha_j)^{-1}$ for all $i<j$ and $\mathbb{E}\tilde{M}_{(i,i)} = (c+\alpha_i)^{-1}$ for all $i$, and that $\tilde{M}_{(i,j)} = \tilde{M}_{(j,i)}$ for all $i,j$. 
        Define $Z_M$ and $Z_{\tilde{M}}$ to be the last passage values on $M$ and $\tilde{M}$. 
        
        \item Fix $\alpha,\beta \in \R^\Z$ and $c \in \R$ with $\alpha_i + \beta_j > 0$ for all $i,j$, $\alpha_i+\alpha_j > 0$ for all $i\neq j$, and $c+\alpha_i >0$ for all $i$.  
        Let $M$ be an array of independent inverse-gamma random variables with $M_{(i,j)}$ having parameter $\alpha_i+\beta_j$ (i.e., having density $\frac{1}{\Gamma(\alpha_i+\beta_j)} x^{-\alpha_i-\beta_j-1} e^{-1/x}$) for each $i,j$. 
        Let $\tilde{M}$ be a symmetric array of random variables so that $\{\tilde{M}_{(i,j)}:i\leq j\}$ are independent with $\tilde{M}_{(i,j)}$ having inverse-gamma distribution with parameter $\alpha_i+\alpha_j$ for all $i<j$, $2\tilde{M}_{(i,i)}$ having inverse-gamma distribution with parameter $c+\alpha_i$ 
        (i.e., $\tilde{M}_{(i,i)}$ has density $\frac{2^{-c-\alpha_i}}{\Gamma(c+\alpha_i)} x^{-c-\alpha_i-1} e^{-1/(2x)}$)
        for all $i$, and $\tilde{M}_{(i,j)} = \tilde{M}_{(j,i)}$ for all $i,j$. 
        Define $Z_M$ and $Z_{\tilde{M}}$ as the partition functions on $M$ and $\tilde{M}$. 
    \end{enumerate}

    Then, for any $u \in \zdiag$, the following equality of conditional laws holds:
    \[
    \mathcal L\!\left(
        Z_M|_{\bar V(u)}
        \,\middle|\,
        Z_M|_{D(u)}
    \right)
    =
    \mathcal L\!\left(
        Z_{\tilde M}|_{\bar V(u)}
        \,\middle|\,
        Z_{\tilde M}|_{D(u)}
    \right).
    \]
\end{theorem}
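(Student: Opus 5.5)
By translating along the diagonal, which preserves both environments up to a shift of the parameter sequences, I will assume $u=(1,1;n,n)$. All observables then depend only on the $n\times n$ block of weights indexed by $[n]^2$. Write $M_n$ for the full-space block; its row/column parameters are $(\alpha_1,\dots,\alpha_n)$ and $(\beta_1,\dots,\beta_n)$. Write $\tilde M_n$ for the symmetric half-space block with parameters $(c,\gamma)$, $\gamma=(\alpha_1,\dots,\alpha_n)$. In the geometric case Theorem~\ref{bridge LPP} applies directly. In the polymer case I use Theorem~\ref{bridge polymer}. The exponential case I obtain at the end as a limit of the geometric case.

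The first step is a deterministic dictionary between the observables and the RSK (resp.\ gRSK) output. I will show that $Z|_{D(u)}$ is a bijective function of the common shape, and that $Z|_{\bar V(u)}$ is a measurable function of the pattern $Q$ (resp.\ $R$). The map involved is the same for the full-space and half-space models. Start from \eqref{eq: RSK def Q} with $r=n$: the quantity $Z(u^k)$ is the $k$-point passage value for the packed endpoints $(1,1),\dots,(k,1)$ to $(n-k+1,n),\dots,(n,n)$. By Lemma~\ref{lemma: k-point paths equivalence} and the transposition symmetry, this equals $q_{n1}+\dots+q_{nk}$ (resp.\ the product of the corresponding $t$'s). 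Hence $Z|_{D(u)}\leftrightarrow \sh(Q)$ bijectively.

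For $\bar V(u)$, the vectors consist of boxes spanning the full height $[1,n]$ in the second coordinate, with horizontal extent inside $[1,n]$. The claim to prove is that every multi-point passage value of such a vector is determined by the sub-array of columns of $M$ indexed by $\{1,\dots,n\}$ in the first coordinate. More precisely, it is determined by $Q$, the tableau built from the first-coordinate slices. The cleanest route I see is Greene's theorem applied to the Knuth word read along the direction indexed by $Q$. Equivalently, I would use the known fact (cf.\ \cite{dauvergne}) that all full-height passage values in a strip are functions of the tableau recording that strip. In the polymer case the analogous statement follows from the gRSK local-move description: the partition functions between bottom and top boundaries of the strip are encoded in $Q$. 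I will phrase this as the existence of a single function $\Psi$ with $Z_M|_{\bar V(u)}=\Psi(Q)$ and $Z_{\tilde M}|_{\bar V(u)}=\Psi(R)$. The key point for the second identity is that the same deterministic map applies because it ignores symmetry of the input.

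Given the dictionary, Theorem~\ref{bridge LPP} (resp.\ \ref{bridge polymer}) gives a common kernel $K$ for $Q\mid \sh(Q)$ and $R\mid\sh(R)$. Pushing $K$ forward by $\Psi$ and reparametrizing the conditioning variable by the bijection shape $\leftrightarrow Z|_{D(u)}$ gives a common kernel for both sides of the claim, which proves it in the geometric and polymer cases.

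For the exponential case I take $\alpha_i^{(\epsilon)}=1-\epsilon\alpha_i$-type scalings, with $\beta$ and $c$ scaled the same way. Then $\epsilon$ times the geometric variables converges to the exponential ones, and both passage-value vectors converge jointly. To pass the identity of conditional laws to the limit, I would use the explicit geometric kernel, namely $\alpha^q/s_\lambda(\alpha)$ on $\GT_n(\lambda)$. Under scaling, this converges to the continuous Gelfand--Tsetlin density $\propto e^{-\sum \alpha_r(\cdot)}$ on the polytope with fixed last row, uniformly in the shape on compacts. The limit therefore supplies a common continuous kernel for both exponential models. The main obstacle I expect is the dictionary step showing that all of $Z|_{\bar V(u)}$, including multi-path values with arbitrary horizontal endpoints, is a function of $Q$ alone. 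The second place I would be careful is the limiting argument for regular conditional laws, which I handle via the explicit kernel rather than abstract weak convergence.
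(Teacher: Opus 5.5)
Your proposal follows the paper's proof. The dictionary you describe is exactly Proposition~\ref{isometry RSK} combined with Lemma~\ref{translation}; Theorems~\ref{bridge LPP} and \ref{bridge polymer} then supply the common kernel; and the exponential case is obtained as a scaling limit of the explicit geometric kernels $\alpha^q/s_\lambda(\alpha)$ (the paper uses $e^{-\varepsilon\alpha_i}$, which keeps every product in $(0,1)$ without taking $\varepsilon$ small). One small correction: the kernel convergence holds only for limiting shapes in the open chamber $\lambda_1>\cdots>\lambda_n>0$, not uniformly on arbitrary compact sets of shapes, and the limit passage closes because both exponential shape laws are absolutely continuous and so are almost surely supported there.
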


\begin{remark}
\label{remark: half-space directed polymer}
    We note that the half-space directed polymer models are usually defined only within a half-quadrant $H=\{(i,j)\in \N^2:i\geq j\}$ and without any scaling for the diagonal entries. 
    Our setting is closely related to it in the following way:
    let $M$ be a symmetric environment on $\Z^2$, with partition function $Z_M$, and define $M'$ on $H$ by $M'_{(i,j)} = M_{(i,j)}$ for $i> j$ and $M'_{(i,i)} = 2M_{(i,i)}$. Let $Z'_{M'}$ denote the partition function obtained by summing only over directed paths contained in $H$. 
    Then, for $m \geq n \ge 1$, we have 
    \[
    Z'_{M'}(1,1;m,n) 
    = 2Z_M(1,1;m,n),
    \]
    since any path lying within $H$ that touches the diagonal $k$ times corresponds to $2^{k-1}$ distinct paths in the full-space, each with a weight scaled by $2^{-k}$. 
\end{remark}

To prove Theorem~\ref{bridge} from the results in Section~\ref{sec: preliminary}, we also need the isometry properties of the RSK and gRSK maps. 
This essentially says that the last passage values (resp. partition functions) over certain endpoints are preserved under the RSK (resp. gRSK) map, if we put the outputs of the map back into the environment appropriately and make certain modifications of the endpoints. 
For a more explicit statement and proof, see \cite[Theorem~3.3]{dauvergne}. 
Also see \cite{corwin} for the result in the gRSK case.

However, we will need only the following consequence of the isometry properties:

\begin{proposition}
[{\cite[Proposition~3.2]{dauvergne}}]
\label{isometry RSK}
    Let $M$ be any environment, and $Z_M$ be either the last passage value or the partition function.
    In the polymer case, assume $M$ has strictly positive weights.
    Then for any $u \in \zbox$, we can express 
    $Z_M|_{\bar{H}(u)}$ as a function of $Z_M|_{H(u)}$
    and $Z_M|_{\bar{V}(u)}$ as a function of $Z_M|_{V(u)}$. 
\end{proposition}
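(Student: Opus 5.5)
The plan is to prove the statement for $\bar H(u)$ and deduce the $\bar V(u)$ statement by transposing the environment. Under the reflection $x\mapsto x^\top$, the set $H(u)$ becomes $V(u^\top)$ and $\bar H(u)$ becomes $\bar V(u^\top)$. Directed paths map to directed paths and disjointness is preserved, so the passage values match exactly.

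Fix $u$ and translate so that $u^-=(1,1)$ and $u^+=(n,m)$. Then consider the restricted environment $M|_u$ as an $n\times m$ matrix. We may pad it to a square matrix with zero weights in the LPP case, or take a suitable limit of small weights in the polymer case.

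\textbf{Step 1: identify $Z_M|_{H(u)}$ with one RSK pattern.} For the box $w_r=(u^-;(n,r))$, the vector $w_r^k$ consists of $k$ disjoint paths starting at $(1,1),\dots,(k,1)$ and ending at $(n-k+1,r),\dots,(n,r)$, i.e.\ an element of $\hat\Pi^k_{n,r}$. By Lemma~\ref{lemma: k-point paths equivalence} the maximum (resp.\ sum) over $\hat\Pi^k_{n,r}$ equals that over $\Pi^k_{n,r}$. Hence, by \eqref{eq: RSK def P} (resp.\ \eqref{eq: gRSK def P}), $Z_M|_{H(u)}$ is exactly the data $\{\sum_{i\le k}p_{ri}\}_{k\le r}$ (resp.\ the corresponding products of lower-triangular entries of $T$). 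So $Z_M|_{H(u)}$ determines, and is determined by, the full pattern $P$ of the RSK (resp.\ gRSK) image of $M|_u$.

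\textbf{Step 2: express $Z_M|_{\bar H(u)}$ through $P$.} A vector in $\bar H(u)$ consists of boxes spanning columns $1$ to $n$, with starting rows $a_i$ and ending rows $b_i$. I would invoke the isometry theorem \cite[Theorem~3.3]{dauvergne}, and \cite{corwin} for gRSK. In the form needed here, it says the following: after replacing $M|_u$ by an environment built from the output $(P,Q)$ and modifying the endpoints appropriately, multi-path passage values between the left and right edges of $u$ are preserved.

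One then has to check that, for endpoints running from the left edge to the right edge, the modified paths only see the part of the output environment encoded by $P$, and not by $Q$. This should follow because $Q$ records the column-by-column insertion data (the $\Pi_{r,n}$ values), which concern partial horizontal extents. Once this is verified, every entry of $Z_M|_{\bar H(u)}$ is a deterministic function of $P$, hence of $Z_M|_{H(u)}$ by Step 1.

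An alternative route to Step 2, if the isometry bookkeeping becomes awkward, is a row-by-row induction. Process the rows $1,\dots,m$ as successive RSK row insertions, or equivalently as multilevel Pitman/Noumi--Yamada transforms. Then show that the full-width multi-path values between rows $a$ and $b$ are obtained from the interlacing sequence $P_{a-1},P_a,\dots,P_b$ by a tropical (resp.\ geometric) recursion that does not involve the individual matrix entries.

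The main obstacle is Step 2. Specifically, one must verify precisely which part of the isometry output enters full-width passage values, and handle the endpoint shifts for multi-path vectors with several distinct $a_i,b_i$. In the polymer case one must additionally check that the identities are algebraic, which requires positivity. I would first carry out the single-path case $k=1$, where the correspondence between Greene-type values and the pattern entries is cleanest. I would then extend to general valid vectors using the disjoint-path form of the isometry.
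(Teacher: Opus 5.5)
Your route is essentially the paper's. The paper gives no independent argument for this statement; it cites it as a consequence of the RSK/gRSK isometry of \cite{dauvergne} (with \cite{corwin} for gRSK), which is exactly what your Step~2 invokes, and your Step~1 is the identification of $Z_M|_{H(u)}$ with the pattern $P$ recorded in Lemma~\ref{translation}. The check you flag in Step~2 should be automatic once the isometry is used in the form relevant here, with the columns of $u$ as lines and the rows as time: the output environment seen by left-to-right multi-path passages is then built from the increments of $P$ alone (the melon of \cite{dauvergneRSKLastPassage2022}). You should also drop the padding, since RSK, gRSK and the isometry apply to rectangular boxes directly; for tall boxes, adding columns would change which passages are full-width, and no small-weight limit is needed.
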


The following lemma clarifies the translation of the languages used in this section and Section~\ref{sec: preliminary}.

\begin{lemma}
\label{translation}
    Let $u=((x+1,y+1);(x+n,y+n))\in \zbox$, and let
    $ A = (M_{(x+i,y+j)})_{1\leq i,j\leq n}$.
    
    In the LPP case, assume that $M$ has nonnegative integer entries, and let
    $(P,Q)$ be the RSK image of $A$. If $Z_M$ denotes the last passage values, then
    \[
    \sum_{j=1}^k p_{ij}
    =
    Z_M\!\left(((x+1,y+1);(x+n,y+i))^k\right),
    \]
    and
    \[
    \sum_{j=1}^k q_{ij}
    =
    Z_M\!\left(((x+1,y+1);(x+i,y+n))^k\right).
    \]
    
    In the polymer case, assume that $M$ has positive real entries, and let
    $(P,Q)$ be the gRSK image of $A$. If
    $Z_M$ denotes the polymer partition functions, then
    \[
    \prod_{j=1}^k p_{ij}
    =
    Z_M\!\left(((x+1,y+1);(x+n,y+i))^k\right),
    \]
    and
    \[
    \prod_{j=1}^k q_{ij}
    =
    Z_M\!\left(((x+1,y+1);(x+i,y+n))^k\right).
    \]

    In both cases, $P$ encodes $Z_M|_{H(u)}$, $Q$ encodes $Z_M|_{V(u)}$, and their common shape encodes $Z_M|_{D(u)}$. 
\end{lemma}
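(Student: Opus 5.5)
The plan is to unwind the definitions \eqref{eq: RSK def P}--\eqref{eq: RSK def Q} (respectively \eqref{eq: gRSK def P}--\eqref{eq: gRSK def Q}) and compare them with the definition of $Z_M$ on vectors of the form $w^k$. The comparison uses Lemma~\ref{lemma: k-point paths equivalence} to switch between the two families of starting and ending points. Throughout we use the local coordinates in which the lattice point $(i,j)$ of the matrix $A$ corresponds to $(x+i,y+j)\in\Z^2$. Under this correspondence directed paths go to directed paths, and the weights $A_{ij}$ agree with $M_{(x+i,y+j)}$.

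\emph{Step 1: the $P$-identity.} Fix $1\le k\le i\le n$. By the definition of $w^k$, the vector $((x+1,y+1);(x+n,y+i))^k$ consists of the boxes from $(x+\ell,y+1)$ to $(x+n-k+\ell,y+i)$ for $\ell=1,\dots,k$. In local coordinates, these start at $(1,1),(2,1),\dots,(k,1)$ and end at $(n-k+1,i),\dots,(n,i)$. The admissible disjoint path tuples are therefore exactly the elements of $\hat\Pi^k_{n,i}$, so
\[
Z_M\bigl(((x+1,y+1);(x+n,y+i))^k\bigr)=\max_{(\pi_1,\dots,\pi_k)\in\hat\Pi^k_{n,i}}\ \sum_{(a,b)\in\pi_1\cup\dots\cup\pi_k}A_{ab}.
\]
The weight of a tuple depends only on the union of its lattice points. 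The bijection $\Phi:\Pi^k_{n,i}\to\hat\Pi^k_{n,i}$ of Lemma~\ref{lemma: k-point paths equivalence} preserves this union, so the maximum over $\hat\Pi^k_{n,i}$ equals the maximum over $\Pi^k_{n,i}$. By \eqref{eq: RSK def P} with $r=i$, the latter is $\sum_{j\le k}p_{ij}$.

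In the polymer case the same argument works with sums of products instead of maxima of sums. The bijection preserves each summand, so the partition function equals the right side of \eqref{eq: gRSK def P} with $r=i$. This is $\prod_{\ell=1}^k t_{n+1-\ell,i+1-\ell}=\prod_{\ell=1}^k p_{i\ell}$ by the convention $p_{ij}=t_{n+1-j,i-j+1}$.

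\emph{Step 2: the $Q$-identity.} This is symmetric to Step 1. The vector $((x+1,y+1);(x+i,y+n))^k$ corresponds to $\hat\Pi^k_{i,n}$, and Lemma~\ref{lemma: k-point paths equivalence} converts it to $\Pi^k_{i,n}$. For $i<n$ we then apply \eqref{eq: RSK def Q}, or \eqref{eq: gRSK def Q} with $q_{ij}=t_{i-j+1,n+1-j}$. The one point requiring care, and the only mild obstacle, is the case $i=n$ in the gRSK setting, where \eqref{eq: gRSK def Q} is not stated. There $q_{nj}=t_{n-j+1,n-j+1}=p_{nj}$, and the box in question is $u$ itself. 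Hence the claim reduces to the $P$-identity with $i=n$. In the RSK case, $i=n$ is covered directly because \eqref{eq: RSK def Q} includes $r=n$.

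\emph{Step 3: the encoding statements.} By definition, $H(u)$ is the union over $i'=0,\dots,n-1$ of $D\bigl((x+1,y+1;x+n,y+n-i')\bigr)$. Setting $i=n-i'$, these are the vectors $((x+1,y+1);(x+n,y+i))^k$ with $1\le k\le\min(n,i)=i$. By Step 1, their values are exactly the partial sums (products) of row $i$ of $P$. The partial sums determine the entries by successive differences (quotients), and conversely, so $P$ and $Z_M|_{H(u)}$ determine each other. The same argument with Step 2 shows that $Q$ encodes $Z_M|_{V(u)}$. Finally, the shape is row $n$ of $P$ (equivalently of $Q$). It corresponds to the vectors $u^k$, $1\le k\le n$, which by definition constitute $D(u)$. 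Hence $\sh(P)=\sh(Q)$ encodes $Z_M|_{D(u)}$.
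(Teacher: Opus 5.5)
Your proof is correct and follows the same route as the paper, which calls the lemma a direct restatement of the defining identities \eqref{eq: RSK def P}, \eqref{eq: RSK def Q}, \eqref{eq: gRSK def P}, \eqref{eq: gRSK def Q}. Your write-up also makes explicit two points the paper leaves implicit: the use of Lemma~\ref{lemma: k-point paths equivalence} to convert the vertically staggered endpoints of $\Pi^k_{a,b}$ into the horizontally staggered endpoints of $w^k$, and the diagonal row $i=n$ of $Q$ in the gRSK case, where \eqref{eq: gRSK def Q} is not stated and one uses $q_{nj}=p_{nj}$.
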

\begin{proof}
    This is just a restatement in the new notation of our definition of the RSK map, \eqref{eq: RSK def P} and \eqref{eq: RSK def Q}, or the gRSK map, \eqref{eq: gRSK def P} and \eqref{eq: gRSK def Q}.
\end{proof}

In the exponential LPP setting, continuous RSK is a
piecewise-linear bijection from $n\times n$ matrices with
nonnegative real entries to pairs of nonnegative real
Gelfand--Tsetlin patterns with $n$ rows and common shape;
see \cite[Section~8]{oconnell}.
For the $u\in \zbox$ and matrix $A$ in Lemma~\ref{translation},
the LPP identities in that lemma remain valid under continuous
RSK, so $Z_M|_{D(u)}$ encodes the common shape.

\begin{proof}
    [Proof of Theorem~\ref{bridge}] 
    By Proposition~\ref{isometry RSK}, 
    $Z_M|_{\bar V(u)}$ is a deterministic function of
    $Z_M|_{V(u)}$, and similarly for $\tilde M$.
    Hence it suffices to prove
    \[
    \mathcal L\!\left(
        Z_M|_{V(u)}
        \,\middle|\,
        Z_M|_{D(u)}
    \right)
    =
    \mathcal L\!\left(
        Z_{\tilde M}|_{V(u)}
        \,\middle|\,
        Z_{\tilde M}|_{D(u)}
    \right).
    \]
    
   For the geometric LPP case, this follows from Theorem~\ref{bridge LPP} and Lemma~\ref{translation}.
   For the log-gamma polymer case, it follows from Theorem~\ref{bridge polymer} and Lemma~\ref{translation}.

    For the exponential LPP case, translate the indices so that
        $u=(1,1;n,n)$. For $\varepsilon>0$, set
        \[
        \alpha_i^{(\varepsilon)}=e^{-\varepsilon\alpha_i},
        \qquad
        \beta_j^{(\varepsilon)}=e^{-\varepsilon\beta_j},
        \qquad
        c^{(\varepsilon)}=e^{-\varepsilon c},
        \]
        and let $M^{(\varepsilon)}$ be the full-space geometric
        environment with parameter sequences $\alpha^{(\varepsilon)}$
        and $\beta^{(\varepsilon)}$.
        Let $\tilde M^{(\varepsilon)}$ be the half-space geometric
        environment with parameter sequence $\alpha^{(\varepsilon)}$
        and diagonal parameter $c^{(\varepsilon)}$.
        Let $Q^{(\varepsilon)}$ be the second RSK
        pattern of $M^{(\varepsilon)}$ restricted to $u$, and let
        $(R^{(\varepsilon)},R^{(\varepsilon)})$ be the RSK image
        of $\tilde M^{(\varepsilon)}$ restricted to $u$.
        By Propositions~\ref{prop: rsk Q given lambda} and
        \ref{prop: RSK R given lambda}, the conditional laws of
        $\varepsilon Q^{(\varepsilon)}$ given
        $\varepsilon\sh(Q^{(\varepsilon)})$ and of
        $\varepsilon R^{(\varepsilon)}$ given
        $\varepsilon\sh(R^{(\varepsilon)})$ have the same kernel
        $K_\varepsilon$.
        
        For a real triangular array
        $q=(q_{ij})_{1\leq j\leq i\leq n}$, set
        \[
        \Delta_r(q)=\sum_{i=1}^r q_{ri}
        -\sum_{i=1}^{r-1}q_{r-1,i},\qquad 1\leq r\leq n.
        \]
        For $\lambda_1\geq\cdots\geq\lambda_n\geq0$, let
        $\GT_n^{\mathbb R}(\lambda)$ be the set of such arrays
        with last row $\lambda$ satisfying
        $q_{i+1,j+1}\leq q_{ij}\leq q_{i+1,j}$ for
        $1\leq j\leq i<n$.
        For every bounded continuous function $F$ on the real
        triangular arrays with $n$ rows, Riemann-sum convergence
        in the geometric conditional laws gives
        \[
        K_\varepsilon F\bigl(\varepsilon\ell^{(\varepsilon)}\bigr)
        \longrightarrow
        K F(\lambda)
        =
        \frac{
        \displaystyle
        \int_{\GT_n^{\mathbb R}(\lambda)}
        F(q)\exp\left(-\sum_{r=1}^n\alpha_r\Delta_r(q)\right)\,dq
        }{
        \displaystyle
        \int_{\GT_n^{\mathbb R}(\lambda)}
        \exp\left(-\sum_{r=1}^n\alpha_r\Delta_r(q)\right)\,dq
        }
        \]
        whenever $\ell^{(\varepsilon)}$ are partitions with at most
        $n$ parts and $\varepsilon\ell^{(\varepsilon)}\to\lambda$
        with $\lambda_1>\cdots>\lambda_n>0$.
        Here $dq$ is Lebesgue measure in the $n(n-1)/2$ entries
        above the fixed last row. For every other $\lambda\in\R^n$,
        let $K(\lambda,\cdot)$ be the point mass at the zero array.
        
        Let $(P,Q)$ and $(R,R)$ be the continuous RSK images of
        $M$ and $\tilde M$, respectively, restricted to $u$.
        In each environment, the scaled geometric weights on $u$
        converge jointly in distribution to the corresponding
        exponential weights.
        Since the pattern entries are continuous functions of
        finitely many last-passage values, the scaled patterns and
        their shapes converge jointly to the continuous RSK
        patterns and their shapes.
        
        Continuous RSK is a piecewise-linear bijection, and its
        restriction to symmetric nonnegative matrices is a
        piecewise-linear bijection onto single nonnegative real
        Gelfand--Tsetlin patterns. The joint laws of the full-space
        weights on $u$ and of the upper-triangular half-space weights
        on $u$ each have a density. Thus both shape laws are absolutely
        continuous with respect to Lebesgue measure on $\R^n$,
        and both shapes are strictly decreasing and positive almost
        surely.
        
        For every bounded continuous function $G:\R^n\to\R$,
        the geometric conditional-law identity, joint convergence,
        and the preceding convergence of $K_\varepsilon F$ give
        \[
        \mathbb E\left[F(Q)G(\sh(Q))\right]
        =
        \mathbb E\left[G(\sh(Q))K F(\sh(Q))\right].
        \]
        The passage to the limit follows from the extended
        continuous-mapping theorem and boundedness of $F$ and $G$.
        Hence $K$ is a regular conditional distribution of $Q$
        given $\sh(Q)$. Applying the same argument to
        $R^{(\varepsilon)}$ shows that $K$ is also a regular
        conditional distribution of $R$ given $\sh(R)$.
        The LPP identities in Lemma~\ref{translation}, extended
        to continuous RSK, give the required equality.
\end{proof}

\begin{lemma}
\label{lemma: conditioning vector absolute continuity}
Let $(M,Z_M)$ and $(\tilde M,Z_{\tilde M})$ be a pair of
environments as in Theorem~\ref{bridge}. Then, for every
$v\in\zdiag$,
\[
\mathcal L\bigl(Z_{\tilde M}|_{D(v)}\bigr)
\ll
\mathcal L\bigl(Z_M|_{D(v)}\bigr).
\]
\end{lemma}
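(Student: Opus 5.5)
By translation invariance of the setup (shifting indices of $\alpha,\beta$), assume $v=(1,1;n,n)$. By Lemma~\ref{translation}, $Z_M|_{D(v)}$ and $Z_{\tilde M}|_{D(v)}$ are images of the common shape of the (g)RSK output under a fixed injective map: partial sums in LPP, partial products in the polymer case. It therefore suffices to show $\mathcal L(\sh(R)) \ll \mathcal L(\sh(Q))$, where $(P,Q)$ is the image of the full-space block $A=(M_{(i,j)})_{1\le i,j\le n}$ and $(R,R)$ that of the half-space block.

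\emph{Geometric LPP.} Both laws are supported on a countable set, so absolute continuity means that every atom of the half-space shape law is an atom of the full-space one. By Proposition~\ref{lpp matrix change of parameter}, $\tilde{\mathbb P}(\sh(R)=\lambda)>0$ iff $\lambda$ has at most $n$ parts. By the proposition giving $Z_{\alpha,\beta}s_\lambda(\alpha)s_\lambda(\beta)$, the same condition characterizes $\mathbb P(\sh(Q)=\lambda)>0$, since all parameters are positive and $s_\lambda$ is a sum of positive monomials over the nonempty set $\GT_n(\lambda)$. Hence the supports coincide.

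\emph{Exponential LPP and log-gamma polymer.} Here I would show that the full-space shape law has a density that is strictly positive Lebesgue-a.e.\ on the natural state space, and that the half-space shape law is absolutely continuous with respect to Lebesgue measure on that same space.

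\begin{itemize}
\item \textbf{Log-gamma polymer, full space.} Integrate the push-forward density $\mathcal Z_{\alpha,\beta}^{-1}P^{-\beta}Q^{-\alpha}e^{-\mathcal E}\prod dt/t$ over the off-diagonal entries of $T$ with the diagonal fixed. The integrand is strictly positive and the resulting integral is finite a.e., so the marginal density of $\mathrm{revdiag}(T)$ on $(\R_{>0})^n$ is strictly positive a.e.
\item \textbf{Log-gamma polymer, half space.} Proposition~\ref{prop: gRSK pushforward law of R} gives a density for $R$ with respect to $\prod dr/r$. Its shape marginal is therefore absolutely continuous with respect to Lebesgue measure on $(\R_{>0})^n$.
\item \textbf{Exponential LPP, full space.} Continuous RSK is a piecewise-linear, volume-preserving bijection (see \cite[Section~8]{oconnell}). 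Equivalently, pass to the $\varepsilon\to0$ limit of \eqref{eq: rsk pushforward mass lpp}. Either way the joint density of $(P,Q)$ is proportional to $\exp(-\sum_r\alpha_r\Delta_r(q)-\sum_r\beta_r\Delta_r(p))$ on the pairs of real GT patterns with common shape. Integrating out $p,q$ over $\GT_n^{\mathbb R}(\lambda)$ gives a shape density that is strictly positive on the Weyl chamber $\{\lambda_1>\cdots>\lambda_n>0\}$, because $\GT^{\mathbb R}_n(\lambda)$ has nonempty interior there.
\item \textbf{Exponential LPP, half space.} As already noted in the proof of Theorem~\ref{bridge}, the shape law is absolutely continuous and is supported in the closed chamber.
\end{itemize}

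In each continuous case, any Lebesgue-null set has half-space probability zero. Conversely, if the full-space shape law gives a set $B$ measure zero, then $B$ is Lebesgue-null inside the chamber, because the full-space density is positive a.e.\ there. Combining these gives the claim.

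The main obstacle is rigorously justifying a.e.\ positivity of the full-space shape density. In the polymer case I would argue directly that the fiber integral of a strictly positive function over a set of positive measure is positive, and finite a.e.\ because the total integral is $1$. In the exponential case I would use the explicit polytope description, or bypass it by writing the density of $\sh(Q)$ as the convolution of positive densities obtained through the bijection.
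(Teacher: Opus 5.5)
Your proposal is correct and follows essentially the same route as the paper. You reduce via Lemma~\ref{translation} to the shape laws, compare atoms in the geometric case, and in the exponential and log-gamma cases combine a.e.\ positivity of the full-space shape density with Lebesgue absolute continuity of the half-space shape law; the only cosmetic difference is that in the exponential case you use the explicit density $\exp\bigl(-\sum_r\alpha_r\Delta_r(q)-\sum_r\beta_r\Delta_r(p)\bigr)$ on pairs of real Gelfand--Tsetlin patterns (giving positivity everywhere on the open chamber), whereas the paper argues only from continuous RSK being a piecewise-linear bijection.
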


\begin{proof}
By Lemma~\ref{translation} and its continuous RSK counterpart,
it suffices to show that the half-space RSK or gRSK shape law
is absolutely continuous with respect to the full-space shape
law for the environments restricted to $v$.
Let $n$ be the side length of $v$.

In geometric LPP, the full-space shape law puts positive mass
on every partition with at most $n$ parts, by
\eqref{eq: rsk pushforward mass lpp}. Every half-space shape
is such a partition, so absolute continuity follows.

In exponential LPP, continuous RSK is a piecewise-linear
bijection from nonnegative matrices to pairs of nonnegative
real Gelfand--Tsetlin patterns with common shape; see
\cite[Section~8]{oconnell}. The strictly positive full-space
input density induces a density that is strictly positive
almost everywhere in the interior of the set of pattern pairs.
Integrating out the entries above the common last row gives
a shape density strictly positive almost everywhere on
\[
\{\lambda\in\R^n:\lambda_1>\cdots>\lambda_n>0\}.
\]
The restriction of continuous RSK to symmetric nonnegative
matrices is a piecewise-linear bijection to single nonnegative
real Gelfand--Tsetlin patterns. The joint density of the
independent upper-triangular half-space weights therefore
induces an absolutely continuous law on the pattern entries,
and hence an absolutely continuous shape law concentrated on
the displayed set. This proves the required absolute continuity.

In the log-gamma polymer, the full-space gRSK push-forward
density in Subsection~\ref{subsec: log-gamma polymer} is
strictly positive, and the half-space pattern law in
Proposition~\ref{prop: gRSK pushforward law of R} is
absolutely continuous. Integrating out the entries above the
last row gives a full-space shape density strictly positive
almost everywhere on $(\R_{>0})^n$ and a half-space shape law
absolutely continuous with respect to Lebesgue measure there.
The required absolute continuity follows.
\end{proof}

We will use the next decoupling statement to extend Theorem~\ref{bridge} to more general identities between the half-space and full-space models. 

\begin{lemma}
[{\cite[Proposition~3.1]{dauvergne}}]
\label{decoupling duncan 3.1}
    Suppose $\mathbf{u} = (u_1,\dots, u_k) \in \mathcal{E}$ and $v\in \zbox$ are such that $(u_i, v) \in \mathcal{V} \cup \mathcal{N}$ for all $i$. 
    Then $Z_M(\mathbf{u})$ is a function of the random variables $\{M_x:x\in (u_1 \cup \dots \cup u_k) \setminus v\}$ and the function $Z_M|_{\bar{V}(v)}$. 
\end{lemma}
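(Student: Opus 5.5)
The plan is to prove the statement by induction on the number of paths, using a planar argument that reduces an ensemble of disjoint paths to a single path outside $v$ together with one multi-path passage value inside $v$. This is the approach of \cite[Proposition~3.1]{dauvergne}. The statement is purely deterministic: it holds for every environment $M$, in both the LPP case and the polymer case. So no probabilistic input is needed.

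Since each $u_i$ either crosses $v$ vertically or is disjoint from $v$, the portion of any $u_i$-path lying inside $v$ is confined to a horizontal strip of columns. More precisely, if $(u_i,v)\in\mathcal V$, then $[u_{i,1}^-,u_{i,1}^+]\subseteq[v_1^-,v_1^+]$ fails in general, but $[v_2^-,v_2^+]\subseteq[u_{i,2}^-,u_{i,2}^+]$ and $[u^-_{i,1},u^+_{i,1}]\subseteq[v^-_1,v^+_1]$ hold by the definition of $u_i$ crossing $v$ vertically. Hence a $u_i$-path enters $v$ through its bottom edge $\{v^-_1,\dots,v^+_1\}\times\{v^-_2\}$ and leaves through its top edge $\times\{v^+_2\}$, at some entry column $a$ and exit column $b$ with $a\le b$.

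First, decompose. For a tuple $\pi=(\pi_1,\dots,\pi_k)$ of disjoint $u_i$-paths, split each $\pi_i$ with $(u_i,v)\in\mathcal V$ into three pieces: the piece below $v$, the piece inside $v$ from $(a_i,v_2^-)$ to $(b_i,v_2^+)$, and the piece above $v$. The pieces outside $v$ use only weights $M_x$ with $x\in(\bigcup u_j)\setminus v$. The inside pieces form a tuple of disjoint paths realizing the vector $\mathbf w=((a_i,v_2^-;b_i,v_2^+))_i$, which lies in $\bar V(v)$. Paths $\pi_i$ with $(u_i,v)\in\mathcal N$ avoid $v$ entirely.

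Second, rewrite the optimization. In the LPP case I write
\[
Z_M(\mathbf u)=\max_{\text{entry/exit data}}\Bigl[\max(\text{outside pieces})+Z_M(\mathbf w)\Bigr],
\]
and in the polymer case the analogous sum of products. The step needing care is that, once the entry and exit columns are fixed, the outside pieces and the inside pieces can be chosen independently while the full tuple remains disjoint. This holds because the two families occupy disjoint lattice regions: the outside pieces live in $\Z^2\setminus v$ and the inside pieces live in $v$. The only interaction is through the shared boundary points $(a_i,v_2^-)$ and $(b_i,v_2^+)$. I assign those points to the inside pieces and let the outside pieces stop just before them.

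Disjointness of distinct $\pi_i$ outside $v$ depends only on the outside pieces. Disjointness inside $v$ is exactly the validity of $\mathbf w$, with invalid $\mathbf w$ excluded. Hence the constraint set factors as a product, and
\[
Z_M(\mathbf u)=\max_{(a_i,b_i)}\bigl[F_{(a,b)}\bigl(\{M_x\}_{x\notin v}\bigr)+Z_M(\mathbf w_{(a,b)})\bigr].
\]
Here $F$ is the outside last passage value over disjoint tuples with the prescribed endpoints, which depends only on $\{M_x:x\in(\bigcup u_j)\setminus v\}$. In the polymer case the maximum and the sum are replaced by a sum and a product.

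The main obstacle is the bookkeeping in the second step. In particular, a path might exit $v$ and re-enter it, which would break the three-piece decomposition. I would rule this out using monotonicity. Since $[u^-_{i,1},u^+_{i,1}]\subseteq[v^-_1,v^+_1]$, the path never leaves $v$ sideways. Its second coordinate is nondecreasing, so once it enters the row range $[v_2^-,v_2^+]$ it stays inside $v$ until it passes row $v_2^+$. Paths from the $\mathcal N$ class never meet $v$. With these observations each path enters $v$ at most once, and the formula above expresses $Z_M(\mathbf u)$ as the required function.
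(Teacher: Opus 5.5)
Your proof is correct and is essentially the explicit decomposition from \cite[Proposition~3.1]{dauvergne}, which the paper cites rather than reproduces. You split each vertically crossing path at its first and last points in $v$, so that for fixed entry/exit columns the constraint set factors into an outside problem depending only on $\{M_x:x\in(\bigcup u_j)\setminus v\}$ and the inside value $Z_M(\mathbf w)$ with $\mathbf w\in\bar V(v)$. The only thing to fix is the garbled sentence in your second paragraph, which says $[u^-_{i,1},u^+_{i,1}]\subseteq[v^-_1,v^+_1]$ ``fails in general'' and then that it holds. The latter is correct and is exactly what $(u_i,v)\in\mathcal V$ gives.
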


The proof in \cite{dauvergne} has the function written out explicitly, but this is not needed for us. 

In the next theorems, we will require the two sequences of model parameters $\alpha$ and $\beta$ of the full-space environment to be equal, so that any ``non-symmetric'' part of the half-space environment has the same joint law as that of the corresponding full-space environment.  

\begin{theorem}
\label{general bridge}
    Let $M,Z_M, \tilde{M},Z_{\tilde{M}}$ be a pair of environments as in Theorem~\ref{bridge}, with the additional requirement that $\alpha = \beta$. 
    Suppose $U \subseteq \zbox$ and $(\sqcup U) \cap (\sqcup U)^\top \subseteq v$ for some $v\in \zdiag$,
    and that $\{v\} \times U \subseteq {\mathcal{N}} \cup \mathcal{H}$. 
    Then,
    \[
    \mathcal L\!\left(
        Z_{\tilde{M}}|_{\mathcal{E}_D(U)}
        \,\middle|\,
        Z_{\tilde{M}}|_{D(v)}
    \right)
    =
    \mathcal L\!\left(
        Z_{M}|_{\mathcal{E}_D(U)}
        \,\middle|\,
        Z_{M}|_{D(v)}
    \right).
    \]
\end{theorem}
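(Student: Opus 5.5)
The plan is to reduce to Theorem~\ref{bridge} via the decoupling Lemma~\ref{decoupling duncan 3.1}. Every passage value in $Z|_{\mathcal E_D(U)}$ is a passage value $Z(\mathbf u)$ of a valid vector whose components $u_i^{k_i}$ are sub-boxes of boxes in $U$. First, I would check that each such component $w$ satisfies $(w,v)\in\mathcal V\cup\mathcal N$. This follows from $\{v\}\times U\subseteq\mathcal N\cup\mathcal H$ and the structure of $u^k$: each piece of $u^k$ has the same vertical extent as $u$ and a horizontal extent inside that of $u$. So if $v$ crosses $u$ horizontally, $v$ still crosses each piece horizontally, and if $u,v$ are disjoint, so are the pieces. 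Lemma~\ref{decoupling duncan 3.1} then gives, for each $\mathbf u\in\mathcal E_D(U)$, a deterministic function $\varphi_{\mathbf u}$ with
\[
Z_M(\mathbf u)=\varphi_{\mathbf u}\bigl(Z_M|_{\bar V(v)},\{M_x:x\in(\sqcup U)\setminus v\}\bigr).
\]
The function does not depend on the environment, so the same identity holds with $M$ replaced by $\tilde M$.

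Second, I would identify the joint law of the two arguments in each model. In the full space, $Z_M|_{\bar V(v)}$ depends only on $\{M_x:x\in v\}$, which is independent of $\{M_x:x\in(\sqcup U)\setminus v\}$. In the half space, the key point is that $(\sqcup U)\setminus v$ contains no pair of mutually reflected points and is disjoint from $v=v^\top$. Indeed, if $x$ and $x^\top$ both lie in $\sqcup U$, then $x\in(\sqcup U)\cap(\sqcup U)^\top\subseteq v$. Also, no point of $(\sqcup U)\setminus v$ has its reflection in $v$, and none lies on the diagonal inside $v$. Hence the half-space variables $\{\tilde M_x:x\in(\sqcup U)\setminus v\}$ are distinct independent off-diagonal entries. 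They are independent of $\{\tilde M_x:x\in v\}$, since $v$ is symmetric and its entries are generated by the upper-triangular entries within $v$. Their law is exactly that of $\{M_x:x\in(\sqcup U)\setminus v\}$, because $\alpha=\beta$ makes $\tilde M_{(i,j)}$ and $\tilde M_{(j,i)}$ both have the parameter $\alpha_i\alpha_j$ (resp. $\alpha_i+\alpha_j$), matching $M_{(i,j)}$ with parameter $\alpha_i\beta_j$.

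Third, I would combine these. Theorem~\ref{bridge} gives a common kernel $K$ for $Z|_{\bar V(v)}$ given $Z|_{D(v)}$ in both models, and $D(v)\subseteq\bar V(v)$, so $Z|_{D(v)}$ is a function of $Z|_{\bar V(v)}$. By independence, the conditional law of the pair $(Z|_{\bar V(v)},\text{outside weights})$ given $Z|_{D(v)}$ is $K\otimes\mu$ in both models, where $\mu$ is the common law of the outside weights. Pushing forward by the family $(\varphi_{\mathbf u})_{\mathbf u\in\mathcal E_D(U)}$ gives the same kernel for $Z|_{\mathcal E_D(U)}$ given $Z|_{D(v)}$ in both models.

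The main obstacle is the measure-theoretic bookkeeping. We must verify that $K\otimes\mu$ is a version of the conditional law in each model: this is a routine check with product test functions $F(Z|_{\bar V(v)})H(\text{weights})G(Z|_{D(v)})$, using independence. We must also handle $\mathcal E_D(U)$ possibly being countably infinite, which is done by working on the product space and using that it is determined by finite-dimensional marginals. The geometric verification of disjointness from reflections in the second step also needs some care.
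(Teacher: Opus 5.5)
Your proposal is correct and follows essentially the same route as the paper. You check that every component of a vector in $\mathcal E_D(U)$ is vertically crossing or disjoint from $v$. You then apply Lemma~\ref{decoupling duncan 3.1} to write $Z|_{\mathcal E_D(U)}$ as one common deterministic function of $Z|_{\bar V(v)}$ and the weights on $(\sqcup U)\setminus v$. Finally, you combine Theorem~\ref{bridge} with the independence and equality in law of those outside weights, which uses $(\sqcup U)\cap(\sqcup U)^\top\subseteq v$ and $\alpha=\beta$. The only difference is that you spell out the kernel bookkeeping ($K\otimes\mu$ pushed forward by the decoupling maps), which the paper leaves implicit.
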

\begin{proof}
    For $u,w\in \zbox$, 
    write $w \in u^k$ if $w = (u^-+(j,0);u^+-(k-1-j,0))$ for some $0 \leq j < k$. 

    Notice that if $u \in \zbox$ is such that $(u,v) \in \mathcal{V}$ (resp. $(u,v)\in \mathcal{N}$), 
    then for each 
    $ u^k \in D(u)$
    and each $w \in u^k$,
    we also have $(w,v) \in \mathcal{V}$ (resp. $(w,v) \in \mathcal{N}$). 
    
    For any $\mathbf{u} \in \mathcal{E}_D(U)$, we can write it as $\mathbf{u} = (u_1,u_2,\dots, u_m)$ where each $u_i \in w_i^{k_i}$ for some $w_i\in U$ and $w_i^{k_i} \in D(w_i)$.
    Also, each $u_i \subseteq w_i \subseteq  \sqcup U$, so $(u_1\cup \dots \cup u_m ) \setminus v  \subseteq \sqcup U \setminus v$.
    
    Hence by applying Lemma~\ref{decoupling duncan 3.1}, each $Z_M(\mathbf{u})$ is a function of $\{M_x:x\in \sqcup U \setminus v\}$ and $Z_M|_{\bar{V}(v)}$. 
    Consequently, $Z_M|_{\mathcal{E}_D(U)}$ itself is a function of $\{M_x:x\in \sqcup U \setminus v\}$ and $Z_M|_{\bar{V}(v)}$.
    Similarly, $Z_{\tilde{M}}|_{\mathcal{E}_D(U)}$ is the same function of $\{\tilde{M}_x:x\in \sqcup U \setminus v\}$ and $Z_{\tilde{M}}|_{\bar{V}(v)}$.

    Clearly $Z_M|_{\bar{V}(v)}$ and $Z_M|_{D(v)}$ only depend on $\{M_x:x\in v\}$, so they are independent from $\{M_x:x\in \sqcup U \setminus v\}$. 
    Similarly,  $Z_{\tilde{M}}|_{\bar{V}(v)}$ and $Z_{\tilde{M}}|_{D(v)}$ are independent from $\{\tilde{M}_x:x\in \sqcup U \setminus v\}$.

    Hence it remains to verify two facts:
    \begin{equation}
    \label{eq: general bridge 1}
        \mathcal L\!\left(
            Z_{\tilde M}|_{\bar V(v)}
            \,\middle|\,
            Z_{\tilde M}|_{D(v)}
        \right)
        =
        \mathcal L\!\left(
            Z_M|_{\bar V(v)}
            \,\middle|\,
            Z_M|_{D(v)}
        \right)
    \end{equation}
    and
    \begin{equation}
    \label{eq: general bridge 2}
        \{\tilde M_x:x\in \sqcup U\setminus v\}
        \disteq
        \{M_x:x\in \sqcup U\setminus v\}.
    \end{equation}

    The identity in \eqref{eq: general bridge 1} is exactly Theorem~\ref{bridge},
    while \eqref{eq: general bridge 2} follows from the independence of the two collections and equality of their corresponding marginal distributions.
    More explicitly, note that if $(i,j) \in \sqcup U \setminus v$, then $(j,i) \notin \sqcup U \setminus v$, as otherwise $(i,j) \in (\sqcup U) \cap (\sqcup U)^\top \setminus v$, contradicting that $(\sqcup U) \cap (\sqcup U)^\top \subseteq v$, so the random variables $\tilde{M}_x:x\in \sqcup U \setminus v$ are independent. 
    This also means that $(i,i) \notin \sqcup U \setminus v$ for all $i$. But by our definition of the environments, $M_{(i,j)} \disteq \tilde{M}_{(i,j)}$ for any $i\neq j$, so $M_x \disteq \tilde{M}_x$ for any $x \in \sqcup U \setminus v$.   
\end{proof}

\begin{remark}
\label{remark: auxiliary full-space parameters}
    The condition $\alpha=\beta$ in
    Theorem~\ref{general bridge} is only a convenient sufficient condition
    for \eqref{eq: general bridge 2}. The proof in fact applies to any
    auxiliary full-space environment for which
    \eqref{eq: general bridge 1} holds on $v$ and whose entries on
    $\sqcup U\setminus v$ have the same joint law as the corresponding
    half-space entries. The finite-matrix proof of Theorem~\ref{bridge}
    shows that the conditional kernel on $v$ depends on the first
    full-space parameter sequence only through its restriction to the
    indices of $v$, and is independent of the second full-space parameter
    sequence. Thus one may first fix the half-space environment on $v$,
    and then choose the auxiliary full-space parameters after $v$ and $U$
    have been specified.
\end{remark}

Theorem~\ref{general bridge} allows us to directly transfer distributional invariances in the full space into those in the corresponding half-space environments. 

For a function $f: S\to T$, where $S,T\subseteq \zbox$, we define 
\[
\bar{f}: \mathcal{E}_D(S) \to \mathcal{E}_D(T)
\]
for its extension such that 
\[
\bar{f}(u_1^{k_1},\dots, u_l^{k_l}) = (f(u_1)^{k_1},\dots, f(u_l)^{k_l}).
\]

For well-definedness, we assume all functions in our consideration satisfy the following:
for every representation
$\mathbf u=(u_1^{k_1},\dots,u_\ell^{k_\ell})
\in\mathcal E_D(S),$
the vector
$(f(u_1)^{k_1},\dots,f(u_\ell)^{k_\ell})$
belongs to $\mathcal E_D(T)$ and is independent of the chosen
representation of $\mathbf u$.

\begin{theorem}
\label{transferring invariance}
     Let $M,Z_M, \tilde{M},Z_{\tilde{M}}$ be a pair of environments as in Theorem~\ref{bridge}, with the additional requirement that $\alpha = \beta$. 
    Suppose $U , U'\subseteq \zbox$ and $v \in \zdiag$ are such that $\sqcup U \cap (\sqcup U)^\top, \sqcup U' \cap (\sqcup U')^\top \subseteq v$
    and that $\{v\} \times U, \{v\} \times U' \subseteq {\mathcal{N}} \cup \mathcal{H}$.  
    
    Suppose in addition that there exists a function 
    $f :U \cup \{v\} \to U' \cup \{v\}$ 
    such that $f(v)=v$, $f(U)\subseteq U'$, and 
    \[
    Z_M |_{\mathcal{E}_D(U\cup \{v\})} \disteq
    Z_M|_{\mathcal{E}_D(U'\cup \{v\})} \circ \bar{f},
    \]
    then
    \[
    Z_{\tilde{M}} |_{\mathcal{E}_D(U)} \disteq
    Z_{\tilde{M}}|_{\mathcal{E}_D(U')} \circ \bar{f}.
    \]
\end{theorem}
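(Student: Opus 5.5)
The plan is to condition on the diagonal data $Z|_{D(v)}$, transfer the full-space invariance to the level of conditional laws, and then pass it to the half space through Theorem~\ref{general bridge}. This follows the pattern of the proof sketch of Proposition~\ref{point-to-point shift Jimmy He}.

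\emph{Step 1: conditioning on $D(v)$ in the full space.} The set $D(v)$ is contained in $\mathcal E_D(\{v\})\subseteq \mathcal E_D(U\cup\{v\})$. Since $f(v)=v$, the extension $\bar f$ fixes every $v^k$. The assumed full-space identity therefore gives the joint equality
\[
\bigl(Z_M|_{D(v)},\,Z_M|_{\mathcal E_D(U)}\bigr)\disteq\bigl(Z_M|_{D(v)},\,Z_M|_{\mathcal E_D(U')}\circ\bar f\bigr).
\]
This is obtained by restricting both sides of the hypothesis to the coordinates indexed by $D(v)\cup\mathcal E_D(U)$. We use that $\bar f$ maps $\mathcal E_D(U)$ into $\mathcal E_D(U')$, by the well-definedness convention and $f(U)\subseteq U'$. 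Joint equality in law with identical first components yields equality of the conditional laws,
\[
\mathcal L\bigl(Z_M|_{\mathcal E_D(U)}\,\big|\,Z_M|_{D(v)}\bigr)=\mathcal L\bigl(Z_M|_{\mathcal E_D(U')}\circ\bar f\,\big|\,Z_M|_{D(v)}\bigr),
\]
in the sense that they agree $\mathcal L(Z_M|_{D(v)})$-almost everywhere as kernels.

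\emph{Step 2: transfer to the half space.} Theorem~\ref{general bridge} applies to $U$ and to $U'$ separately, since its hypotheses are exactly the ones assumed here. It provides a kernel $K_U$ that is a version of both $\mathcal L(Z_{\tilde M}|_{\mathcal E_D(U)}\mid Z_{\tilde M}|_{D(v)})$ and $\mathcal L(Z_M|_{\mathcal E_D(U)}\mid Z_M|_{D(v)})$, and similarly a kernel $K_{U'}$. Composing with the deterministic map $\bar f$ on coordinates, $K_{U'}$ pushed forward by $\bar f$ is a common version for $Z|_{\mathcal E_D(U')}\circ\bar f$ in both environments. By Step 1, $K_U$ and $\bar f_*K_{U'}$ agree outside a $\mathcal L(Z_M|_{D(v)})$-null set $N$.

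\emph{Step 3: handling the null set, the only delicate point.} By Lemma~\ref{lemma: conditioning vector absolute continuity}, $\mathcal L(Z_{\tilde M}|_{D(v)})\ll\mathcal L(Z_M|_{D(v)})$, so $N$ is also null for the half-space conditioning variable. Hence $K_U=\bar f_*K_{U'}$ holds $\mathcal L(Z_{\tilde M}|_{D(v)})$-a.e. Integrating both kernels against $\mathcal L(Z_{\tilde M}|_{D(v)})$ gives
\[
Z_{\tilde M}|_{\mathcal E_D(U)}\disteq Z_{\tilde M}|_{\mathcal E_D(U')}\circ\bar f,
\]
and in fact the equality holds jointly with $Z_{\tilde M}|_{D(v)}$.

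Aside from this measure-theoretic bookkeeping, everything is a direct consequence of Theorem~\ref{general bridge}. The main points to check carefully are:
\begin{itemize}
\item the kernels are on the same (countable-product) spaces, so that "common version" makes sense;
\item $\bar f$ acts coordinatewise and measurably.
\end{itemize}
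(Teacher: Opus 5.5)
Your proposal is correct and follows the paper's proof essentially step for step. You restrict the full-space identity to get equal conditional laws given $Z_M|_{D(v)}$, take the common kernels supplied by Theorem~\ref{general bridge} for $U$ and $U'$, and use Lemma~\ref{lemma: conditioning vector absolute continuity} to carry the almost-everywhere equality over to the half-space conditioning law before integrating.
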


\begin{proof}
    Note that both $\mathcal{E}_D(U)$ and $D(v)$ are subsets of $\mathcal{E}_D(U \cup \{v\})$, 
    both $\mathcal{E}_D(U')$ and $D(v)$ are subsets of $\mathcal{E}_D(U' \cup \{v\})$, 
    and $\bar{f}$ restricts to the identity on $D(v)$.
    Hence, 
    the assumed distributional identity implies
    \begin{equation}
    \label{eq: full-space conditional invariance}
        \mathcal L\!\left(
            Z_M|_{\mathcal{E}_D(U)}
            \,\middle|\,
            Z_M|_{D(v)}
        \right)
        =
        \mathcal L\!\left(
            Z_M|_{\mathcal{E}_D(U')} \circ \bar{f}
            \,\middle|\,
            Z_M|_{D(v)}
        \right).
    \end{equation}
    Choose the two conditional laws in
    \eqref{eq: full-space conditional invariance} to be
    the common full-space and half-space versions supplied by
    Theorem~\ref{general bridge}, applied to $U$ and $U'$.
    Equation~\eqref{eq: full-space conditional invariance} holds for
    $\mathcal L(Z_M|_{D(v)})$-almost every conditioning value.
    By Lemma~\ref{lemma: conditioning vector absolute continuity},
    it also holds for
    $\mathcal L(Z_{\tilde M}|_{D(v)})$-almost every conditioning
    value. Using these versions for the half-space conditional
    laws gives
    \[
    \mathcal L\!\left(
        Z_{\tilde{M}}|_{\mathcal{E}_D(U)}
        \,\middle|\,
        Z_{\tilde{M}}|_{D(v)}
    \right)
    =
    \mathcal L\!\left(
        Z_{\tilde{M}}|_{\mathcal{E}_D(U')} \circ \bar{f}
        \,\middle|\,
        Z_{\tilde{M}}|_{D(v)}
    \right),
    \]
    which implies
    \[
    Z_{\tilde{M}} |_{\mathcal{E}_D(U)} \disteq
    Z_{\tilde{M}}|_{\mathcal{E}_D(U')} \circ \bar{f}.
    \]
\end{proof}

Using Theorem~\ref{transferring invariance} and invariance results in the full-space environments (for example those proved in \cite{dauvergne}), we will be able to prove various invariance results related to shifting boxes in the half-space environments. 
However, Theorem~\ref{transferring invariance} does not deal with the invariances in which the environment itself changes (i.e. the defining parameters change), for which we will need a slightly stronger theorem.

To make the choice of parameters explicit, we will write a pair of environments $M$ and $\tilde{M}$ defined as in Theorem~\ref{bridge} using parameters $\alpha, \beta, c$ as 
$M_{\alpha,\beta}$ and $\tilde{M}_{\alpha,c}$. 
Any two environments defined with different parameters will be assumed to be independent. 

First, we establish the following distributional identity for diagonal boxes. 

\begin{proposition}
\label{diagonal change of parameter}
    Let $\tilde{M}_{\alpha,c}$ and $\tilde{M}_{\alpha',c}$ be two half-space environments of the same type as in Theorem~\ref{bridge}, and let $v \in \zdiag$. 
    Suppose that $(\alpha'_i:v^-_1\leq i \leq v^+_1)$ is a permutation of $(\alpha_i:v^-_1\leq i \leq v^+_1)$, then 
    \[
    Z_{\tilde{M}_{\alpha,c}}|_{D(v)} \disteq Z_{\tilde{M}_{\alpha',c}}|_{D(v)}.
    \]
\end{proposition}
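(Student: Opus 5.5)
The plan is to reduce the statement to the permutation invariance of the RSK/gRSK shape law of the symmetric matrix restricted to $v$. Translate indices so that $v=((1,1);(n,n))$ and set $\gamma=(\alpha_1,\dots,\alpha_n)$, $\gamma'=(\alpha'_1,\dots,\alpha'_n)$. By hypothesis $\gamma'$ is a permutation of $\gamma$. The vector $Z_{\tilde M}|_{D(v)}$ depends only on the weights $\{\tilde M_x : x\in v\}$, which form a symmetric $n\times n$ matrix $\tilde A$. Its upper-triangular entries are independent, with off-diagonal parameters built from $\gamma_i\gamma_j$ (resp.\ $\gamma_i+\gamma_j$) and diagonal parameters built from $c\gamma_i$ (resp.\ $c+\gamma_i$). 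This is exactly the half-space matrix model of Subsections~\ref{subsec: half space geom lpp} and~\ref{subsec: half space log gamma} with parameters $(c,\gamma)$, and likewise for $(c,\gamma')$.

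By Lemma~\ref{translation}, $Z_{\tilde M}|_{D(v)}$ is a deterministic function of the common shape $\sh(R)$ of the (g)RSK image $(R,R)$ of $\tilde A$. Concretely, the $k$-point value $Z(v^k)$ equals $\sum_{j\le k} r_{nj}$ in the LPP case and $\prod_{j\le k} r_{nj}$ in the polymer case. It therefore suffices to show that the law of $\sh(R)$ under parameters $(c,\gamma)$ equals its law under $(c,\gamma')$.

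\emph{Geometric LPP.} This is Proposition~\ref{lpp matrix change of parameter}: the mass $\tilde Z_{c,\gamma}\, s_\lambda(\gamma)\, c^{O(\lambda)}$ is symmetric in $\gamma$. Here $\tilde Z_{c,\gamma}=\prod_{i<j}(1-\gamma_i\gamma_j)\prod_i(1-c\gamma_i)$, and this product is itself permutation invariant.

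\emph{Log-gamma polymer.} This is Proposition~\ref{polymer matrix change of parameter}, which rests on the Whittaker-function expression for the shape law.

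\emph{Exponential LPP.} This is the step needing care, since no explicit continuous statement is recorded earlier. I would obtain it by the same degeneration used in the proof of Theorem~\ref{bridge}. Set $\gamma^{(\varepsilon)}_i=e^{-\varepsilon\gamma_i}$ and $c^{(\varepsilon)}=e^{-\varepsilon c}$, and note that $(\gamma')^{(\varepsilon)}$ is the corresponding permutation of $\gamma^{(\varepsilon)}$. The geometric case then gives equality of the laws of $\varepsilon\,\sh(R^{(\varepsilon)})$ under the two parameter choices, for every $\varepsilon>0$.

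Next, the entries of $\varepsilon\tilde A^{(\varepsilon)}$ converge jointly in distribution to the exponential half-space matrix, because the geometric variables with parameter $e^{-\varepsilon a}$, rescaled by $\varepsilon$, converge to $\mathrm{Exp}(a)$. The shape entries are continuous (piecewise-linear) functions of finitely many last passage values. By the continuous mapping theorem, $\varepsilon\,\sh(R^{(\varepsilon)})$ converges in law to the continuous-RSK shape in each environment. Equality of the laws then passes to the limit.

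Alternatively, one can apply the same reasoning directly to the vector $\varepsilon Z_{\tilde M^{(\varepsilon)}}|_{D(v)}$, whose coordinates are continuous functions of the matrix entries. This bypasses continuous RSK entirely, and is the route I would actually take.
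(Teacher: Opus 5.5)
Your proposal is correct and follows the paper's proof. The paper also treats the geometric and log-gamma cases through the permutation invariance of the shape law (Propositions~\ref{lpp matrix change of parameter} and~\ref{polymer matrix change of parameter}) combined with Lemma~\ref{translation}, and handles the exponential case as a limit of the geometric one. Your direct scaling limit of $\varepsilon Z_{\tilde M^{(\varepsilon)}}|_{D(v)}$, with $e^{-\varepsilon\alpha_i}$ and $e^{-\varepsilon c}$ as parameters, is a clean way to fill in the paper's one-line treatment of that last case.
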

\begin{proof}
    The geometric LPP case follows from Proposition~\ref{lpp matrix change of parameter} and Lemma~\ref{translation}. 
    The log-gamma polymer case follows from Proposition~\ref{polymer matrix change of parameter} and Lemma~\ref{translation}.
    The exponential LPP case follows as a limit of the geometric LPP case. 
\end{proof}

We can now state the next theorem for transferring invariances of inhomogeneous environments with a permutation of parameters.

\begin{theorem}
\label{transferring invariance inhomogeneous}
    Let $M_{\alpha,\alpha}, \tilde{M}_{\alpha,c}, M_{\alpha',\alpha'},\tilde{M}_{\alpha',c}$ be two pairs of environments of the same types as in Theorem~\ref{bridge}. 
    Suppose $U , U'\subseteq \zbox$ and $v \in \zdiag$ are such that $\sqcup U \cap (\sqcup U)^\top, \sqcup U' \cap (\sqcup U')^\top \subseteq v$
    and that $\{v\} \times U, \{v\} \times U' \subseteq {\mathcal{N}} \cup \mathcal{H}$, 
    and that $(\alpha'_i:v^-_1\leq i \leq v^+_1)$ is a permutation of $(\alpha_i:v^-_1\leq i \leq v^+_1)$.
    Suppose further that there exists a function 
    $f :U \cup \{v\} \to U' \cup \{v\}$ 
    such that $f(v)=v$, $f(U)\subseteq U'$, and
    \[ Z_{M_{\alpha,\alpha}} |_{\mathcal{E}_D(U\cup \{v\})} \disteq
    Z_{M_{\alpha',\alpha'}}|_{\mathcal{E}_D(U'\cup \{v\})} \circ \bar{f},\]
    then
    \[ Z_{\tilde{M}_{\alpha,c}} |_{\mathcal{E}_D(U)} \disteq
    Z_{\tilde{M}_{\alpha',c}}|_{\mathcal{E}_D(U')} \circ \bar{f}.\]
\end{theorem}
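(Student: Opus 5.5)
The plan is to mimic the proof of Theorem~\ref{transferring invariance}, with one extra step that handles the different conditioning laws. Apply Theorem~\ref{general bridge} twice: to the pair $(M_{\alpha,\alpha},\tilde M_{\alpha,c})$ with region $U$, and to the pair $(M_{\alpha',\alpha'},\tilde M_{\alpha',c})$ with region $U'$. The geometric hypotheses on $U,U',v$ are exactly those of Theorem~\ref{general bridge}. This gives
\[
\mathcal L\!\left(Z_{\tilde M_{\alpha,c}}|_{\mathcal E_D(U)}\,\middle|\,Z_{\tilde M_{\alpha,c}}|_{D(v)}\right)=\mathcal L\!\left(Z_{M_{\alpha,\alpha}}|_{\mathcal E_D(U)}\,\middle|\,Z_{M_{\alpha,\alpha}}|_{D(v)}\right)=:K,
\]
and similarly a kernel $K'$ for $\alpha'$, $U'$ and the composition with $\bar f$.

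Next, I would deduce $K=K'$ almost everywhere from the hypothesis. Since $f(v)=v$, the map $\bar f$ is the identity on $D(v)\subseteq\mathcal E_D(U\cup\{v\})$. The assumed identity in law therefore gives a joint identity
\[
\bigl(Z_{M_{\alpha,\alpha}}|_{D(v)},Z_{M_{\alpha,\alpha}}|_{\mathcal E_D(U)}\bigr)\disteq\bigl(Z_{M_{\alpha',\alpha'}}|_{D(v)},Z_{M_{\alpha',\alpha'}}|_{\mathcal E_D(U')}\circ\bar f\bigr).
\]
In particular the conditioning variables have the same law, call it $\rho$. So $K$ and $K'$ are both versions of the same conditional law, and hence agree for $\rho$-a.e.\ conditioning value.

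Then I would transfer this agreement to the half-space conditioning laws. Lemma~\ref{lemma: conditioning vector absolute continuity} gives $\mathcal L(Z_{\tilde M_{\alpha,c}}|_{D(v)})\ll\rho$, so $K=K'$ also holds almost surely for the half-space conditioning variable. Proposition~\ref{diagonal change of parameter} gives $Z_{\tilde M_{\alpha,c}}|_{D(v)}\disteq Z_{\tilde M_{\alpha',c}}|_{D(v)}$, using the permutation hypothesis on $v$. Call this common law $\tilde\rho$. Integrating the common kernel against $\tilde\rho$ then gives
\[
\int K(x,\cdot)\,\tilde\rho(dx)=\int K'(x,\cdot)\,\tilde\rho(dx).
\]
The left side is the law of $Z_{\tilde M_{\alpha,c}}|_{\mathcal E_D(U)}$ and the right side is the law of $Z_{\tilde M_{\alpha',c}}|_{\mathcal E_D(U')}\circ\bar f$, which is the claim.

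The main obstacle is the bookkeeping of versions. Kernels are only defined up to null sets, so the key point is that one fixed function $K$ serves simultaneously as the full-space and half-space kernel. Theorem~\ref{general bridge} as proved supplies such a common version, via the explicit RSK/gRSK disintegration. With that in hand, absolute continuity lets the full-space a.e.\ equality be used against $\tilde\rho$. No new distributional input is needed beyond Proposition~\ref{diagonal change of parameter}, which is exactly what replaces the trivial equality of conditioning laws in Theorem~\ref{transferring invariance}.
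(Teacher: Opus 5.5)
Your proposal is correct and follows essentially the same argument as the paper. You take the common full-space/half-space kernels supplied by Theorem~\ref{general bridge} for both parameter sequences, use that $\bar f$ fixes $D(v)$ to get their agreement almost everywhere under the common full-space conditioning law, transfer this to the half-space conditioning laws via Lemma~\ref{lemma: conditioning vector absolute continuity}, and conclude by integrating against the common half-space marginal from Proposition~\ref{diagonal change of parameter}.
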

\begin{proof}
    The assumed distributional identity implies
    \begin{equation}
    \label{eq: inhomogeneous full-space conditional invariance}
        \mathcal L\!\left(
            Z_{M_{\alpha,\alpha}}|_{\mathcal{E}_D(U)}
            \,\middle|\,
            Z_{M_{\alpha,\alpha}}|_{D(v)}
        \right)
        =
        \mathcal L\!\left(
            Z_{M_{\alpha',\alpha'}}|_{\mathcal{E}_D(U')} \circ \bar{f}
            \,\middle|\,
            Z_{M_{\alpha',\alpha'}}|_{D(v)}
        \right).
    \end{equation}
    The assumed distributional identity also implies that
    $Z_{M_{\alpha,\alpha}}|_{D(v)}$ and
    $Z_{M_{\alpha',\alpha'}}|_{D(v)}$ have the same law, since
    $\bar f$ is the identity on $D(v)$.
    Choose the two conditional laws in
    \eqref{eq: inhomogeneous full-space conditional invariance}
    to be the common full-space and half-space versions supplied by
    Theorem~\ref{general bridge}, applied to the pair
    $(M_{\alpha,\alpha},\tilde M_{\alpha,c})$ on $U$ and to the
    pair $(M_{\alpha',\alpha'},\tilde M_{\alpha',c})$ on $U'$.
    Equation~\eqref{eq: inhomogeneous full-space conditional invariance}
    holds almost everywhere with respect
    to the common full-space conditioning law.
    By Lemma~\ref{lemma: conditioning vector absolute continuity},
    the laws of $Z_{\tilde M_{\alpha,c}}|_{D(v)}$ and
    $Z_{\tilde M_{\alpha',c}}|_{D(v)}$ are both absolutely
    continuous with respect to that law. Thus the equality also
    holds almost everywhere with respect to both half-space
    conditioning laws. Using these versions for the half-space
    conditional laws gives
    \[
    \mathcal L\!\left(
        Z_{\tilde{M}_{\alpha,c}}|_{\mathcal{E}_D(U)}
        \,\middle|\,
        Z_{\tilde{M}_{\alpha,c}}|_{D(v)}
    \right)
    =
    \mathcal L\!\left(
        Z_{\tilde{M}_{\alpha',c}}|_{\mathcal{E}_D(U')} \circ \bar{f}
        \,\middle|\,
        Z_{\tilde{M}_{\alpha',c}}|_{D(v)}
    \right).
    \]
    By Proposition~\ref{diagonal change of parameter}, the marginal laws of 
    $Z_{\tilde{M}_{\alpha,c}}|_{D(v)}$ and
    $Z_{\tilde{M}_{\alpha',c}}|_{D(v)}$ are the same.
    Therefore, we have 
    \[
    Z_{\tilde{M}_{\alpha,c}}|_{\mathcal{E}_D(U)}
    \disteq
    Z_{\tilde{M}_{\alpha',c}}|_{\mathcal{E}_D(U')} \circ \bar{f}.
    \]
\end{proof}

\subsection{Invariance in homogeneous half-space models}

In this subsection, we will assume $M,Z_M,\tilde{M},Z_{\tilde{M}}$ is a pair of environments as in Theorem~\ref{bridge}, such that $\alpha_i = \beta_i = p$ for some $p$ for all $i\in \Z$, so that all the entries in $M$ and all the entries in $\tilde{M}$ strictly below diagonal are i.i.d. 
Note that the diagonal entries in $\tilde{M}$ are also i.i.d., but the distribution may be different from the off-diagonal entries due to the additional parameter $c$ involved, which can be chosen arbitrarily subject to the
admissibility conditions in Theorem~\ref{bridge}.

We record the following invariance properties in the full-space environments.
For $c \in \Z^2$, recall that $T_c$ is the translation by $c$, i.e. $T_c u = u+(c;c)$ for any $u \in \zbox$.

\begin{proposition}
[{\cite[Theorem~4.2 and Proposition~4.13]{dauvergne}}]
\label{duncan tower} 
    Let $(M,Z_M)$ be a homogeneous full-space environment of one of the types considered in
    Theorem~\ref{bridge}.
    Then, the following shift-invariances hold.
    
    \begin{enumerate}
        \item \textup{(Towers)}
        Let $U_1,\ldots,U_k, \,U'_1,\ldots,U'_k\subseteq \zbox$ be
        nonempty sets such that
        $U_i\times U_{i+1},\, U'_i\times U'_{i+1}\subseteq \mathcal H$
        for each $1\leq i<k$. 
        Suppose also that for each $i$, there exists $c_i\in\mathbb Z^2$
        such that
        $U'_i=T_{c_i}U_i$.
        Let
        $f:\bigcup U_i\to \bigcup U'_i$
        be the map whose restriction to $U_i$ is $T_{c_i}$. 
        Then
        \[
        Z_M|_{\mathcal E_D(\bigcup U_i)}
        \disteq
        Z_M|_{\mathcal E_D(\bigcup U'_i)}\circ \bar f .
        \]
        
        \item \textup{(Box permutations)}
        Let
        $U_1,\ldots,U_k,V_1,\ldots,V_m,
        U'_1,\ldots,U'_k,V'_1,\ldots,V'_m
        \subseteq \zbox$
        be such that
        $V_i\times U_j,
        V'_i\times U'_j\subseteq \mathcal H$
        for all $i,j$, and
        $U_i\times U_j,\; V_i\times V_j,\; U'_i\times U'_j,\; V'_i\times V'_j
        \subseteq \mathcal N$ for all $i\neq j$.
        Suppose also that for each $i,j$, there exist
        $c_i,d_j\in\mathbb Z^2$ such that
        $U'_i=T_{c_i}U_i$ and $V'_j=T_{d_j}V_j$ .
        Let
        $f:\bigcup U_i \cup
          \bigcup V_j
        \to
        \bigcup U'_i\cup
        \bigcup V'_j$
        be the map whose restriction to $U_i$ is $T_{c_i}$, and whose restriction to
        $V_j$ is $T_{d_j}$. Then
        \[
        Z_M|_{\mathcal E_D(\bigcup U_i\cup\, \bigcup V_j)}
        \disteq
        Z_M|_{\mathcal E_D(\bigcup U'_i\cup\,\bigcup V'_j)}
        \circ \bar f .
        \]
    \end{enumerate}
\end{proposition}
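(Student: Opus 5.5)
This result is taken from \cite{dauvergne}. The plan is to reprove it with the same RSK mechanism used earlier in this paper, applied to the homogeneous full-space environment. The basic ingredient is a \emph{conditional independence} statement for a single box $B\in\zbox$. Let $(P,Q)$ be the RSK (or gRSK) image of the weights in $B$.
\begin{itemize}
\item By \eqref{eq: rsk pushforward mass lpp}, and its gRSK analogue in Subsection~\ref{subsec: log-gamma polymer}, the push-forward density factorizes as a function of $P$ times a function of $Q$ once the common shape is fixed.
\item Hence $P$ and $Q$ are conditionally independent given $\sh(P)=\sh(Q)$.
\item By Proposition~\ref{prop: rsk Q given lambda} and its mirror image, each conditional law depends only on the dimensions of $B$, since the environment is homogeneous.
\item Translating through Lemma~\ref{translation} and Proposition~\ref{isometry RSK}: $Z_M|_{\bar H(B)}$ and $Z_M|_{\bar V(B)}$ are conditionally independent given $Z_M|_{D(B)}$.
\item The joint law of the triple $(Z_M|_{\bar H(B)}, Z_M|_{D(B)}, Z_M|_{\bar V(B)})$ is invariant under translating $B$.
\end{itemize}
The exponential case follows by the same scaling limit used in the proof of Theorem~\ref{bridge}.

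For the tower statement I would argue by induction on $k$. For $k=1$ the claim is translation invariance of an i.i.d.\ field. For the inductive step, take $B$ to be the intersection box of $\sqcup U_{k-1}$ and $\sqcup U_k$. This box has the row range spanned by the $U_{k-1}$ boxes and the column range spanned by the $U_k$ boxes, and is nonempty because $U_{k-1}\times U_k\subseteq\mathcal H$.
\begin{itemize}
\item By Lemma~\ref{decoupling duncan 3.1}, any valid vector whose pieces lie in $U_k$ (all of which cross $B$ vertically) is a function of the weights off $B$ and of $Z_M|_{\bar V(B)}$.
\item Symmetrically, any vector whose pieces lie in $U_1\cup\dots\cup U_{k-1}$ is a function of the weights off $B$ and of $Z_M|_{\bar H(B)}$. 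Here one uses that $U_j\times U_{k-1}\subseteq\mathcal H$ propagates, so each such box crosses $B$ horizontally or is disjoint from it.
\item Mixed vectors in $\mathcal E_D(\bigcup U_i)$ are handled by noting that the $D$-extension only produces subboxes of the same type, as in the proof of Theorem~\ref{general bridge}.
\end{itemize}

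Now compare the configuration $(U_1,\dots,U_k)$ with $(U_1',\dots,U_{k-1}',T_{c}U_k)$, where $c$ is chosen so that the relative position of $T_cU_k$ and $B'$ (the intersection box for the primed tower) equals that of $U_k$ and $B$. By the induction hypothesis, the part of the data on the first $k-1$ sets, jointly with $Z_M|_{D(B)}$, matches in law after shifting. By conditional independence, $Z_M|_{\bar V(B)}$ can then be re-glued with its conditional law given $Z_M|_{D(B)}$, which is translation invariant. The weights on the parts of the $U_k$ boxes lying off $B$ are i.i.d.\ and independent of everything else used. This yields the full joint identity. The last move shifts $T_cU_k$ to $U_k'$ while fixing the other sets. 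Since $B$ and $B'$ have the same dimensions, this amounts to the same argument run with the roles of the first $k-1$ sets and $U_k$ exchanged.

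For the box permutation statement I would run the same scheme with a single box $B$, chosen as the intersection of $\sqcup\bigcup V_j$ and $\sqcup\bigcup U_i$. The sets $U_i$ are pairwise in $\mathcal N$, so the weights they see off $B$ are disjoint and independent. Hence each $U_i$ can be shifted independently given $Z_M|_{\bar V(B)}$, and similarly each $V_j$ given $Z_M|_{\bar H(B)}$.

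I expect the main obstacle to be the geometric bookkeeping. One must check that the chosen $B$ really satisfies the hypotheses of Lemma~\ref{decoupling duncan 3.1} for every subbox produced by $\bar f$. One must also check that regions off $B$ used by the two sides are disjoint, or else independent with matching law, in both the original and the shifted configuration. The most delicate point is that multi-path valid vectors mixing boxes from different $U_i$ must remain valid and decompose compatibly after shifting; I would try to control this by first proving the identity only on $H(B)$/$V(B)$-generated data and then extending via Proposition~\ref{isometry RSK}.
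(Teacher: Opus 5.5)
The paper gives no proof of this statement; it imports it from \cite{dauvergne}, where the RSK decoupling you describe is combined with the translation \emph{and reflection} symmetries of the i.i.d.\ environment. Apart from the decoupling, your outline uses only translation invariance, and this is not enough: your key step is circular. You claim that, by the induction hypothesis, the data on $U_1,\dots,U_{k-1}$ jointly with $Z_M|_{D(B)}$ matches in law after shifting. But $B'=T_{((c_k)_1,(c_{k-1})_2)}B$, not $T_{c_{k-1}}B$. So $\{B\}$ cannot be absorbed into the layer $U_{k-1}$, and what you actually need is a $k$-layer statement whose last layer is $\{B\}$.

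Take $k=2$, $U_1=U_1'=\{u\}$, $U_2=\{w\}$, $U_2'=\{T_{(1,0)}w\}$, with $u$ crossing both $w$ and $T_{(1,0)}w$ horizontally. Then $c=(1,0)$, your last move is trivial, and your first move requires
\[
\bigl(Z_M|_{D(u)},Z_M|_{D(B)}\bigr)\disteq\bigl(Z_M|_{D(u)},Z_M|_{D(T_{(1,0)}B)}\bigr).
\]
This is itself a two-layer shift. The case $k=1$ says nothing about it. Decoupling at $B$ once more just returns the same statement, because the intersection box of $u$ and $B$ is $B$. Your ``last move with the roles exchanged'' is circular in the same way: it asks for a vertical shift of $U_k$ relative to the fixed box $B'$.

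The missing ingredient is a sliding step that uses reflection. Let $E$ be the hull of $B$ and $T_{(1,0)}B$. The $180^\circ$ rotation of the i.i.d.\ weights on $E$ preserves their law, maps $E$ to itself and $B$ to $T_{(1,0)}B$, and sends $D$-vectors to $D$-vectors. Hence $(Z_M|_{D(E)},Z_M|_{D(B)})\disteq(Z_M|_{D(E)},Z_M|_{D(T_{(1,0)}B)})$. Now decouple at $E$. The $u$-data is a function of $Z_M|_{\bar H(E)}$ and the weights off $E$, while $D(B),D(T_{(1,0)}B)\subseteq\bar V(E)$. Conditional independence given $Z_M|_{D(E)}$ then yields the display above.

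With this horizontal slide and its vertical analogue, your induction closes. First shift $U_{k-1}\cup\{B\}$ by $c_{k-1}$, which is a genuine $(k-1)$-layer instance. Then glue at $B$, and finally slide the last layer.

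A smaller point concerns the horizontal side. There the $D$-extension does \emph{not} preserve the crossing type: the pieces of $u^k$ are narrower and may meet $B$ only partially. You should first replace $u^k$ by its vertically shifted version via Lemma~\ref{lemma: k-point paths equivalence}, as in the proof of Theorem~\ref{half-space conditional independence}.

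The box-permutation sketch has the same gap in a stronger form. Conditionally on $Z_M|_{\bar V(B)}$, the $U_i$-data is a fixed function of the outside weights, and shifting $U_i$ changes which coordinates of $Z_M|_{\bar V(B)}$ it reads. So ``each $U_i$ can be shifted independently'' is not a statement about any fixed conditional law. The real content is an invariance of the joint law of the strips' $D$-data with the $V_j$-data under rearranging the strips of $B$ used by the $U_i$. Proving that again requires reflection-type input, not just translation invariance of a conditional kernel.
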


Based on these, we can get analogous results for the half-space environments.

\begin{theorem}
    [Half-space tower shift] 
    \label{tower shift}
    Let $U_1, \dots ,U_k, U_1', \dots, U_k' \subseteq \zbox$ be nonempty sets such that
    \begin{enumerate}
        \item There exists $v \in \zdiag$, so that $\{v\} \times U_1 , \{v\} \times U'_1\subseteq \mathcal{H}$;
        \item $U_i \times U_{i+1}, U'_i \times U'_{i+1} \subseteq \mathcal{H}$ for every $1\leq i < k$;
        \item For each $i$, $U'_i = T_{c_i} U_i$ for some $c_i \in \Z^2$.
    \end{enumerate}
    Let $f: \bigcup U_i \to \bigcup U'_i$ be the function whose restriction to $U_i$ is $T_{c_i}$. Then 
    \[
    Z_{\tilde{M}} |_{\mathcal{E}_D(\bigcup U_i)} \disteq
    Z_{\tilde{M}}|_{\mathcal{E}_D(\bigcup U_i')} \circ \bar{f}.\]
    
\end{theorem}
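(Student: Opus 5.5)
The plan is to apply Theorem~\ref{transferring invariance} with $U=\bigcup U_i$, $U'=\bigcup U'_i$, the given $v\in\zdiag$, and the map $f$ extended by $f(v)=v$. The full-space input will be the tower case of Proposition~\ref{duncan tower}, applied to the enlarged tower $\{v\},U_1,\dots,U_k$ and $\{v\},U'_1,\dots,U'_k$ with shifts $0,c_1,\dots,c_k$.

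First, check the tower hypotheses for the enlarged families. Hypothesis 1 gives $\{v\}\times U_1\subseteq\mathcal H$ and $\{v\}\times U'_1\subseteq\mathcal H$. Hypothesis 2 gives the consecutive crossings $U_i\times U_{i+1}\subseteq\mathcal H$ and the primed analogue. The shifts are $T_0$ on $\{v\}$ and $T_{c_i}$ on $U_i$. Proposition~\ref{duncan tower}(1) then yields
\[
Z_M|_{\mathcal E_D(U\cup\{v\})}\disteq Z_M|_{\mathcal E_D(U'\cup\{v\})}\circ\bar f.
\]
One must make sure $f$ is well defined as a single map, since $v$ might lie in some $U_i$ with $c_i\neq0$. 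I would handle this by noting that $v\in U_i$ forces $(v,v)\in\mathcal H$ only when $i=1$, and then $c_1=0$ is needed. If this causes trouble, I would instead rely on the well-definedness convention the paper imposes on $f$.

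Second, check the geometric hypotheses of Theorem~\ref{transferring invariance}.

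\emph{Crossing condition.} We need $\{v\}\times U\subseteq\mathcal N\cup\mathcal H$. Since $v$ crosses every $u\in U_1$ horizontally, each $u\in U_1$ has $[u^-_1,u^+_1]\subseteq[v^-_1,v^+_1]$ and $[v^-_2,v^+_2]\subseteq[u^-_2,u^+_2]$. Inductively, $u_{i+1}\in U_{i+1}$ is crossed horizontally by some $u_i$, so its $x$-interval is contained in that of $u_i$ and its $y$-interval contains that of $u_i$. Hence every $u\in U$ has $x$-interval inside $[v^-_1,v^+_1]$ and $y$-interval containing $[v^-_2,v^+_2]$, which is exactly $(v,u)\in\mathcal H$. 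Here I use that each $U_i$ is nonempty and that the condition $U_i\times U_{i+1}\subseteq\mathcal H$ holds for all pairs.

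\emph{Symmetric-overlap condition.} We need $(\sqcup U)\cap(\sqcup U)^\top\subseteq v$. Write $v=[a,b]^2$. By the previous step, every box of $U$ lies in the vertical strip $[a,b]\times\Z$, so $\sqcup U\subseteq[a,b]\times\Z$. Therefore $(\sqcup U)^\top\subseteq\Z\times[a,b]$, and the intersection lies in $[a,b]^2=v$. The same argument applies to $U'$ using hypothesis 1 for $U'_1$.

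With these hypotheses verified, Theorem~\ref{transferring invariance} gives the conclusion directly. I expect the main obstacle to be the bookkeeping: the inductive propagation of the crossing relation through the tower, and the consistency of $f$ at $v$. I expect the probabilistic content to be entirely absorbed by the earlier results.
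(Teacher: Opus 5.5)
Your proposal is correct and follows the paper's proof step for step. You enlarge both towers by the layer $\{v\}$ with zero shift and apply Proposition~\ref{duncan tower}(1). You then propagate horizontal crossing down the tower to get $\{v\}\times U,\{v\}\times U'\subseteq\mathcal H$, deduce $(\sqcup U)\cap(\sqcup U)^\top\subseteq v$ from $\sqcup U\subseteq[v^-_1,v^+_1]\times\Z$, and invoke Theorem~\ref{transferring invariance}.

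The one piece that does not work as written is your side argument that setting $f(v)=v$ is consistent. The relation $(v,v)\in\mathcal H$ holds trivially for every box, so it carries no information. Moreover, $v$ can lie in some $U_i$ with $i\ge 2$, for example when $U_1=\dots=U_{i-1}=\{v\}$. Falling back on the paper's well-definedness convention does not help either. That convention concerns independence of $\bar f$ from the chosen representation $(u_1^{k_1},\dots,u_\ell^{k_\ell})$, and it presupposes that $f$ is already single-valued.

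The correct short argument is the one the paper gives in the proof of Theorem~\ref{inhomogeneous shift and permute}. If $v\in U_i$, then $T_{c_i}v\in U'_i$. By hypothesis~1 for $U'_1$ together with your propagation step, $v$ crosses $T_{c_i}v$ horizontally. Since $v$ and $T_{c_i}v$ have the same side lengths, both interval inclusions in that crossing are equalities, so $T_{c_i}v=v$, i.e.\ $c_i=(0,0)$. Hence $f(v)=v$ agrees with the prescribed rule on every layer.
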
 
\begin{proof}
    Since every layer is nonempty and horizontal crossing is transitive, the first two conditions imply
    $\{v\}\times U_i,\{v\}\times U_i'\subseteq\mathcal H$ for every $i$.
    Extend $f$ to a function from $\{v\} \cup \bigcup U_i$ to $\{v\} \cup \bigcup U_i'$ by letting $f(v) = v$. Apply Proposition~\ref{duncan tower}(1) to 
    $\{v\}, U_1,\dots U_k$ and $\{v\}, U'_1,\dots, U'_k$ to get 
     \[
    Z_{{M}} |_{\mathcal{E}_D(\bigcup U_i\cup \{v\})} \disteq
    Z_{{M}}|_{\mathcal{E}_D(\bigcup U_i'\cup \{v\})} \circ \bar{f}.\]
    Then the result follows from Theorem~\ref{transferring invariance} with $U = \bigcup U_i$ and $U' = \bigcup U_i'$. 
    To see that $\sqcup U \cap (\sqcup U)^\top \subseteq v$ and $\sqcup U' \cap (\sqcup U')^\top \subseteq v$, 
    notice that if $(i,j) \in \sqcup U \setminus v$ or $\sqcup U' \setminus v$ then $i \in [v^-_1, v^+_1]$ and $j \notin [v^-_1, v^+_1]$, so it's impossible to also have $(j,i) \in \sqcup U \setminus v$ or $\sqcup U' \setminus v$. 
\end{proof}

\begin{theorem}
    [Half-space disjoint shift]
    \label{disjoint shift}
    Let $V, U_1, \dots, U_k, U'_1,\dots, U'_k \subseteq \zbox$ be such that
    \begin{enumerate}
        \item There exists $v \in \zdiag$, so that $\{v\} \times (V \cup \bigcup U_i \cup \bigcup U'_i) \subseteq \mathcal{H}$;
        \item For all $i$, $V \times U_i, V \times U'_i \subseteq \mathcal{H}$;
        \item For all $i\neq j$, $U_i \times U_j, U'_i \times U'_j \subseteq \mathcal{N}$;
        \item For each $i$, $U'_i = T_{c_i} U_i$ for some $c_i \in \Z^2$.
    \end{enumerate}
    Let $f: V \cup \bigcup U_i \to V \cup \bigcup U'_i$ be the function whose restriction to $U_i$ is $T_{c_i}$ and whose restriction to $V$ is the identity. Then 
    \[
    Z_{\tilde{M}} |_{\mathcal{E}_D(V \cup \bigcup U_i)} \disteq
    Z_{\tilde{M}}|_{\mathcal{E}_D(V \cup \bigcup U_i')} \circ \bar{f}.
    \]
\end{theorem}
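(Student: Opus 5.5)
The proof will follow the tower-shift argument, with Proposition~\ref{duncan tower}(2) (box permutations) replacing part (1). Set $U = V\cup\bigcup U_i$ and $U' = V\cup\bigcup U'_i$. Extend $f$ to $U\cup\{v\}$ by $f(v)=v$. The plan is to apply Theorem~\ref{transferring invariance} to the pair $U,U'$ with the diagonal box $v$. This requires two things: the full-space identity
\[
Z_M|_{\mathcal E_D(U\cup\{v\})}\disteq Z_M|_{\mathcal E_D(U'\cup\{v\})}\circ\bar f,
\]
and the geometric hypotheses $\sqcup U\cap(\sqcup U)^\top\subseteq v$, $\sqcup U'\cap(\sqcup U')^\top\subseteq v$, and $\{v\}\times U,\{v\}\times U'\subseteq\mathcal N\cup\mathcal H$.

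For the full-space identity I will invoke Proposition~\ref{duncan tower}(2) with the following choices:
\begin{itemize}
\item the horizontal family is the two sets $V_1=\{v\}$ and $V_2=V$, with $V'_1=\{v\}$, $V'_2=V$ and trivial shifts $d_j=0$;
\item the vertical family is $U_1,\dots,U_k$, shifted to $U'_i=T_{c_i}U_i$.
\end{itemize}
Hypothesis~2 gives $V\times U_i\subseteq\mathcal H$ and $V\times U'_i\subseteq\mathcal H$. Hypothesis~1 gives $\{v\}\times U_i\subseteq\mathcal H$ and $\{v\}\times U'_i\subseteq\mathcal H$. Hypothesis~3 supplies the required disjointness among the $U_i$ and among the $U'_i$.

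The obstacle is that Proposition~\ref{duncan tower}(2) also demands $V_1\times V_2\subseteq\mathcal N$, i.e.\ $v$ disjoint from every element of $V$. This is false here, since hypothesis~1 says $v$ crosses every element of $V$ horizontally. I would try two fixes, in order.
\begin{itemize}
\item First, work with a single horizontal set $V$ and treat $v$ as an additional element of it, taking $V_1=V\cup\{v\}$. This is legitimate if the disjointness in Proposition~\ref{duncan tower}(2) is only required between distinct $V_i$, not within one $V_i$, which matches its statement ``$V_i\times V_j\subseteq\mathcal N$ for $i\neq j$.'' Then $V_1\times U_i\subseteq\mathcal H$ holds by hypotheses~1 and~2, and $f$ is the identity on $V_1$.
\item If elements inside a single $V_j$ must also be shifted by one common vector, this still works, because the shift on $V_1$ is $0$ for both $V$ and $v$.
\end{itemize}
So I expect the first option to go through directly. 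If $V$ is empty, Proposition~\ref{duncan tower}(2) applies with $V_1=\{v\}$ alone.

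For the geometric conditions I will argue as in the proof of Theorem~\ref{tower shift}. Every element $w$ of $U$ or $U'$ is crossed horizontally by $v$, so $w$ has first-coordinate range inside $[v^-_1,v^+_1]$ and second-coordinate range containing $[v^-_2,v^+_2]=[v^-_1,v^+_1]$. Hence a lattice point $(i,j)\in\sqcup U\setminus v$ has $i\in[v^-_1,v^+_1]$ and $j\notin[v^-_1,v^+_1]$. Its reflection $(j,i)$ therefore cannot lie in $\sqcup U$, and the same holds for $U'$. The inclusion $\{v\}\times U\subseteq\mathcal H$ is immediate. The well-definedness assumption on $\bar f$ is inherited from the full-space statement.

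Theorem~\ref{transferring invariance} then yields
\[
Z_{\tilde M}|_{\mathcal E_D(U)}\disteq Z_{\tilde M}|_{\mathcal E_D(U')}\circ\bar f,
\]
which is the claim.
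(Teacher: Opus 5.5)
Your proof is correct and follows the paper's argument. The paper applies Proposition~\ref{duncan tower}(2) with $m=1$, $V_1=V_1'=\{v\}\cup V$ and $d_1=(0,0)$, then invokes Theorem~\ref{transferring invariance} with $U=V\cup\bigcup U_i$ and $U'=V\cup\bigcup U_i'$, verifying $\sqcup U\cap(\sqcup U)^\top\subseteq v$ as in the tower-shift proof; you were right to discard the two-set horizontal family, and your first fix is the paper's choice.
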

\begin{proof}
    Extend $f$ to a function from $\{v\} \cup V \cup  \bigcup U_i$ to $\{v\} \cup V \cup \bigcup U'_i$ by setting $f(v) = v$. 
    Apply Proposition~\ref{duncan tower}(2) with $m=1$, $V_1 = V_1' = \{v\} \cup V$, and $d_1 =(0,0)$ to get
    \[
    Z_{{M}} |_{\mathcal{E}_D(\{v\} \cup V \cup \bigcup U_i)} \disteq
    Z_{{M}}|_{\mathcal{E}_D(\{v\} \cup V \cup \bigcup U_i')} \circ \bar{f}.
    \]
    Then the result follows from Theorem~\ref{transferring invariance} with $U = V \cup \bigcup U_i$ and $U' = V \cup \bigcup U'_i$. That $\sqcup U \cap (\sqcup U)^\top \subseteq v$ and $\sqcup U' \cap (\sqcup U')^\top \subseteq v$ are seen in the same way as in the previous proposition.
\end{proof}

\subsection{Invariance of inhomogeneous half-space models}

In this subsection, we return to the general setting of environments $M , Z_M, \tilde{M}, Z_{\tilde{M}}$ as in Theorem~\ref{bridge} without requiring the parameters to be homogeneous.

We first record a known invariance result for permuting parameters and shifting boxes in the full-space model. 
Following \cite{dauvergne}, we define for $\alpha$ a bi-infinite sequence and $u\in \zbox$ that 
\[ 
\alpha_u = (\alpha_i:(i,j)\in u \text{ for some }j)
\quad \text{ and } \quad 
\hat\alpha_u = (\alpha_j:(i,j)\in u \text{ for some } i).
\]

Further, for $U \subseteq \zbox$, define 
\[
I(U)^-_1
\]
to be the smallest interval containing $\{u^-_1:u\in U\}$, and similarly define $I(U)^+_1, I(U)^-_2,$ and $I(U)^+_2$. 

Then, define the completion of $U$ by
\[
\overline U \defeq \{u\in \zbox: u^-_1 \in I(U)^-_1, u^-_2 \in I(U)^-_2, u^+_1 \in I(U)^+_1, u^+_2 \in I(U)^+_2\}.
\]

We will only consider families $U_1, \dots, U_k$ with prescribed translations $T_{c_1},\dots, T_{c_k}$ such that completed layers translated by different vectors are disjoint, i.e.
\[
\overline{U_i} \cap \overline{U_j} \neq \emptyset \implies c_i=c_j.
\]
Thus, if $f|_{U_i} = T_{c_i}$ for each $i$, then $f$ extends unambiguously to $\bigcup_i \overline{U_i}$ by setting $f|_{\overline U_i} = T_{c_i}$. 

This extension of $f$ to the completed sets should not be confused with the map $\bar f$ defined above, which denotes the induced map on the extended endpoint sets $\mathcal E_D(\cdot)$. 

\begin{proposition}
[{\cite[Theorem~4.17]{dauvergne}}]
    \label{duncan parameter shift}
    Let $U_1, \dots, U_k, U'_1, \dots, U'_k \subseteq \zbox$ be 
    nonempty sets such that
    $U_i \times U_{i+1}, \; U'_i \times U'_{i+1} \subseteq \mathcal{H}$ for every $1 \leq i < k$, 
    and each $U'_i = T_{c_i} U_i$ for some $c_i \in \Z^2$.
    Let $f: \bigcup U_i \to \bigcup U'_i$ be the function whose restriction to $U_i$ is $T_{c_i}$. 
    Let $\alpha, \beta, \alpha', \beta'$ be bi-infinite sequences such that for each $u\in \bigcup \overline{U_i}$, there exist permutations $\sigma_u$ and $\tau_u$ such that
    \[
        \alpha_u = \sigma_u ( \alpha'_{f(u)}) \quad \text{and} \quad \hat{\beta}_u = \tau_u ( \hat{\beta}'_{f(u)}).
    \]
    Then,
    \[ 
    Z_{M_{\alpha, \beta}}|_{\mathcal{E}_D(\bigcup U_i)} \disteq 
    Z_{M_{\alpha', \beta'}}|_{\mathcal{E}_D(\bigcup U_i')} \circ \bar{f}.
    \]
\end{proposition}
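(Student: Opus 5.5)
This proposition is quoted from \cite[Theorem~4.17]{dauvergne}. So I will not reprove it from scratch; instead I will indicate how I would derive it within the framework of that paper, using the RSK/gRSK isometry and a local parameter-swapping move.

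\textbf{Step 1: elementary moves.} I will reduce to two kinds of moves, which can be composed:
\begin{itemize}
\item[(a)] adjacent transpositions of parameters in $\alpha$ (or $\beta$) with the boxes fixed;
\item[(b)] single-step shifts of one top or bottom portion of the tower, with the parameters relabelled accordingly.
\end{itemize}
The assumption that $\alpha_u$ is a permutation of $\alpha'_{f(u)}$ for all $u$ in the completions $\overline{U_i}$ is used to decompose the global change into a chain of such moves. At each intermediate stage the hypothesis must still hold for the intermediate configuration. The completion is needed because $\mathcal E_D$ involves sub-boxes of the boxes in $U_i$, whose endpoints range over the intervals $I(U_i)^\pm_{1,2}$.

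\textbf{Step 2: parameter swaps on a crossing layer.} Suppose two adjacent columns $i,i+1$ are such that every box in the layer either contains both columns or neither. Swapping $\alpha_i$ and $\alpha_{i+1}$ then preserves the joint law of $Z|_{\mathcal E_D}$. I would prove this by applying the RSK/gRSK (Lemma~\ref{translation}) on the rectangle spanned by those columns. By \eqref{eq: rsk pushforward mass lpp} and Proposition~\ref{prop: rsk Q given lambda}, the conditional law of the relevant pattern given its shape depends on the parameters only through a symmetric function of them. The isometry (Proposition~\ref{isometry RSK}), together with the decoupling Lemma~\ref{decoupling duncan 3.1}, expresses all the observables through that pattern and independent outside weights.

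\textbf{Step 3: shifts via the tower.} For a tower $U_1,\dots,U_k$ with horizontal crossings, shifting $U_i$ amounts to moving which columns or rows it occupies relative to its neighbours. Using decoupling, the observables on $U_i$ depend on the rest only through $Z|_{\bar V}$ or $Z|_{\bar H}$ on the neighbouring crossing box. The law of those quantities, conditional on the $D$-data, is invariant under relabelling shifts once the parameters are permuted to match. This follows from the homogeneous tower result in Proposition~\ref{duncan tower}(1), generalised to Schur-type symmetric weights.

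\textbf{Main obstacle.} The hard part is Step 1 combined with Step 3. One must organise the arbitrary permutations $\sigma_u,\tau_u$ and shifts $c_i$ into a sequence of local moves that each preserve the tower condition and the permutation hypothesis on every completed layer. This is exactly the combinatorial bookkeeping carried out in \cite{dauvergne}, and I would follow it there.
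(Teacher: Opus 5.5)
Relying on \cite[Theorem~4.17]{dauvergne} is exactly what the paper does. It imports this proposition verbatim and gives no argument of its own, so as a justification your proposal matches the paper's. The sketch you attach, however, should not be read as a derivation, because two of its steps are wrong as stated.

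\textbf{Step~2 points to the wrong side of RSK.} In the paper's conventions $\alpha_i$ is attached to the first coordinate, and $Q$ encodes $Z|_{V(w)}$. By Proposition~\ref{prop: rsk Q given lambda}, the conditional law of $Q$ given its shape is $\alpha^q/s_\lambda(\alpha)$, which is \emph{not} symmetric in the column parameters. Already for $n=2$, $\lambda=(1,0)$ one gets $\mathbb P(q_{11}=1\mid\lambda)=\alpha_1/(\alpha_1+\alpha_2)$.

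The symmetry you need lives on the $P$-side, and it comes from the shape marginal rather than from a conditional law. Summing \eqref{eq: rsk pushforward mass lpp} (in its rectangular form) over $q$ shows that $P$ has law $Z_{\alpha,\beta}\,s_{\sh(p)}(\alpha)\,\beta^p$. Through Proposition~\ref{isometry RSK}, it is therefore $Z|_{\bar H(w)}$ whose law is invariant under $\alpha_i\leftrightarrow\alpha_{i+1}$. To make this work you need three further things:
\begin{enumerate}
\item the transpose of Lemma~\ref{decoupling duncan 3.1}: observables on boxes in $\mathcal H\cup\mathcal N$ relative to $w$ are functions of $Z|_{\bar H(w)}$ and outside weights;
\item $w$ occupying exactly columns $i,i+1$ and all rows in use, so that every box meeting those columns crosses $w$ horizontally;
\item the ``both or neither'' condition checked for every component box occurring in $\mathcal E_D$, not just the boxes of the layer. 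The components of $u^k$ start at $u^-+(j,0)$, so this can fail; those blocks must then first be converted to vertically offset endpoints via Lemma~\ref{lemma: k-point paths equivalence}.
\end{enumerate}

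\textbf{Step~3 is circular.} Proposition~\ref{duncan tower}(1) is available only for homogeneous weights, and its ``generalisation to Schur-type weights'' is the content of the statement being proved. The supporting claim is also false for general permutations. The kernel of $Z|_{\bar V(w)}$ given $Z|_{D(w)}$ is $\alpha^q/s_\lambda(\alpha)$, and that of $Z|_{\bar H(w)}$ is $\beta^p/s_\lambda(\beta)$. Both depend on the \emph{order} of the parameters, so they are not preserved by an arbitrary $\sigma_u$ or $\tau_u$. An order-preserving relabelling is just translation invariance and says nothing about shifting one layer relative to another.

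Likewise, your move (b) in Step~1 is not elementary. Even with constant parameters, a single-step shift of part of a tower is already the hidden shift invariance of Proposition~\ref{duncan tower}(1).

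Finally, your explanation of why completions are needed does not hold. For a singleton $U_i=\{u\}$ one has $\overline{U_i}=\{u\}$, even though the components of $u^k$ have endpoints offset by $(j,0)$.

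So the citation carries the entire weight of your proposal, just as in the paper.
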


This yields the following half-space counterpart.
We restrict the
parameters so that the full-space environments $M_{\alpha,\alpha}$
and $M_{\alpha',\alpha'}$ used in the proof are well-defined.

\begin{theorem}
\label{inhomogeneous shift and permute}
    Let $U_1, \dots ,U_k, U_1', \dots, U_k' \subseteq \zbox$ be nonempty sets such that
    \begin{enumerate}
        \item There exists $v,v' \in \zdiag$, so that $\{v\} \times U_1 , \{v'\} \times U'_1\subseteq \mathcal{H}$;
        \item $U_i \times U_{i+1}, U'_i \times U'_{i+1} \subseteq \mathcal{H}$ for every $1\leq i <k$;
        \item For each $i$, $U'_i = T_{c_i} U_i$ for some $c_i \in \Z^2$.
    \end{enumerate}
    Let $f: \bigcup U_i \to \bigcup U'_i$ be the function whose restriction to $U_i$ is $T_{c_i}$. 
    Let $\alpha, \alpha'$ be bi-infinite sequences 
    with $\alpha_i,\alpha_i' \in (0,1)$ for all $i$ in the geometric case, and $\alpha_i, \alpha_i' >0$ for all $i$ in the exponential and log-gamma cases. 
    Suppose that
    for all $u\in \bigcup \overline{U_i}$, there exist permutations $\sigma_u$ and $\tau_u$ such that 
    \[
    \alpha_u = \sigma_u ( \alpha'_{f(u)}) \quad \text{and} \quad \hat{\alpha}_u = \tau_u ( \hat{\alpha}'_{f(u)}).
    \]
    Then, for every $c$ for which both
    $\tilde M_{\alpha,c}$ and $\tilde M_{\alpha',c}$ are well-defined, we have
    \[
    Z_{\tilde{M}_{\alpha, c}}|_{\mathcal{E}_D(\bigcup U_i)} \disteq 
    Z_{\tilde{M}_{\alpha', c}}|_{\mathcal{E}_D(\bigcup U_i')} \circ \bar{f}.
    \]
\end{theorem}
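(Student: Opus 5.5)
We plan to follow the proof of Theorem~\ref{tower shift}, but with the inhomogeneous transfer mechanism of Theorem~\ref{transferring invariance inhomogeneous}. The full-space input will be Proposition~\ref{duncan parameter shift}, applied to $M_{\alpha,\alpha}$ and $M_{\alpha',\alpha'}$; these are well defined by the stated parameter ranges. The one new feature is that the diagonal anchors $v$ and $v'$ may differ. Theorem~\ref{transferring invariance inhomogeneous} as stated requires $f(v)=v$. We therefore first rechoose the anchors, and then rerun that proof with $f(v_0)=v_0'$ for some $v_0\neq v_0'$.

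\emph{Step 1: choice of anchors.} Write $c_1=(a,b)$. Let $[s,t]$ be the smallest interval containing $\bigcup_{u\in U_1}[u^-_1,u^+_1]$, and let $v_0\in\zdiag$ be the diagonal square with side $[s,t]$. Since $(v,u)\in\mathcal H$ for all $u\in U_1$, every $u\in U_1$ satisfies $[u^-_1,u^+_1]\subseteq[v^-_1,v^+_1]\subseteq[u^-_2,u^+_2]$. Hence $[s,t]\subseteq[v^-_1,v^+_1]\subseteq\bigcap_{u\in U_1}[u^-_2,u^+_2]$, and so $\{v_0\}\times U_1\subseteq\mathcal H$. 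By transitivity, as in the proof of Theorem~\ref{tower shift}, $\{v_0\}\times U_i\subseteq\mathcal H$ for every $i$. Let $v_0'$ be the diagonal square with side $[s+a,t+a]$. The same argument, using the anchor $v'$ and the relation $U_1'=T_{c_1}U_1$, gives $\{v_0'\}\times U_i'\subseteq\mathcal H$. The geometric conditions $\sqcup U\cap(\sqcup U)^\top\subseteq v_0$ and $\sqcup U'\cap(\sqcup U')^\top\subseteq v_0'$ then follow exactly as in the proof of Theorem~\ref{tower shift}.

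\emph{Step 2: the parameter condition on the anchors.} Pick $u^*\in\overline{U_1}$ with $u^{*-}_1=s$ and $u^{*+}_1=t$, and with vertical endpoints taken from $I(U_1)^\pm_2$. Then $\alpha_{u^*}=(\alpha_i:s\le i\le t)$, and $f(u^*)=T_{c_1}u^*$ has horizontal index set $[s+a,t+a]$. The hypothesis therefore says that $(\alpha_i)_{i\in[s,t]}$ is a permutation of $(\alpha'_i)_{i\in[s+a,t+a]}$.

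\emph{Step 3: the full-space identity.} Apply Proposition~\ref{duncan parameter shift} to the tower $\{v_0\},U_1,\dots,U_k$ and its image $\{v_0'\},U_1',\dots,U_k'$, with $f(v_0)=v_0'=T_{(a,a)}v_0$. The layer shift $(a,a)$ is allowed because towers permit a different shift on each layer. For the extra layer, $\overline{\{v_0\}}=\{v_0\}$. Its $\alpha$-condition is Step 2, and its $\hat\alpha$-condition is the same statement because $\alpha=\beta$ and $v_0$ is a square. There is one degenerate case: $v_0$ might lie in some $\overline{U_i}$ with $c_i\neq(a,a)$, which would violate the requirement that completed layers with different shifts be disjoint. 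I would dispose of it by noting that Step 1 only needs $v_0$ to have side between the union of the horizontal ranges and $[v^-_1,v^+_1]$, and choosing the side within that range to avoid the collision. This gives
\[
Z_{M_{\alpha,\alpha}}|_{\mathcal E_D(\bigcup U_i\cup\{v_0\})}\disteq Z_{M_{\alpha',\alpha'}}|_{\mathcal E_D(\bigcup U_i'\cup\{v_0'\})}\circ\bar f .
\]

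\emph{Step 4: transfer with distinct anchors.} I then rerun the proof of Theorem~\ref{transferring invariance inhomogeneous}, with $v_0$ on the left and $v_0'$ on the right, using $\bar f|_{D(v_0)}$ to identify the conditioning spaces.
\begin{itemize}
\item Theorem~\ref{general bridge}, together with Remark~\ref{remark: auxiliary full-space parameters}, gives common conditional kernels for $(M_{\alpha,\alpha},\tilde M_{\alpha,c})$ on $(U,v_0)$ and for $(M_{\alpha',\alpha'},\tilde M_{\alpha',c})$ on $(U',v_0')$.
\item Lemma~\ref{lemma: conditioning vector absolute continuity} passes the almost-everywhere equality of kernels from the full-space conditioning laws to the half-space ones.
\item It remains to show $Z_{\tilde M_{\alpha,c}}|_{D(v_0)}\disteq Z_{\tilde M_{\alpha',c}}|_{D(v_0')}$. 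By Lemma~\ref{translation}, both sides are the RSK/gRSK shape law of a finite symmetric matrix. By Proposition~\ref{lpp matrix change of parameter} or Proposition~\ref{polymer matrix change of parameter}, with the exponential case obtained as a limit, this law depends only on $c$ and the multiset of parameters on the square. Step 2 then gives equality, which is the translated form of Proposition~\ref{diagonal change of parameter}.
\end{itemize}

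I expect the main obstacle to be Step 1 together with the collision issue in Step 3. The anchor has to be chosen so that it crosses every box of both towers, its shifted copy satisfies the permutation hypothesis, and the completion-disjointness condition of Proposition~\ref{duncan parameter shift} still holds. The transfer in Step 4 is routine once the proof of Theorem~\ref{transferring invariance inhomogeneous} is checked to use $f(v)=v$ only to identify the conditioning vectors.
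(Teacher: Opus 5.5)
Your argument is correct in outline, but it handles the diagonal component of $c_1=(a,b)$ differently from the paper.

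\textbf{The two routes.} The paper first uses the diagonal translation covariance $Z_{\tilde M_{\alpha,c}}\disteq Z_{\tilde M_{\beta,c}}\circ T_{(r,r)}$, with $\beta_i=\alpha_{i-r}$, to reduce to $c_1=(0,y)$. A single anchor $v$ with side $[s,t]$ then serves both towers, $f(v)=v$, and Theorem~\ref{transferring invariance inhomogeneous} applies as a black box.

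You instead keep $c_1$ general, anchor the towers at $v_0$ and $v_0'=T_{(a,a)}v_0$, and rerun the transfer proof with $\bar f$ identifying $D(v_0)$ with $D(v_0')$. In that rerun, Proposition~\ref{diagonal change of parameter} is replaced by its translated version. This is legitimate. In that proof, $f(v)=v$ is used only to identify the conditioning vectors and to compare the marginal laws of the diagonal data. Your Step~2 supplies exactly the multiset equality needed for the latter.

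The paper's route reuses an already-proved theorem. Yours avoids the reduction step but has to reopen the transfer argument.

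\textbf{The degenerate case in Step~3.} This is the one thing to fix. The remedy you propose would not work. The hypothesis controls $\alpha$ only on horizontal ranges of boxes in $\bigcup_i\overline{U_i}$, and all of these lie inside $[s,t]$. If you enlarge the side of $v_0$, the permutation condition for the extra layer $\{v_0\}$ is no longer implied, so Step~2 breaks. Moreover, if $[v^-_1,v^+_1]=[s,t]$ there is no room to move anyway.

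Fortunately the case cannot occur.
\begin{enumerate}
\item Suppose $v_0\in\overline{U_i}$. Then $T_{c_i}v_0\in T_{c_i}\overline{U_i}=\overline{U_i'}$.
\item $v_0'$ horizontally crosses every box of $U_i'$, hence every box of its completion $\overline{U_i'}$.
\item $T_{c_i}v_0$ and $v_0'$ are both squares of side $t-s+1$. Horizontal crossing between two boxes of equal side lengths forces $T_{c_i}v_0=v_0'=T_{(a,a)}v_0$, i.e.\ $c_i=(a,a)$.
\end{enumerate}
This is the same side-length argument the paper uses to check that setting $f(v)=v$ is consistent with $f|_{\overline{U_i}}=T_{c_i}$. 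With it in place of your remedy, the proof goes through.
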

\begin{proof}
    The parameter assumptions ensure that $M_{\alpha,\alpha}$ and $M_{\alpha',\alpha'}$ are well-defined.
    
    Notice that if $\alpha,\beta$ are parameter sequences with $\beta_i = \alpha_{i-r}$ for some $r \in \Z$, then 
    \[
    Z_{\tilde{M}_{\alpha,c}} \disteq Z_{\tilde{M}_{\beta,c}} \circ T_{(r,r)}.
    \]
    Using this, upon shifting one of the environments entirely along the diagonal,
    we may assume that $c_1$ is of the form $(0,y)$,
    i.e. $U_1'$ is obtained from $U_1$ by a vertical shift. 
    
    Then, we can make a particular choice of $v=v'\in \zdiag$ in condition (1), such that $v^-_1=v^-_2 = \min\{u^-_1:u\in U_1\}$ 
    and $v^+_1=v^+_2 = \max\{u^+_1:u\in U_1\}$.
    This still ensures that $\{v\} \times U_1, \{v\} \times U'_1 \subseteq \mathcal{H}$.
    Since every layer is nonempty and horizontal crossing is transitive, $v$ horizontally crosses every $U_i$ and $U_i'$, and hence their completions.
    Also, by definition of $\overline{U_1}$, there exists $u \in \overline{U_1}$ with $u^-_1 = v_1^-$ and $u^+_1 = v^+_1$, 
    so that $\alpha_v = \alpha_u$ and $\alpha'_{f(u)} = \alpha'_{v}$.
    Let $\sigma_u$ be the permutation such that $\alpha_u = \sigma_u(\alpha'_{f(u)})$. 
    Then, we also have $\alpha_v = \sigma_u(\alpha'_{v})$ and $\hat\alpha_v = \alpha_v = \sigma_u(\alpha'_{v}) = \sigma_u(\hat\alpha'_v)$.

    Setting $f(v)=v$ agrees with the prescribed rule
    $f|_{\overline{U_i}}=T_{c_i}$ for every $i$.
    Indeed, if $v\in\overline{U_i}$, then
    $T_{c_i}v\in\overline{U_i'}$ is horizontally crossed by $v$.
    Since $v$ and $T_{c_i}v$ have the same side lengths,
    this forces $c_i=(0,0)$, and hence $T_{c_i}v=v$.
    Hence, we can apply Proposition~\ref{duncan parameter shift} to the towers 
    $\{v\}, U_1,\dots,U_k$ and $\{v\},U'_1,\dots, U'_k$ with parameter sequences $\alpha,\alpha, \alpha',\alpha'$ and $f$ extended as the identity on $\{v\}$, 
    resulting in the identity
    \[
    Z_{M_{\alpha,\alpha}}|_{\mathcal{E}_D(\bigcup U_i \cup \{v\})} 
    \disteq 
    Z_{M_{\alpha',\alpha'}}|_{\mathcal{E}_D(\bigcup U_i' \cup \{v\})} \circ \bar{f}.
    \]
    
    The result then follows from Theorem~\ref{transferring invariance inhomogeneous},
    since the required inclusions $\sqcup U\cap(\sqcup U)^\top,\sqcup U'\cap(\sqcup U')^\top\subseteq v$, with $U=\bigcup U_i$ and $U'=\bigcup U_i'$, follow exactly as in the proof of Theorem~\ref{tower shift}.
\end{proof}

\subsection{A conditional independence property}
\label{subsec: half-space conditional independence}

The following conditional independence property was pointed out to us
by Jimmy He. We prove it by transferring the
corresponding full-space decoupling property through
Theorem~\ref{general bridge}.

The result closely parallels the full-space decoupling property in
\cite[Theorem~2.2]{dauvergne}. In that result,
$Z_M|_{H(u)}$ and $Z_M|_{V(u)}$, which encode the two RSK
output patterns associated with the environment inside $u$, are
conditionally independent given their common shape
$Z_M|_{D(u)}$. Together with
\cite[Proposition~3.2]{dauvergne}, this implies the stronger
full-space relation
\begin{equation}
\label{eq: full-space extended conditional independence}
Z_M|_{\bar H(u)}
\perp\!\!\!\perp
Z_M|_{\bar V(u)}
\;\mid\;
Z_M|_{D(u)}.
\end{equation}

We first record a short conditioning observation that will be used to
transfer this relation.

\begin{lemma}
\label{lemma: conditional independence under marginal change}
Let $E_0,E_1,E_2,E_3$ be standard Borel spaces, and let
$\mathbb P$ and $\widetilde{\mathbb P}$ be probability measures on
$E_0\times E_1\times E_2\times E_3$, with coordinate variables
$(V,X,Y,W)$. Suppose that
\[
\frac{d\widetilde{\mathbb P}}{d\mathbb P}=\rho(V)
\]
for some nonnegative measurable function $\rho$. If, under
$\mathbb P$,
\[
(V,X)\perp\!\!\!\perp W \;\mid\; Y,
\]
then the same conditional independence holds under
$\widetilde{\mathbb P}$.
\end{lemma}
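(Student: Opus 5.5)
The proof characterizes conditional independence through test functions and transfers it by a change of measure. Under $\mathbb P$, the hypothesis $(V,X)\perp\!\!\!\perp W\mid Y$ is equivalent to
\[
\mathbb E\bigl[g(V,X)h(W)\,k(Y)\bigr]=\mathbb E\bigl[\,\mathbb E[g(V,X)\mid Y]\;\mathbb E[h(W)\mid Y]\,k(Y)\bigr]
\]
for all bounded measurable $g,h,k$. Since the spaces are standard Borel, regular conditional distributions exist. It then suffices to show, for bounded measurable $g$ on $E_0\times E_1$ and $h$ on $E_3$, that
\[
\widetilde{\mathbb E}[g(V,X)h(W)\mid Y]=\widetilde{\mathbb E}[g(V,X)\mid Y]\;\widetilde{\mathbb E}[h(W)\mid Y]\quad \widetilde{\mathbb P}\text{-a.s.}
\]

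First, I would use the abstract Bayes formula. For any bounded measurable $F$ of $(V,X,Y,W)$, on the event $\{\mathbb E[\rho(V)\mid Y]>0\}$ (which has full $\widetilde{\mathbb P}$-measure),
\[
\widetilde{\mathbb E}[F\mid Y]=\frac{\mathbb E[F\rho(V)\mid Y]}{\mathbb E[\rho(V)\mid Y]}.
\]
The set where $\mathbb E[\rho(V)\mid Y]=0$ is $\widetilde{\mathbb P}$-null, because its $\widetilde{\mathbb P}$-probability equals $\mathbb E[\rho(V)\mathbf 1\{\cdots\}]=\mathbb E\bigl[\mathbb E[\rho(V)\mid Y]\mathbf 1\{\cdots\}\bigr]=0$. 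The only subtlety is that $\rho$ need not be bounded, though it is integrable since $\mathbb E\rho(V)=1$. I would handle this by truncating $\rho$ or by working directly with nonnegative functions via monotone convergence, which is routine.

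Second, I apply the formula with $F=g(V,X)h(W)$. The key point is that the density $\rho(V)$ depends only on $V$, which lies on the same side of the conditional independence as $X$. Thus $g(V,X)\rho(V)$ is still a function of $(V,X)$ (integrable rather than bounded), and the $\mathbb P$-conditional independence, extended to integrable $g$ by truncation, gives
\[
\mathbb E[g(V,X)\rho(V)h(W)\mid Y]=\mathbb E[g(V,X)\rho(V)\mid Y]\;\mathbb E[h(W)\mid Y].
\]
Dividing by $\mathbb E[\rho(V)\mid Y]$ turns the first factor into $\widetilde{\mathbb E}[g(V,X)\mid Y]$. For the second factor, the case $g\equiv1$ of the same computation gives
\[
\widetilde{\mathbb E}[h(W)\mid Y]=\frac{\mathbb E[\rho(V)\mid Y]\,\mathbb E[h(W)\mid Y]}{\mathbb E[\rho(V)\mid Y]}=\mathbb E[h(W)\mid Y].
\]
Combining the two yields the factorization above, which proves the lemma.

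The main obstacle is only measure-theoretic bookkeeping: extending the factorization from bounded $g$ to the integrable function $g\rho$, and ensuring that $\mathbb P$-a.s. identities hold $\widetilde{\mathbb P}$-a.s. The latter is automatic since $\widetilde{\mathbb P}\ll\mathbb P$. The extension follows from the conditional monotone convergence theorem applied to $g^\pm\cdot(\rho\wedge n)$.
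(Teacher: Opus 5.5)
Your proposal is correct and follows essentially the same argument as the paper. Both use the conditional Bayes formula, absorb the density $\rho(V)$ into the $(V,X)$-factor of the factorization (so the conditional law of $W$ given $Y$ is unchanged), and note that $\{\mathbb E[\rho(V)\mid Y]=0\}$ is $\widetilde{\mathbb P}$-null. The only difference is that you work with test functions and conditional expectations, with a truncation step for the unbounded $g\rho$, whereas the paper uses the regular conditional kernels $\mathbb P_y$; as a result, your version does not need the standard Borel hypothesis.
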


\begin{proof}
Write $\mathbb P^Y$ for the law of $Y$ under $\mathbb P$, let
$\mathbb P_y$ be a regular conditional law of $(V,X,W)$ given
$Y=y$, and denote its $(V,X)$- and $W$-marginals by
$\mathbb P_y^{V,X}$ and $\mathbb P_y^W$. The assumed conditional
independence says that, for $\mathbb P^Y$-almost every $y$,
\[
\mathbb P_y(dv,dx,dw)
=
\mathbb P_y^{V,X}(dv,dx)\,\mathbb P_y^W(dw).
\]
Set
\[
m(y)
=
\int \rho(v)\,\mathbb P_y^{V,X}(dv,dx).
\]
Then $m(Y)$ is a version of
$\mathbb E[\rho(V)\mid Y]$.
Here $\mathbb E$ and $\widetilde{\mathbb E}$ denote expectations
under $\mathbb P$ and $\widetilde{\mathbb P}$, respectively.
Conditional Bayes' formula gives, for every bounded measurable
function $h$,
\[
\widetilde{\mathbb E}[h(V,X,W)\mid Y]
=
\frac{
\mathbb E[\rho(V)h(V,X,W)\mid Y]
}{
 m(Y)
}.
\]
Let $\widetilde{\mathbb P}_y$ be a regular conditional law of
$(V,X,W)$ given $Y=y$ under $\widetilde{\mathbb P}$, and let
$\widetilde{\mathbb P}_y^{V,X}$ be its $(V,X)$-marginal. Then, for
$m(y)>0$,
\begin{align*}
\widetilde{\mathbb P}_y(dv,dx,dw)
&=
\frac{\rho(v)}{m(y)}
\mathbb P_y^{V,X}(dv,dx)\,\mathbb P_y^W(dw)\\
&=
\widetilde{\mathbb P}_y^{V,X}(dv,dx)\,\mathbb P_y^W(dw).
\end{align*}
The law of $Y$ under $\widetilde{\mathbb P}$ has density $m$
with respect to $\mathbb P^Y$, and hence
\[
\widetilde{\mathbb P}(m(Y)=0)
=
\int_{\{y:m(y)=0\}}m(y)\,\mathbb P^Y(dy)
=0.
\]
Thus the displayed factorization holds for
$\widetilde{\mathbb P}$-almost every conditioning value, proving
that $(V,X)$ and $W$ remain independent given $Y$ under
$\widetilde{\mathbb P}$.
\end{proof}

We now define the horizontal endpoint set appearing in the result.
Let $u\in\zcross$.
Define
\[
HC(u)
=
\{
    (w_1^{k_1},\ldots,w_m^{k_m})\in\mathcal E:
    m, k_1,\ldots,k_m \in \Z_{>0},\ 
    w_i\in\zcross\cap\bar H(u)
    \text{ for all } i
\}.
\]
Thus, $HC(u)$ consists of valid concatenations of multi-point
passages $w_i^{k_i}$, where each underlying box $w_i$ spans $u$
horizontally and crosses the diagonal vertically.

\begin{theorem}
\label{half-space conditional independence}
Let $(\tilde M,Z_{\tilde M})$, with parameters $\alpha,c$, be one of the three half-space
environments described in Theorem~\ref{bridge}. 
Assume in addition that $\alpha_i \in (0,1)$ for all $i$ in the geometric case, and $\alpha_i >0$ for all $i$ in the exponential and log-gamma cases. 
Then, for every
$u\in\zcross$,
\[
Z_{\tilde M}|_{HC(u)}
\perp\!\!\!\perp
Z_{\tilde M}|_{\bar V(u)}
\;\mid\;
Z_{\tilde M}|_{D(u)}.
\]
\end{theorem}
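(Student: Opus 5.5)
The plan is to move the full-space relation \eqref{eq: full-space extended conditional independence} to the half space. Theorem~\ref{general bridge} gives the change of measure, and Lemma~\ref{lemma: conditional independence under marginal change} shows it preserves conditional independence. Write $u=(a,c;b,d)$ with $c\le a$ and $b\le d$, since $u\in\zcross$. Let $v=(a,a;b,b)\in\zdiag$. Take the auxiliary full-space environment $M=M_{\alpha,\alpha}$. The assumption $\alpha_i\in(0,1)$ (resp.\ $\alpha_i>0$) makes this environment well-defined, and it puts the pair $(M,\tilde M)$ in the setting of Theorem~\ref{general bridge}.

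\emph{Step 1: choice of $U$.} Let $U$ consist of $u$, all boxes $w\in\zcross\cap\bar H(u)$, and the shortened boxes $(u^-;u^+-(i,0))$ for $0\le i\le b-a$. Every element of $U$ lies inside $u$, so $\sqcup U=u$. Then
\[
(\sqcup U)\cap(\sqcup U)^\top=([a,b]\times[c,d])\cap([c,d]\times[a,b])=[a,b]\times[a,b]=v .
\]
Each $w\in U$ has $x$-range contained in $[a,b]$ and $y$-range containing $[a,b]$. For the boxes in $\zcross\cap\bar H(u)$ this holds because they cross the diagonal vertically; for the shortened boxes it is immediate. Hence $(v,w)\in\mathcal H$, and $\{v\}\times U\subseteq\mathcal H$.

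Theorem~\ref{general bridge} then says that the conditional law of $Z|_{\mathcal E_D(U)}$ given $Z|_{D(v)}$ is the same kernel for $M$ and for $\tilde M$. The index set $\mathcal E_D(U)$ contains $HC(u)$, $D(u)$ and $V(u)$. By Proposition~\ref{isometry RSK}, $Z|_{\bar V(u)}$ is the same deterministic function of $Z|_{V(u)}$ in both environments. Also $v$ spans $u$ horizontally, so $D(v)\subseteq\bar H(u)$.

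\emph{Step 2: density.} Set $V=Z|_{D(v)}$, $X=Z|_{HC(u)}$, $Y=Z|_{D(u)}$ and $W=Z|_{V(u)}$, so that $Z|_{\bar V(u)}$ is a function of $W$. Under either environment, the joint law of $(V,X,Y,W)$ is the law of $V$ integrated against the common kernel. By Lemma~\ref{lemma: conditioning vector absolute continuity}, the half-space law of $V$ has a density $\rho$ with respect to the full-space law. Hence the half-space joint law has density $\rho(V)$ with respect to the full-space joint law. All these variables take values in countable sets or Euclidean spaces, so the spaces are standard Borel.

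\emph{Step 3: full-space input and transfer.} Every element of $HC(u)$ is a valid vector of boxes spanning $u$ horizontally, so $HC(u)\subseteq\bar H(u)$; likewise $D(v)\subseteq\bar H(u)$. Therefore \eqref{eq: full-space extended conditional independence} gives, under the full-space law, $(V,X)\perp\!\!\!\perp W\mid Y$. Lemma~\ref{lemma: conditional independence under marginal change} carries this to the half-space law. Since $Z_{\tilde M}|_{\bar V(u)}$ is a function of $W$, the theorem follows.

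The main obstacle is Step 2. One has to check that the equality of conditional kernels in Theorem~\ref{general bridge} is a statement about the joint law of the whole vector $(V,X,Y,W)$, including $V$ itself, so that the Radon--Nikodym derivative really depends on $V$ alone. This holds because $D(v)\subseteq\mathcal E_D(U)$ as well: the kernel reproduces $V$, so it carries no extra information. The other checks are routine: that the isometry function $Z|_{V(u)}\mapsto Z|_{\bar V(u)}$ is the same map in both environments, and the geometric verifications of $\{v\}\times U\subseteq\mathcal H$ in Step 1.
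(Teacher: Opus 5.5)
Your overall architecture is the paper's: the auxiliary $M_{\alpha,\alpha}$, the diagonal square $v=(a,a;b,b)$, Theorem~\ref{general bridge} on a family $U$ of boxes inside $u$, the density $\rho(V)$, and Lemma~\ref{lemma: conditional independence under marginal change}. Your variant of $U$ is also fine: it uses the shortened boxes generating $V(u)$ together with Proposition~\ref{isometry RSK}, where the paper puts all boxes of $\bar V(u)$ into $U_0$. There is, however, a genuine gap in Step~3 and in the last sentence of Step~1.

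\textbf{The false inclusions.} The inclusions $HC(u)\subseteq\bar H(u)$ and $D(v)\subseteq\bar H(u)$ do not hold. The coordinates of $w^k$ are the boxes $(w^-+(j,0);w^+-(k-1-j,0))$, with horizontal extent $[w^-_1+j,\;w^+_1-(k-1-j)]$. For $k\ge 2$ this is strictly narrower than $[u^-_1,u^+_1]$. So these boxes are not among the full-width boxes whose valid vectors make up $\bar H(u)$. In fact $D(v)$ consists of full-height boxes and lies in $\bar V(v)$, not in $\bar H(u)$. As written, then, you cannot feed $(V,X)$ into the full-space relation $Z_M|_{\bar H(u)}\perp\!\!\!\perp Z_M|_{\bar V(u)}\mid Z_M|_{D(u)}$.

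\textbf{The missing step.} What is needed is an endpoint switch, which the paper supplies.
\begin{itemize}
\item For $w^k$, let $\widehat w^{\,k}=\bigl((w^-;w^+-(0,k-1)),\dots,(w^-+(0,k-1);w^+)\bigr)$, i.e.\ move the endpoints vertically instead of horizontally.
\item If $w\in\zcross\cap\bar H(u)$, these boxes keep the full width of $w$, hence of $u$, and stay inside $u$.
\item The bijection of Lemma~\ref{lemma: k-point paths equivalence}, applied block by block, preserves the set of occupied lattice points. It therefore preserves disjointness between blocks and the total weight or product of weights. This gives $Z_M(w_1^{k_1},\dots,w_m^{k_m})=Z_M(\widehat w_1^{\,k_1},\dots,\widehat w_m^{\,k_m})$, and the right-hand vector is a valid element of $\bar H(u)$.
\item Hence $X=Z_M|_{HC(u)}$ is a function of $Z_M|_{\bar H(u)}$.
\item Since $v\in\zcross\cap\bar H(u)$, we have $D(v)\subseteq HC(u)$, so $V$ is a function of $X$.
\end{itemize}
With this, $(V,X)\perp\!\!\!\perp W\mid Y$ does hold under the full-space law. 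Your Steps~1 and~2 and the transfer via $\rho(V)$ then go through exactly as in the paper.
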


\begin{proof}
Let
$v=(u^-_1,u^-_1;u^+_1,u^+_1)$
be the largest diagonal square contained in $u$, and let
$(M,Z_M)$ be the corresponding full-space environment with both
parameter sequences equal to the half-space parameter sequence
$\alpha$.
The parameter assumptions in the theorem ensure that
this full-space environment is well-defined.

For $w=(w^-;w^+)$ and $w^k\in D(w)$, let
\[
\widehat w^{\,k}
=
\bigl(
(w^-;w^+-(0,k-1)),
(w^-+(0,1);w^+-(0,k-2)),
\dots,
(w^-+(0,k-1);w^+)
\bigr).
\]
Thus $w^k$ is obtained by moving the endpoints horizontally, while
$\widehat w^{\,k}$ is obtained by moving them vertically. If
$w\in\zcross\cap\bar H(u)$, every coordinate of
$\widehat w^{\,k}$ belongs to $\bar H(u)$. The
endpoint-switching correspondence of
Lemma~\ref{lemma: k-point paths equivalence}, applied separately to
each block, is weight-preserving and gives
\[
Z_M(w_1^{k_1},\dots,w_m^{k_m})
=
Z_M(\widehat w_1^{\,k_1},\dots,\widehat w_m^{\,k_m}).
\]
Consequently, $Z_M|_{HC(u)}$ is a function of
$Z_M|_{\bar H(u)}$. Moreover, $D(v)\subseteq HC(u)$, since
$v\in\zcross\cap\bar H(u)$. For either of the two environments,
write
\[
V=Z|_{D(v)},\quad
X=Z|_{HC(u)},\quad
Y=Z|_{D(u)},\quad
W=Z|_{\bar V(u)},
\]
where $Z=Z_M$ or $Z=Z_{\tilde M}$, according to the model. The
preceding lemma includes $V$ together with $X$ because the
change-of-measure factor $\rho(V)$ must lie on the same side of the
conditional factorization. In the present application, $V$ is a
deterministic function of $X$, so $\rho(V)$ is
$\sigma(X)$-measurable and the hypotheses
$X\perp\!\!\!\perp W\mid Y$ and
$(V,X)\perp\!\!\!\perp W\mid Y$ are equivalent. The full-space conditional
independence recalled above therefore implies
\[
(V,X)\perp\!\!\!\perp W
\;\mid\;
Y
\]
under the full-space law.

It remains to transfer this relation to the half space. Let $U_0$
be the set of boxes $w\in\zbox$ such that either
$w\in\zcross\cap\bar H(u)$ or $w\in\bar V(u)$. Then
\[
HC(u),\ \bar V(u),\ D(u)
\subseteq
\mathcal E_D(U_0).
\]
Every box in $U_0$ is contained in $u$, and $v$ crosses it
horizontally. Since $u\cap u^\top=v$, we also have
\[
(\sqcup U_0)\cap(\sqcup U_0)^\top \subseteq v.
\]
Theorem~\ref{general bridge}, followed by restriction to the indicated
endpoint sets, shows that the full- and half-space laws of
$(X,Y,W)$ have the same conditional kernel given $V$.

Let $\mu$ and $\tilde\mu$ be the full- and half-space laws of
$V$, respectively.
Lemma~\ref{lemma: conditioning vector absolute continuity}
gives $\tilde\mu\ll\mu$.
Let $\mathbb P$ and $\widetilde{\mathbb P}$ be the full-space
and half-space laws of $(V,X,Y,W)$, respectively.

Write $\rho=d\tilde\mu/d\mu$. Since the two models have the same
conditional kernel given $V$, their joint laws satisfy
\[
\frac{d\widetilde{\mathbb P}}{d\mathbb P}=\rho(V).
\]
This is precisely the density condition in
Lemma~\ref{lemma: conditional independence under marginal change}.
Applying the lemma to the full-space conditional independence above
gives
\[
Z_{\tilde M}|_{HC(u)}
\perp\!\!\!\perp
Z_{\tilde M}|_{\bar V(u)}
\;\mid\;
Z_{\tilde M}|_{D(u)},
\]
as desired.
\end{proof}

\begin{remark}[Extension of the parameter range]
\label{remark: extension of parameter range}
    In Theorems~\ref{inhomogeneous shift and permute} and
    \ref{half-space conditional independence}, we 
    impose the additional requirement on the parameters $\alpha,\alpha'$ that $\alpha_i,\alpha'_i \in (0,1)$ in the geometric case and $\alpha_i,\alpha_i'>0$ in the exponential and log-gamma cases 
    to ensure that the auxiliary full-space environments $M_{\alpha,\alpha}$ and $M_{\alpha',\alpha'}$ are well-defined. 
    
    We note that these restrictions are not necessary for the invariance results to hold. 

    The proofs extend to all admissible half-space parameters (for example, a half-space exponential environment with one $\alpha_i <0 $)
    by the following modification of the auxiliary full-space environment.
    
    To adapt the proofs, let $v=(v^-;v^+)\in\zdiag$ be the
    diagonal square chosen in each proof, and set
    $I=[v^-_1,v^+_1]\cap\Z$.
    We use a constant $B>0$ to make the auxiliary full-space
    weights well-defined while retaining $\alpha_i$ in the first
    parameter sequence for $i\in I$ and in the second for
    $i\notin I$. Namely, replace $M_{\alpha,\alpha}$ by
    $M_{a,b}$, where
    \[
    (a_i,b_i)=
    \begin{cases}
    (\alpha_i,B),&i\in I,\\
    (B,\alpha_i),&i\notin I.
    \end{cases}
    \]
    For example, we can choose
    \[
    B=
    \begin{cases}
    \frac12\min\{c,1\},&\text{in the geometric case},\\
    |c|+1,&\text{in the exponential and log-gamma cases}.
    \end{cases}
    \]
    The half-space assumptions ensure that all weight parameters
    of $M_{a,b}$ are admissible.
    
    The weights indexed by $I\times(\Z\setminus I)$ have the same
    joint law in $M_{a,b}$ and $\tilde M_{\alpha,c}$; these are all
    the weights used outside $v$ in the proofs.
    On $v$, the identity $a_i=\alpha_i$ for $i\in I$ gives
    \eqref{eq: general bridge 1} for $M_{a,b}$ and
    $\tilde M_{\alpha,c}$ by
    Remark~\ref{remark: auxiliary full-space parameters}.
    The proof of
    Lemma~\ref{lemma: conditioning vector absolute continuity}
    also gives
    $\mathcal L(Z_{\tilde M_{\alpha,c}}|_{D(v)})
    \ll\mathcal L(Z_{M_{a,b}}|_{D(v)})$.
    
    For Theorem~\ref{inhomogeneous shift and permute}, define
    $a',b'$ from $\alpha'$ using the same $B$ and $v$.
    For $u\in\{v\}\cup\bigcup_{i=1}^k\overline{U_i}$, with
    $f(v)=v$, we have $a_u=\alpha_u$ and
    $a'_{f(u)}=\alpha'_{f(u)}$. The sequences $\hat b_u$ and
    $\hat b'_{f(u)}$ are obtained from $\hat\alpha_u$ and
    $\hat\alpha'_{f(u)}$, respectively, by replacing the entries
    indexed by $I$ with $B$.
    Since $\alpha_v$ and $\alpha'_v$ are permutations of one another,
    the permutation conditions in
    Proposition~\ref{duncan parameter shift} are preserved.
    The conditional-law identity and absolute continuity also hold
    with $(a,b,\alpha)$ replaced by $(a',b',\alpha')$.
    Thus the two proofs adapt to these auxiliary full-space environments.
    We impose the additional restrictions above to avoid introducing
    these more general formulations of the transfer theorems.
\end{remark}

\subsection{Proofs of the introductory formulations}
\label{subsec: intro proofs}

For completeness, we include proofs of the introductory results stated in Subsection~\ref{subsec: main results} from the more general invariance theorems
of Section~\ref{main section}.

For $u=(u^-_1,u^-_2;u^+_1,u^+_2)\in\zcross$, write
\[
    u^\boxslash
    \defeq (u^-_1,u^-_1;u^+_1,u^+_1)\in\zdiag .
\]
Then $(u^\boxslash,u)\in\mathcal H$. 

\begin{proof}[Proof of Theorem~\ref{multi-point shift intro}]
Apply Theorem~\ref{tower shift} under the case that there is only $U=U_1 = \{u_1,\dots, u_k\}$
present in the tower, with corresponding $U' = U'_1 = \{u'_1,\dots, u'_k\}$ such that $U' = T_cU$ for some $c\in \Z^2$.
The assumed box $v$ verifies the anchoring condition in
Theorem~\ref{tower shift}.
The result states that the law of
$
Z_{\tilde M}|_{\mathcal E_D(U)}
$
is preserved under the shift $T_c$.
Included in it are the multi-point passage values across all vectors of the form
\[
(u_{j_1}^{s_1},u_{j_2}^{s_2}, \dots, u_{j_t}^{s_t}).
\]
By hypothesis, $\mathbf u\in\mathcal E_D(U)$. Since a common
translation preserves disjointness, $\mathbf u'=T_c\mathbf u$
belongs to $\mathcal E_D(U')$. Moreover, $(u_i)\in\mathcal E_D(U)$
for every $i$. The result follows by restricting the preceding
distributional identity.
\end{proof}

\begin{proof}[Proof of Theorem~\ref{intro tower shift}]
By applying a common diagonal translation to all $u_i'$, we may assume that $c_1=(0,h)$. 
Then
set $v = u_1^\boxslash$, $U_i = \{u_i\}, U_i'=\{u'_i\}$. Noting that
$(v,u_1),(v,u'_1)\in \mathcal H$ and the remaining conditions are satisfied by assumption, apply Theorem~\ref{tower shift} and take the marginal indexed by $u_1,\dots, u_k$ to get the result.  
\end{proof}

\begin{proof}[Proof of Theorem~\ref{intro disjoint shift}]
Take $V  =\{v\}, U_i = \{u_i\}$ and $U'_i = \{u'_i\}$. 
Since $v \in \zcross$, we have $(v^\boxslash,v) \in \mathcal H$. 
Moreover, $(v,u_i) \in \mathcal H$ implies $(v^\boxslash,u_i)\in\mathcal H$, and similarly
$(v,u_i')\in\mathcal H$ implies $(v^\boxslash,u_i')\in\mathcal H$. Hence the
hypotheses of Theorem~\ref{disjoint shift} are precisely the assumptions of
Theorem~\ref{intro disjoint shift}, in the special case of singleton sets. 
Taking the
marginal indexed by $v,u_1,\dots,u_k$ gives the result. 
\end{proof}

\begin{proof}[Proof of Theorem~\ref{intro inhomogeneous shift}]
Apply Theorem~\ref{inhomogeneous shift and permute} with $U_i=\{u_i\}$ and $U_i'=\{u_i'\}$.
The crossing and shift hypotheses are exactly those of Theorem~\ref{intro tower shift}.
For a singleton $U_i=\{u_i\}$, the completion $\overline{U_i}$ is just $U_i$ itself, 
and
the parameter sequences $\alpha_{u_i}$ and $\hat\alpha_{u_i}$ are respectively $(\alpha_j:j\in I(u_i))$ and $(\alpha_j:j\in J(u_i))$.
Thus the parameter-permutation assumption in
Theorem~\ref{inhomogeneous shift and permute} is exactly the hypothesis imposed in
Theorem~\ref{intro inhomogeneous shift}. 
Taking the marginal indexed by
$u_1,\dots,u_k$ gives
the result.
\end{proof}

\appendix
\section{RSK growth diagrams and Sch\"utzenberger's lemma}
\label{appendix}

We will adopt the definition of the RSK correspondence using row insertions, as described in \cite[Section~7.11]{stanley},
derive a growth diagram formulation of the RSK from it, and finally use it to prove Sch\"utzenberger's lemma (Proposition~\ref{schutzenberger}). 

We fix $m,n \in \N$ and let the matrix $A = (a_{ij}) \in (\Z_{\geq 0})^{m\times n}$ be an $m\times n$ matrix of nonnegative integer entries. 
From the matrix, we can obtain a \emph{two-line array} of the following form
\begin{gather*}
    w_A = 
    \begin{pmatrix}
    i_1 & i_2 & \dots & i_N \\
    j_1 & j_2 & \dots & j_N
    \end{pmatrix}
\end{gather*}
where $N = \sum_{i,j}a_{i,j}$,
and the pairs $(i_t,j_t)$ are ordered lexicographically, meaning that $i_1\leq i_2\leq \dots \leq i_N$ and whenever $i_r = i_s$ for $r\leq s$, we have $j_r \leq j_s$.
For each $1\leq i\leq m, 1\leq j\leq n$, the pair $(i,j)$ appears exactly $a_{ij}$ times in this array. 

For $P=(P_{ij})$ a semistandard Young tableau (SSYT) and $k$ a positive integer, the \emph{row insertion} $P \gets k$ is defined as follows:

First, insert $k$ into the first row of $P$. If every entry in the
first row is at most $k$, then place $k$ at the end of the first row and terminate the procedure.
Otherwise, let $l$ be the leftmost entry in the first row which is strictly larger than $k$. Replace
this entry $l$ by $k$, and then insert $l$ into the second row by the same rule. 
That is, if every
entry in the second row is at most $l$, place $l$ at the end of the second row and terminate;
otherwise, replace the leftmost entry strictly larger than $l$ by $l$, and insert the bumped entry
into the next row. 
Continuing in this way, the procedure eventually terminates by adding one new
entry to the end of some (possibly empty) row. The resulting tableau is denoted $P\leftarrow k$.

We denote the box in the $s$-th position of the $r$-th row in a Young tableau by the pair $(r,s)$.
Now, the pair $(P,Q)$ of SSYTs associated with $A$ is obtained from $w_A$ using the following procedure:

Start with two empty tableaux $(P(0),Q(0))=(\emptyset,\emptyset)$.
Suppose that after $t-1$ steps we have constructed a pair of SSYTs
$(P(t-1),Q(t-1))$ of the same shape.
To perform the $t$-th step, insert $j_t$ into
$P(t-1)$ by row insertion:
\[
P(t)=P(t-1)\leftarrow j_t.
\]
This adds exactly one new box to $P(t-1)$. Let $(r,s)$ be the position of this new box. Define
$Q(t)$ to be the tableau obtained from $Q(t-1)$ by adding a new box in the same position
$(r,s)$, filled with the entry $i_t$. After all entries of the two-line array have been inserted,
we set
\[
(P,Q)=(P(N),Q(N)),
\]
where $N=\sum_{i,j}a_{ij}$ is the total number of entries in the two-line array.

We write
\[
(P(t+k),Q(t+k)) = 
(P(t),Q(t)) \gets 
 \begin{pmatrix}
    i_{t+1} & i_{t+2} & \dots & i_{t+k} \\
    j_{t+1} & j_{t+2} & \dots & j_{t+k}
    \end{pmatrix}
\]
for the process of obtaining $(P(t+k),Q(t+k))$ from $(P(t),Q(t))$ under the above procedure. 

Denote by $I(P \gets j)$ the insertion path of inserting the number $j$ into the SSYT $P$, which is the set of boxes being added or having the contained number changed during the insertion.
We note the following property of the insertion path:

\begin{lemma}
[{\cite[Lemma~7.11.2(b)]{stanley}}]
\label{appendix-insertion path lemma}
    Let $P$ be an SSYT, and let $j \leq k$. Then $I(P\gets j)$ lies strictly to the left of $I((P \gets j)\gets k)$. More precisely, if $(r,s) \in I(P\gets j)$ and $(r,t) \in I((P\gets j)\gets k)$, then $s<t$. Moreover, $I((P\gets j)\gets k)$ does not extend below the bottom of $I(P\gets j)$. 
\end{lemma}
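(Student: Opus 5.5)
We need to prove Lemma (Stanley 7.11.2(b)): the row bump lemma, for $j \le k$. The statement is about $I(P\gets j)$ lying strictly left of $I((P\gets j)\gets k)$, and the second path not extending below the bottom of the first.

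The proof goes by induction on the row index, comparing the two insertion paths row by row. Write $P' = P\gets j$, and let $x_1=j, x_2, x_3,\dots$ be the successive values inserted into rows $1,2,\dots$ during $P\gets j$. Let $y_1 = k, y_2,\dots$ be the values inserted into rows $1,2,\dots$ during $P'\gets k$. In row $r$ the first insertion places $x_r$ at position $s_r$ and bumps the old entry there (if any) as $x_{r+1}$. The second insertion places $y_r$ at position $t_r$ and bumps $y_{r+1}$.

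The inductive claim will be: whenever the second insertion reaches row $r$, the first insertion also reached row $r$, and $x_r \le y_r$ and $s_r < t_r$. The base case $r=1$ has $x_1=j\le k=y_1$. For the inductive step, assume $x_r\le y_r$ in row $r$. After the first insertion, row $r$ of $P'$ has $x_r$ at position $s_r$. All entries in positions $\le s_r$ are $\le x_r \le y_r$, since rows are weakly increasing. The second insertion bumps the leftmost entry strictly greater than $y_r$, so its position $t_r$ is $> s_r$; if no entry exceeds $y_r$, it appends at the end, again to the right of $s_r$.

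If the first insertion terminated in row $r$ (it appended $x_r$ at the end), then $x_r$ is the last entry and is $\le y_r$. So the second insertion also appends at the end and terminates in row $r$, which is not below. Otherwise the first insertion bumped $x_{r+1}$, the entry originally at $s_r$, with $x_{r+1} > x_r$. If the second insertion continues, it bumps $y_{r+1}$, the entry of $P'$ at position $t_r>s_r$ in row $r$. This entry was unchanged by the first insertion, so it equals the original entry of $P$ there. Row-weak-increase of the original row then gives $y_{r+1}\ge x_{r+1}$, completing the induction.

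The main subtle step is this comparison $y_{r+1}\ge x_{r+1}$. It requires that the entry at $t_r$ in $P'$ coincides with that in $P$, which holds because $t_r\neq s_r$. Both conclusions of the lemma then follow. The strict-left statement is $s_r<t_r$ in every common row, and positions within a row determine the paths because each path meets each row in one box. The no-lower statement is the termination argument above.
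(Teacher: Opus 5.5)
Your argument is correct. The paper does not prove this lemma itself; it only cites Stanley, and your row-by-row induction is the standard proof given there. That induction maintains $x_r\le y_r$, which forces $t_r>s_r$. Since position $t_r$ is untouched by the first insertion, weak increase of the original row then gives $x_{r+1}\le y_{r+1}$.
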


We will only use it through the next corollary:

\begin{corollary}\label{appendix-insertion order corollary}
    Consider $P \gets (j_1, j_2,\dots j_n)$ where $j_1\leq j_2\leq \dots \leq j_n$. Let $(r_k,s_k)$ be the box added during the insertion of $j_k$, for each $1\leq k\leq n$, regardless of what number it contains.
    Then $(r_1,s_1),(r_2,s_2),\dots,(r_n,s_n)$ goes strictly from left to right and weakly from bottom to top.
    More precisely, if $k < k'$, then $r_k \geq r_{k'}$ and $s_k < s_{k'}$.
\end{corollary}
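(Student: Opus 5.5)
This is a direct consequence of Lemma~\ref{appendix-insertion path lemma}, applied to consecutive pairs of insertions. For each $1\le k<n$, set $P_{k-1}=P\gets(j_1,\dots,j_{k-1})$. Then $P_k=P_{k-1}\gets j_k$ and $P_{k+1}=P_k\gets j_{k+1}$. Since $j_k\le j_{k+1}$, the lemma applies with $P_{k-1}$ in place of $P$ and $(j,k)=(j_k,j_{k+1})$.

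The lemma says two things about the insertion paths $I_k\defeq I(P_{k-1}\gets j_k)$ and $I_{k+1}$.
\begin{enumerate}
\item $I_{k+1}$ does not extend below the bottom of $I_k$.
\item Wherever $I_k$ and $I_{k+1}$ occupy the same row, the box of $I_{k+1}$ is strictly to the right of that of $I_k$.
\end{enumerate}

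The added box $(r_k,s_k)$ is the last box of $I_k$, namely the box in its bottom row $r_k$. An insertion path occupies exactly one box in each row $1,\dots,r_k$, so the bottom row of $I_k$ is $r_k$. Fact 1 therefore gives $r_{k+1}\le r_k$.

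For the columns, row $r_{k+1}$ is occupied by both paths because $r_{k+1}\le r_k$. Let $(r_{k+1},s')$ be the box of $I_k$ in that row. Fact 2 gives $s'<s_{k+1}$.

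It remains to show $s_k\le s'$. Insertion paths move weakly left as they go down: a bumped entry is inserted into the next row at a column no larger than the one it left. This is the standard fact from \cite[Section~7.11]{stanley}, and follows from column strictness. The box of $I_k$ in row $r_{k+1}$ lies weakly above its final box in row $r_k$, so $s_k\le s'<s_{k+1}$.

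Chaining these inequalities over consecutive indices gives, for all $k<k'$, $r_k\ge r_{k'}$ and $s_k<s_{k'}$, which is the claim.

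The only point needing care is the last one: the lemma compares paths row by row, while the claim is about the endpoint boxes, which may lie in different rows. This is resolved by the observation that insertion paths move weakly left as they descend.
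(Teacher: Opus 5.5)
Your proof is correct and follows the paper's argument: the paper's proof says only that the corollary follows by repeated application of Lemma~\ref{appendix-insertion path lemma} to consecutive insertions. You also make explicit the step the paper leaves implicit, namely passing from the row-by-row comparison to the added boxes when $r_{k+1}<r_k$, via the weakly-leftward descent of insertion paths (Stanley's Lemma~7.11.2(a)); alternatively, $s_{k+1}-1$ is the length of row $r_{k+1}$ of $P_k$, which is at least the length $s_k$ of row $r_k$.
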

\begin{proof}
    The result follows by repeated application of Lemma~\ref{appendix-insertion path lemma} above.
\end{proof}

The symmetry property of the RSK is also important:

\begin{theorem}
[{\cite[Theorem~7.13.1]{stanley}}]
\label{appendix-symmetry RSK}
    If the matrix $A$ corresponds to $(P,Q)$ under the RSK, then its transpose $A^\top$ corresponds to $(Q,P)$. 
\end{theorem}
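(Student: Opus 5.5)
The statement to prove is Theorem~\ref{appendix-symmetry RSK}: under the row-insertion RSK, transposing $A$ swaps $P$ and $Q$. My plan is to pass through the growth diagram formulation. I view the insertion as a sequence of shapes indexed by the lattice $\{0,\dots,m\}\times\{0,\dots,n\}$. For each $(i,j)$, let $\lambda^{(i,j)}$ be the shape of the $P$-tableau obtained by inserting only the pairs $(i_t,j_t)$ with $i_t\le i$ and $j_t\le j$. Restricting to entries $\le j$ in the row-inserted tableau commutes with insertion, because inserting a letter larger than $j$ never disturbs the entries $\le j$. Hence $\lambda^{(i,j)}$ is the shape of the subtableau of $P(\text{after row } i)$ consisting of entries $\le j$. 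It follows that $P$ is read off along the top edge $i=m$, and $Q$ along the right edge $j=n$: the entries $i$ of $Q$ occupy $\lambda^{(i,n)}/\lambda^{(i-1,n)}$.

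The key step is a local rule. I will show that $\lambda^{(i,j)}$ is determined by $\lambda^{(i-1,j-1)}$, $\lambda^{(i-1,j)}$, $\lambda^{(i,j-1)}$ and $a_{ij}$, through a rule that is manifestly symmetric in the two neighbours $\lambda^{(i-1,j)}$ and $\lambda^{(i,j-1)}$. To do this, I analyze row $i$ of the two-line array, which is a weakly increasing word. By Corollary~\ref{appendix-insertion order corollary}, inserting an increasing word adds a horizontal strip, with the added boxes moving strictly rightward and weakly upward. Within the entries $\le j$, inserting the letters $<j$ produces $\lambda^{(i,j-1)}$ from $\lambda^{(i-1,j-1)}$. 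The $a_{ij}$ copies of $j$ then go at the end of row one. Meanwhile, the letters $j$ that were bumped out of the old tableau, by letters less than $j$, cascade down. Tracking these cases gives Fomin's rule. Set $\mu=\lambda^{(i-1,j-1)}$, $\nu=\lambda^{(i-1,j)}$, $\kappa=\lambda^{(i,j-1)}$. Then row $1$ of the new shape is $\max(\nu_1,\kappa_1)+a_{ij}$, and row $k+1$ is $\max(\nu_{k+1},\kappa_{k+1})+\min(\nu_k,\kappa_k)-\mu_k$.

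This case analysis is the main obstacle. It requires showing that the boxes bumped in row $k$ by the new small letters interact with the strip $\nu/\mu$ exactly as the formula says. I would proceed by induction on the row $k$, using Lemma~\ref{appendix-insertion path lemma} to control where the bumped letters $j$ land in the next row.

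Once the local rule is in hand, the conclusion is quick. Apply the same construction to $A^\top$. The diagram for $A^\top$ is the reflection of the diagram for $A$: the initial boundaries are empty shapes, the weights are transposed, and the rule is symmetric in its two neighbour arguments. Induction on $i+j$ therefore gives $\lambda_{A^\top}^{(j,i)}=\lambda_A^{(i,j)}$. Consequently the top edge of the diagram for $A^\top$ is the right edge of the diagram for $A$, and vice versa, which gives $P(A^\top)=Q(A)$ and $Q(A^\top)=P(A)$.
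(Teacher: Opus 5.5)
Your proposal is correct. It is Fomin's growth-diagram proof. The paper gives no proof of its own here and imports the result from Stanley.

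What matters is how your route relates to the paper's appendix. The paper \emph{uses} Theorem~\ref{appendix-symmetry RSK} to build its growth diagram in two places:
\begin{itemize}
\item to see that $P''$ is $P$ with $(n+1)$'s appended to its rows, via $(Q'',P'')=(Q,P)\gets\cdots$;
\item in Theorem~\ref{appendix-equivalence theorem}, to read $P$ off the top edge.
\end{itemize}
Your restriction property gives both facts directly. After that, Lemma~\ref{appendix-lemma for s'_i} (with $Y=P(A^{(i-1,j-1)})$, $Y\cup s=P(A^{(i-1,j)})$, $u=j$) and the computation leading to \eqref{eq: appendix recursive relation P'''_i} give exactly your local rule. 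Neither of these uses symmetry.

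So your ``main obstacle'' is already done in the paper, by induction on the length of the inserted word rather than on the row. Your argument reverses the appendix's logic: the recursion comes first and symmetry follows, with Sch\"utzenberger's lemma then obtained as before. This makes the appendix self-contained. It also stays entirely within row insertion, unlike the main-text Theorem~\ref{RSK symmetry}, which works with the last-passage definition and needs Greene's theorem to say anything about insertion. The citation buys only brevity.

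Two small repairs.

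First, the restriction property needs its nontrivial half as well. For $x\le j$ you need $(T\gets x)|_{\le j}=T|_{\le j}\gets x$. This holds because once the bumping path meets an entry larger than $j$, the letter is simply appended in the restricted tableau, and the rest of the path moves only letters larger than $j$. This half, with threshold $j-1$, is what gives $P(A^{(i-1,j)})|_{\le j-1}=P(A^{(i-1,j-1)})$. It is also what lets you read $P$ along the top edge.

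Second, the displaced letters $j$ do not cascade, and you do not need Lemma~\ref{appendix-insertion path lemma} to locate them. A displaced $j$ goes straight to the end of the next row, since nothing exceeds $j$, and it is never displaced again. What you need from Corollary~\ref{appendix-insertion order corollary} is the timing:
\begin{itemize}
\item The $\kappa_k-\mu_k$ insertions whose new box in the subtableau of entries $\le j-1$ lies in row $k$ all precede those ending in row $k-1$.
\item So they meet only the original $\nu_k-\mu_k$ letters $j$ of row $k$, each displacing one while any remain.
\item Afterwards row $k$ holds $\max(0,\nu_k-\kappa_k)+\min(\nu_{k-1},\kappa_{k-1})-\mu_{k-1}$ letters $j$, which is your rule.
\end{itemize}
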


Now let $(c_i) \in (\Z_{\geq 0})^m$ and $(r_i) \in (\Z_{\geq 0})^n$. 
Define the matrices
$A' = (a'_{ij}) \in (\Z_{\geq 0})^{(m+1)\times n}$ by $a'_{ij} = a_{ij}$ for $1\leq i \leq m, 1\leq j\leq n$ and $a'_{m+1,j} = r_j$ for $1\leq j \leq n$,
 and $A'' = (a''_{ij}) \in (\Z_{\geq 0})^{m\times (n+1)}$ by 
 $a''_{ij} = a_{ij}$ for $1\leq i\leq m, 1\leq j\leq n$ and $a''_{i,n+1} = c_i$ for $1 \leq i \leq m$. 
 Let $d \in \Z_{\geq 0}$, define also the matrix 
 $A''' = (a'''_{ij})\in (\Z_{\geq 0})^{(m+1)\times (n+1)}$ by
 $a'''_{ij} = a_{ij}$ for $1\leq i \leq m$, $1\leq j\leq n$, $a'''_{m+1,j} = r_j$ for $1\leq j \leq n$, $a'''_{i,n+1} = c_i$ for $1 \leq i \leq m$, and $a'''_{m+1, n+1} = d$. 
 In words, $A'$ is obtained from $A$ by adding $(r_i)$ as the last row, $A''$ is obtained from $A$ by adding $(c_i)$ as the last column, and $A'''$ is obtained from $A$ by adding both the last row and the column, and setting the additional entry $a'''_{m+1, n+1}$ to $d$.

 Suppose that the RSK maps $A,A',A'',A'''$ to $(P,Q),(P',Q'), (P'',Q''),(P''',Q''')$, respectively. 

 We can see that since the added entries in $A'$ all have first coordinate as $m+1$, its corresponding two-line array $w_{A'}$ is of the form
 \begin{gather*}
     w_A + 
     \begin{pmatrix}
         m+1 & m+1 & \dots & m+1 \\
         j'_1 & j'_2 & \dots & j'_{k'}
     \end{pmatrix}
 \end{gather*}
 where the plus sign indicates concatenation of two-line arrays, $k' = \sum_i r_i$, and
 $r_i$ is the number of times $i$ appears among $(j'_1,\dots, j'_{k'})$ for each $i$
Hence 
\[
(P',Q')  = (P,Q) \gets \begin{pmatrix}
         m+1 & m+1 & \dots & m+1 \\
         j'_1 & j'_2 & \dots & j'_{k'}
     \end{pmatrix}.
\]
In particular,
\[
P' = P \gets (j'_1,\dots j'_{k'})
\]
and 
$Q'$ is obtained from $Q$ by adding boxes containing $m+1$ on the right of each row.

Symmetrically, 
$w_{(A'')^\top}$ is of the form
\[
w_{A^\top} + 
\begin{pmatrix}
     n+1 & n+1 & \dots & n+1 \\
     j''_1 & j''_2 & \dots & j''_{k''}
\end{pmatrix}
\]
where $c_i$ is the number of times $i$ appears among $(j''_1,\dots, j''_{k''})$ for each $i$ and $k'' = \sum_i c_i$. 
Hence by Theorem~\ref{appendix-symmetry RSK},
\[
(Q'',P'')  = (Q,P) \gets 
    \begin{pmatrix}
         n+1 & n+1 & \dots & n+1 \\
        j''_1 & j''_2 & \dots & j''_{k''}
     \end{pmatrix}.
\]
In particular,
$P''$ is obtained from $P$ by adding boxes containing $n+1$ on the right of each row.

Also note that since $A'''$ is obtained from $A''$ by adding a row, 
\[
w_{A'''} = w_{A''} + 
\begin{pmatrix}
    m+1 & m+1 & \dots & m+1 \\
    j'_1 & j'_2 & \dots & j'_{k'}
\end{pmatrix}
+
\begin{pmatrix}
    m+1 \\
    n+1
\end{pmatrix}
\times d
\]
where the middle term corresponds to $(r_i)$ as above and the last term is the two-line array with $(m+1,n+1)$ repeated $d = a'''_{m+1,n+1}$ times. 
Hence,
\[
(P''',Q''') = (P'',Q'') \gets 
\begin{pmatrix}
    m+1 & m+1 & \dots & m+1 \\
    j'_1 & j'_2 & \dots & j'_{k'}
\end{pmatrix}
+
\begin{pmatrix}
    m+1 \\
    n+1
\end{pmatrix}
\times d
,
\]
and in particular
\[ P''' = 
\left( P'' \gets 
(j'_1,j'_2,\dots, j'_{k'})
\right)
\gets 
(n+1)
\times d.
\]

\begin{lemma}\label{appendix-lemma for s'_i}
    Suppose $Y$ is a SSYT with all entries less than $u$, and $Y\cup s$ is an SSYT obtained from $Y$ by adding $s_i\in \Z_{\geq 0}$ boxes containing the number $u$ to the end of $i$-th row of $Y$, for each $i$.  
    Let $w = (t_1, t_2,\dots, t_r)$ be a weakly increasing sequence of positive integers with each $t_i < u$.
    Let $\hat{Y} = Y \gets w$ be the result of inserting $w$ into $Y$.
    Then $(Y \cup s \gets w)$, the result of inserting $w$ into $Y\cup s$, is of the form $\hat{Y} \cup s'$, meaning it can be obtained from $\hat{Y}$ by adding $s'_i$ boxes containing $u$ to the end of its $i$-th row. 
    Denoting the number of boxes in the $i$-th row of $Y$ by $|Y_i|$,
    and adopting the conventions $s_0=0$ and $|Y_0| = |\hat Y_0| =0$,
    we have that for each $i$,
    \begin{equation} 
    \label{eq: appendix insertion relation} 
    s'_i = \max(0,s_i -( |\hat{Y}_i| -|Y_i|))
    + \min(s_{i-1}, |\hat{Y}_{i-1}| -|Y_{i-1}|).
    \end{equation}
\end{lemma}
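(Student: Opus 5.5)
We will prove the lemma by induction on the length $r$ of $w$, so the heart of the argument is the case of a single letter $w=(t)$ with $t<u$. In that case we track the bumping path of $t$ through $Y\cup s$ and compare it with the bumping path of $t$ through $Y$. In each row $i$, the entries of $Y\cup s$ are the entries of $Y_i$ followed by $s_i$ copies of $u$. The number being inserted into row $i$ is always $<u$ as long as it originates from $Y$.

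The single-letter case is a row-by-row comparison. Suppose $t$ (or the letter bumped from above) is inserted into row $i$ of $Y$. If it bumps some entry of $Y_i$, the same entry is bumped in $Y\cup s$, since the $u$'s lie to the right and are larger. If instead it is appended to the end of $Y_i$, two subcases arise. When $s_i=0$ the letter is appended in $Y\cup s$ as well, and the process ends there. When $s_i>0$ the letter bumps the leftmost $u$ in row $i$, and that $u$ moves down to row $i+1$. Since every entry of $Y$ is $<u$, this $u$ is placed at the end of row $i+1$, after any existing $u$'s; the insertion then terminates.

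The net effect of one letter is therefore as follows. The underlying $Y$-part evolves exactly as in $Y\gets t$, so it becomes $\hat Y$. If the new box of $\hat Y$ is in row $i$ and $s_i>0$, then $s_i$ decreases by one and $s_{i+1}$ increases by one. Otherwise $s$ is unchanged. This is exactly formula \eqref{eq: appendix insertion relation} in the case where $\hat Y-Y$ is a single box in row $i$: the quantity $\max(0,s_i-1)+\min(s_{i-1},0)$ matches the single-box update.

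For general $w$ we iterate this. By Corollary~\ref{appendix-insertion order corollary}, the boxes added while inserting $t_1\le\dots\le t_r$ move strictly rightward and weakly upward. So the new boxes in row $i$, numbering $|\hat Y_i|-|Y_i|$, are created after all new boxes in rows below $i$. We then count how $u$'s enter and leave row $i$:
\begin{itemize}
\item When a box is added in row $i$ while $u$'s remain there, one $u$ is pushed down. Hence row $i$ loses $\min(s_i^{\text{cur}},|\hat Y_i|-|Y_i|)$ of them.
\item Row $i$ gains a $u$ each time a box is added in row $i-1$ while a $u$ remains in row $i-1$.
\end{itemize}
The main obstacle is the order of events: $u$'s pushed from row $i-1$ into row $i$ might be pushed on again by later row-$i$ additions. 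The weakly-upward ordering resolves this, because every addition in row $i$ precedes every addition in row $i-1$. Thus row $i$ first loses $\min(s_i,|\hat Y_i|-|Y_i|)$ of its original $u$'s, computed with its original $s_i$. It then gains $\min(s_{i-1},|\hat Y_{i-1}|-|Y_{i-1}|)$, and row $i-1$ has not yet received anything from row $i-2$ at the moment it pushes down.

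Combining the loss and the gain gives $s_i'=\max(0,s_i-(|\hat Y_i|-|Y_i|))+\min(s_{i-1},|\hat Y_{i-1}|-|Y_{i-1}|)$, which is the claimed formula. The form $\hat Y\cup s'$ is preserved throughout, since every $u$ always sits at the end of its row.
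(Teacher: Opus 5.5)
Your proof is correct. It rests on the same two ingredients as the paper's proof: the one-letter comparison of bumping paths in $Y$ and $Y\cup s$, and Corollary~\ref{appendix-insertion order corollary}. The one-letter comparison says that a $u$ is displaced exactly when the new box of the $Y$-part lands in a row that still holds a $u$, and that this $u$ then drops to the end of the next row.

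The difference is in the bookkeeping. The paper carries the closed formula \eqref{eq: appendix insertion relation} itself as the inductive hypothesis. When the path of $t_{r+1}$ ends in row $n+1$, it uses the Corollary only locally, to get $|\hat Y_i|=|Y_i|$ for $i\le n$; this kills the term $\min(s_n,|\hat Y_n|-|Y_n|)$. It then checks the cases $s'_{n+1}=0$ and $s'_{n+1}>0$ by max/min algebra at rows $n+1$ and $n+2$.

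You instead use the Corollary globally. Since all additions to row $i$ precede all additions to row $i-1$, the history of row $i$ splits into two phases: a pure-loss phase starting from the original $s_i$, then a pure-gain phase fed by row $i-1$ while it still holds its original $s_{i-1}$. The formula is therefore derived rather than verified.

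Your version explains where each of the two terms comes from and avoids the case-by-case algebra. The paper's version reduces every step to a mechanical check of a single insertion against an already-established formula.
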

\begin{proof}
    We proceed by induction on $r$, the length of $w$. If $r=0$, no insertion occurs and $\hat{Y} = Y$, so the identities hold.  

    Assuming the statement of the lemma as inductive hypothesis, 
    consider next the insertion of $t_{r+1} = t<u$ into $\hat{Y} = Y \gets w$, where $t \geq t_1,t_2,\dots t_r$.
    If $t$ is not placed directly at the end of the first row, it bumps an element $y_1 > t$ from that row; this element then bumps an element $y_2 > y_1$ from the second row, and so on, until an element $y_n$ is placed at the end of row $n+1$. 
    
    To unify notation, we can think of the case $n=0$ as placing $y_0 = t$ directly at the end of the first row of $Y$.
    By Corollary~\ref{appendix-insertion order corollary},
    it must be that the previous process of inserting $w = (t_1,\dots, t_r)$ into $Y$ has not added any boxes to the first $n$ rows of $Y$, so that $|\hat{Y}_i| = |Y_i|$ for each $1\leq i \leq n$. 

    Now consider inserting $t_{r+1}$ into $\hat{Y} \cup s'$. 
    Since the bumped elements $y_i$ are less than $u$ for each $0\leq i\leq n$, and $\hat{Y} \cup s'$ only differs from $\hat{Y}$ by having additional boxes containing $u$ on the right, 
    the same bumping process will happen as above, until $y_n$ gets bumped off the $n$-th row of $\hat{Y}\cup s'$. 
    After this, we split into two cases. 
    For each case, we will show that $(\hat{Y} \cup s') \gets t$ can be written as $(\hat{Y} \gets t) \cup s''$ for some $s''$ which records the new way the $u$'s appear on the end of each row, and then verify
    \begin{equation} \label{eq: appendix to verify}
        s_i'' = \max(0,s_i-( |(\hat{Y} \gets t)_i| - |Y_i|)) 
        + \min(s_{i-1},|(\hat{Y}\gets t)_{i-1}| - |{Y}_{i-1}|)
    \end{equation}
    for each $i$ to conclude the induction. 
    
    \textbf{Case 1}: $s'_{n+1} = 0$.
    Then the $(n+1)$-th row of $\hat{Y} \cup s'$ is identical to the $(n+1)$-th row of $\hat{Y}$, so $y_n$ will be placed at its end and the insertion process is done. 
    As no entry equal to $u$ has been bumped, we have 
    \[ 
    (\hat{Y} \cup s') \gets t = (\hat{Y} \gets t) \cup s'' 
    \]
    for $s'' = s'$. 
    Note that $|(\hat{Y}\gets t)_i| = |\hat{Y}_i|$ for $i\neq n+1$, so \eqref{eq: appendix to verify} is automatically satisfied for $i\neq n+1,n+2$,  and we have that $|(\hat{Y}\gets t)_{n+1}| = |\hat{Y}_{n+1}| + 1.$ 
    
    From the induction hypothesis \eqref{eq: appendix insertion relation} for $s'_{n+1}$, 
    \[ 
    0 = s'_{n+1}  = \max(0,s_{n+1} -( |\hat{Y}_{n+1}| -|Y_{n+1}|))
    + \min(s_{n}, |\hat{Y}_{n}| -|Y_{n}|).
    \]
    As $|\hat{Y}_{n}| =|Y_{n}|$, we have $\min(s_n,|\hat{Y}_{n}| -|Y_{n}|) = 0$, 
    so it must be that $s_{n+1} \leq ( |\hat{Y}_{n+1}| -|Y_{n+1}|)$,
    which means 
    $s_{n+1} - (|\hat{Y}_{n+1}| +1-|Y_{n+1}|) < 0$ and
    $\min(s_{n+1}, |\hat{Y}_{n+1}| -|Y_{n+1}|) = s_{n+1} = \min(s_{n+1}, |\hat{Y}_{n+1}| + 1 -|Y_{n+1}|)$.
    Consequently 
    \begin{align*} 
    &\phantom{={}}
     \max(0,s_{n+1}-( |(\hat{Y} \gets t)_{n+1}| - |Y_{n+1}|)) 
    + \min(s_{n},|(\hat{Y}\gets t)_{n}| - |{Y}_{n}|) \\
    &=
    \max(0,s_{n+1} -( |\hat{Y}_{n+1}|+1 -|Y_{n+1}|))
    + \min(s_{n}, |\hat{Y}_{n}| -|Y_{n}|) \\
    &= 0 \\
    &=s'_{n+1},
    \end{align*}
    so \eqref{eq: appendix to verify} is satisfied for $i=n+1$, and
    \begin{align*}
        &\phantom{=\ } \max(0,s_{n+2}-( |(\hat{Y} \gets t)_{n+2}| - |Y_{n+2}|)) 
        + \min(s_{n+1},|(\hat{Y}\gets t)_{n+1}| - |{Y}_{n+1}|) \\
      &= \max(0,s_{n+2}-( |\hat{Y}_{n+2}| - |Y_{n+2}|)) 
     + \min(s_{n+1},|\hat{Y}_{n+1}| + 1 - |{Y}_{n+1}|)\\
     &= \max(0,s_{n+2}-( |\hat{Y}_{n+2}| - |Y_{n+2}|)) 
     + \min(s_{n+1},|\hat{Y}_{n+1}| - |{Y}_{n+1}|)\\
     &= s'_{n+2},
    \end{align*}
    so \eqref{eq: appendix to verify} is satisfied for $i = n+2$. 

    \textbf{Case 2:} $s'_{n+1} > 0$. 
    Then the $(n+1)$-th row of $\hat{Y} \cup s'$ is the same row of $\hat{Y}$ with some additional $u$'s at the end. 
    Hence placing $y_n < u$ in would bump the first $u$ off, which then gets placed at the end of the $(n+2)$-th row (since there are no entries larger than $u$). 
    No other entry equal to $u$ has been bumped, so we have 
    \[ 
    (\hat{Y} \cup s') \gets t = (\hat{Y} \gets t) \cup s''
    \]
     where
     $s''_{n+1} = s'_{n+1}-1$, 
    $s''_{n+2} = s'_{n+2}+1$,
    and $s''_i = s'_i$ for $i \neq n+1,n+2$. 
    As in the previous case, \eqref{eq: appendix to verify} is automatically satisfied for $i \neq n+1, n+2$. 

    From the induction hypothesis \eqref{eq: appendix insertion relation}, 
    \[ 
    1 \leq s'_{n+1}  = \max(0,s_{n+1} -( |\hat{Y}_{n+1}| -|Y_{n+1}|))
    + \min(s_{n}, |\hat{Y}_{n}| -|Y_{n}|).
    \]
    As $|\hat{Y}_{n}| =|Y_{n}|$, we have $\min(s_n,|\hat{Y}_{n}| -|Y_{n}|) = 0$, 
    so it must be that $s_{n+1} \geq  |\hat{Y}_{n+1}| -|Y_{n+1}|+1$,
    which means 
    $s_{n+1} - (|\hat{Y}_{n+1}| +1-|Y_{n+1}|) \geq 0$,
    $\min(s_{n+1}, |\hat{Y}_{n+1}| -|Y_{n+1}|) = |\hat{Y}_{n+1}| -|Y_{n+1}|$, 
    and 
    $\min(s_{n+1}, |\hat{Y}_{n+1}| + 1 -|Y_{n+1}|) = |\hat{Y}_{n+1}| + 1 -|Y_{n+1}|$.
    Consequently 
    \begin{align*} 
    &\phantom{={}}
     \max(0,s_{n+1}-( |(\hat{Y} \gets t)_{n+1}| - |Y_{n+1}|)) 
    + \min(s_{n},|(\hat{Y}\gets t)_{n}| - |{Y}_{n}|) \\
    &= \max(0,s_{n+1}-( |\hat{Y}_{n+1}| + 1 - |Y_{n+1}|)) 
    + \min(s_{n},|\hat{Y}_{n}| - |{Y}_{n}|) \\
    &=
    s_{n+1} -( |\hat{Y}_{n+1}|+1 -|Y_{n+1}|)
    + \min(s_{n}, |\hat{Y}_{n}| -|Y_{n}|) \\
    &= \max(0,s_{n+1}-( |\hat{Y}_{n+1}| - |Y_{n+1}|)) 
    + \min(s_{n},|\hat{Y}_{n}| - |{Y}_{n}|) -1\\
    &=s'_{n+1} - 1 \\
    &= s''_{n+1},
    \end{align*}
    so \eqref{eq: appendix to verify} is satisfied for $i=n+1$, and
    \begin{align*}
     &\phantom{=\ } \max(0,s_{n+2}-( |(\hat{Y} \gets t)_{n+2}| - |Y_{n+2}|)) 
        + \min(s_{n+1},|(\hat{Y}\gets t)_{n+1}| - |{Y}_{n+1}|) \\
      &= \max(0,s_{n+2}-( |\hat{Y}_{n+2}| - |Y_{n+2}|)) 
     + \min(s_{n+1},|\hat{Y}_{n+1}| + 1 - |{Y}_{n+1}|)\\
      &= \max(0,s_{n+2}-( |\hat{Y}_{n+2}| - |Y_{n+2}|)) 
     + |\hat{Y}_{n+1}| - |{Y}_{n+1}| + 1\\
     &= \max(0,s_{n+2}-( |\hat{Y}_{n+2}| - |Y_{n+2}|)) 
     + \min(s_{n+1},|\hat{Y}_{n+1}| - |{Y}_{n+1}|) + 1\\
     &= s'_{n+2} + 1 \\
     &= s''_{n+2},
    \end{align*}
    so \eqref{eq: appendix to verify} is satisfied for $i = n+2$. 
\end{proof}

Suppose $s$ is defined such that $P''$ is obtained from $P$ by adding $s_i$ boxes containing $(n+1)$ to its $i$-th row for each $i$. 
Then applying the lemma with $Y = P$, $Y \cup s = P''$, $u = n+1$, and $w = (j'_1,j'_2,\dots, j'_{k'})$,
so that $\hat{Y} = P'$ we get that 
\[
P'' \gets (j'_1,\dots j'_{k'}) = (P \gets (j'_1,\dots ,j'_{k'})) \cup s' = P' \cup s',
\]
where 
\[
s'_i = \max(0,s_i-(|P'_i|-|P_i|)) + \min(s_{i-1}, |P'_{i-1}|-|P_{i-1}|). 
\]
Since all numbers contained in $P'\cup s'$ are at most $(n+1)$, inserting a new sequence of $(n+1)$'s into it will only have them added to the end of the first row. 
Recalling that $P''' = (P''\gets (j'_1,\dots, j'_{k'}))\gets (n+1)\times d$, we thus have for each $i$ that
\[
|P'''_i| = |P'_i| + s'_i + d\mathds{1}_{i=1}.
\]

Substituting the expression for $s'_i$, writing $s_i = |P''_i| - |P_i|$, and simplifying gives
\begin{equation}
\label{eq: appendix recursive relation P'''_i}
    |P'''_i| = \max(|P'_i|,|P''_i|) + \min(|P'_{i-1}|,|P''_{i-1}|) - |P_{i-1}| + d \mathds{1}_{i=1}
\end{equation}
for each $i \geq 1$, where $|P_0| = |P'_0|=|P''_0|=0$.
This will be the recursive relation in the growth diagram of the RSK, which can be used as an alternative definition of the RSK.

\begin{theorem}
\label{appendix-equivalence theorem}
    Suppose $A = (a_{ij})_{i,j\geq 1}$ is an infinite matrix with entries in $\Z_{\geq 0}$. 
    For $r,s \geq 0$, write 
    \[
    A^{(r,s)} = (a_{ij})_{1\leq i\leq r, \,1\leq j \leq s},
    \]
    with $A^{(0,s)}$ and $A^{(r,0)}$ interpreted as empty matrices. 
    For every $r,s \in \Z_{\geq 0}$, define a partition $\lambda^{(r,s)}$ recursively as follows:
    \begin{enumerate}
        \item    
        Let each $\lambda^{(r,0)}$ and $\lambda^{(0,s)}$ be the empty partition $(0,0,\dots)$.
        \item 
        For $r,s \geq 1$, once $\lambda^{(r-1,s-1)}$, $\lambda^{(r-1,s)}$, and $\lambda^{(r,s-1)}$ have been defined, 
        let $\lambda^{(r,s)}$ be the partition with components
        \[\lambda^{(r,s)}_k = \max(\lambda^{(r-1,s)}_k, \lambda^{(r,s-1)}_k) + \min(\lambda^{(r-1,s)}_{k-1}, \lambda^{(r,s-1)}_{k-1})-\lambda^{(r-1,s-1)}_{k-1}
        + a_{rs} \mathds{1}_{k=1}\] 
        for each $k \geq 1$, with the convention that every partition has zeroth component 0. 
    \end{enumerate}
    Then,
    $\lambda^{(r,s)}$ is the RSK shape of $A^{(r,s)}$ for every $r,s \geq 0$. In particular, for fixed $m,n \geq 1$, the RSK maps the  $m \times n$ truncation $A^{(m,n)}$ to a pair of SSYTs $(P,Q)$ 
    with $\lambda^{(m,n)}$ as their common shape. 
    Moreover, $P$ can be reconstructed from the sequence of Young diagrams
    \[
    \emptyset = \lambda^{(m,0)} \subseteq \lambda^{(m,1)} \subseteq \dots \subseteq \lambda^{(m,n)}
    \]
    where $\lambda^{(m,s)}$ is the shape of the subtableau of $P$ consisting of entries at most $s$. 
    Similarly, $Q$ can be reconstructed from the sequence of Young diagrams 
    \[
    \emptyset = \lambda^{(0,n)} \subseteq \lambda^{(1,n)} \subseteq \dots\subseteq \lambda^{(m,n)}, 
    \]
    where $\lambda^{(r,n)}$ is the shape of the subtableau of $Q$ consisting of entries at most $r$.
    
\end{theorem}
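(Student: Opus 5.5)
We will prove the statement by induction on $r+s$, comparing the recursion with the relation \eqref{eq: appendix recursive relation P'''_i} derived just above. For $r=0$ or $s=0$ the matrix $A^{(r,s)}$ is empty, its RSK image is the pair of empty tableaux, and $\lambda^{(r,s)}$ is the empty partition by definition, so the base case holds. For $r,s\geq 1$, assume $\lambda^{(r-1,s-1)},\lambda^{(r-1,s)},\lambda^{(r,s-1)}$ are the RSK shapes of $A^{(r-1,s-1)},A^{(r-1,s)},A^{(r,s-1)}$. Apply the construction preceding the theorem with $A=A^{(r-1,s-1)}$, $m=r-1$, $n=s-1$, added last row $(a_{r1},\dots,a_{r,s-1})$, added last column $(a_{1s},\dots,a_{r-1,s})$ and corner entry $d=a_{rs}$. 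Then $A'=A^{(r,s-1)}$, $A''=A^{(r-1,s)}$ and $A'''=A^{(r,s)}$. Since $|P_i|$, $|P'_i|$, $|P''_i|$ and $|P'''_i|$ are the row lengths of the respective shapes, \eqref{eq: appendix recursive relation P'''_i} is exactly the defining recursion for $\lambda^{(r,s)}_k$. This closes the induction.

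The only points needing care are degenerate cases in the derivation of \eqref{eq: appendix recursive relation P'''_i}: when $m=0$ or $n=0$ the matrix $A$ is empty, and the argument via Lemma~\ref{appendix-lemma for s'_i} and the two-line-array decomposition still goes through trivially. We also need to check the simplification used just before the theorem. Substituting $s_i=|P''_i|-|P_i|$, together with $|P'_i|\ge|P_i|$ and $|P''_i|\ge|P_i|$, gives
\[
|P'_i|+\max\bigl(0,|P''_i|-|P'_i|\bigr)=\max\bigl(|P'_i|,|P''_i|\bigr)
\]
and
\[
\min\bigl(|P''_{i-1}|-|P_{i-1}|,\ |P'_{i-1}|-|P_{i-1}|\bigr)=\min\bigl(|P'_{i-1}|,|P''_{i-1}|\bigr)-|P_{i-1}|,
\]
which is the recursion as stated.

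For the reconstruction statements, we use the insertion procedure directly. The subtableau of $Q$ (for $A^{(m,n)}$) with entries at most $r$ is the recording tableau after inserting exactly the columns of $w_A$ with top entry at most $r$. Because $w_A$ is ordered lexicographically, these columns form an initial segment of $w_A$, namely $w_{A^{(r,n)}}$. Hence this subtableau is $Q$ for $A^{(r,n)}$, and its shape is $\lambda^{(r,n)}$ by the induction. A semistandard tableau is determined by the chain of shapes of its subtableaux with entries at most $r$, since the boxes in $\lambda^{(r,n)}\setminus\lambda^{(r-1,n)}$ are exactly those filled with $r$. This gives the reconstruction of $Q$.

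For $P$, apply Theorem~\ref{appendix-symmetry RSK}: the $P$-tableau of $A^{(m,n)}$ is the $Q$-tableau of its transpose. The subtableau with entries at most $s$ is therefore the $P$-tableau of $A^{(m,s)}$, with shape $\lambda^{(m,s)}$. To make this precise, note that the recursion is symmetric under swapping the roles of $r$ and $s$ (the $\max$ and $\min$ expressions are symmetric in their two arguments). So the shapes produced from $A^\top$ are $\lambda^{(s,r)}$, and we can invoke the $Q$-argument for $A^\top$. I expect the only real obstacle to be the bookkeeping in the degenerate cases; the core identity is already \eqref{eq: appendix recursive relation P'''_i}.
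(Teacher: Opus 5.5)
Your proposal is correct and follows the paper's proof essentially step for step. You do the induction on $r+s$ by applying \eqref{eq: appendix recursive relation P'''_i} with $A'=A^{(r,s-1)}$, $A''=A^{(r-1,s)}$, $A'''=A^{(r,s)}$, $d=a_{rs}$; you then reconstruct $Q$ from the initial segment $w_{A^{(r,n)}}$ of the two-line array and transfer to $P$ via Theorem~\ref{appendix-symmetry RSK}.

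The only cosmetic difference is in the edge cases $r=1$ or $s=1$. The paper checks these separately (both sides give a single row equal to the total sum of entries). You instead fold them into the general step by noting that the derivation of \eqref{eq: appendix recursive relation P'''_i} remains valid for an empty $A$, which it does.
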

\begin{proof}
    We prove the shape assertion by induction on $r+s$. The cases $r=0$ or $s=0$ are immediate. If $r=1$ or $s=1$, both the recurrence and RSK give the one-row partition whose first part is the sum of the entries of $A^{(r,s)}$. Now let $r,s \geq 2$.
    By induction, the RSK shapes of $A^{(r-1,s-1)}$, $A^{(r,s-1)}$, and $A^{(r-1,s)}$ are the corresponding partitions $\lambda$. Apply \eqref{eq: appendix recursive relation P'''_i} with 
    the three matrices being $A,A',A''$ respectively, $A'''=A^{(r,s)}$ and $d =a_{rs}$, 
    the resulting identity is exactly the defining recurrence for $\lambda^{(r,s)}$, proving the induction step. 

    For the reconstruction statement, fix $m,n$. 
    After the first $r$ rows of $A^{(m,n)}$ have been processed, the recording tableau has shape $\lambda^{(r,n)}$, and its newly added boxes contain $r$. 
    Hence the chain $\lambda^{(0,n)} \subseteq \dots \subseteq \lambda^{(m,n)}$ reconstructs $Q$.
    Applying the same argument to $(A^{(m,n)})^\top$ and using Theorem~\ref{appendix-symmetry RSK} shows that $\lambda^{(m,0)} \subseteq \dots \subseteq \lambda^{(m,n)}$ reconstructs $P$. 
\end{proof}

Using this growth diagram formulation of the RSK correspondence, we are ready to prove 
Sch\"utzenberger's lemma by induction. 

\begin{proof}
    [Proof of Proposition~\ref{schutzenberger}]
    Suppose that in Theorem~\ref{appendix-equivalence theorem} the matrix $A$ is symmetric, i.e. $a_{ij} = a_{ji}$ for all $i\neq j$. 
    From the symmetry of the definition, we have $\lambda^{(i,j)} = \lambda^{(j,i)}$ for each $i\neq j$. 
    Recall that for a partition $\lambda$, $O(\lambda)$ is defined to be its alternating sum $\sum_i (-1)^{i-1} \lambda_i $. 
    We want to show that in this case 
    \[
    \sum_{i=1}^n a_{ii} = O(\lambda^{(n,n)})
    \]
    for each $n\geq 1$.
    For this, we induct on $n$. 
    The case $n=1$ is trivial, since $\lambda^{(1,1)} = (a_{11},0,0,\dots)$. 
    Then, assume that $\sum_{i=1}^{n-1}a_{ii}=O(\lambda^{(n-1,n-1)})$.
    By the recursive relation of the growth diagram, for each $i\geq 1$, we have
    \[
    \lambda^{(n,n)}_i
    =
    \lambda^{(n,n-1)}_i
    +
    \lambda^{(n,n-1)}_{i-1}
    -
    \lambda^{(n-1,n-1)}_{i-1}
    +
    a_{nn}\mathds{1}_{i=1}.
    \]
    Here we used the symmetry of $A$, which gives
    $\lambda^{(n,n-1)}=\lambda^{(n-1,n)}$.
    Therefore,
    \begin{align*}
        O(\lambda^{(n,n)})
        &=
        \sum_{i}(-1)^{i-1}\lambda^{(n,n)}_i \\
        &=
        a_{nn}
        +
        \sum_{i}(-1)^{i-1}\lambda^{(n,n-1)}_i
        +
        \sum_{i}(-1)^{i-1}\lambda^{(n,n-1)}_{i-1}
        -
        \sum_{i}(-1)^{i-1}\lambda^{(n-1,n-1)}_{i-1} \\
        &=
        a_{nn}
        +
        O(\lambda^{(n,n-1)})
        -
        O(\lambda^{(n,n-1)})
        +
        O(\lambda^{(n-1,n-1)}) \\
        &=
        a_{nn}+O(\lambda^{(n-1,n-1)})\\
        &= \sum_{i=1}^n a_{ii},
    \end{align*}
    where the last step comes from the induction hypothesis. This completes the proof. 
\end{proof}

\printbibliography

\end{document}